\numberwithin{equation}{section}
\theoremstyle{plain}
\newtheorem{theorem}{Theorem}[section]
\newtheorem{assumption}{Assumption}[section]
\newtheorem{alg}{Algorithm}[section]
\newtheorem{lemma}{Lemma}[section]
\newtheorem{proposition}{Proposition}[section]
\newtheorem{corollary}{Corollary}[section]
\newtheoremstyle{definition}
{\topsep}
{\topsep}
{}
{}
{\bfseries}
{.}
{.5em}
{}
\theoremstyle{definition}
\newtheorem{definition}{Definition}[section]
\newtheorem{example}{Example}[section]
\newtheorem{remark}{Remark}[section]
\preto\subequations{\ifhmode\unskip\fi}
\title{A Semismooth Newton based Augmented Lagrangian Method for Nonsmooth Optimization on Matrix Manifolds\thanks{The work was supported in part by National Natural Science Foundation of China (11901338, 61620106010). The research of the third
author was supported in part by the National Natural Science Foundation of China (12071464) and the
Beijing Natural Science Foundation (Z190002).}
}
\author{Yuhao Zhou\footnote{Department of Computer Science and Technology, Tsinghua University, China. 
  (\url{yuhaoz.cs@gmail.com}).}
\and Chenglong Bao\footnote{Yau Mathematical Sciences Center, Tsinghua University, China and Yanqi Lake Beijing Institute of Mathematical Sciences and Applications, China (\url{clbao@mail.tsinghua.edu.cn}).}
\and Chao Ding\footnote{Institute of Applied Mathematics, Academy of Mathematics and System Sciences, Chinese Academy of Sciences, China (\url{dingchao@amss.ac.cn}).}
\and Jun Zhu\footnote{Department of Computer Science and Technology, Tsinghua University, China.  (\url{dcszj@tsinghua.edu.cn}).}
}
\newcommand{\peq}{\phantom{{}={}}}
\newcommand{\St}[1]{\mathrm{St}(#1)}
\newcommand{\prox}{\mathrm{prox}}
\newcommand{\feas}{{\mathrm{feas}}}
\newcommand{\trans}{\top}
\newcommand{\R}{\mathbb R}
\newcommand{\N}{\mathbb N}
\DeclareMathOperator{\grad}{grad}
\DeclareMathOperator*{\Hess}{Hess}
\newcommand{\Proj}{\mathbf{P}}
\DeclareMathOperator*{\argmin}{arg\,min}
\newcommand{\norm}[1]{\left \Vert #1 \right \Vert}
\newcommand{\normz}[1]{\Vert #1 \Vert}
\newcommand{\tr}{\text{tr}} %
\newcommand{\inner}[2]{\left \langle #1, #2 \right \rangle}
\newcommand{\innerz}[2]{\langle #1, #2 \rangle}
\newcommand{\id}{{\rm{id}}} %
\newcommand{\bone}{\textbf{1}}
\newcommand{\cA}{\mathcal{A}}
\newcommand{\cC}{\mathcal{C}}
\newcommand{\cD}{\mathcal{D}}
\newcommand{\cI}{\mathcal{I}}
\newcommand{\cK}{\mathcal{K}}
\newcommand{\cL}{\mathcal{L}}
\newcommand{\cM}{\mathcal{M}}
\newcommand{\cN}{\mathcal{N}}
\newcommand{\cP}{\mathcal{P}}
\newcommand{\cX}{\mathcal{X}}
\newcommand{\cZ}{\mathcal{Z}}
\newcommand{\dd}{\mathrm{d}}
\newcommand{\II}{\mathrm{II}}
\renewcommand{\Box}{}
\begin{document}

\maketitle

\begin{abstract}
This paper is devoted to studying an augmented Lagrangian method for solving a class of manifold optimization problems, which have nonsmooth objective functions and nonlinear constraints. Under the constant positive linear dependence condition on manifolds, we show that the proposed method converges to a stationary point of the nonsmooth manifold optimization problem. Moreover, we propose a globalized semismooth Newton method to solve the augmented Lagrangian subproblem on manifolds efficiently. The local superlinear convergence of the manifold semismooth Newton method is also established under some suitable conditions.
We also prove that the semismoothness on submanifolds can be inherited from that in the ambient manifold. Finally, numerical experiments on compressed modes and (constrained) sparse principal component analysis illustrate the advantages of the proposed method.

\vspace{0.6em}
{\bf Keywords:}
Nonsmooth manifold optimization $\cdot$ Semismooth Newton method $\cdot$ Augmented Lagrangian method $\cdot$ Riemannian manifold

\vspace{0.6em}
{\bf Mathematics Subject Classification (2020):}
90C30 $\cdot$ 49J52 $\cdot$ 58C20 $\cdot$ 65K05 $\cdot$ 90C26
\end{abstract}

\section{Introduction}
Manifold optimization is recently growing in popularity as it naturally arises from various applications in many fields,
including phase retrieval~\cite{cai2019fast, waldspurger2015phase}, principal component analysis~\cite{lu2012spca,montanari2015non}, 
matrix completion~\cite{boumal2011rtrmc,vandereycken2013low}, 
medical image analysis~\cite{lai2014folding},
and deep learning~\cite{cho2017riemannian}.
It is concerned with optimization problems with a manifold constraint, 
and has been extensively studied when the objective function is smooth during the past decades~\cite{absil2009optimization,gabay1982minimizing,hu2019brief,hu2018adaptive,wen2013feasible,zhang2016riemannian}.
Nonsmooth manifold optimization is less explored but has drawn increasing attention in recent years~\cite{chen2020proximal,chen2016augmented,huang2019riemannian,lai2014splitting}.
In this paper, we consider the following nonsmooth and nonconvex manifold optimization problem:
\begin{equation}
	\label{eqn:original-problem-0}
	\begin{aligned}
		\min_{x}\  \left \{ f(x) + \psi(h_1(x)) \right \},\ \mathrm{s.t.}\  x \in \cM,\ h_2(x) \leq 0,
	\end{aligned}
\end{equation} 
	 where $\cM$ is a Riemannian manifold, $f : \cM \to \R$, $h_1: \cM \to \R^m$, $h_2: \cM \to \R^q$ are continuously differentiable, and $\psi: \R^m \to \R$ is convex. 
	 Besides the nonsmooth function $\psi$ in \eqref{eqn:original-problem-0}, the inequality constraint can be modeled using the nonsmooth function $\delta_{\cC}(x)$, where $\delta_\cC$ is the indicator function of the set $\cC := \{x : h_2(x)\leq0\}$. From the perspective of algorithm design, the two nonsmooth terms in \eqref{eqn:original-problem-0} are difficult to handle in general.
	 On the other hand, the model~\eqref{eqn:original-problem-0} has many important applications in machine learning and scientific computing. We list some typical examples as follows and refer the readers to~\cite{absil2019collection,chen2020proximal,hu2019brief} for more examples and details. 
\begin{enumerate}
\item {\bf{Compressed modes (CM)~\cite{ozolina2013compressed}.}} The CM problem seeks for sparse eigenfunctions to a class of Hamiltonian operators, as these localized spatial bases play an important role in representing the rapid varying functions in physics and quantum chemistry. Numerically, let $H$ be a discretization of the Hamiltonian operator, 
then the CM problem is formulated as:
	\begin{equation}
	\label{eqn:cm-problem}
	\begin{aligned}
	\min_{Q \in \mathrm{St}(n, r)}\  &\left \{ \tr(Q^\trans H Q) + \mu \norm{Q}_1 \right \},
	\end{aligned}
	\end{equation}
	 where $\St{n, r} := \{ Q \in \R^{n\times r} : Q^\trans Q=I_r \}$ is the Stiefel manifold.
	\item {\bf{Sparse principal component analysis (SPCA)~\cite{zou2006sparse}.}} Let $A\in\R^{p\times n}$ be a data matrix where~$n$ and $p$ are the number of variables and the number of observations, respectively.
	Setting~$f(Q)=-\tr(Q^\trans A^\trans A Q)$, $\psi(Q) = \mu\norm{Q}_1$, $h_1(Q) = Q$, $h_2(Q) \equiv 0$ and $\cM = \St{n, r}$, the SPCA problem has the form \eqref{eqn:original-problem-0}, i.e., it aims at solving
	\begin{equation}
	\label{eqn:spca-problem}
	\begin{aligned}
	\min_{Q \in \mathrm{St}(n, r)}\  &\left \{ -\tr(Q^\trans A^\trans A Q) + \mu \norm{Q}_1 \right \}.
	\end{aligned}
	\end{equation}
	\item {\bf{Constrained SPCA~\cite{lu2012spca}.}} To further enforce the orthogonality among principal components, the constrained SPCA problem imposes additional constraints on each column of $Q$, which has the form
	\begin{equation}
	\label{eqn:cons-spca-problem}
	\begin{aligned}
	\min_{Q \in \mathrm{St}(n, r)}\  &\left \{ -\tr(Q^\trans A^\trans A Q) + \mu \norm{Q}_1 \right \}, \\
	\mathrm{s.t.}\ & \quad |Q_i^\top A^\trans AQ_j| \leq \Delta_{ij}, \ \forall\, i \neq j,
	\end{aligned}
	\end{equation}
	where $Q_i$ denotes the $i$-th column of $Q$ and $\Delta_{ij}\geq 0$ are the predefined tolerances.
\end{enumerate}

It is worth mentioning that \eqref{eqn:original-problem-0} can be regarded as an unconstrained nonsmooth optimization problem on $\cM$ when the inequality constraint is dropped, i.e., $h_2(x) \equiv 0$.
Various methods are designed to solve such problems.
The subgradient methods in the Riemannian setting are studied in \cite{borckmans2014riemannian,dirr2007nonsmooth}.
Riemannian proximal point algorithms are investigated by~\cite{chen2020proximal,ferreira2002proximal,huang2019riemannian}. 
Operator splitting methods like the alternating direction methods of multipliers (ADMM) and the augmented Lagrangian methods (ALM) are also promising on manifolds~\cite{kangkang2019inexact,kovnatsky2016madmm}, in which unconstrained manifold optimization algorithms are used to solve the subproblems.
However, due to the presence of two nonsmooth terms in \eqref{eqn:original-problem-0}, the existing manifold-based algorithms may not be directly applicable for solving \eqref{eqn:original-problem-0} in general cases. 
For example, the direct application of ManPG~\cite{chen2020proximal} or the Riemannian proximal gradient method~\cite{huang2019riemannian} requires dealing with a subproblem with two nonsmooth terms, which is generally difficult to solve. Moreover, by introducing more auxiliary variables, the multiblock ADMM method can be exploited, but there is no convergence guarantee of such algorithm, to the best of our knowledge.

On the other hand, in many situations $\cM$ is embedded in a Euclidean space and can be specified by equality constraints, e.g., the Stiefel manifold or the Oblique manifold. 
In these cases, \eqref{eqn:original-problem-0} can be viewed as a constrained optimization problem in Euclidean spaces~\cite{chen2016augmented,lai2014splitting,lu2012spca,zhu2017nonconvex}, 
which has been widely studied for many years~\cite{andreani2008augmented,boyd2011distributed,chen2017augmented,sun2008rate}. However, due to the complex constraints induced by embedded manifolds, the constraint qualifications may not be satisfied, and the nonlinear methods are not applicable for solving optimization problems over abstract manifolds.
Motivated by the above analysis, we aim at designing numerical algorithms for solving~\eqref{eqn:original-problem-0} by exploiting the intrinsic structure of manifolds.

\subsection*{The Main Contributions} 
In this paper, we propose a manifold-based augmented Lagrangian method to solve \eqref{eqn:original-problem-0}, which consists of two nonsmooth terms. Compared to other existing methods, the proposed algorithm satisfies the manifold constraint automatically at each step and exploits the second-order geometric property of manifolds.
The main idea of the proposed method is to introduce auxiliary variables that split~\eqref{eqn:original-problem-0} into a smooth manifold constrained term, a nonsmooth term and an inequality constrained term. Then, we apply the augmented Lagrangian method to solve the equivalent version of~\eqref{eqn:original-problem-0} and show the global convergence property under some constraint qualifications on manifolds. Using the Moreau-Yosida identity, the augmented Lagrangian subproblem can be converted to a continuous and differentiable manifold optimization problem, but it is not second-order differentiable. This subproblem is inherently different from the subproblems in ManPG~\cite{chen2020proximal} and the Riemannian proximal gradient method~\cite{huang2019riemannian}, which are nonsmooth problems on the tangent space. To solve the augmented Lagrangian subproblem, we propose a globalized version of the semismooth Newton method on manifolds and prove its local superlinear convergence under some reasonable assumptions. 
We also provide a theorem showing that when the manifold $\cM$ is a compact submanifold of another Riemannian manifold $\bar \cM$, 
the semismoothness of a vector field $X$ on $\cM$ can be inherited from the semismoothness of its extension to $\bar \cM$.
Numerical results in compressed modes~\eqref{eqn:cm-problem}, sparse PCA~\eqref{eqn:spca-problem}, and the constrained sparse PCA~\eqref{eqn:cons-spca-problem} show the advantages of the proposed method comparing with existing approaches.

The remaining parts of this article are organized as follows:
Preliminaries on manifolds and backgrounds about previously mentioned optimization methods are presented in Sec.~\ref{sec:background}.
The augmented Lagrangian method together with its convergence analysis are given in Sec.~\ref{sec:alm}.
The globalized semismooth Newton method for dealing with the subproblem, and its global convergence together with the transition to local superlinear convergence are shown in Sec.~\ref{sec:newton}.
The semismooth property on submanifolds and the method for calculating the Clarke generalized covariant derivative are also explored in Sec.~\ref{sec:newton}.
Numerical experiments are reported in Sec.~\ref{sec:exp}.
Finally, this article is concluded in Sec.~\ref{sec:conclusions}.

\begin{center}
	{\small \begin{table}
	\centering
		\caption{Notations used in this work.}
		\label{Table:Notations}
		\begin{tabular}{|c|l|}
			\hline
			\multicolumn{1}{|c|}{\bf Notations} &
			\multicolumn{1}{c|}{\bf Descriptions} \\ \hline
			$[u]_i$ & The $i$-th component of $u\in\R^d$ \\ \hline
			$[M]$ & The set $\{1,2,\cdots,M\}$, where $M$ is a positive integer \\ \hline
			$\cM$ & A complete $n$-dimensional smooth Riemannian manifold \\ 
			\hline
			$T_p\cM$ & The tangent space at $p\in\cM$ \\ \hline
			$T\cM$ & The tangent bundle of $\cM$ \\ \hline
			$\cD(\cM)$ & The set of smooth functions on $\cM$ with compact support \\ \hline 
			$\dd\varphi|_p$ & The differential of the smooth map $\varphi$ at $p\in\cM$  \\ \hline
			$\grad\varphi$ & The gradient of the function $\varphi$ on manifolds\\ \hline
			$\partial \varphi$ & The Clarke subgradient of the function $\varphi$ on manifolds \\ \hline
			$\Hess \varphi$ & The Hessian of the function $\varphi$ on manifolds \\ \hline
			$X,Y$ & Vector fields on manifolds \\ \hline
			$\cX(\cM)$ & The set of all smooth vector fields on $\cM$ \\ \hline
			$\partial X$ & The Clarke generalized covariant derivative of the vector field $X$ \\ \hline 
			$\nabla_X Y$ & The Levi-Civita connection of two vector fields $X$ and $Y$ \\ \hline
			$\nabla X(p; v)$ & The directional derivative of a vector field $X$ at $p$ along $v$ \\ \hline
			$\nabla X(p)$ & The operator $v \mapsto \nabla_v X(p)$ from $T_p\cM$ to $T_p\cM$ \\ \hline
			$P^{s \to t}_\gamma$ & The parallel transport along a curve $\gamma$ from $\gamma(s)$ to $\gamma(t)$ \\ \hline
			$P_{pq}$ & The parallel transport along the geodesic from $p$ to $q$ \\ \hline 
			$\exp_p$ & The exponential map at $p\in\cM$ \\ \hline
			${\mathcal{L}}(T_p\cM)$ & The linear space of all linear operators from $T_p\cM$ to $T_p\cM$ \\ \hline
			$\Proj_p V$ & The projection of a vector $V \in \R^d$ into $T_p \cM$ \\ \hline
			$B_p(r)$ & The open ball $\{ q \in \cM : d(p, q) < r \}$ \\ \hline
			$\|Q \|_1$ & The $\ell_1$-norm of $Q \in \R^{n\times r}$, namely $\sum_{i, j} |Q_{ij}|$ \\ \hline
			$\|Q \|_F$ & The Frobenius norm of the matrix $Q \in \R^{n\times r}$ \\ \hline
			$\|Q \|$ & The operator norm of the matrix $Q \in \R^{n \times r}$ \\ \hline
			$\|x \|_2$, $\| x\|_{\R^n}$ & The $\ell_2$-norm of the vector $x \in \R^n$ \\ \hline
			$\|x \|_\infty$ & The $\ell_\infty$-norm of the vector $x \in \R^n$ \\ \hline
			$\inner{\eta}{\xi}_p$,$\inner{\eta}{\xi}$ & The Riemannian inner product of $\eta, \xi \in T_p\cM$  \\ \hline
			$\norm{\eta}_p$, $\norm{\eta}$ & The norm of $\eta \in T_p\cM$  \\ \hline
		\end{tabular}
	\end{table}}
\end{center}

\section{Background} \label{sec:background}
In this section, we review some concepts of manifolds and briefly discuss some related literature of nonsmooth manifold optimization, nonsmooth nonconvex ALM in Euclidean spaces, and the semismooth Newton method.

\subsection{Preliminaries on Manifolds}\label{sec:background-manifold}
A Hausdorff topological space $\cM$ is said to be an \emph{$n$-dimensional manifold} if it has a countable basis and for each $p \in \cM$ there exist a neighborhood $U$ of $p$, an open subset $\hat U \subset \R^n$ and a map $\varphi: U \to \hat U$ such that $\varphi$ is a homeomorphism. The pair $(U, \varphi)$ is called a \emph{chart}.

Notations used in the remaining part of this article are listed in Table~\ref{Table:Notations}. 
As the Euclidean spaces can be interpreted as the linear manifolds~\cite{absil2009optimization}, 
our notations for manifolds are consistent with those used in Euclidean spaces when the function is defined on $\R^n$, e.g., $\grad\varphi=\nabla\varphi$ if $\varphi$ is defined on $\R^n$. Now, we briefly 
review basic definitions and properties of functions defined on manifolds.
Most of these definitions can be found in, e.g., \cite[Chapter 1-3]{carmo1992riemannian} and \cite[Chapter 3]{absil2009optimization}.
\begin{definition}  %
    A \emph{tangent vector} $\xi_p:\cD(\cM)\to\R$ to a manifold $\cM$ at a point $p$ is a linear operator such that for every $f \in \cD(\cM)$, %
        $\xi_p f = \dot\gamma(0) f:= \left.\frac{\dd(f(\gamma(t)))}{\dd t}\right|_{t=0}$,
    where $\gamma: (-1, 1) \to \cM$ is a smooth curve on $\cM$ with $\gamma(0)=p$.
\end{definition}
The \emph{tangent space} $T_p\cM$ is the space containing all tangent vectors at $p$, which is an $n$-dimensional $\R$-linear space. 
A \emph{Riemannian metric} $\langle\cdot,\cdot\rangle_p$ gives an inner product on $T_p\cM$, which smoothly depends on~$p$.\footnote{The subscript $p$ in $\inner{\cdot}{\cdot}_p$ is usually omitted for simplicity.}
Moreover, a Riemannian metric gives a metric on $\cM$ and the gradient of functions defined on $\cM$.
Below we assume that $\cM$ is equipped with a Riemannian metric.
\begin{definition} %
	Given $p,q\in\cM$, the \emph{distance between $p$ and $q$} is defined as 
	\begin{equation*}
	d(p,q) = \inf \left\{ \ell(\gamma)  :  \ell(\gamma) := \int_0^1 \sqrt{\inner{\dot \gamma(t)}{\dot\gamma(t)}} \dd t \right\},
	\end{equation*}
	where $\inf$ is taken over all piecewise smooth curves $\gamma:[0,1]\to\cM$  with $\gamma(0)=p$ and $\gamma(1)=q$.
\end{definition}
\begin{definition}
    Let $f: \cM \to \cN$ be a smooth map between smooth manifolds $\cM, \cN$, the \emph{differential of $f$ at $p\in\cM$}, denoted by $\dd f|_p$, is a map from $T_p\cM$ to $T_{f(p)}\cN$ such that $(\dd f|_p\eta) g:= \eta(g \circ f)$ for all $g \in \cD(\cN)$ and $\eta \in T_p\cM$.
\end{definition}
\begin{definition}%
	\label{def:riemannian-gradient}
	Let $f:\cM\to\R$ be a smooth function 
	and $p\in\cM$, the \emph{gradient} of $f$ at $p$ is defined as the unique tangent vector $\grad f(p)\in T_p\cM$ that satisfies 
		 $$\xi_p f = \inner{\xi_p}{\grad f(p)},\ \forall\xi_p \in T_p\cM.$$
\end{definition}
The uniqueness of $\grad f(p)$ follows from the Riesz representation theorem.
Define $T\cM := \bigcup_{p \in \cM} T_p \cM$ to be the \emph{tangent bundle} of $\cM$ and a map $X: \cM \to T\cM$ to be a \emph{vector field} on $\cM$ if $X(p) \in T_p\cM$ for all $p\in\cM$. 

\begin{definition}%
	 For any $X,Y\in\cX(\cM)$, a map $\nabla_XY\in\cX(\cM)$ is called the \emph{Levi-Civita connection} if it is an affine connection\footnote{An affine connection is $\cD(\cM)$-linear w.r.t. $X$, $\R$-linear w.r.t. $X, Y$, and satisfies the product rule, see Chapter~5 in \cite{absil2009optimization}.} and satisfies 
    \begin{equation*}
        X\inner{Y}{Z} = \inner{\nabla_XY}{Z} + \inner{Y}{\nabla_XZ}\quad {\rm and} \quad  \nabla_XY - \nabla_YX = XY - YX, 
    \end{equation*}
    for all $X, Y, Z \in \cX(\cM)$.
\end{definition}
The Levi-Civita connection is unique~\cite{carmo1992riemannian} and can define the parallel transport of a vector field.
\begin{definition}
	 A vector field $X$ is \emph{parallel along a smooth curve $\gamma$} if $\nabla_{\dot \gamma} X = 0$. 
\end{definition}
Given a smooth curve $\gamma$ and $\eta \in T_{\dot \gamma(0)}\cM$, there exists a unique parallel vector field $X_\eta$ along $\gamma$ such that $X_\eta(0) = \eta$. 
We define the \emph{parallel transport along $\gamma$} to be $ P_\gamma^{0 \to t} \eta := X_\eta(t)$. 
A curve $\gamma$ is called a \emph{geodesic} if it is parallel to itself, i.e., $\nabla_{\dot \gamma} \dot \gamma = 0$, which implies $P_\gamma^{0 \to t} \dot\gamma(0) = \dot\gamma(t)$. For given initial conditions $\gamma(0) = p \in \cM$, $\dot\gamma(0) = \eta \in T_p\cM$, the geodesic equation $\nabla_{\dot\gamma}\dot\gamma = 0$ has a solution locally.
Let $V_p$ be the set of $\eta \in T_p\cM$ such that $\gamma$ is a geodesic, $\gamma(0) = p$, $\dot\gamma(0) = \eta$ and $\gamma(1)$ exists.
The \emph{exponential map} $\exp_p: V_p \to \cM$ is defined as $\eta \mapsto \gamma(1)$.
When the geodesic from $p$ to $q$ is unique, denoted by $\gamma_{pq}$, we define $P_{pq} := P_{\gamma_{pq}}^{0\to 1}$.
We highlight that the parallel transport $P_\gamma^{0 \to t}$ is a linear isometry, i.e., $P_\gamma^{0 \to t}$ is linear and $\inner{\xi}{\zeta} = \inner{P_\gamma^{0 \to t} \xi}{P_\gamma^{0 \to t} \zeta}$ for $\xi, \zeta \in T_p \cM$~(see Sec.~5.4 in \cite{absil2009optimization} for details).

The exponential map is not always tractable, e.g., it may be expansive to compute or does not have a closed-form solution (since we need to solve a differential equation).
However, it is possible that the convergence properties of an optimization algorithm remains the same when the exponential map is replaced with its first-order approximation~\cite{absil2009optimization,adler2002newton}.
Such an approximation is called a retraction and defined as follows.
\begin{definition}%
\label{def:retraction}
	A $C^2$ map $R:T\cM\to\cM$ is a \emph{retraction} if  $R_p(0) = p$ and $\frac{\dd}{\dd t} R_p(t\eta)|_{t=0} = \eta$ for all $\eta\in T_p\cM$ and $p \in \cM$, where we denote $R_p := R(p, \cdot)$.
\end{definition}
Generally, we only require that for every $p \in \cM$, $R$ is defined on a neighborhood of $(p, 0) \in T\cM$.

\begin{definition}[\cite{de2018newton,oliveira2020a}]
	\label{def:directional-differentiable}
	Let $X$ be a vector field on $\cM$. The \emph{directional derivative} at $p \in \cM$ along $v\in T_p\cM$ is defined as 
	\begin{equation}
		\label{eqn:directionally-differentiable}
		\nabla X(p; v):= \lim_{t \to 0^+} \frac{1}{t} \big [ P_{\exp_p(tv),p}X(\exp_p(tv)) - X(p) \big ] \in T_p\cM. 
	\end{equation}
\end{definition}
We say $X$ is \emph{directionally differentiable} at $p$ if $\nabla X(p; v)$ exists for all $v \in T_p\cM$.
When $X$ is smooth at $p$, we know $\nabla X(p; v) = \nabla_v X(p)$ (see \cite[p.\ 234]{spivak1999comprehensive}).

\begin{definition}%
    Let $f:\cM\to\R$ be a smooth function. The \emph{Hessian of $f$ at $p\in\cM$}, denoted by $\Hess f(p)$, is defined as a linear operator on $T_p\cM$ such that $\Hess f(p)[v] := \nabla_v \grad f(p)$ for all $v\in T_p(\cM)$.
\end{definition}
We refer readers to~\cite{absil2009optimization,carmo1992riemannian, lee2012intro, lee2018introduction} for more details about manifolds.

\subsection{Nonsmooth Manifold Optimization}
As suggested in~\cite{chen2020proximal}, most nonsmooth manifold optimization algorithms can be classified into three categories: subgradient methods, proximal point algorithms and operator splitting methods.

\subsubsection{Subgradient Methods}
The subgradient methods on manifolds~\cite{borckmans2014riemannian,dirr2007nonsmooth} naturally generalize their Euclidean space counterparts.
Suppose $f: \cM \to \R$ is a locally Lipschitz function\footnote{We say a function $f$ on a manifold is locally Lipschitz if $f \circ \varphi^{-1}$ is locally Lipschitz in $U$ for every chart $(U, \varphi)$.} on a manifold $\cM$ and $(U, \varphi)$ is a chart containing $p \in \cM$. 
From ~\cite{azagra2005nonsmooth,dirr2007nonsmooth,hosseini2011generalized}, the \emph{Clarke generalized directional derivative}  of $f$ at $p$, denoted by $f^\circ(p; v)$, is defined by
\[ f^\circ(p; v) := \limsup_{y \to p, t \downarrow 0} \frac{\hat f(\varphi(y) + t \dd\varphi|_pv) - \hat f(\varphi(y))}{t}, \]
where $\hat f := f \circ \varphi^{-1}$ and $\dd\varphi|_p$ is the differential of $\varphi$ at $p$. The \emph{Clarke subgradient} is 
$$\partial f(p) := \{  \xi \in T_p\cM : \inner{\xi}{v} \leq f^\circ (p; v),\ \forall v \in T_p \cM \}.$$
 Indeed, $f^\circ(p; v)$ is the Clarke directional derivative of $\hat f$ in Euclidean spaces and is independent of the choice of $\varphi$. 
 Moreover, the Clarke subgradient $\partial f(p)$ can be obtained from the Euclidean version as shown in Proposition~3.1 of~\cite{yang2014optimality}:
 \begin{equation} \label{eqn:subgradient-from-euclidean}
	\partial f(p) = (\dd \varphi|_p)^{-1}[G_{\varphi(p)}^{-1}\partial \hat f (\varphi(p))],
 \end{equation}
 where $G_{\varphi(p)} \in \R^{n \times n}$ is the metric matrix such that its $(i, j)$-th element is $g_{ij} := \innerz{(\dd \varphi|_p)^{-1}e_i}{(\dd \varphi|_p)^{-1}e_j}$, 
 where $\{ e_i \}_{i \in [n]}$ is the standard basis of $\R^n$, i.e., the $j$-th component of $e_i$ is $\delta_{ij}$.

 The update rule of the subgradient method in the Riemannian setting~\cite{borckmans2014riemannian,dirr2007nonsmooth} is $p_{k + 1} = \exp_{p_k} (t_k v_k)$, where $v_k \in \partial f(p_k)$ and $t_k$ is the stepsize.
These methods are known to be slow in the Euclidean setting. From the experiments in \cite{chen2020proximal}, it is also observed that subgradient based methods are slower than proximal point algorithms and operator splitting methods in the Riemannian setting.

\subsubsection{Proximal Point Methods} \label{sec:background-ppa}
The extension of proximal point algorithms on manifolds is proposed in~\cite{ferreira2002proximal} and the subgradient methods are suggested in~\cite{bacak2016second} to solve the subproblem. In~\cite{ferreira2002proximal}, manifolds with non-positive sectional curvature are considered, which exclude many important applications such as optimization problems on the Stiefel manifold.
Very recently, Chen et al. proposed the proximal gradient method (ManPG) on the Stiefel manifold~\cite{chen2020proximal} with proved convergence. More specifically, it aims at solving the following problem:
\begin{equation}
	\label{eqn:manpg-problem}
	\min_{Q \in \cM}\ \left \{ f(Q) + \psi(Q) \right \},
\end{equation} 
where $\cM = \St{n, r}$ is the Stiefel manifold, $f$ is smooth with Lipschitz gradient, and $\psi$ is convex and Lipschitz.
In each step, the descent direction is determined by solving the subproblem
\begin{equation}
	\label{eqn:manpg-subproblem}
	V_k := \argmin_{V \in T_{Q_k} \cM} \left \{ \inner{\grad f(Q_k)}{V} + \frac{1}{2t}\norm{V}_F^2 + \psi(Q_k + V) \right \}
\end{equation} 
via the regularized semismooth Newton method~\cite{xiao2018regularized}. 
Besides, Huang and Wei extended an accelerated version of the proximal gradient method to manifolds~\cite{huang2019extending}. 
They also proposed a Riemannian proximal gradient method~\cite{huang2019riemannian} to solve~\eqref{eqn:manpg-problem} for general manifolds by replacing the term $\psi(Q_k+V)$ with $\psi(R_{Q_k}(V))$, where $R_{Q_k}$ is a retraction, and analyzed the iteration complexity for convex objectives under some assumptions. However, the direct application of the above three methods for solving \eqref{eqn:original-problem-0} has to deal with the subproblem:
\begin{equation*}
    V_k := \argmin_{V \in T_{Q_k} \cM} 
	\Big \{ \inner{\grad f(Q_k)}{V} + \frac{1}{2t}\norm{V}_F^2 + \underbrace{\psi(Q_k+V) + \delta_{\cC}(Q_k+V)}_{(\text{ or } \psi(R_{Q_k}(V)) + \delta_{\cC}(R_{Q_k}(V)))} \Big \},
\end{equation*}
where $\delta_{\cC}$ is the indicator function of the feasible set corresponding to the inequality constraints in \eqref{eqn:original-problem-0}. In general, the above problem is difficult due to the presence of two nonsmooth terms.

It is worth mentioning that \eqref{eqn:manpg-problem} may be solved in a more efficient way when $\cM$ has specific structures, e.g., $\cM=\cM_1\times\cM_2$ where $\cM_1$ and $\cM_2$ are two manifolds.
Chen et al.~\cite{chen2019alternating} proposed an alternating manifold proximal gradient algorithm to solve \eqref{eqn:manpg-problem}, which alternatively updates the variables in Gauss-Seidel fashion based on the linearized formulation~\eqref{eqn:manpg-subproblem}. It is empirically observed that this effective alternating strategy leads to better performance than ManPG~\cite{chen2019alternating}. Exploring the special structure of the manifold is a promising direction, and we leave it as our future work.

\subsubsection{Operator Splitting Methods}
Operator splitting methods on manifolds split \eqref{eqn:manpg-problem} into several terms, each of which is easier to solve. For example, the manifold ADMM proposed in~\cite{kovnatsky2016madmm} rewrites~\eqref{eqn:manpg-problem} to
\begin{equation}
	\label{eqn:madmm-problem}
	\min_{Q, Z}\ \left \{ f(Q) + \psi(Z) \right \} \quad \mathrm{s.t.} \quad Q = Z,\ Q \in \cM.
\end{equation} 
Then, a two-block ADMM is used to solve it, which has the following update rules:
\[ \begin{aligned}
	Q_{k + 1} & := \argmin_{Q \in \cM} \left \{ f(Q) + \frac{\rho}{2} \norm{Q - Z_k + U_k}_F^2 \right \} ,\\  
	Z_{k + 1} &:= \argmin_{Z} \left \{ \psi(Z) + \frac{\rho}{2} \norm{Q_{k+1} - Z + U_k}_F^2 \right \}, \\
	U_{k+1} & := U_k + Q_{k+1} -Z_{k+1}.
\end{aligned} \]
The $Q$-update requires smooth manifold optimization algorithms and the $Z$-update is the proximal mapping of $\psi$. Besides, an inexact ALM framework to solve~\eqref{eqn:madmm-problem} with some convergence results is considered in~\cite{kangkang2019inexact}.

When the manifold $\cM$ can be  embedded in a Euclidean space, classical nonsmooth nonconvex constrained optimization algorithms can also be explored. 
Lai et al. proposed a splitting method for orthogonality constrained problems (SOC)~\cite{lai2014splitting}, which reformulates~\eqref{eqn:manpg-problem} into
\begin{equation}
	\label{eqn:soc-problem-0}
	\min_{Q, P, R} \left \{ f(P) + \psi(R) \right \} \quad \mathrm{s.t.} \quad P = R,\ Q = P,\ Q^\trans Q = I_r.
\end{equation} 
A three-block ADMM is then used to solve the above problem: 
\[ \begin{aligned}
	P_{k + 1} & := \argmin_{P \in \R^{n\times r}} \left \{ f(P) + \frac{\rho}{2} \norm{P - R_k + \Lambda_k}_F^2
	+ \frac{\rho}{2} \norm{P - Q_k + \Gamma_k}_F^2 \right \}
	, \\
	R_{k + 1} & := \argmin_{R \in \R^{n\times r}} \left \{ \psi(R)
	+ \frac{\rho}{2} \norm{P_{k+1} - R + \Lambda_k}_F^2 \right \}, \\
	Q_{k + 1} & := \argmin_{Q \in \R^{n \times r}}\ \frac{\rho}{2} \norm{P_{k+1} - Q + \Gamma_k}_F^2 \quad \mathrm{s.t.} \quad Q^\trans Q = I_r, \\
	\Lambda_{k+1} & := \Lambda_k + P_{k+1} -R_{k+1}, \quad
	\Gamma_{k+1}  := \Gamma_k + P_{k+1} -Q_{k+1},
\end{aligned} \]
where the $R$-update can be solved by a proximal map, and the $Q$-update has the closed form solution, and the $P$-update can be solved using gradient based methods. 
Although these ADMM-type methods are simple, to the best of our knowledge, 
it is unclear whether they converge to a KKT point of~\eqref{eqn:soc-problem-0}.

Unlike ADMM, ALM usually has theoretical guarantees. In~\cite{chen2016augmented}, Chen et al. proposed the proximal alternating minimized augmented Lagrangian method (PAMAL), which solves the augmented Lagrangian subproblem by the proximal alternating minimization (PAM) scheme~\cite{attouch2010proximal}. In~\cite{zhu2017nonconvex}, the so-called EPALMAL is proposed, where the PALM~\cite{bolte2014proximal} is used for solving the subproblem. Although both PAMAL and EPALMAL have certain convergence guarantee, the analysis is not complete for solving~\eqref{eqn:original-problem-0}.

\subsection{Augmented Lagrangian Methods for Nonsmooth and Nonconvex Problems}
Here, we review some augmented Lagrangian methods in constrained optimization,
which has been studied for many decades~\cite{bertsekas2014constrained}.
We consider the following problem:
\begin{equation}
	\label{eqn:alm-problem}
	\min_{x \in \R^n}\ \{ f(x) + \Phi(x) \} \quad \mathrm{s.t.} \quad g(x) = 0,\ h(x) \leq 0,
\end{equation} 
where $f, g, h$ are smooth and $\Phi$ is lower semicontinuous. It is noted that the original problem~\eqref{eqn:original-problem-0} has the above form when the manifold $\cM$ can be written as
\begin{equation}
\cM = \{x : g_1(x) = 0,\ h_1(x)\leq0\},
\end{equation}
where $g_1$ and $h_1$ are parts of $g$ and $h$, respectively.
When $\Phi \equiv 0$ and the constraints can be divided into $g_1(x) = 0$, $g_2(x) = 0$, $h_1(x) \leq 0$ and $h_2(x) \leq 0$ such that the minimization problem is easier on $\{ x  : g_2(x) = 0,\ h_2(x) \leq 0 \}$, Anderani et al. proposed an Augmented Lagrangian (AL) method~\cite{andreani2008augmented} to solve \eqref{eqn:alm-problem}.
It is shown that any feasible limit point generated by the algorithm is a KKT point under the constant positive linear dependence (CPLD) condition~\cite{qi2000constant}, which is weaker than the linear independence constraint qualification (LICQ) condition. 
However, this method cannot guarantee that any limit point is feasible when the penalty parameter, 
i.e., the coefficient of the quadratic term in the augmented Lagrangian function, tends to infinity as the iteration proceeds (see Theorem 4.1(i) in \cite{andreani2008augmented}).
This infeasiblity phenomenon also exists in other literature such as~\cite{curtis2015adaptive}.

To alleviate this issue, another AL method is proposed in~\cite{lu2012spca}, where two nonmonotone proximal methods are applied to solve the subproblem.
They consider the problem \eqref{eqn:alm-problem} with the additional assumption that $\Phi$ is a convex function.
A feasible point is assumed to be known, and is used to guarantee that the augmented Lagrangian function is uniformly bounded from above at points generated in subproblems. 
Besides, the method in~\cite{lu2012spca} modifies the update rule of the penalty parameter to ensure that the penalty grows faster than Lagrangian multipliers (see, e.g., \eqref{eqn:alm-update-penalty} in Algorithm~\ref{alg:alm-outer}).
Using these two properties, the convergence result that any limit point is a KKT point under Robinson’s constraint qualification is established. Recently, Chen et al.~\cite{chen2017augmented} proposed an AL method to solve~\eqref{eqn:alm-problem} with $\Phi$ possibly being a nonconvex non-Lipschitz function. Under a weak constraint qualification called the relaxed constant positive linear dependence (RCPLD) condition~\cite{andreani2012relaxed}, they provided a global convergence result.

\subsection{Semismooth Newton Methods} \label{sec:background-semismooth}
The subproblem in ALM generally requires tackling a nonsmooth equation, which usually can be efficiently solved by the semismooth Newton method~\cite{mifflin1977semismooth,qi1993nonsmooth,sun2002semismooth}. 
Under suitable assumptions, the semismooth Newton method has the local superlinear convergence rate.
Recently, this method is generalized to solving nonsmooth equations~\cite{de2018newton} on manifolds based on the Clarke generalized covariant derivatives~\cite{ghahraei2017pseudo,rampazzo2007commutators}. 
Below we introduce several definitions for locally Lipschitz vector fields on manifolds.
\begin{definition}[\cite{de2018newton}]
	\label{def:lipschitz}
	Let $L > 0$ and $\Omega \subset \cM$ be given.
	We say a vector field $X: \cM \to T\cM$ on a manifold $\cM$ is 
	\emph{$L$-Lipschitz in $\Omega$} if for each $x, y \in \Omega$, and each geodesic $\gamma$ joining $x, y$, it holds 
	\[ \norm{P_\gamma^{0\to 1} X(x) - X(y)} \leq L \ell(\gamma),  \]
	and we say that $X$ is \emph{locally Lipschitz at $p \in \cM$} if there exist a neighborhood $U_p \ni p$ and a constant $L_p > 0$ such that $X$ is $L_p$-Lipschitz in $U_p$.
	If $X$ is locally Lipschitz at every $p \in \cM$, we say that $X$ is \emph{locally Lipschitz on $\cM$}.
\end{definition}
Since a locally Lipschitz vector field $X$ on manifolds is differentiable almost everywhere~\cite{de2018newton}, we denote $\cD_X$ as the set of its differentiable points and define the \emph{Clarke generalized covariant derivative} as follows:
\begin{definition}[\cite{de2018newton,ghahraei2017pseudo}]
	\label{def:clarke-generalized-covariant}
	Let $X$ be a locally Lipschitz vector field on $\cM$. The \emph{B-derivative} is a set-valued map $\partial_B X: \cM \rightrightarrows \cL(T\cM)$ with 
	\begin{equation}
	\begin{aligned}
	 \partial_B X(p)
	:=  \left\{ H\in {\mathcal{L}}(T_p\cM):~ \exists\, \{p_k\}\subset {\cal D}_X,~ \lim_{k\to +\infty}p_k =p,~ H = \lim_{k\rightarrow +\infty} \nabla X(p_k)  \right\},
	\end{aligned}
	\end{equation}
	where the last limit means that $\norm{\nabla X(p_k)[P_{pp_k}v] - P_{pp_k}Hv} \to 0$ for all $v \in T_p \cM$.
	The \emph{Clarke generalized covariant derivative} is a set-valued map~$\partial X: \cM \rightrightarrows \cL(T\cM)$ such that $\partial X(p)$ is the convex hull of $\partial_B X(p)$.
\end{definition}
The above definitions are consistent with those in $\R^n$ as the tangent spaces can be identified with $\R^n$ so $P_{p_kp}$ and $P_{pp_k}$ are the identical mappings. The properties of the Clarke generalized covariant derivative are similar to those in Euclidean spaces. For example, $\partial_B X(p)$ and $\partial X(p)$ are non-empty compact sets and the maps $\partial_B X, \partial X$ are locally bounded and upper semicontinuous~\cite[Proposition 3.1]{de2018newton}. Having introduced these notions, the Newton method for a locally Lipschitz vector field $X$ on $\cM$ is~\cite{de2018newton}:
\begin{equation}\label{Newton:iter}
p_{k + 1} := \exp_{p_k}(-H_k^{-1} X(p_k)), \quad
\text{where } H_k \in \partial X(p_k).
\end{equation}
To obtain the convergence rate, we have to impose the semismooth property of the vector field $X$.
\begin{definition}[\cite{de2018newton}]
	\label{def:semismooth-clarke}
	Let $X$ be a locally Lipschitz vector field on $\cM$. 
	We say $X$ is \emph{semismooth with order $\mu$ at $p \in \cM$} if 
	it is directionally differentiable in a neighborhood $U$ of $p$, and
	 there exist $C >0, \delta > 0$ such that %
	\begin{equation}
		\label{eqn:semismooth-def}
		\norm{X(p) - P_{qp}[X(q) + H_q \exp^{-1}_qp] }\leq C d(p, q)^{1 + \mu},\ \forall\, q \in B_\delta(p), H_q \in \partial X(q),
	\end{equation}
	where $B_\delta(p) := \{ q \in \cM : d(p, q) < \delta \}$ and $\exp^{-1}_q p$ is the inverse of the exponential map\footnote{This is well-defined in a small neighborhood of $q$~\cite[Proposition 3.2.9]{carmo1992riemannian}.}.
\end{definition}
In~\cite{de2018newton}, it is shown that if $X$ is locally Lipschitz, $X(p_*) = 0$, all elements in $\partial X(p_*)$ are nonsingular and $X$ is semismooth at $p_*$ with order $\mu$, then the Newton iteration~\eqref{Newton:iter} has the local convergence rate $1 + \mu$. This result is similar to that in Euclidean spaces~\cite{mifflin1977semismooth,qi1993nonsmooth,sun2002semismooth}.

\section{An Augmented Lagrangian Framework} \label{sec:alm}
In this section, we present an augmented Lagrangian method to solve~\eqref{eqn:original-problem-0} and establish its convergence result. The method for solving the subproblem is deferred to the next section.
\subsection{Algorithm}
Recall that we consider the following optimization problem:
\begin{equation}
	\label{eqn:original-problem}
	\begin{aligned}
		\min_{x}\  &\{f(x) + \psi(h_1(x)) \}, \quad 
		\mathrm{s.t.}\  x \in \cM,\ h_2(x) \leq 0.
	\end{aligned}
\end{equation}

Throughout this paper, we always make the following assumptions:
\begin{assumption}
	$\cM$ is a complete smooth Riemannian manifold.
\end{assumption}
\begin{assumption}
	$f : \cM \to \R$, $h_1: \cM \to \R^m$, $h_2: \cM \to \R^{q}$ are continuously differentiable, and~$\psi: \R^m \to \R$ is a convex function.
	$f(x) + \psi(y)$ is bounded below for $(x, y) \in \cM \times \R^m$.
\end{assumption}

Note that we can reformulate \eqref{eqn:original-problem} to the following problem:
\begin{equation}
	\label{eqn:reformulated-problem}
	\begin{aligned}
		\min_{x, y, z}\  &\{f(x) + \psi(y) \}, \quad
		\mathrm{s.t.}\  x \in \cM, \
		 y = h_1(x), \
		 z = h_2(x),\ z \leq 0.
	\end{aligned}
\end{equation}
The augmented Lagrangian function $L_\sigma: \cM \times \R^m \times \R_-^q \times \R^m \times \R^q \to \R$ of \eqref{eqn:reformulated-problem} is given by
\begin{equation}
	\begin{aligned} 
\peq	L_\sigma(x, y, z, \lambda, \gamma)  
= f(x) + \psi(y) 
+ \frac{\sigma}{2} \norm{h_1(x) - y + \frac{\lambda}{\sigma}}^2_2 
+ \frac{\sigma}{2}\norm{h_2(x) - z + \frac{\gamma}{\sigma}}_2^2 
- \frac{\norm{\lambda}_2^2 + \norm{\gamma}_2^2}{2\sigma}.
	\end{aligned}
	\label{eqn:L_sigma}
\end{equation}
We note that simultaneously minimizing $L_\sigma$ with respect to $x, y, z$ is equivalent to
\begin{equation}
	\label{eqn:minimizing_L-x}
	 \min_{x \in \cM} \left \{
    f(x) + \psi^\sigma\left (h_1(x) + \frac{\lambda}{\sigma}\right) + \delta_{\R^q_-}^\sigma\left ( h_2(x) + \frac{\gamma}{\sigma} \right)
	\right \}
	,
\end{equation}
where $\psi^\sigma$, $\delta_{\R^q_-}^\sigma$ are the \emph{Moreau-Yosida regularization} of $\psi$, $\delta_{\R^q_-}$, respectively. 
More specifically, it holds that
\begin{align}
    \psi^\sigma(x) &:= \min_{y \in \R^m}  \left \{ \psi(y)  
    + \frac{\sigma}{2} \norm{x - y}^2_2 \right \}, \label{prob:y}\\
    \delta_{\R_-^q}^\sigma(x) &:= \min_{z \in \R^q} \left \{ \delta_{\R_-^q}(z) + \frac{\sigma}{2}\norm{x - z}_2^2 \right \}, \label{prob:z}
\end{align}
where $\delta_{\R_-^q}$ is the indicator function of $\R_-^q$. 
Since the above two functions are crucial in developing our algorithm, we present some important properties.
\begin{proposition}[Theorem 4.1.4 in \cite{hiriart1993convex}]
	\label{prop:moreau-yosida}
	Let $\sigma > 0$ and $f: \R^n \to \R \cup \{ +\infty \}$ be a closed proper convex function, then the Moreau-Yosida regularization $f^\sigma$ is continuously differentiable, and its gradient is 
	\begin{equation}
		\label{eqn:moreau-yosida-gradient}
		\nabla f^\sigma(x) = \frac{1}{\sigma}(x - \prox_{f/\sigma}(x)),
	\end{equation}
	where $\prox_{f / \sigma}$ is the \emph{proximal map} of the function $f / \sigma$:
	 \begin{equation}
		\prox_{f / \sigma} (x) := \argmin_{y} \left \{ f(y) + \frac{\sigma}{2}\norm{x-y}_2^2 \right \}.
	\end{equation}
\end{proposition}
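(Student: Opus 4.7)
The plan is to verify the classical Moreau--Yosida identity by first establishing that the proximal mapping is well-defined and $1$-Lipschitz, and then using a sandwich argument on $f^\sigma$ to extract the gradient formula.

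First, I would check that $\prox_{f/\sigma}(x)$ is well-defined for every $x \in \R^n$. The function $\Phi_x(y) := f(y) + \frac{\sigma}{2}\|x-y\|_2^2$ is $\sigma$-strongly convex, proper, and lower semicontinuous (the first two because $f$ is proper convex; strong convexity and coercivity come from the quadratic term). A standard argument then gives a unique minimizer $p(x) := \prox_{f/\sigma}(x)$, characterized by the first-order condition $\sigma(x - p(x)) \in \partial f(p(x))$.

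Next, I would prove that $p:\R^n\to\R^n$ is $1$-Lipschitz. For $x,x'\in\R^n$ with $p=p(x)$ and $p'=p(x')$, monotonicity of $\partial f$ applied to $\sigma(x-p)\in\partial f(p)$ and $\sigma(x'-p')\in\partial f(p')$ yields $\langle (x-p)-(x'-p'),\,p-p'\rangle \geq 0$. Rearranging and using Cauchy--Schwarz gives $\|p(x)-p(x')\|_2 \leq \|x-x'\|_2$, so $p$ is continuous.

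For the gradient formula, I would bound $f^\sigma(x+h)-f^\sigma(x)$ from both sides. The upper bound comes from plugging $y=p(x)$ into the definition of $f^\sigma(x+h)$:
\[
f^\sigma(x+h) \;\leq\; f(p(x)) + \tfrac{\sigma}{2}\|x+h - p(x)\|_2^2 \;=\; f^\sigma(x) + \sigma\langle x-p(x),\,h\rangle + \tfrac{\sigma}{2}\|h\|_2^2.
\]
Symmetrically, using $y=p(x+h)$ in the definition of $f^\sigma(x)$ and rearranging gives
\[
f^\sigma(x+h) \;\geq\; f^\sigma(x) + \sigma\langle x+h - p(x+h),\,h\rangle - \tfrac{\sigma}{2}\|h\|_2^2.
\]
Adding and subtracting $\sigma\langle x-p(x),h\rangle$ to the lower bound, the residual is $\sigma\langle p(x)-p(x+h)+h,\,h\rangle$, which is $o(\|h\|_2)$ by the continuity of $p$ established in the previous step. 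Both bounds together show $f^\sigma$ is Fréchet differentiable at $x$ with $\nabla f^\sigma(x) = \sigma(x-p(x))$. Finally, continuous differentiability is immediate because $x\mapsto \sigma(x-p(x))$ is continuous (indeed, Lipschitz) thanks to the $1$-Lipschitz property of $p$.

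The main delicate point is the lower bound, where the error term must be controlled to be $o(\|h\|_2)$; this is exactly where the Lipschitz continuity of the proximal map enters and makes the argument tight. Everything else reduces to convex-analytic manipulations of the optimality condition $\sigma(x-p(x))\in\partial f(p(x))$.
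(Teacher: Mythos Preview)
The paper does not prove this proposition; it simply cites Theorem~4.1.4 in Hiriart-Urruty and Lemar\'echal. Your argument is the standard one and is correct: well-posedness of the proximal map via strong convexity, firm nonexpansiveness from monotonicity of $\partial f$, and the sandwich estimate for the Moreau envelope. One small remark on the lower-bound residual: you actually get $O(\|h\|_2^2)$ directly from the $1$-Lipschitz bound on $p$, not merely $o(\|h\|_2)$ from continuity, so the estimate is even cleaner than you indicate.

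One point worth flagging: your computation correctly yields $\nabla f^\sigma(x) = \sigma\bigl(x - \prox_{f/\sigma}(x)\bigr)$, whereas the proposition as stated in the paper has the factor $\tfrac{1}{\sigma}$. Given the paper's definition $f^\sigma(x) = \min_y \{ f(y) + \tfrac{\sigma}{2}\|x-y\|_2^2 \}$, your formula is the correct one, and indeed the paper uses it in that form in the proof of Theorem~\ref{thm:kkt} (the displayed identities for $y_k$ and $z_k$ are consistent with $\nabla f^\sigma(x) = \sigma(x - \prox_{f/\sigma}(x))$, not with $\tfrac{1}{\sigma}$). So the discrepancy is a typo in the stated proposition, not an error in your proof.
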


The minimization problems \eqref{prob:y} and \eqref{prob:z} are related to finding the proximal maps $\prox_{\psi / \sigma}$ and $\prox_{\delta_{\R_-^q}}$, which can be easily solved in many cases. 
In addition, since $\delta^\sigma_{\R^q_-}$ and $\psi^\sigma$ are continuously differentiable, \eqref{eqn:minimizing_L-x} is a smooth optimization problem on manifolds. 
These observations suggest the following augmented Lagrangian method, whose framework is similar to \cite{chen2017augmented,lu2012spca}.
\begin{alg}[An augmented Lagrangian method for solving \eqref{eqn:reformulated-problem}]
	\label{alg:alm-outer}
	Choose initial values $x_0 \in \cM$, $\gamma_0 \in \R^{q}_+$, $\lambda_0 \in \R^m$, $\sigma_0 > 0$, $\alpha, \tau \in (0, 1)$, $\rho > 1$ 
	and a sequence $\{ \varepsilon_k \} \subseteq \R_+$ converging to $0$.
	Let $y_0 = \prox_{\psi/\sigma_0} ( h_1(x_0) + \lambda_0 / \sigma_0)$, $z_0 = \Pi_{\R_-^q} ( h_2(x_0) + \gamma_0 / \sigma_0)$, where $\Pi_{\R_-^q}$ is the projection onto $\R_-^q$. %
	Choose a feasible point $x_\feas$ and a constant $\Phi$ such that
	\begin{equation}
		\label{eqn:initial-maximal}
		\Phi \geq \max\{  f(x_\feas) + \psi(h_1(x_\feas)),\ L_{\sigma_0}(x_0, y_0, z_0, \lambda_0, \gamma_0)  \}.
	\end{equation}
	Our algorithm repeats the following steps for $k = 1, 2, \dots$

    \begin{enumerate}[label=(\roman*)]
	\item
	Find $x_k\in\cM$ such that
	\begin{equation}
		\label{eqn:subproblem-stopping-rule}
		\norm{\grad L_k (x_k)} < \varepsilon_k, \quad L_k(x_k) \leq \Phi,
	\end{equation}
	where 
	\begin{equation}
	L_k(x) := f(x) 
	+ \psi^{\sigma_k} \left ( h_1(x) + \frac{\lambda_k}{\sigma_k}  \right )
	+ \delta_{\R_-^q}^{\sigma_k} \left ( h_2(x) + \frac{\gamma_k}{\sigma_k}  \right ).
	\label{eqn:Lk}
	\end{equation}
	\item Update $y$ and $z$ using
    \begin{align}
        \label{eqn:subproblem-y}
        y_k &= \prox_{\psi/\sigma_k}\left ( h_1(x_k) + \frac{\lambda_k}{\sigma_k}  \right ),\\
        \label{eqn:subproblem-z}
        z_k &= \Pi_{\R_-^q} \left ( h_2(x_k) +  \frac{\gamma_k}{ \sigma_k} \right).
    \end{align}
        
	\item %
	Update the multipliers:
    \[ \begin{aligned}
        \lambda_{k+1} &= \lambda_k + \sigma_k (h_1(x_k) - y_k), \quad 
        \gamma_{k+1} = \gamma_k +\sigma_k (h_2(x_k) - z_k).
    \end{aligned} \]

	\item
	Let 
        $     \delta_k = \max\left\{ \norm{h_1(x_k) - y_k}_2,\ \norm{h_2(x_k)-z_k}_2\right\}.
        $ %

		 If $\delta_k \leq \tau \delta_{k-1}$, then $\sigma_{k+1} = \sigma_k$.
		 Otherwise, set
         \begin{equation}
            \label{eqn:alm-update-penalty}
			\sigma_{k+1} = \max \left \{ \rho\sigma_k,\ \norm{\lambda_{k+1}}_2^{1+\alpha},\ \norm{\gamma_{k+1}}_2^{1+\alpha} \right \}. 
		 \end{equation}
	\end{enumerate}
\end{alg}

\subsection{Convergence Analysis}
In this part, motivated by the proof in~\cite{chen2017augmented}, we first give the feasibility result of Algorithm~\ref{alg:alm-outer}.
\begin{theorem}
	\label{thm:feasibility}
	Let $\{ (x_k, y_k, z_k) \}$ be the sequence generated by Algorithm~\ref{alg:alm-outer}.
	Then, we have
	\[ \lim_{k\to \infty} \left( \norm{h_1(x_k) - y_k}_2 + \norm{h_2(x_k) - z_k}_2 \right) = 0. \]
	Consequently, if $x_* \in \cM$ is an accumulation point of $\{ x_k \}$, then $(x_*, h_1(x_*), h_2(x_*))$ is a feasible accumulation point of $\left\{ (x_k, y_k, z_k) \right\}$. %
	Moreover, $\{ x_k \}$ always contains an accumulation point if $\cM$ is compact.
\end{theorem}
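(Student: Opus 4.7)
The plan is to show $\delta_k:=\max\{\|h_1(x_k)-y_k\|_2,\|h_2(x_k)-z_k\|_2\}\to 0$ by extracting an a priori bound on the multipliers from the stopping criterion $L_k(x_k)\leq\Phi$, and then performing a case split on whether the penalty sequence $\{\sigma_k\}$ stays bounded or diverges; the feasibility of accumulation points will follow from continuity of $h_1,h_2$ and closedness of $\R^q_-$.

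First I would unfold $L_k(x_k)$ using the Moreau--Yosida identity of Proposition~\ref{prop:moreau-yosida}. Since $y_k,z_k$ are by construction the proximal minimizers at $h_1(x_k)+\lambda_k/\sigma_k$ and $h_2(x_k)+\gamma_k/\sigma_k$, and $\delta_{\R^q_-}(z_k)=0$, this yields
\begin{equation*}
L_k(x_k)=f(x_k)+\psi(y_k)+\tfrac{\sigma_k}{2}\bigl\|h_1(x_k)-y_k+\tfrac{\lambda_k}{\sigma_k}\bigr\|_2^2+\tfrac{\sigma_k}{2}\bigl\|h_2(x_k)-z_k+\tfrac{\gamma_k}{\sigma_k}\bigr\|_2^2.
\end{equation*}
Noting that $\sigma_k(h_1(x_k)-y_k)+\lambda_k=\lambda_{k+1}$ (and likewise for $\gamma_{k+1}$), and combining $L_k(x_k)\leq\Phi$ with a lower bound $f+\psi\geq M_0$ provided by Assumption~\ref{ass:problem}, the stopping criterion converts into the key estimate
\begin{equation*}
\|\lambda_{k+1}\|_2^2+\|\gamma_{k+1}\|_2^2\leq 2\sigma_k(\Phi-M_0).
\end{equation*}

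Next I would split on the behaviour of $\{\sigma_k\}$, which is non-decreasing (each update either leaves it unchanged or multiplies it by at least $\rho>1$). If $\{\sigma_k\}$ is bounded, then $\sigma_k$ strictly increases only finitely often, so $\sigma_{k+1}=\sigma_k$ for all large $k$; by the update rule this is equivalent to $\delta_k\leq\tau\delta_{k-1}$ eventually, and geometric decay forces $\delta_k\to 0$. If instead $\sigma_k\to\infty$, applying the key estimate at both $k$ and $k-1$ (with $\sigma_{k-1}\leq\sigma_k$) shows $\|\lambda_k\|_2,\|\lambda_{k+1}\|_2=O(\sqrt{\sigma_k})$ and likewise for $\gamma$, so
\begin{equation*}
\|h_1(x_k)-y_k\|_2=\frac{\|\lambda_{k+1}-\lambda_k\|_2}{\sigma_k}=O\bigl(1/\sqrt{\sigma_k}\bigr)\to 0,
\end{equation*}
and analogously for $\|h_2(x_k)-z_k\|_2$.

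Finally, along any convergent subsequence $x_{k_j}\to x_*$, continuity of $h_1,h_2$ together with $\delta_{k_j}\to 0$ gives $y_{k_j}\to h_1(x_*)$ and $z_{k_j}\to h_2(x_*)$; since each $z_{k_j}\in\R^q_-$ and this set is closed, $h_2(x_*)\leq 0$, so $(x_*,h_1(x_*),h_2(x_*))$ is feasible for~\eqref{eqn:reformulated-problem}. Existence of an accumulation point when $\cM$ is compact is immediate, as a compact smooth manifold is metrizable and hence sequentially compact. The main obstacle is the algebraic identity behind the key estimate: one has to spot that the Moreau--Yosida expansion of $L_k(x_k)$, combined with the multiplier-update identity $\sigma_k(h_1(x_k)-y_k)+\lambda_k=\lambda_{k+1}$, collapses the two squared terms to $\|\lambda_{k+1}\|_2^2/(2\sigma_k)+\|\gamma_{k+1}\|_2^2/(2\sigma_k)$. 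Once this is seen, the case analysis on $\{\sigma_k\}$ is routine.
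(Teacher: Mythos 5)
Your proof is correct, and for the crucial case $\sigma_k \to \infty$ it takes a genuinely different (and leaner) route than the paper. The paper also starts from $L_{\sigma_k}(x_k,y_k,z_k,\lambda_k,\gamma_k)\leq\Phi$, but it leaves the squared terms as $\|h_1(x_k)-y_k+\lambda_k/\sigma_k\|_2^2$ and $\|h_2(x_k)-z_k+\gamma_k/\sigma_k\|_2^2$ and is then forced to prove separately that $\|\lambda_k\|_2/\sigma_k\to 0$ and $\|\gamma_k\|_2/\sigma_k\to 0$; this is done by invoking the safeguard built into the penalty update \eqref{eqn:alm-update-penalty} (namely $\sigma_{k_i}\geq\|\lambda_{k_i}\|_2^{1+\alpha}$ at the update indices, giving the rate $\sigma_{k_i}^{-\alpha/(1+\alpha)}$) combined with a geometric-series bound on $\sum_j\delta_j$ between consecutive updates, where $\delta_j\leq\tau\delta_{j-1}$. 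You instead exploit the identity $h_1(x_k)-y_k+\lambda_k/\sigma_k=\lambda_{k+1}/\sigma_k$ (and its $\gamma$ analogue), which collapses the Moreau--Yosida expansion of $L_k(x_k)\leq\Phi$ into the a priori bound $\|\lambda_{k+1}\|_2^2+\|\gamma_{k+1}\|_2^2\leq 2\sigma_k(\Phi-M_0)$; together with monotonicity of $\sigma_k$ this yields $\|h_1(x_k)-y_k\|_2\leq(\|\lambda_{k+1}\|_2+\|\lambda_k\|_2)/\sigma_k=O(\sigma_k^{-1/2})$ directly, without ever using the $(1+\alpha)$-power safeguard or the inter-update recursion. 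What your route buys is a shorter argument, an explicit $O(\sigma_k^{-1/2})$ feasibility rate along the divergent-penalty branch, and the observation that feasibility already follows from the stopping rule $L_k(x_k)\leq\Phi$ and the lower bound on $f+\psi$ alone; what the paper's route exhibits is the role of the safeguard in \eqref{eqn:alm-update-penalty}, which your argument shows is not needed for this particular theorem (the bounded-$\sigma_k$ case and the feasibility/compactness conclusions are handled identically in both proofs). Two cosmetic points: the label \texttt{ass:problem} you cite does not exist in the paper (the relevant hypothesis is the second standing assumption in Section 3), and strictly speaking your estimate at index $k-1$ applies only from the first loop index onward, with the initial index covered by \eqref{eqn:initial-maximal}; neither affects the argument.
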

\begin{proof}
    First, consider the case where $\{ \sigma_k \}$ is bounded. There exists $k_1 \in \N$ such that $\delta_{k+1} \leq \tau \delta_k$ for any $k \geq k_1$.
    Then, $\lim_{k \to \infty} \delta_k = 0$ since $\tau < 1$. 
    By the definition of $\delta_k$, we know $\lim_{k \to \infty} \norm{h_1(x_k) - y_k}_2 = 0$ and $\lim_{k\to \infty} \norm{h_2(x_k) - z_k}_2 = 0$. 

	In the case where $\{ \sigma_k \}$ is unbounded. By the update rule, $\sigma_k$ is updated for infinitely many times. Then, we can find $k_1 < k_2 < \cdots$ such that
	\[ \begin{aligned}
			\sigma_k = \sigma_{k_i} &= \max\big\{ \rho \sigma_{k_{i-1}}, \norm{\lambda_{k_i}}_2^{1 + \alpha}, \norm{\gamma_{k_i}}_2^{1 + \alpha} \big\},\ \forall \, k_i \leq k < k_{i+1}.
	\end{aligned} \]

	From \eqref{eqn:subproblem-stopping-rule} and the definition of $L_k(x)$, we know that $L_{\sigma_{k}}(x_{k}, y_{k}, z_{k}, \lambda_{k}, \gamma_{k}) \leq \Phi$, where $L_{\sigma_k}$ is defined in \eqref{eqn:L_sigma}. Therefore, we have
	\begin{equation}
		\norm{h_1(x_{k}) - y_{k} + \frac{\lambda_{k}}{\sigma_{k}}}^2_2 
		+ \norm{h_2(x_{k}) - z_{k} + \frac{\gamma_{k}}{\sigma_{k}}}_2^2 
		\leq 2\frac{ \Phi - f(x_{k}) - \psi(y_{k}) }{\sigma_{k}} 
		+ \frac{\norm{\lambda_{k}}_2^2 + \norm{\gamma_{k}}_2^2}{\sigma_{k}^2}.
		\label{eqn:feasibility-bounded}
	\end{equation}

	Since $f + \psi$ is bounded below and $\sigma_{k} \to \infty$, then the first term of \eqref{eqn:feasibility-bounded} converges to $0$. Next, we show that the second term also converges to $0$.
	Notice that $\norm{\lambda_{k_i}}_2^{1+\alpha} \leq \sigma_{k_i}$ and $\sigma_{k_i} \to \infty$, 
	we have $\norm{\lambda_{k_i}}_2 / \sigma_{k_i} \leq \sigma_{k_i}^{-\frac{\alpha}{1 + \alpha}} \to 0$ as $i \to \infty$. A similar result also holds for $\norm{\gamma_{k_i}}_2 / \sigma_{k_i}$. 
	Then by \eqref{eqn:feasibility-bounded} and the definition of $\delta_{k_i}$, we have $\lim_{i \to \infty} \delta_{k_i} = 0$.
	By the update rule of $\lambda_k$, for $k_i < k < k_{i+1}$, it holds that 
	\[\begin{aligned}
			\frac{\norm{\lambda_{k}}_2}{\sigma_{k}} 
			&\leq \frac{\norm{\lambda_{k - 1}}_2}{\sigma_{k_i}} + \norm{h_1(x_{k - 1}) - y_{k - 1}}_2 
			\leq \frac{\norm{\lambda_{k - 1}}_2}{\sigma_{k_i}} + \delta_{k-1}.
	\end{aligned} \]

	Also note that $\delta_k \leq \tau\delta_{k-1}$ for all $k_i \leq k < k_{i+1} - 1$. Then, for $k_i < k < k_{i+1}$ the following inequality holds by induction
	\[\begin{aligned}
			\frac{\norm{\lambda_{k}}_2}{\sigma_{k}} 
			&\leq \frac{\norm{\lambda_{k_i}}_2}{\sigma_{k_i}} + \sum_{j=k_i}^{k-1} \delta_j
			\leq \frac{\norm{\lambda_{k_i}}_2}{\sigma_{k_i}} + \delta_{k_i} \sum_{j=0}^{k - k_i -1} \tau^{j}
			\leq \frac{\norm{\lambda_{k_i}}_2}{\sigma_{k_i}} + \frac{\delta_{k_i}}{1 - \tau}.
	\end{aligned} \]

	Thus, we have $\lim_{k \to \infty} \norm{\lambda_k}_2 / \sigma_k = 0$.
	Similarly, $\lim_{k \to \infty} \norm{\gamma_k}_2 / \sigma_k = 0$. Therefore, from \eqref{eqn:feasibility-bounded}, we conclude that $\lim_{k \to \infty} \delta_k = 0$. \hfill $\Box$
\end{proof}

Next, we consider the convergence result of Algorithm~\ref{alg:alm-outer} by introducing  an extension of the constraint qualifications on manifolds~\cite{yang2014optimality}.
Consider the problem \eqref{eqn:original-problem}, we define the \emph{active set} of a feasible point $x$ to be
$\cA(x) := \{ i \in [q] : [h_2(x)]_i = 0 \}$.
The following constraint qualification can be introduced~\cite{yang2014optimality}:
\begin{definition}[LICQ]
	\label{def:LICQ}
	We say that a feasible point $x \in \cM$ of \eqref{eqn:original-problem} satisfies the \emph{linear independence constraint qualification} (LICQ) if
    $\{ \grad\, [h_2(x)]_i : i \in \cA(x) \}$ are linearly independent in $T_x\cM$.
\end{definition}

The above definition is the same as that in the Euclidean case except that Euclidean gradients are replaced by Riemannian gradients. 
Indeed, there is a weaker constraint qualification:
\begin{definition}[CPLD]
	\label{def:CPLD}
	Let $x \in \cM$ be a feasible point of \eqref{eqn:original-problem} and define
    $S(x) := \{ \grad\, [h_2(x)]_i : i \in \cA(x) \}$.
	We say that $x$ satisfies the \emph{constant positive linear dependence constraint qualification} (CPLD) 
	if for each subset $S_0(x) \subseteq S(x)$ whose elements are linearly dependent with non-negative coefficients, $S_0$ remains linearly dependent in a neighborhood of $x$. 
\end{definition}

The first-order optimality condition of \eqref{eqn:original-problem} can be stated as follows using the LICQ condition, which is a direct consequence of \cite[Theorem 4.1]{yang2014optimality}.
\begin{corollary}
	\label{thm:first-order-optimality}
	Define the Lagrangian of \eqref{eqn:original-problem} as
	\[ \cL(x, \gamma) := f(x) + \psi(h_1(x)) + \gamma^\trans h_2(x), \quad x \in \cM,\ \gamma \geq 0.\]
    Suppose $x_*$ is a local minimum of \eqref{eqn:original-problem} and the LICQ holds at $x_*$, then there exists a multiplier $\gamma_*$ such that the following KKT conditions hold:
    \begin{subequations}
        \label{eqn:original-kkt}
        \begin{align}
        & 0 \in \partial_x \cL(x_*, \gamma_*), \label{eqn:original-kkt-x} \\
        & h_2(x_*) \in \R^q_-,\ \gamma_* \in \R^q_+,\ \gamma_*^\trans h_2(x_*) = 0.
        \end{align}
    \end{subequations}
\end{corollary}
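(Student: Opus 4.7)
The plan is to invoke Theorem 4.1 of \cite{yang2014optimality} directly on a repackaged form of \eqref{eqn:original-problem} and then unfold the resulting stationarity condition into the Lagrangian form claimed. First I would set $F(x) := f(x) + \psi(h_1(x))$ and read \eqref{eqn:original-problem} as $\min_{x\in\cM} F(x)$ subject to $h_2(x) \leq 0$. Since $f$ and $h_1$ are $C^1$ and $\psi$ is convex on all of $\R^m$ (hence locally Lipschitz), $F$ is locally Lipschitz on $\cM$, which places us squarely in the setting of \cite[Thm.~4.1]{yang2014optimality}.

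Under LICQ at the local minimum $x_*$, that theorem produces a multiplier $\gamma_* \in \R^q_+$ satisfying feasibility $h_2(x_*) \in \R^q_-$, complementary slackness $\gamma_*^\trans h_2(x_*) = 0$, and the stationarity inclusion
\[ 0 \in \partial F(x_*) + \sum_{i=1}^{q} \gamma_{*,i}\, \grad [h_2(x_*)]_i. \]
The remaining task is to identify the right-hand side with $\partial_x \cL(x_*, \gamma_*)$. I would apply the Clarke sum rule on manifolds: since the subdifferential is defined chart-wise via \eqref{eqn:subgradient-from-euclidean} and the Euclidean Clarke calculus (Proposition~2.3.3 of Clarke) gives equality of the sum rule whenever all but one summand is $C^1$, we have both
\[ \partial F(x_*) = \grad f(x_*) + \partial(\psi \circ h_1)(x_*) \]
and
\[ \partial_x \cL(x_*, \gamma_*) = \grad f(x_*) + \partial(\psi \circ h_1)(x_*) + \sum_{i=1}^{q}\gamma_{*,i}\, \grad [h_2(x_*)]_i. \]
Combining these two identities with the stationarity inclusion above yields $0 \in \partial_x \cL(x_*, \gamma_*)$, which is \eqref{eqn:original-kkt-x}, and the other conditions in \eqref{eqn:original-kkt} have already been secured.

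There is no real obstacle here: the corollary is a packaging statement rather than a new result, with existence of multipliers supplied by \cite[Thm.~4.1]{yang2014optimality} and the passage to the Lagrangian form reduced, via charts and \eqref{eqn:subgradient-from-euclidean}, to the well-known Euclidean Clarke sum rule. The only point deserving explicit mention in the final write-up is that the manifold Clarke calculus transports the Euclidean sum rule intact, so the two displayed identities above are legitimate.
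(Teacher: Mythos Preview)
Your proposal is correct and matches the paper's own treatment: the paper simply states that the corollary is ``a direct consequence of \cite[Theorem 4.1]{yang2014optimality}'' without further argument, and your write-up just fills in the routine unpacking (local Lipschitzness of $F=f+\psi\circ h_1$, the sum rule via charts) that the paper leaves implicit.
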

\begin{remark}
	 \label{remark:optimality}
The Lagrangian of the equivalent problem \eqref{eqn:reformulated-problem} is
\[ \tilde \cL(x, y, z, \lambda, \gamma) = f(x) + \psi(y) + \lambda^\trans(h_1(x) - y) + \gamma^\trans(h_2(x) - z). \]
Under the LICQ condition, a necessary optimality condition is the following KKT system~\cite{yang2014optimality}:
    \begin{subequations}
        \label{eqn:reformulated-kkt}
        \begin{align}
			 y = h_1(x),\ z = h_2(x), \label{eqn:reformulated-kkt-feas} \\
			 z \in \R^q_-,\ 
            \gamma \in \R^q_+,\ \gamma^\trans z = 0, \label{eqn:reformulated-kkt-z} \\
            0 \in \partial_y \tilde \cL(x, y, z, \lambda, \gamma), \label{eqn:reformulated-kkt-y} \\
            \mathrm{grad}_x\, \tilde \cL(x, y, z, \lambda, \gamma) = 0. \label{eqn:reformulated-kkt-x}
        \end{align}
	\end{subequations}
\end{remark}

Before presenting the relationship of these two optimality conditions, we give the following chain rule.
\begin{lemma}
	\label{remark:chain-rule}
	Let $p \in \cM$, and $h:\cM\to\R^m$ be a smooth map, $\psi:\R^m\to \R$ be a convex function. Then,
	\begin{equation} \label{eqn:riemannian-chain-rule}
	\partial (\psi \circ h)(p)
	= \left \{    \sum_{i=1}^m \alpha_i \grad\,[h(p)]_i  : \alpha \in \partial \psi(h(p))   \right \}. 
	\end{equation} 
\end{lemma}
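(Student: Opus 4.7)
The plan is to reduce to the Euclidean chain rule via a local chart and then translate back to the manifold using the formula~\eqref{eqn:subgradient-from-euclidean} already established in the paper. Concretely, fix a chart $(U,\varphi)$ containing $p$, and set $\hat h := h\circ\varphi^{-1}$ and $\widehat{\psi\circ h} := (\psi\circ h)\circ\varphi^{-1} = \psi\circ\hat h$. Since $\hat h$ is smooth and $\psi$ is convex (hence locally Lipschitz), the classical Euclidean chain rule for compositions of a convex function with a smooth map yields
\begin{equation*}
\partial\,\widehat{\psi\circ h}(\varphi(p)) = \nabla\hat h(\varphi(p))^{\trans}\,\partial\psi(\hat h(\varphi(p))) = \Bigl\{\sum_{i=1}^{m}\alpha_{i}\,\nabla\hat h_{i}(\varphi(p)) : \alpha \in \partial\psi(h(p))\Bigr\}.
\end{equation*}

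Next, I apply \eqref{eqn:subgradient-from-euclidean} to $f:=\psi\circ h$ to get
\begin{equation*}
\partial(\psi\circ h)(p) = (\dd\varphi|_{p})^{-1}\bigl[G_{\varphi(p)}^{-1}\partial\,\widehat{\psi\circ h}(\varphi(p))\bigr].
\end{equation*}
Combining the two displays reduces the claim to the identity
\begin{equation*}
(\dd\varphi|_{p})^{-1}\bigl[G_{\varphi(p)}^{-1}\nabla\hat h_{i}(\varphi(p))\bigr] \;=\; \grad\,[h(p)]_{i}, \qquad i\in[m].
\end{equation*}
This is exactly the coordinate expression of the Riemannian gradient of the scalar smooth function $[h]_{i}$: writing out Definition~\ref{def:riemannian-gradient} in the chart $(U,\varphi)$ and using that the metric matrix $G_{\varphi(p)}$ has entries $\langle(\dd\varphi|_{p})^{-1}e_{i},(\dd\varphi|_{p})^{-1}e_{j}\rangle$, the unique tangent vector representing $\dd[h(\cdot)]_{i}|_{p}$ via the Riesz representation theorem has precisely the coordinate form $G^{-1}\nabla\hat h_{i}$, pushed forward by $(\dd\varphi|_{p})^{-1}$. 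Substituting this into the bracketed set and using linearity of $(\dd\varphi|_{p})^{-1}\circ G_{\varphi(p)}^{-1}$ across the sum yields~\eqref{eqn:riemannian-chain-rule}.

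The only genuinely substantive step is the Euclidean chain rule for $\partial(\psi\circ\hat h)$; everything else is a bookkeeping translation between chart coordinates and intrinsic objects. The Euclidean statement is standard (e.g., Clarke's calculus, or Theorem~2.3.10 in Clarke's monograph), and it applies without any constraint qualification here because the inner map $\hat h$ is $C^{1}$ and the outer map $\psi$ is finite-valued convex; in that regularity regime the inclusion $\partial(\psi\circ\hat h) \subseteq \nabla\hat h^{\trans}\partial\psi(\hat h)$ is in fact an equality. The main place one must be careful is that \eqref{eqn:subgradient-from-euclidean} is stated for the intrinsic Clarke subgradient of a locally Lipschitz function on $\cM$, so I need to observe that $\psi\circ h$ is locally Lipschitz on $\cM$ (immediate from local Lipschitzness of $\psi$ on $\R^{m}$ and smoothness of $h$) to legitimately invoke it. Once that is noted, the proof is essentially a one-line chart computation.
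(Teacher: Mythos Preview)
Your proposal is correct and follows essentially the same approach as the paper: both pull back to a chart, apply the Euclidean Clarke chain rule to $\psi\circ\hat h$ (the paper cites Clarke's Theorem~2.3.9, noting explicitly that the convex hull is redundant since $\partial\psi(y)$ is already compact convex), and then push forward via \eqref{eqn:subgradient-from-euclidean} together with the coordinate identity $(\dd\varphi|_p)^{-1}G^{-1}\nabla\hat h_i(\varphi(p))=\grad\,[h(p)]_i$ (the paper cites \cite[p.~46]{absil2009optimization} for this last fact, whereas you derive it directly from Definition~\ref{def:riemannian-gradient}).
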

\begin{proof}
	For any point $p\in\cM$ and a fixed chart $(U_p, \varphi)$ at $p$, the map $h\circ \varphi^{-1}:\varphi(U_p) \subset \R^n\to\R^m$ is differentiable at $p$, and hence the chain rule in Euclidean spaces (e.g., Theorem 2.3.9 in~\cite{clarke1990optimization}) implies
	\[ \partial (\psi \circ \hat h)(\varphi(p))
	=  \mathrm{co} \left \{    \sum_{i=1}^m \alpha_i \xi_i  : \alpha \in \partial \psi(y)   \right \} = \left \{    \sum_{i=1}^m \alpha_i \xi_i  : \alpha \in \partial \psi(y)   \right \}, \]
	where $\hat h := h \circ \varphi^{-1}$, $y := h(p)$, $\xi_i := \nabla \hat h_i(\varphi(p))$ and ``co'' denotes the convex hull, and the last equality is from the fact that $\partial \psi(y)$ is a compact convex set.
	Note that from \cite[p.~46]{absil2009optimization}, $(\dd \varphi|_{p})^{-1} G^{-1} \xi_i = \grad\,[h (p)]_i$, where $G$ is the metric matrix defined around~\eqref{eqn:subgradient-from-euclidean}.
	Combining with \eqref{eqn:subgradient-from-euclidean}, we know \eqref{eqn:riemannian-chain-rule} holds.
	\hfill $\Box$
\end{proof}
Using the chain rule, the following proposition shows that the two optimality conditions are equivalent. 
\begin{proposition}
	\label{prop:equiv-kkt}
	Conditions \eqref{eqn:original-kkt} and \eqref{eqn:reformulated-kkt} are equivalent: 
	\begin{enumerate}[(i)]
	    \item If $(x,\gamma)$ satisfies \eqref{eqn:original-kkt}, then there exist $y,z,\lambda$ such that $(x,y,z,\lambda,\gamma)$ satisfies \eqref{eqn:reformulated-kkt}; 
	    \item If $(x,y,z,\lambda,\gamma)$ satisfies \eqref{eqn:reformulated-kkt}, then $(x,\gamma)$ satisfies \eqref{eqn:original-kkt}.
	\end{enumerate}
\end{proposition}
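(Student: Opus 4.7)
The plan is to prove the two implications directly by exploiting the chain rule of Lemma~\ref{remark:chain-rule} together with the fact that $f$ and $\gamma^\trans h_2$ are smooth, so the Clarke subgradient of $\cL(\cdot, \gamma)$ with respect to $x$ decomposes as
\begin{equation*}
\partial_x \cL(x,\gamma) \;=\; \grad f(x) \;+\; \partial(\psi\circ h_1)(x) \;+\; \sum_{i=1}^q \gamma_i\,\grad\,[h_2(x)]_i .
\end{equation*}
Applying Lemma~\ref{remark:chain-rule} then gives the explicit description
\begin{equation*}
\partial_x \cL(x,\gamma) \;=\; \Bigl\{\grad f(x) + \sum_{i=1}^m \alpha_i \grad\,[h_1(x)]_i + \sum_{i=1}^q \gamma_i \grad\,[h_2(x)]_i : \alpha \in \partial \psi(h_1(x))\Bigr\}.
\end{equation*}
Once this representation is in place, both directions become bookkeeping.

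For part (i), I would start from $(x,\gamma)$ satisfying \eqref{eqn:original-kkt} and set $y = h_1(x)$, $z = h_2(x)$, which immediately yields \eqref{eqn:reformulated-kkt-feas} and, combined with the original complementarity, also \eqref{eqn:reformulated-kkt-z}. The condition \eqref{eqn:original-kkt-x} together with the representation above provides some $\alpha_* \in \partial\psi(h_1(x))$ such that $\grad f(x) + \sum_i (\alpha_*)_i \grad\,[h_1(x)]_i + \sum_i \gamma_i \grad\,[h_2(x)]_i = 0$. I would then simply set $\lambda := \alpha_*$; this gives \eqref{eqn:reformulated-kkt-x} directly, and since $\partial_y \tilde \cL = \partial\psi(y) - \lambda$, the choice $\lambda \in \partial\psi(h_1(x)) = \partial\psi(y)$ gives \eqref{eqn:reformulated-kkt-y}.

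For part (ii), starting from $(x,y,z,\lambda,\gamma)$ satisfying \eqref{eqn:reformulated-kkt}, the feasibility \eqref{eqn:reformulated-kkt-feas} lets me identify $y$ with $h_1(x)$ and $z$ with $h_2(x)$, so that \eqref{eqn:reformulated-kkt-z} becomes the complementarity and feasibility clauses of \eqref{eqn:original-kkt}. From \eqref{eqn:reformulated-kkt-y} I read off $\lambda \in \partial\psi(h_1(x))$, so by Lemma~\ref{remark:chain-rule} the vector $\sum_i \lambda_i \grad\,[h_1(x)]_i$ lies in $\partial(\psi\circ h_1)(x)$. Substituting this into the Riemannian-gradient identity \eqref{eqn:reformulated-kkt-x} and using the decomposition of $\partial_x \cL$ above yields $0 \in \partial_x \cL(x,\gamma)$, which is \eqref{eqn:original-kkt-x}.

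The main obstacle is justifying the sum-rule decomposition $\partial_x\cL = \grad f + \partial(\psi\circ h_1) + \grad(\gamma^\trans h_2)$ on the manifold, since the Clarke subgradient on $\cM$ is defined through charts. I would handle this by pulling back to a chart via the formula \eqref{eqn:subgradient-from-euclidean}: the functions $f\circ\varphi^{-1}$ and $(\gamma^\trans h_2)\circ\varphi^{-1}$ are continuously differentiable, so the Euclidean Clarke sum rule (Proposition~2.3.3 in~\cite{clarke1990optimization}) applies and gives equality, not merely inclusion; pushing the result back to $T_x\cM$ via $(\dd\varphi|_x)^{-1}G^{-1}$ and combining with Lemma~\ref{remark:chain-rule} produces the desired identity. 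Apart from this point, everything is a matching of terms between the two KKT systems.
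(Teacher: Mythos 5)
Your proposal is correct and takes essentially the same route as the paper's proof: both directions hinge on Lemma~\ref{remark:chain-rule}, with the same choices $y=h_1(x)$, $z=h_2(x)$, $\lambda=\alpha\in\partial\psi(h_1(x))$ in one direction and the observation that \eqref{eqn:reformulated-kkt-y} means $\lambda\in\partial\psi(y)$ so that $\sum_i\lambda_i\grad\,[h_1(x)]_i\in\partial(\psi\circ h_1)(x)$ in the other. The only difference is that you spell out, via the chart formula \eqref{eqn:subgradient-from-euclidean} and the Euclidean smooth-plus-nonsmooth sum rule, the exact decomposition of $\partial_x\cL(x,\gamma)$ that the paper uses implicitly; this is a harmless (and welcome) extra justification rather than a different argument.
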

\begin{proof}

	When the conditions in \eqref{eqn:original-kkt} hold, we set $y=h_1(x)$, $z=h_2(x)$, and choose $g \in \partial (\psi \circ h_1)(x)$ such that $\grad f(x) + g + \sum_{i=1}^q [\gamma]_i \grad\, [h_2(x)]_i = 0$.
	By \eqref{eqn:riemannian-chain-rule}, there exists $\alpha \in \partial \psi(y)$ such that $g = \sum_{i=1}^m \alpha_i \grad\,[h_1(x)]_i$.
	Therefore, \eqref{eqn:reformulated-kkt-x} holds by setting $\lambda=\alpha$.
	As \eqref{eqn:reformulated-kkt-y} is equivalent to $\lambda \in \partial \psi(y)$, it also holds by the choice of $\lambda$.
	Other conditions directly follow from \eqref{eqn:original-kkt}.

	Conversely, when the conditions in \eqref{eqn:reformulated-kkt} hold, \eqref{eqn:reformulated-kkt-y} and \eqref{eqn:riemannian-chain-rule} imply that $\lambda \in \partial \psi(y)$ and $\sum_{i=1}^m \lambda_i \grad\,[h_1(x)]_i \in \partial (\psi \circ h_1)(x)$. 
	Combining with \eqref{eqn:reformulated-kkt-x}, the condition \eqref{eqn:original-kkt-x} holds. \hfill $\Box$
\end{proof}

Finally, we show that Algorithm~\ref{alg:alm-outer} converges to a KKT point.

\begin{theorem}
    \label{thm:kkt}
    Suppose there exist $K \subseteq \N$ and $(x_*, y_*, z_*) \in \cM \times \R^m \times \R^q_-$ such that
    \[ \lim_{K \ni k \to \infty} \left ( d(x_k, x_*) + \norm{y_k - y_*}_2 + \norm{z_k - z_*}_2 \right ) = 0. \]
    If the CPLD condition holds at $x_*$, then there exist $K_0 \subseteq K$ and $\lambda_* \in \R^m, \gamma_* \in \R^q_+$ such that $\lim_{K_0 \ni k \to \infty} \lambda_{k+1} = \lambda_*$, and the KKT conditions \eqref{eqn:reformulated-kkt} hold at $(x_*, y_*, z_*, \lambda_*, \gamma_*)$. Moreover, when the LICQ holds at $x_*$, we can choose $\gamma_*$ such that $\lim_{K_0 \ni k \to \infty} \gamma_{k + 1} = \gamma_*$.
\end{theorem}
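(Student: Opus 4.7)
The plan is to pass to the limit in the stationarity condition \eqref{eqn:subproblem-stopping-rule} together with the inner optimality conditions generated by the proximal and projection steps. First, Theorem~\ref{thm:feasibility} combined with the hypothesis $(x_k,y_k,z_k) \to (x_*,y_*,z_*)$ along $K$ gives $y_* = h_1(x_*)$ and $z_* = h_2(x_*) \leq 0$, which already yields \eqref{eqn:reformulated-kkt-feas}. Writing out the optimality conditions for \eqref{eqn:subproblem-y} and \eqref{eqn:subproblem-z} and substituting the multiplier update rule, I obtain $\lambda_{k+1} \in \partial\psi(y_k)$ and $\gamma_{k+1} \in N_{\R^q_-}(z_k)$; the latter gives $\gamma_{k+1} \geq 0$, $[\gamma_{k+1}]_j = 0$ whenever $[z_k]_j < 0$, and $\gamma_{k+1}^\top z_k = 0$. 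Proposition~\ref{prop:moreau-yosida} applied to $\psi^{\sigma_k}$ and $\delta^{\sigma_k}_{\R_-^q}$ shows that their Euclidean gradients at the current arguments coincide with $\lambda_{k+1}$ and $\gamma_{k+1}$ respectively, so the Riemannian chain rule turns \eqref{eqn:subproblem-stopping-rule} into the key stationarity identity
\begin{equation*}
\grad f(x_k) + \sum_{i=1}^m [\lambda_{k+1}]_i \grad[h_1(x_k)]_i + \sum_{j=1}^q [\gamma_{k+1}]_j \grad[h_2(x_k)]_j = \xi_k,\qquad \norm{\xi_k} < \varepsilon_k.
\end{equation*}

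Since $\psi$ is convex on $\R^m$, $\partial\psi$ is locally bounded, so $\{\lambda_{k+1}\}_{k \in K}$ is bounded; passing to a subsequence $K_1 \subseteq K$ and using upper semicontinuity of $\partial\psi$, it converges to some $\lambda_* \in \partial\psi(y_*)$, which is precisely condition \eqref{eqn:reformulated-kkt-y}. For $j \notin \cA(x_*)$ one has $[z_*]_j = [h_2(x_*)]_j < 0$, hence $[z_k]_j < 0$ and $[\gamma_{k+1}]_j = 0$ for all large $k \in K_1$, so only the components of $\gamma_{k+1}$ indexed by $\cA(x_*)$ could potentially be unbounded.

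The main obstacle is controlling $\gamma_{k+1}|_{\cA(x_*)}$ under the weak CPLD condition. I plan to adopt a Carath\'eodory-type reduction in the spirit of \cite{andreani2008augmented}: for each $k$, replace $\gamma_{k+1}|_{\cA(x_*)}$ by a nonnegative $\tilde\gamma^{(k)}$ supported on some $J_k \subseteq \cA(x_*)$ such that $\{\grad[h_2(x_k)]_j : j \in J_k\}$ is linearly independent in $T_{x_k}\cM$ and $\sum_{j \in \cA(x_*)}[\gamma_{k+1}]_j\grad[h_2(x_k)]_j = \sum_{j \in J_k}[\tilde\gamma^{(k)}]_j\grad[h_2(x_k)]_j$. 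Passing to a subsequence $K_0 \subseteq K_1$ on which $J_k \equiv J$ (there are only finitely many candidates), suppose toward contradiction that $\|\tilde\gamma^{(k)}\| \to \infty$; normalizing by the norm and taking the limit yields a nonzero $\mu \geq 0$ supported on $J$ with $\sum_{j \in J}\mu_j\grad[h_2(x_*)]_j = 0$. By Definition~\ref{def:CPLD}, CPLD then forces $\{\grad[h_2(x_k)]_j : j \in J\}$ to remain linearly dependent for all $k \in K_0$ sufficiently large, contradicting the construction of $J_k$. Hence $\{\tilde\gamma^{(k)}\}$ is bounded and, after extraction, converges to some $\gamma_*|_J$ which we extend by zero on $[q]\setminus J$; taking limits in the key identity produces \eqref{eqn:reformulated-kkt-x}, while the nonnegativity and the zeros outside $\cA(x_*)$ give \eqref{eqn:reformulated-kkt-z}.

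Finally, under LICQ the full set $\{\grad[h_2(x_*)]_j : j \in \cA(x_*)\}$ is linearly independent, so the Carath\'eodory reduction is unnecessary: applying the same normalization argument directly to $\gamma_{k+1}|_{\cA(x_*)}$, any unbounded subsequence would again produce a nontrivial $\mu \geq 0$ annihilating $\{\grad[h_2(x_*)]_j\}_{j\in\cA(x_*)}$, contradicting LICQ. Thus $\{\gamma_{k+1}\}_{k \in K_1}$ is bounded and a subsequence $\gamma_{k+1} \to \gamma_*$ can be extracted along some $K_0 \subseteq K_1$, completing the proof.
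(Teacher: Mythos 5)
Your proposal is correct and follows essentially the same route as the paper's proof: feasibility from Theorem~\ref{thm:feasibility}, the stationarity identity via the Moreau--Yosida gradients and multiplier updates, boundedness of $\{\lambda_{k+1}\}$ from local boundedness of $\partial\psi$, the Carath\'eodory reduction on $\cA(x_*)$ with a normalization-and-contradiction argument against CPLD, and a direct normalization against LICQ for the boundedness of $\{\gamma_{k+1}\}$. The only cosmetic difference is that the paper normalizes by the maximal component $M_k$ of the reduced multipliers rather than by their norm, which is equivalent.
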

\begin{proof}
    First, from Theorem~\ref{thm:feasibility}, we obtain the feasibility condition \eqref{eqn:reformulated-kkt-feas}.
    From \eqref{eqn:subproblem-y}, \eqref{eqn:subproblem-z} and the property of Moreau-Yosida regularizations in Proposition~\ref{prop:moreau-yosida}, we know that
    \[ \begin{aligned} 
        y_k %
        = h_1(x_k) +  \frac{\lambda_k}{\sigma_k} - \frac{1}{\sigma_k} \nabla \psi^{\sigma_k}\left (h_1(x_k) +  \frac{\lambda_k}{\sigma_k}\right), \text{ and }
        z_k %
        = h_2(x_k) +  \frac{\gamma_k}{\sigma_k} - \frac{1}{\sigma_k} \nabla \delta_{\R_-^q}^{\sigma_k}\left (h_2(x_k) +  \frac{\gamma_k }{ \sigma_k}\right). 
	\end{aligned} \]
    From the definition of $\lambda_{k+1}$ and $\gamma_{k+1}$, combining the above two equations, we have
	\begin{equation}
		\label{eqn:grad_Lk}
		\begin{aligned}
			& \grad L_k(x_k)
			= \grad f(x_k) + \sum_{i=1}^m [\lambda_{k+1}]_i \grad\,[h_1(x_k)]_i 
			+ \sum_{i=1}^q [\gamma_{k+1}]_i \grad\, [h_2(x_k)]_i,  
		\end{aligned}
	\end{equation}
	where the chain rule for gradients on manifolds is used\footnote{
	Let $g: \cM \to \R^m$ and $f: \R^m \to \R$ be two smooth maps,
	we can use Definition~\ref{def:riemannian-gradient} to compute the gradient of $f \circ g$.
	For $p \in \cM$, $\xi_p \in T_p\cM$, choosing any curve $\gamma: (-1, 1) \to \cM$ such that $\gamma(0) = p$ and $\dot \gamma(0) = \xi_p$, we have
	$\inner{\xi_p}{\grad\,(f\circ g)(p)} = \xi_p (f \circ g) 
	= \left.\frac{\dd (f \circ g \circ \gamma)(t)}{\dd t}\right|_{t=0} 
	= \sum_{i=1}^m\alpha_i\left.\frac{\dd (g_i \circ \gamma)(t)}{\dd t}\right|_{t=0} 
	= \sum_{i=1}^m \alpha_i \cdot (\xi_p g_i) 
	= \sum_{i=1}^m \alpha_i \inner{\xi_p}{\grad\,[g(p)]_i}$, where $\alpha := \nabla f(q)$, $q := g(\gamma(t))$ and $g_i$ is the $i$-th component of $g$.
	Thus, $\grad\, (f \circ g)(p) = \sum_{i=1}^m \alpha_i \grad\, [g(p)]_i$.
	}.

    Since $y_k = \prox_{\psi/\sigma_k} (h_1(x_k) +  \lambda_k/\sigma_k)$, then
	\begin{equation}
		\label{eqn:grad_yk}
		0 \in \partial \psi(y_k) - (\sigma_k(h_1(x_k) - y_k) +  \lambda_k) = \partial \psi(y_k) - \lambda_{k+1}. 
	\end{equation}
	Note that $\lim_{K \ni k \to \infty} y_k = y_*$, then $\left\{ y_k \right\}_{k\in K}$ is bounded.
	By the locally boundedness of the subdifferential~\cite[Corollary 24.5.1]{rockafellar1970convex}, $\bigcup_{k \in K} \partial \psi(y_k)$ is also bounded.
	Therefore, $\{ \lambda_{k+1} \}_{k \in K}$ is bounded, so we can choose $K_1 \subseteq K$, $\lambda_* \in \R^m$ such that $\lim_{ K_1 \ni k \to \infty} \lambda_{k+1} = \lambda_*$, which implies $\lambda_*\in\partial\psi(y_*)$.

	On the other hand, since 
	\[ z_k = \Pi_{\R_-^q} \left ( h_2(x_k) +  \frac{\gamma_k}{\sigma_k} \right ) = \argmin_{z \leq 0} \norm{h_2(x_k) +  \frac{\gamma_k}{\sigma_k} - z}^2_2, \]
then from the optimality condition of the above problem, it holds that
	\begin{equation}
		\label{eqn:grad_zk}
		0 = [\sigma_k( h_2(x_k) - z_k) +  \gamma_k]^\trans z_k = \gamma_{k+1}^\trans z_k \quad \mathrm{and} \quad \gamma_{k+1} \geq 0.
	\end{equation}

	Let $\cA_{k} = \{ i \in [q] : [z_k]_i = 0 \}$, from \eqref{eqn:grad_zk}, we know $[\gamma_{k+1}]_i = 0$ for $i \notin \cA_k$. 
	Define $\cA_* := \{ i \in [q] : [z_*]_i = 0 \}$.
	Since $z_k \to z_*$, we also find that for sufficiently large $k$ and $i \notin \cA_*$, $[\gamma_{k+1}]_i = 0$.
	Then, when $k$ is large enough \eqref{eqn:grad_Lk} can be written as 
	\begin{equation}
		\label{eqn:alm-Astar-equ}
			\grad L_k(x_k)
			= \grad f(x_k) + \sum_{i=1}^m [\lambda_{k+1}]_i \grad\,[h_1(x_k)]_i 
			+ \sum_{i \in \cA_*} [\gamma_{k+1}]_i \grad\, [h_2(x_k)]_i.  
    \end{equation}
	Using the Carath\'eodory's theorem of cones~\cite{bertsekas1997nonlinear}, there exist $J_k \subseteq \cA_*$ and $[\hat \gamma_k]_j \geq 0$, where $j \in J_k$, such that $\left\{ \grad\, [h_2(x_k)]_j : j \in J_k \right\}$ are linearly independent and
	\begin{equation}
		\label{eqn:alm-J_k-equ}
			\grad L_k(x_k)
			= \grad f(x_k) + \sum_{i=1}^m [\lambda_{k+1}]_i \grad\,[h_1(x_k)]_i 
			+ \sum_{j \in J_k} [\hat\gamma_{k}]_j \grad\, [h_2(x_k)]_j,
	\end{equation}
	Since $J_k \subseteq [q]$ is a finite set, we can choose $J_*$ and $K_2 \subseteq K_1$ such that $J_k = J_*$ for all $k \in K_2$ and $|K_2| = \infty$.
	Define $M_k = \max\left\{ [\hat\gamma_{k}]_i : i \in J_* \right\}$ for $k \in K_2$.
	When $\{ M_k \}_{k \in K_2}$ is bounded, then we can find $K_3 \subseteq K_2$ and $\gamma_* \in \R^p_+$ such that $[\gamma_*]_i = 0$ for $i \notin J_*$ and $\lim_{ K_3 \ni k \to \infty} [\hat \gamma_k]_i = [\gamma_*]_i$ for $i \in J_*$.
	Using the fact that $\norm{\grad L_k(x_k)} \to 0$, \eqref{eqn:grad_Lk} implies \eqref{eqn:reformulated-kkt-x}.
	Since $J_* \subseteq \cA_*$ and $[\gamma_*]_i = 0$ for $i \notin J_*$, then \eqref{eqn:reformulated-kkt-z} follows.

	When $\{ M_k \}_{k\in K_2}$ is unbounded, we can find $K_4 \subseteq K_2$ and $\hat \gamma \in \R_+^q$ such that $\lim_{ K_4 \ni k \to \infty} [\hat \gamma_k]_i / M_k = [\hat \gamma]_i$ for $i \in J_*$ and $[\hat \gamma]_i = 0$ otherwise. By the definition of $M_k$, we have $\hat\gamma\neq 0$ and $\|\hat\gamma\|_\infty = 1$.
	Dividing \eqref{eqn:alm-J_k-equ} by $M_k$, using the boundedness of $\grad L_k(x_k)$, $\grad f(x_k)$ and $\grad\, [h_1(x_k)]_i$, letting $k \in K_4 \to \infty$ and noticing that $\lambda_{k + 1} \to \lambda_*$, $x_k \to x_*$, we can get
	\[  \sum_{j\in J_*} [\hat \gamma]_j \grad\, [h_2(x_*)]_j = 0.  \]
	Due to $\normz{\hat \gamma}_\infty = 1$ and $\hat \gamma \geq 0$, we know $\left\{ \grad\, [h_2(x_*)]_j : j \in J_* \right\}$ are linearly dependent with non-negative coefficients. However, they are linearly independent near $x_*$, which contradicts to the CPLD assumption. Thus, $M_k$ is always bounded.
	Moreover, when the LICQ holds at $x_*$ but $\{ \gamma_{k + 1} \}$ is unbounded, we can divide \eqref{eqn:alm-Astar-equ} by $\norm{\gamma_{k + 1}}_2$ and yield a contradiction to the LICQ condition. Therefore, $\{ \gamma_{k+1} \}$ is bounded and contains a convergent subsequence.
\hfill $\Box$
\end{proof}

\section{A Globalized Semismooth Newton Method}\label{sec:newton}
Recall that at every step we have to find an approximated stationary point of the following problem.
\begin{equation}\label{sub:obj}
\min_{x\in\cM}~ L_k(x) = f(x) 
+ \psi^{\sigma_k} \left ( h_1(x) + \frac{\lambda_k}{\sigma_k}  \right )
+ \delta_{\R_-^q}^{\sigma_k} \left ( h_2(x) + \frac{\gamma_k}{\sigma_k}  \right ).
\end{equation}
It is known that $L_k$ is continuously differentiable, and from the calculation around \eqref{eqn:grad_Lk}, the gradient of $L_k$ is
\begin{equation}\label{grad_newton}
\grad L_k(x) = \grad f(x)
+ \sum_{i=1}^m \alpha_i(x) \grad\,[h_1(x)]_i
+ \sum_{i=1}^q \beta_i(x) \grad\,[h_2(x)]_i,
\end{equation}
where $\alpha(x)$ and $\beta(x)$ are gradients of the Moreau-Yosida regularization $\psi^{\sigma_k}$ and $\delta^{\sigma_k}_{\R_-^q}$ at $h_1(x) + \lambda_k / \sigma_k$ and $h_2(x) + \gamma_k / \sigma_k$, respectively.
Note that $\alpha(x)$ and $\beta(x)$ are continuous but not differentiable, so the Newton method cannot be applied, and we need the semismooth Newton method.
For simplicity, we consider the following abstract problem
\begin{equation}\label{sub:obj_new}
\min\ \varphi(x),\ \mathrm{s.t.}\ x\in \cM,
\end{equation}
where $\varphi:\cM\to\R$ is continuously differentiable. 
We make  the following assumption. %
\begin{assumption}
	\label{assumption:vector-field-decomposition}
	The vector field $X := \grad \varphi$ is locally Lipschitz and can be factorized into $X = X_1 + X_2  + X_3$ such that  $X_1$ is smooth and $X_2, X_3$ are locally Lipschitz. 
\end{assumption}

Due to the existence of two nonsmooth terms in $X$, we need to extend the Clarke generalized covariant derivatives in the definition of the semismoothness in \eqref{eqn:semismooth-def} to a general set-valued map. 
\begin{definition}
	\label{def:semismooth-general}
	Let $X$ be a vector field on $\cM$ and 
	$\cK: \cM \rightrightarrows \cL(T\cM)$ be an upper semicontinuous\footnote{We say the map $\cK$ is \emph{upper semicontinuous} if for every $\varepsilon > 0$ there exists $ \delta>0$ such that for all $q \in B_\delta(p)$ we have $P_{qp}\cK(q) \subset \cK(p) + \hat B_\varepsilon(0)$, where $\hat B_\varepsilon(0) := \{ V \in \cL(T_p\cM) : \norm{V} < \varepsilon \}$.} 
	set-valued map such that $\cK(p)$ is a non-empty compact subset of $\cL(T_p\cM)$. %
	Suppose $X$ is Lipschitz and directionally differentiable in a neighborhood $U$ of $p \in \cM$.
	We say that $X$ is \emph{semismooth at $p$ with respect to $\cK$} if for every $\varepsilon > 0$, there exists $\delta > 0$ such that for every $q \in B_\delta(p)$ and $H_q \in \cK(q)$,
	\begin{equation}
	\label{eqn:semismooth-general-weak}
	\norm{X(p) - P_{qp}[X(q) + H_q \exp^{-1}_qp] }\leq \varepsilon d(p, q), %
	\end{equation}
	Moreover, we say that $X$ is \emph{semismooth at $p$ with order $\mu \in (0, 1]$ with respect to $\cK$} if there exist $C > 0, \delta > 0$ such that for every $q \in B_\delta(p)$ and $H_q \in \cK(q)$,
	\begin{equation}
	\label{eqn:semismooth-general}
	\norm{X(p) - P_{qp}[X(q) + H_q \exp^{-1}_qp] }\leq C d(p, q)^{1 + \mu}. %
	\end{equation}
	In particular, we say $X$ is \emph{strongly semismooth at $p$ with respect to $\cK$} if $\mu = 1$ in \eqref{eqn:semismooth-general}.
\end{definition}
\begin{remark}
	Definition~\ref{def:semismooth-clarke} and Definition~\ref{def:semismooth-general} coincide when $\cK = \partial X$~\cite[Proposition 3.1]{de2018newton}.
\end{remark}

When one of the nonsmooth terms vanishes, i.e., $X_2=0$ or $X_3 = 0$, we can choose $\cK = \partial_B X_3$ or $\cK = \partial_B X_2$. 
When both terms $X_2$ and $X_3$ are non-trivial, it is difficult to compute $\partial_B (X_2+X_3)$ in general as we only know $\partial_B(X_2+X_3)\subset\partial_B X_2 + \partial_B X_3$. 
In this case, we choose $\cK = \partial_B X_2 + \partial_B X_3$. 
The next proposition guarantees that such a choice does not affect the semismoothness of $X_2 + X_3$. 

\begin{proposition}
	\label{prop:semismooth-sum}
	Let $X, Y$ be vector fields on $\cM$ and $p \in \cM$.
	Suppose $X$ is semismooth at $p$ with order $\mu \in (0, 1]$ with respect to $\cK_X$,
	and $Y$ is semismooth at $p$ with order $\mu$ with respect to $\cK_Y$.
	Then, $X + Y$ is semismooth at $p$ with order $\mu$ with respect to $\cK_X + \cK_Y$. %
\end{proposition}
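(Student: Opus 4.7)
My plan is to reduce the claim to the triangle inequality after decomposing an arbitrary element of $\cK_X(q) + \cK_Y(q)$. First, I would observe that the sum $X + Y$ is Lipschitz and directionally differentiable in a neighborhood of $p$, since each of $X$ and $Y$ is (by the semismoothness hypothesis), and the limit in Definition~\ref{def:directional-differentiable} passes through finite sums because the parallel transport is linear. So the hypotheses of Definition~\ref{def:semismooth-general} that are required of $X + Y$ are immediate.

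Next, I would verify that the set-valued map $\cK_X + \cK_Y$ satisfies the regularity requirements of Definition~\ref{def:semismooth-general}: for every $p$, $\cK_X(p) + \cK_Y(p)$ is a non-empty compact subset of $\cL(T_p\cM)$ because it is the Minkowski sum of two non-empty compact sets in a finite-dimensional vector space; and upper semicontinuity follows from the fact that the Minkowski sum of two upper semicontinuous compact-valued maps is upper semicontinuous (by adding the $\varepsilon/2$ neighborhoods supplied by each).

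The core estimate is then a one-line triangle inequality. Given $H_q \in \cK_X(q) + \cK_Y(q)$, write $H_q = H_q^X + H_q^Y$ with $H_q^X \in \cK_X(q)$ and $H_q^Y \in \cK_Y(q)$. Using the linearity of $P_{qp}$ and of each $H_q^X, H_q^Y$, I split
\begin{align*}
& \norm{(X+Y)(p) - P_{qp}[(X+Y)(q) + H_q \exp_q^{-1}p]} \\
&\quad \leq \norm{X(p) - P_{qp}[X(q) + H_q^X \exp_q^{-1}p]} + \norm{Y(p) - P_{qp}[Y(q) + H_q^Y \exp_q^{-1}p]}.
\end{align*}
Applying the semismoothness of $X$ with respect to $\cK_X$ and the semismoothness of $Y$ with respect to $\cK_Y$ on a common ball $B_\delta(p)$ (take the minimum of the two radii), both terms are bounded by $C_X d(p,q)^{1+\mu}$ and $C_Y d(p,q)^{1+\mu}$ respectively, giving the desired bound with constant $C_X + C_Y$.

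I do not expect a real obstacle: the one mildly delicate point is making sure that the decomposition $H_q = H_q^X + H_q^Y$ is legitimate, which is simply the definition of Minkowski sum, and that the two applications of the semismoothness inequalities use the \emph{same} $q$ with compatible $H_q^X, H_q^Y$, which causes no issue because the estimates in Definition~\ref{def:semismooth-general} hold uniformly over all selections from $\cK_X(q)$ and $\cK_Y(q)$. An entirely analogous argument with the bound $\varepsilon d(p,q)$ in place of $C d(p,q)^{1+\mu}$ would also yield the (weaker) semismoothness statement \eqref{eqn:semismooth-general-weak}, if needed elsewhere.
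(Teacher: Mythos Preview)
Your proposal is correct and takes essentially the same approach as the paper: the paper's proof is precisely the triangle-inequality splitting you describe, bounding each piece by the semismoothness of $X$ and $Y$ separately on a common ball. You are more thorough than the paper in explicitly checking that $X+Y$ is Lipschitz and directionally differentiable near $p$ and that $\cK_X+\cK_Y$ is non-empty, compact-valued, and upper semicontinuous---the paper's proof silently takes these for granted---but the core argument is identical.
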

\begin{proof}
	By Definition~\ref{def:semismooth-general}, there exist $C > 0, \delta > 0$ such that \eqref{eqn:semismooth-general} holds for both $X$ and $Y$.
	Then, for every $q \in B_\delta(p)$, $H_X \in \cK_X(q)$, $H_Y \in \cK_Y(q)$, we have
	\[ \begin{aligned}
	\peq &\norm{(X + Y)(p) - P_{qp}[(X + Y)(q) + (H_X + H_Y) \exp^{-1}_qp] } \\
	 \leq& 
	\norm{X(p) - P_{qp}[X(q) + H_X\exp^{-1}_qp] }
	+ \norm{Y(p) - P_{qp}[Y(q) + H_Y\exp^{-1}_qp] } < 2 Cd(p, q)^{1 + \mu},
	\end{aligned} \] 
	where the first inequality follows from the linearity of $P_{qp}$ and the triangle inequality of the norm.
	Therefore, the vector field $X + Y$ is semismooth with order $\mu$ with respect to $\cK_X + \cK_Y$. \hfill $\Box$
\end{proof}
From the numerical perspective, we impose the assumption on $\cK$ in our following analysis.
\begin{assumption}
	\label{assumption:symmetric-op}
	For every $p \in \cM$, every element in $\cK(p)$ is self-adjoint (symmetric).
\end{assumption}
Since the Riemannian Hessian is self-adjoint and an element in $\partial X(p)$ is the limit of a sequence of self-adjoint operators, then the Clarke generalized covariant derivatives fulfill the above assumption.

Now, we are ready to present the semismooth Newton method for finding $p\in\cM$ such that $X(p)=0$.
\begin{alg}
    \label{alg:semismooth}

    Choose $p_0 \in \cM$, $\bar \nu \in (0, 1]$ and let $\{ \eta_k \} \subset \R_+$ be a sequence converging to $0$. Set $\mu \in (0, 1/2), \delta \in (0, 1), m_{\mathrm{max}} \in \N$, and
	$p, \beta_0, \beta_1 > 0$, 
	
	Our algorithm repeats the following steps for $k = 0, 1, 2, \dots$
    \begin{enumerate}[label=(\roman*)]
	\item Choose $H_k \in \cK(p_k)$ and use the conjugate gradient (CG) method to find $V_k \in T_{p_k}\cM$ such that 
        \begin{equation}
            \label{eqn:newton-stopping}
		 \norm{(H_k + \omega_k I) V_k +X(p_k)} \leq \tilde \eta_k,
		\end{equation}

		where $\omega_k := \norm{X(p_k)}^{\bar \nu}$, $\tilde \eta_k := \min\big\{ \eta_k, \norm{X(p_k)}^{1+\bar \nu} \big\}$.
		Note that CG may fail when $H_k$ is not positive definite, we choose the first-order direction $V_k = -X(p_k)$ in this case.
	\item
	Choose the stepsize by one of the following linesearch methods:

    \begin{enumerate}
        \item[(LS-I)]
	If $V_k$ is not a sufficient descent direction of $\varphi$, i.e. it does not satisfy
    \begin{equation}
        \label{eqn:sufficient-descent}
    \inner{-X(p_k)}{V_k} \geq \min\{ \beta_0, \beta_1 \norm{V_k}^p \} \norm{V_k}^2,
    \end{equation} 
    then, we set $V_k$ to be $-X(p_k)$.

	Next, find the minimum non-negative integer $m_k$ such that
	\begin{equation}\label{linesearch}
	    \varphi(R_{p_k}(\delta^{m_k} V_k)) \leq \varphi(p_k) + \mu\delta^{m_k} \inner{X(p_k)}{V_k}. 
	\end{equation}
	\item[(LS-II)] Find the minimum non-negative integer $m_k \leq m_{\mathrm{max}}$ such that
        \[ \normz{X(R_{p_k}(\delta^{m_k}V_k))} \leq (1-2\mu\delta^{m_k})\norm{X(p_k)} . \]
    If $m_k$ cannot be found, then we set $m_k = m_{\mathrm{max}}$.
    \end{enumerate}
    
	\item
        Set $p_{k+1} = R_{p_k} (\delta^{m_k} V_k)$.
    \end{enumerate}
\end{alg}

We make several remarks for the above numerical algorithm.
\begin{remark}
	LS-I is a standard way to globalize the semismooth Newton method for minimizing smooth functions.
	Note that the CG method may fail, or $V_k$ may not be a descent direction as $H_k$ may not be positive definite. In both cases, Algorithm~\ref{alg:semismooth} reduces to the first-order method. 
	It is noted that the condition \eqref{linesearch} always holds for finite $m_k$ as shown in Theorem~\ref{thm:semismooth-LS-I-stationary}, so the requirement $m_k \leq m_{\max}$ in LS-I is not needed.
	In Theorem~\ref{thm:semismooth-LS-I-rate}, we show that the LS-I globalization method has a superlinear convergence under a ``convexity assumption'' and some regularity conditions.
\end{remark}
\begin{remark}
	LS-II is another way to globalize the Newton method~\cite{dirr2007nonsmooth,mifflin1977semismooth,sun2002semismooth}. 
	In the Riemannian setting, the convergence result is established under the assumption that $\norm{X}^2$ is differentiable~\cite{oliveira2020a}. 
	However, $\|X\|^2 = \|X_1+X_2+X_3\|^2$ is not differentiable in general. Thus, LS-II does not have a convergence guarantee.
	In our experiments, we find both LS-I and LS-II have a similar performance for ``convex problems'' like \eqref{eqn:cm-problem} and LS-II is suitable for ``nonconvex problems'' like \eqref{eqn:spca-problem} and \eqref{eqn:cons-spca-problem}.
\end{remark}
\begin{remark}
	We can use the method in \cite{chen2017augmented} to choose an initial point of Algorithm~\ref{alg:semismooth} to guarantee the condition~\eqref{eqn:subproblem-stopping-rule}.
	For LS-I, since it is a descent method, it suffices to choose $p_0$ such that $L_k(p_0) \leq \Phi$. This can be done by
    \[ p_0 = \begin{cases}
        x_\feas, & L_k(x_{k-1}) > \Phi, \\
        x_{k - 1}, & L_k(x_{k-1}) \leq \Phi,
	\end{cases} \]
	where $x_\feas$ and $x_{k-1}$ is defined in Algorithm~\ref{alg:alm-outer}. The same initialization method is used for the LS-II method. However, the LS-II method has no convergence guarantee as it is not a descent method.
\end{remark}

\subsection{The Analysis of Global Convergence} \label{sec:global-convergence}
We collect two technical lemmas for the subsequent analysis of Algorithm~\ref{alg:semismooth}. 
The second part of the first one can be regarded as an approximated cosine law on manifolds.

\begin{lemma}[Lemma 2.3 and 2.4 in~\cite{daniilidis2018self}]
	\label{lem:cosine-law}
Fix $p \in \cM$, the following properties hold.
\begin{enumerate}[(i)]
    \item There exists $r>0$ such that for every $q\in B_r(p)$, the exponential map $\exp_q$ is a diffeomorphism from $\{ v \in T_q\cM : \|v\| < 2r \}$ to $B_{2r}(q)$.
    \item There exist $K>0$ and $r>0$ such that for every $v, w \in T_q \cM$ with $\|v\|, \|w\| < 2r$, 
	it holds 
	\begin{equation}
		\label{eqn:cosine-law}
		 | d(\exp_q v, \exp_q w)^2 - \normz{v - w}^2 | \leq K \|v\|^2 \|w\|^2.
	\end{equation}
\end{enumerate}
\end{lemma}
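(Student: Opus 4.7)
The plan is to establish both parts by working in normal coordinate systems centered at $q$ and invoking classical Taylor expansions from Riemannian geometry.

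For part (i), I would use that $\exp:T\cM\to\cM$ is smooth and that $\dd\exp_p|_0$ equals the identity on $T_p\cM$; hence by the inverse function theorem there is an $r_0>0$ such that $\exp_p$ is a diffeomorphism from $\{v\in T_p\cM:\|v\|<4r_0\}$ onto its image. Since the injectivity radius is a lower semicontinuous function on a smooth Riemannian manifold, I can shrink $r\in(0,r_0)$ so that the injectivity radius at every $q\in B_p(r)$ exceeds $2r$, which yields the claimed uniform diffeomorphism property of $\exp_q$. The triangle inequality on $\cM$ keeps geodesics starting in $B_p(r)$ of length at most $2r$ inside the normal neighborhood of $p$, so the coordinate estimates derived below extend uniformly.

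For part (ii), the strategy is to study the smooth function $F_q(v,w):=d(\exp_q v,\exp_q w)^2-\|v-w\|^2$ on $\{(v,w)\in T_q\cM\times T_q\cM:\|v\|,\|w\|<2r\}$. Because radial geodesics from $q$ are minimizing within the injectivity radius, $d(q,\exp_q v)^2=\|v\|^2$ exactly, so $F_q(v,0)\equiv 0$ and $F_q(0,w)\equiv 0$; hence every monomial in the Taylor expansion of $F_q$ at the origin contains at least one factor of $v$ and at least one of $w$. The main analytic input is the classical normal-coordinate expansion
\begin{equation*}
 d(\exp_q v,\exp_q w)^2=\|v-w\|^2-\tfrac{1}{3}\inner{R_q(v,w)w}{v}+\mathcal{O}(\|v\|^2\|w\|^2(\|v\|+\|w\|)),
\end{equation*}
in which the explicit leading correction is the Riemann curvature tensor at $q$ evaluated on $(v,w,v,w)$, already manifestly bi-quadratic and bounded by a constant multiple of $\|R_q\|\cdot\|v\|^2\|w\|^2$. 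This expansion can be derived via Jacobi field estimates along the short geodesic connecting $\exp_q v$ and $\exp_q w$, or by direct computation from the standard normal-coordinate expansion $g_{ij}(x)=\delta_{ij}-\tfrac{1}{3}R_{ikjl}(q)x^kx^l+\mathcal{O}(\|x\|^3)$ of the metric tensor.

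The main obstacle I anticipate is controlling the remainder: one has to argue that every higher-order term is built from covariant derivatives of $R$ contracted with at least two copies of $v$ and at least two copies of $w$, so that on the ball of radius $2r$ it is absorbed into $C'\|v\|^2\|w\|^2$ with $C'$ depending linearly on $r$. This bi-quadratic structure follows from the vanishing identities $F_q(v,0)=F_q(0,w)=0$, the symmetry $F_q(v,w)=F_q(w,v)$, and careful inspection of the Jacobi-field-based expansion. Uniformity of $K$ and $r$ over $q\in B_p(r)$ is then obtained from continuity of the curvature tensor and its first covariant derivatives, which are bounded on the precompact closure of a sufficiently small ball about $p$.
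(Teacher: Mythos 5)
The paper itself gives no proof of this lemma: it is quoted directly from Lemma 2.3 and 2.4 of the cited reference, so your proposal can only be measured against the standard argument. Your part (i) is essentially that argument and is fine: a uniform lower bound on the injectivity radius near $p$ (equivalently, a totally normal neighborhood), together with the fact that within the injectivity radius $\exp_q$ carries the ball of radius $2r$ in $T_q\cM$ diffeomorphically onto the metric ball $B_{2r}(q)$.

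In part (ii) the step you yourself flag as the obstacle is where a genuine gap sits. The structural facts you invoke to force the bi-quadratic bound --- $F_q(v,0)=F_q(0,w)=0$ together with the symmetry $F_q(v,w)=F_q(w,v)$ --- are not sufficient: they only force at least one factor of each variable, and a symmetric term such as $\inner{v}{w}\big(\norm{v}^2+\norm{w}^2\big)$ is compatible with all of them yet is not $O\big(\norm{v}^2\norm{w}^2\big)$. What is actually needed, and what you never establish, is that the \emph{first-order} terms in each variable cancel as well: $\partial_w F_q(v,0)=0$ for all $v$ and $\partial_v F_q(0,w)=0$ for all $w$. This follows from the identity $\grad_y\, d(x,y)^2\big|_{y=q} = -2\exp_q^{-1}x$, which for $x=\exp_q v$ gives $-2v$ and thus matches the $w$-derivative $-2\inner{v}{\cdot}$ of $\normz{v-w}^2$ at $w=0$ (and symmetrically in $v$). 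With these four identities in hand, two applications of Taylor's theorem with integral remainder (first in $w$, then in $v$) give $|F_q(v,w)| \leq C\,\norm{v}^2\norm{w}^2$ with $C$ a supremum of mixed derivatives of $F$; since $(q,v,w)\mapsto F_q(v,w)$ is smooth on the compact set obtained by taking $q$ in the closure of $B_r(p)$ and $\norm{v},\norm{w}\le 2r$ (after shrinking $r$ so all distances stay well inside the injectivity radius), this constant is finite and uniform in $q$, which is the uniformity the lemma asserts. If instead you insist on the curvature-expansion route, you must actually prove that the remainder has the form $O\big(\norm{v}^2\norm{w}^2(\norm{v}+\norm{w})\big)$: the naive Taylor count only yields $O\big((\norm{v}+\norm{w})^5\big)$, which does not imply the claim (a term like $\norm{v}^4\norm{w}$ is of that order but not bounded by $K\norm{v}^2\norm{w}^2$), so ``careful inspection'' must be replaced by the first-order cancellation above or by an explicit Jacobi-field computation exhibiting two factors of each variable in every remainder term.
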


The second lemma is an extension of Proposition 2 in \cite{absil2012projection}, which follows from the compactness of $U$ and Taylor's theorem. Since it is the key lemma for extending the analysis of Algorithm~\ref{alg:semismooth} from the exponential map to general retractions,  we give its proof in Appendix~\ref{app:proof-retraction-approx} for completeness.
\begin{lemma}
	\label{lem:retraction-approx}
	Let $U \subset \cM$ be a compact subset and $R$ be a retraction, then there exist $C, r > 0$ such that for any $q \in U$ and $v \in T_q \cM$ with $\| v \| < r$,  the following inequalities hold
	\begin{equation}
		\label{eqn:retraction-approx}
		d(R_{q}v , q) \leq C \| v\| \quad \text{ and } \quad
		d(R_{q}v , \exp_q v) \leq C \| v\|^2.
	\end{equation}
\end{lemma}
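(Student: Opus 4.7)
The plan is to prove both inequalities by working in coordinates and leveraging the defining properties of a retraction, namely $R_q(0)=q$ and $\dd R_q|_0 = \mathrm{id}_{T_q\cM}$, combined with the $C^2$ regularity of $R$ and the smoothness of $\exp$. The first inequality is a bound on the length of a short curve and follows essentially from continuity; the second inequality expresses that $R$ agrees with $\exp$ to first order, so their deviation is quadratic in $\|v\|$. In both steps, the compactness of $U$ will be used to pass from pointwise-in-$q$ estimates to uniform ones.

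For the first inequality, I would consider the smooth curve $\gamma_q(t) := R_q(tv)$ for $t \in [0,1]$. Since $\gamma_q(0)=q$, the definition of Riemannian distance gives
\begin{equation*}
  d(R_q v, q) \;\leq\; \ell(\gamma_q) \;=\; \int_0^1 \norm{\dd R_q(tv)[v]}\, \dd t.
\end{equation*}
Because $R$ is $C^2$ and $\dd R_q|_0$ is the identity, the map $(q,w) \mapsto \norm{\dd R_q(w)}$ (operator norm) is continuous and equals $1$ on the zero section $\{(q,0)\}$. A straightforward compactness argument on $U$ produces $r_1>0$ and $C_1>0$ with $\norm{\dd R_q(w)} \le C_1$ whenever $q\in U$ and $\norm{w} \le r_1$, yielding $d(R_q v, q) \le C_1 \norm{v}$ for $\norm{v} \le r_1$.

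For the second inequality I would work in normal coordinates. Choose, for each $\bar q \in U$, a normal chart $\exp_{\bar q}^{-1}$ on some ball $B_{\bar q}(\rho_{\bar q})$; since $U$ is compact and the injectivity radius is continuous and positive, finitely many such balls suffice to cover $U$ with a uniform radius. In a normal chart centered at $q$ we have, by definition, $\exp_q^{-1}(\exp_q v) = v$, while the $C^2$ map $v\mapsto \exp_q^{-1}(R_q v)$ satisfies $\exp_q^{-1}(R_q\,0)=0$ with differential at $0$ equal to the identity, so Taylor's theorem gives $\exp_q^{-1}(R_q v) = v + E_q(v)$ with $\norm{E_q(v)}\le C_2 \norm{v}^2$ uniformly for $q$ in the chart (the second derivatives of $R$ are continuous on the compact set, so $C_2$ can be taken uniform). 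Gauss's lemma, or equivalently the standard comparison between the Riemannian metric tensor and the Euclidean one in normal coordinates, yields a uniform constant $c_2$ with $d(x,y)\le c_2\norm{\exp_q^{-1}x - \exp_q^{-1}y}$ for $x,y$ close enough to $q$. Combining these bounds gives $d(R_q v, \exp_q v)\le c_2 C_2 \norm{v}^2$, and a final pass over the finite subcover of $U$ produces uniform $C, r$. The main obstacle is the uniformity across $U$, but once one has a positive lower bound on the injectivity radius and a finite open cover by normal charts, the continuity of the second-order Taylor data in $q$ delivers global constants directly; no other analytic difficulty arises.
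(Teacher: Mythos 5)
Your proposal is correct and follows essentially the same route as the paper: the first bound comes from integrating $\|\dd R_q|_{tv}[v]\|$ along $t\mapsto R_q(tv)$ with a uniform bound on $\dd R$ by compactness, and the second comes from writing both maps in normal coordinates at $q$, using that $\exp_q^{-1}\circ R_q$ agrees with the identity to first order at $0$ so Taylor's theorem gives a quadratic coordinate error, with constants made uniform over the compact set $U$. The only (cosmetic) difference is the final conversion from the coordinate estimate to a distance estimate: the paper invokes the approximate cosine law of Lemma~\ref{lem:cosine-law}, whereas you use the comparison of the metric with the Euclidean one in normal coordinates (which is the correct tool here, rather than Gauss's lemma proper), and you should note, as the paper does via the first inequality, that $R_qv$ indeed lies in the domain of $\exp_q^{-1}$ after shrinking $r$.
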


The next theorem establishes the global convergence of Algorithm~\ref{alg:semismooth} with LS-I.
\begin{theorem}
    \label{thm:semismooth-LS-I-stationary}
    Let $\{ p_k \}$ be the sequence generated by Algorithm \ref{alg:semismooth} with LS-I. 
    Suppose there exists $\delta > 0$ such that $\Omega := \{ p \in \cM : \varphi(p) \leq \varphi(p_0) + \delta \}$ is compact. Then, the following properties hold:
    \begin{enumerate}[(i)]
		\item For every $k \in \N$, there exists $m_k < \infty$ such that \eqref{linesearch} holds. 
	Moreover, if there exist $k_0 \in \N$ and $\varepsilon_0 > 0$ such that the following holds for every $k > k_0$, 
	\begin{equation}
		\label{eqn:strong-sufficient-descent}
		\innerz{-X(p_k)}{V_k} \geq \varepsilon_0 \normz{V_k}^2,
	\end{equation}
	then there exists an $m_{\max} \in \N$ such that $m_k \leq m_{\max}$ for all $k \in \N$.
        \item Let $\cP$ be the set of accumulation points of $\{p_k\}$, then $\cP$ is a non-empty set and every $p_*\in\cP$ is a stationary point of $\varphi$, i.e., $X(p_*) = 0$.
	\item  $\lim_{k \to \infty} \norm{X(p_k)} = 0$ and $\lim_{k \to \infty} \norm{V_k} = 0$.
    \end{enumerate}

\end{theorem}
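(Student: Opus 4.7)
The plan is to follow the classical three-step template for globalized descent methods---Armijo well-posedness, summability of the descent, and forcing $\normz{X(p_k)}\to 0$---adapted to the Riemannian setting via Lemma~\ref{lem:retraction-approx}.

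For part (i), I would first observe that $V_k$ is always a descent direction for $\varphi$: either \eqref{eqn:sufficient-descent} is enforced by the CG output, or the algorithm falls back to $V_k=-X(p_k)$, which yields $\inner{X(p_k)}{V_k}=-\normz{X(p_k)}^2 \leq 0$. Since the curve $t\mapsto \varphi(R_{p_k}(tV_k))$ is $C^1$ with derivative $\inner{X(p_k)}{V_k}$ at $t=0$ and $\mu<1$, the standard Armijo argument produces a finite $m_k$. For the uniform bound $m_k\leq m_{\max}$ under \eqref{eqn:strong-sufficient-descent}, I would use that $\{p_k\}\subset \Omega$ lies in a compact set, $X$ is Lipschitz on $\Omega$ (Assumption~\ref{assumption:vector-field-decomposition}), and Lemma~\ref{lem:retraction-approx} supplies $d(R_{p_k}(sV_k),p_k)\leq C\normz{sV_k}$ uniformly in $k$. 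These ingredients assemble into a uniform quadratic estimate
\[
\varphi(R_{p_k}(tV_k)) \leq \varphi(p_k) + t\inner{X(p_k)}{V_k} + \tfrac{L}{2}\,t^2\normz{V_k}^2,
\]
valid whenever $t\normz{V_k}$ is below a constant depending only on $\Omega$. Armijo failure at step $m_k-1$ combined with \eqref{eqn:strong-sufficient-descent} then yields $\delta^{m_k-1}\geq 2(1-\mu)\varepsilon_0/L$, giving the claimed $m_{\max}$.

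For part (ii), compactness of $\Omega$ together with the fact that $\varphi(p_k)$ is nonincreasing guarantees $\cP\neq\emptyset$. Summing \eqref{linesearch} and using that $\varphi$ is bounded below on $\Omega$ yields
\[
\sum_{k=0}^{\infty}\delta^{m_k}\inner{-X(p_k)}{V_k}\leq \frac{\varphi(p_0)-\inf_\Omega \varphi}{\mu}<\infty,
\]
so in particular $\delta^{m_k}\inner{-X(p_k)}{V_k}\to 0$. Once part (iii) establishes $\normz{X(p_k)}\to 0$, continuity of $X$ immediately gives $X(p_*)=0$ at every $p_*\in\cP$.

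Part (iii) is the crux and I would prove it by contradiction: suppose $\normz{X(p_{k_j})}\geq \varepsilon>0$ along a subsequence. I split into two cases. In the fallback case $V_{k_j}=-X(p_{k_j})$, one has $\inner{-X(p_{k_j})}{V_{k_j}}=\normz{X(p_{k_j})}^2\geq\varepsilon^2$ and $\normz{V_{k_j}}\leq \sup_\Omega\normz{X}$. In the Newton case, the inexact equation \eqref{eqn:newton-stopping} with $\tilde\eta_k\to 0$ together with the bound on $\normz{H_k+\omega_kI}$ (from upper semicontinuity and compactness of values of $\cK$ over the compact $\Omega$, plus boundedness of $\omega_k=\normz{X(p_k)}^{\bar\nu}$) forces $\normz{V_{k_j}}$ into a compact interval bounded away from $0$; then \eqref{eqn:sufficient-descent} gives $\inner{-X(p_{k_j})}{V_{k_j}}\geq c_1>0$. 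In either case, reinvoking the uniform quadratic estimate of part (i), Armijo failure at $m_{k_j}-1$ forces $\delta^{m_{k_j}}\geq c_2>0$, so $\delta^{m_{k_j}}\inner{-X(p_{k_j})}{V_{k_j}}\geq c_1c_2>0$, contradicting the summability above. Hence $\normz{X(p_k)}\to 0$, and then $\normz{V_k}\to 0$ follows either from the identity $V_k=-X(p_k)$ or from the sufficient-descent inequality combined with Cauchy--Schwarz.

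The main obstacle is the uniform quadratic Taylor inequality in part (i): in Euclidean space it is immediate from a Lipschitz gradient, but here one must combine the compactness-based Lipschitz constant of $X$ on $\Omega$ with the second-order agreement between the retraction $R$ and the exponential map $\exp$ supplied by Lemma~\ref{lem:retraction-approx}, and verify that the resulting bound is uniform along the whole sequence. A secondary subtlety is the three-way split between CG-failure, sufficient-descent-failure, and a genuine Newton step, which must be propagated consistently through all three parts.
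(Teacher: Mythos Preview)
Your proposal is correct and follows essentially the same approach as the paper: establish a uniform quadratic upper model $\varphi(R_{p_k}(tV_k))\le \varphi(p_k)+t\inner{X(p_k)}{V_k}+\tfrac{L}{2}t^2\normz{V_k}^2$ via the Lipschitzness of $X$ on the compact sublevel set $\Omega$ together with Lemma~\ref{lem:retraction-approx}, use it to prove Armijo well-posedness and the uniform bound on $m_k$, and then run the contradiction argument through the summability of $\delta^{m_k}\inner{-X(p_k)}{V_k}$. The only cosmetic difference is organizational: the paper first fixes an accumulation point $p_*$, assumes $X(p_*)\neq 0$, shows $\delta^{m_{k_j}}\to 0$, and then uses Armijo failure at $m_{k_j}-1$ to derive a contradiction, whereas you bound $\delta^{m_{k_j}}$ from below directly and contradict summability; both routes are equivalent. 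One small point you should make explicit when writing it up is the uniform bound $\sup_k\normz{V_k}<\infty$ (the paper derives it from the sufficient-descent/fallback dichotomy plus $\sup_\Omega\normz{X}<\infty$), since your quadratic estimate is only valid for $t\normz{V_k}$ below a fixed threshold and this is needed both for the uniform $m_{\max}$ in part~(i) and for the Armijo-failure step in part~(iii).
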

\begin{proof}
	(i) Let $r, C$ be the constants such that Lemma~\ref{lem:retraction-approx} holds for $U = \Omega$.
	Since $X$ is locally Lipschitz, for every $p \in \Omega$ we can find $S_p, L_p > 0$ such that $X$ is $L_p$-Lipschitz in $B_{2S_p}(p)$.
	Since $\Omega$ is compact,  we can find a finite set $\{ q_1, \dots, q_T \} \in \Omega$ such that $\bigcup_j B_{S_{q_j}}(q_j) \supset \Omega$. 
	Define $R := \min \{ r, \min_j S_{q_j} \}$, $\tilde L := \max_j L_{q_j}$.
	Since there exists $B_{S_{q_j}}(q_j) \ni p$ for every $p \in \Omega$, we find $B_R(p) \subset B_{2S_{q_j}}(q_j)$. Thus,
	$X$ is $\tilde L$-Lipschitz in $B_R(p)$ for every $p \in \Omega$.
    Fix $p \in \Omega$ with $\varphi(p) < \varphi(p_0) + \delta$ and $V \in T_p\cM$, and define $\gamma(t) = \exp_p(tV)$. 
    Consider the function $\hat \varphi = \varphi \circ \gamma$, we know that 
	$\hat \varphi^\prime(t) = \inner{X(\gamma(t))}{\dot \gamma(t)}$.
	Note that for $0 \leq s, t < R / \norm{V}$,
    \[ \begin{aligned}
        \left | \hat\varphi^\prime(t) - \hat\varphi^\prime(s) \right | 
        &= 
        \left | \inner{X(\gamma(t))}{\dot \gamma(t)} - \inner{X(\gamma(s))}{\dot \gamma(s)} \right |  = 
        \left | \inner{X(\gamma(t)) - P^{s \to t}_\gamma X(\gamma(s))}{\dot \gamma(t)} \right |  \\
        & \leq \norm{X(\gamma(t)) - P^{s \to t}_\gamma X(\gamma(s))}\norm{\dot \gamma(t)}   \leq \tilde L \ell(\gamma|_{[s,t]})\norm{\dot \gamma(t)} = \tilde L\norm{V}^2 |t - s|,
	\end{aligned} \]
	where the last inequality follows from the Lipschitzness of $X$ and $\ell(\gamma|_{[s,t]}) = |t-s|\norm{V}$ and $\norm{\dot \gamma(t)} = \norm{\dot\gamma(0)} = \norm{V}$.\footnote{Since $\gamma$ is a geodesic which is parallel to itself, i.e., $\nabla_{\dot \gamma} \dot \gamma = 0$, then $\frac{\dd}{\dd t} \| \dot \gamma(t) \|^2 = 2\inner{\nabla_{\dot \gamma}\dot\gamma}{\dot\gamma} \equiv 0$. Therefore, we have $\|\dot\gamma(t)\| = \| \dot \gamma(0) \| = \| V \|$ and $\ell(\gamma|_{[s,t]}) = \int_s^t \| \dot \gamma(u) \| \dd u = |t - s| \|\dot\gamma(0)\|$.}
    Thus, $\hat \varphi$ is $\tilde L\norm{V}^2$-smooth.
	Besides, we know $\varphi$ is Lipschitz on the compact set $\Omega$ since $\grad \varphi = X$ is bounded.
	Setting $p = p_k$, $V = V_k$ to be the quantities defined in Algorithm~\ref{alg:semismooth}, 
	$\hat L$ to be the Lipschitz constant of $\varphi$ and $L := \tilde L + 2\hat LC$,
	 we know when $0 \leq t < R / \norm{V_k}$,
	 it holds that $|\varphi(R_{p_k} (tV_k)) - \varphi(\exp_{p_k} (tV_k))| \leq \hat L d(R_{p_k} (tV_k), \exp_{p_k} (tV_k))$, and
	\begin{equation}\label{eqn:est_decrease}
	    \begin{aligned}
		\varphi(p_k) - \varphi(R_{p_k}(t V_k)) 
		\overset{\eqref{eqn:retraction-approx}}&{\geq} \varphi(p_k) - \varphi(\exp_{p_k}(t V_k)) - \hat LC \| tV_k \|^2   \\
		& \geq \inner{-X(p_k)}{t V_k} - \frac{Lt^2}{2} \norm{V_k}^2.
	\end{aligned}
	\end{equation}
	In view of \eqref{eqn:est_decrease}, when $V_k \neq 0$, we have for $0 \leq t < R / \norm{V_k}$,
	\[ \begin{aligned}
		\varphi(p_k) - \varphi(R_{p_k}(t V_k))  
		\begin{cases}\geq t \Big ( 1 - \frac{Lt}{2\min\{ \beta_0, \beta_1 \normz{V_k}^p \}} \Big ) \inner{-X(p_k)}{V_k}, & \text{ if \eqref{eqn:sufficient-descent} holds,} \\
		=t \Big(1-\frac{Lt}{2}\Big)\inner{-X(p_k)}{V_k}, & \text{ if $V_k=-X(p_k)$,}
		\end{cases}
	\end{aligned} \]
	where $\beta_0, \beta_1$ and $p$ are constants defined in Algorithm~\ref{alg:semismooth}. Thus, for a fixed $k \in \N$, \eqref{linesearch} holds whenever
	$\delta^{m_k} \leq \min \{ R / \norm{V_k}, 2\min\{ 1, \beta_0, \beta_1 \normz{V_k}^p \} (1 - \mu) / L \}$.
	On the other hand, \eqref{linesearch} automatically holds for $m_k = 0$ when $V_k = 0$. Thus, $\{ \varphi(p_k) \}$ is a non-increasing sequence, which implies $\{p_k\}\subset\Omega$.
	
	Next, we show that $\sup_k \normz{V_k} < \infty$. We assume on the contrary that there exists a subsequence $\{ k_j \}$ such that $\normz{V_{k_j}} \to \infty$. 
	The compactness of $\Omega$ and the continuity of $X$ imply that $\sup_k \normz{X(p_k)} < \infty$.
	Since $V_{k_j}$ satisfies \eqref{eqn:sufficient-descent} or $V_{k_j} = -X(p_{k_j})$, it holds that $\normz{V_{k_j}} \leq \normz{X(p_{k_j})} / \min\{1,  \beta_0, \beta_1 \normz{V_{k_j}}^p \}$, 
	which is bounded and hence contradicts to the assumption that $\|V_{k_j}\| \to \infty$. 
	Therefore, $\|V_k\|$ is uniformly bounded and \eqref{eqn:est_decrease} holds for all $k \in \N$ and $0 \leq t < R / \sup_{k} \normz{V_k} =: \bar t$. 
	Moreover, when \eqref{eqn:strong-sufficient-descent} holds, we can choose $m_k$ such that 
	$\delta^{m_k} \leq \min\{ \bar t, 2 \min\{ 1, \varepsilon_0 \} (1 - \mu) / L \}$. Since the term on the right-hand side is independent of $k$ and positive, $\{ m_k \}$ is uniformly bounded.

	(ii) Since $\Omega$ is compact and $\{ p_k \} \subset \Omega$, the set $\cP$ is non-empty.
	Let $p_* \in \cP$, then there exists a subsequence $\{ k_j \}$ such that $p_{k_j} \to p_*$ as $j \to \infty$. Now, we prove that $X(p_*) = 0$.
	Assume on the contrary that $X(p_*) \neq 0$, then we claim that $\inf_{j} \normz{V_{k_j}} > 0$.
	Otherwise, when $\inf_j \normz{V_{k_j}} = 0$, in view of \eqref{eqn:newton-stopping}, 
	we have either $\normz{X(p_{k_j})} = \normz{V_{k_j}}$ or $\normz{X(p_{k_j})} \leq \eta_{k_j} + \normz{(H_{k_j} + \omega_{k_j} I)V_{k_j}}$.
	The upper-semicontinuity of $\cK$ and $\partial X$ shows that they are locally bounded and the compactness of $\Omega$ gives that $\cK \cup \partial X$ is uniformly bounded in $\Omega$.
	Therefore, $\sup_j \normz{H_{k_j}} < \infty$. 
	Since $\eta_{k_j} \to 0$, $\omega_{k_j} = \normz{X(p_{k_j})}^{\bar \nu}$ is bounded and $\inf_j \normz{V_{k_j}} = 0$, we conclude that $\liminf_{j \to \infty} \normz{X(p_{k_j})} = 0$, which contradicts to $X(p_*) \neq 0$. 
	Thus, $\normz{V_{k_j}}$ is bounded away from zero.

	From \eqref{linesearch} and the fact that $\{ \varphi(p_k) \}$ is non-increasing, we know
	\[  \begin{aligned}
		\varphi(p_{k_{j+1}}) - \varphi(p_{k_j}) &\leq \varphi(p_{k_j+1}) - \varphi(p_{k_j})
		\leq \mu \delta^{m_{k_j}} \innerz{X(p_{k_j})}{V_{k_j}} \\
		& \leq -\mu \delta^{m_{k_j}}\min\{ 1, \beta_0,\beta_1\|V_{k_j}\|^p \} \|V_{k_j}\|^2< 0.
	\end{aligned} \]
	Taking $j \to \infty$, the term on the left vanishes. 
	Since $\normz{V_{k_j}}$ is bounded away from zero, 
	 $\delta^{m_{k_j}} \to 0$ as $j \to \infty$, and hence $m_{k_j} \to \infty$. 
	 Thus, for sufficiently large $j$, we know $t_j := \delta^{m_{k_j}-1}<\bar t$.
	 As $m_k$ is the smallest integer such that \eqref{linesearch} holds, when replacing $m_k$ with $m_k - 1$, we have
	 \[
		  \varphi(q_{j}) > \varphi(p_{k_j}) + \mu t_j \innerz{X(p_{k_j})}{V_{k_j}},
	 \]
	 where $q_{j} := R_{p_{k_j}} (t_j V_{k_j})$.
	Together with \eqref{eqn:est_decrease}, it holds
	   \[ \mu t_j \innerz{X(p_{k_j})}{V_{k_j}}
		 < \varphi(q_j) - \varphi(p_{k_j})
		 \leq t_j \innerz{X(p_{k_j})}{V_{k_j}} + \frac{Lt_j^2}{2} \normz{V_{k_j}}^2, \]
		 and thus, 
		 \[ 0 \leq (1-\mu)\min\{ 1, \beta_0,\beta_1\|V_{k_j}\|^p\}\|V_{k_j}\|^2\leq (1-\mu) \innerz{-X(p_{k_j})}{V_{k_j}} < \frac{Lt_j}{2} \normz{V_{k_j}}^2. \]
	As $\{V_{k_j}\}$ is bounded, by passing to a subsequence if necessary, we assume that $\lim_{j \to \infty} V_{k_j} = V_* \neq 0$.
	Dividing the above inequality by $\|V_{k_j}\|^2$ and letting $j \to \infty$, 
	we obtain $(1-\mu)\min\{ \beta_0,\beta_1\|V_*\|^p\}=0$, which leads to a contradiction since $\mu \in (0, 1/2)$.
	Thus, we conclude that $X(p_*) = 0$.
	From the choice of $V_{k_j}$ in LS-I, we can also conclude that $\lim_{j \to \infty} V_{k_j} = 0$.

	(iii) If the whole sequence $\{ \normz{X(p_k)} \}$ contains an accumulation point other than $0$, 
	then there exists a subsequence $\{ \normz{X(p_{k_j})} \}$ such that $\normz{X(p_{k_j})} \to \alpha > 0$ as $j \to \infty$.
	By passing to a subsequence if necessary, we could assume $p_{k_j} \to p_*$ as $j \to \infty$. Thus, $p_* \in \cP$ and the above discussion gives that $X(p_*) = 0$, which leads to a contradiction.
	Similarly, it can be shown that the only accumulation point of $\{ \normz{V_k} \}$ is $0$. \hfill $\Box$
\end{proof}

\subsection{The Analysis of Local Superlinear Convergence} \label{sec:local-convergence}
To obtain the locally superlinear convergence rate of Algorithm~\ref{alg:semismooth}, we first introduce some basic results of Riemannian manifolds and technical lemmas. 
\begin{lemma}
	\label{lem:peturbation}
	Suppose $X$ is a locally Lipschitz vector field on $\cM$, $p_* \in \cM$, $\cK$ is defined in Definition~\ref{def:semismooth-general} and all elements in $\cK(p_*)$ are nonsingular and there exists $\lambda>0$ such that 
    $\lambda \geq \max\left\{ \|H^{-1}\|: H \in \cK(p_*) \right\}$.
    Then, for every $\varepsilon > 0$ and $\varepsilon \lambda < 1$, there exists a neighborhood $U$ of $p_*$ such that all elements in $\cK(p)$ are nonsingular for $p \in U$ and
    \[ \norm{H^{-1}}\leq \frac{\lambda}{1 -\varepsilon \lambda}, \ \forall\, p \in U,\ H \in \cK(p).
    \]
\end{lemma}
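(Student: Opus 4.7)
The plan is to adapt the classical Banach perturbation argument to the manifold setting, with the caveat that the operators $H \in \cK(q)$ and those in $\cK(p_*)$ live on different tangent spaces, so parallel transport has to be used to compare them. In essence, for $q$ close enough to $p_*$, I will show that each $H \in \cK(q)$, after being conjugated into $\cL(T_{p_*}\cM)$ by parallel transport, is a small perturbation of some invertible $H_* \in \cK(p_*)$, and then a Neumann series argument will give both invertibility and the quantitative bound.

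First I would fix $\varepsilon>0$ with $\varepsilon\lambda<1$ and invoke the upper semicontinuity of $\cK$ (the property stated in the footnote of Definition~\ref{def:semismooth-general}) to obtain a $\delta>0$ such that for every $q\in B_\delta(p_*)$,
\[
   P_{qp_*}\cK(q) \subset \cK(p_*)+\hat B_\varepsilon(0),
\]
where $P_{qp_*}\cK(q)$ is interpreted operator-wise as $\{P_{qp_*} H P_{p_*q}: H\in\cK(q)\}$. Taking $U=B_\delta(p_*)$, for any $H\in\cK(q)$ I can then pick $H_*\in\cK(p_*)$ with $\|P_{qp_*} H P_{p_*q}-H_*\|<\varepsilon$; writing $E:=P_{qp_*} H P_{p_*q}-H_*$ gives $P_{qp_*} H P_{p_*q}=H_*(I+H_*^{-1}E)$.

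Next, since $H_*\in\cK(p_*)$ is invertible with $\|H_*^{-1}\|\leq\lambda$, I have $\|H_*^{-1}E\|\leq \varepsilon\lambda<1$, so $I+H_*^{-1}E$ is invertible by the Neumann series, with $\|(I+H_*^{-1}E)^{-1}\|\leq 1/(1-\varepsilon\lambda)$. Hence $P_{qp_*} H P_{p_*q}$ is invertible on $T_{p_*}\cM$, and therefore $H$ itself is invertible on $T_q\cM$ because parallel transport is a linear isometry (in particular a bijection between the two tangent spaces). Concretely,
\[
  H^{-1}=P_{p_*q}\,(I+H_*^{-1}E)^{-1}H_*^{-1}\,P_{qp_*},
\]
and since $P_{qp_*},P_{p_*q}$ preserve the operator norm, $\|H^{-1}\|\leq \|(I+H_*^{-1}E)^{-1}\|\cdot\|H_*^{-1}\|\leq \lambda/(1-\varepsilon\lambda)$, which is the desired estimate.

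I do not anticipate a real obstacle here; the only subtlety is making sure that the notation $P_{qp_*}\cK(q)$ in the definition of upper semicontinuity is correctly read as conjugation of operators by parallel transport, so that the isometry of $P_{qp_*}$ can carry both the perturbation size and the norm bound back and forth between $T_q\cM$ and $T_{p_*}\cM$ without any loss. The entire argument is essentially the usual $(I-T)^{-1}=\sum_k T^k$ perturbation lemma with parallel transport inserted at the appropriate places.
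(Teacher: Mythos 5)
Your argument is correct and is exactly the one the paper relies on: the paper simply cites the proof of Lemma 4.2 in the reference on Newton's method for locally Lipschitz vector fields, noting that only the upper semicontinuity of the set-valued map is used, and that proof is the same parallel-transport-conjugated Banach/Neumann-series perturbation argument you wrote out. Your reading of $P_{qp_*}\cK(q)$ as conjugation by parallel transport and the resulting bound $\|H^{-1}\|\leq \lambda/(1-\varepsilon\lambda)$ match the intended proof.
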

\begin{proof}
The proof is the same as the proof in \cite[Lemma 4.2]{de2018newton} in which $\cK=\partial X$ and only the upper-semicontinuity of $\partial X$ is used. \hfill $\Box$
\end{proof}

Below is a second-order Taylor theorem on manifolds, a generalization of Theorem 2.3 in \cite{hiriart1984generalized}. Its proof can be found in Appendix~\ref{app:proof-second-taylor}.
\begin{lemma}
	\label{lem:second-taylor}
	Suppose $\varphi: \cM \to \R$ is a continuously differentiable function with Lipschitz gradient, and  $p, q \in \cM$.
	Let $\gamma: [0, 1] \to \cM$ be a geodesic joining $p$ and $q$. Then, there exist $\xi \in (0, 1)$ and $M_\xi \in \partial\grad \varphi(\gamma(\xi))$ such that
    \[ \varphi(q) - \varphi(p) = \inner{\grad \varphi(p)}{\dot \gamma(0)} + \frac{1}{2}\inner{P^{\xi \to 0}_\gamma M_\xi P^{0 \to \xi}_\gamma\dot\gamma(0)}{\dot \gamma(0)}. \]
\end{lemma}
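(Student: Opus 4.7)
The plan is to reduce the identity to a scalar $C^{1,1}$ Taylor expansion along $\gamma$, run the one-dimensional version of Hiriart-Urruty's argument there, and then translate the resulting scalar Clarke subgradient back into the Clarke generalized covariant derivative $\partial X(\gamma(\xi))$ along $\gamma$, where $X := \grad\varphi$.

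First, I would define $h(t) := \varphi(\gamma(t))$ for $t \in [0,1]$. Since $\varphi$ is $C^1$, $h$ is $C^1$ with $h'(t) = \inner{X(\gamma(t))}{\dot\gamma(t)}_{\gamma(t)}$. Using that $\gamma$ is a geodesic, so $\dot\gamma(t) = P^{0\to t}_\gamma \dot\gamma(0)$, and that parallel transport is an isometry, I rewrite $h'(t) = \inner{\tilde X(t)}{\dot\gamma(0)}_{\gamma(0)}$ with $\tilde X(t) := P^{t\to 0}_\gamma X(\gamma(t))$. Local Lipschitzness of $X$ and compactness of $\gamma([0,1])$ make $\tilde X$—hence $h'$—Lipschitz on $[0,1]$, so $h \in C^{1,1}([0,1])$.

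Next I run the scalar argument. Setting $R := h(1)-h(0)-h'(0)$ and $g(t) := h(t)-h(0)-th'(0)-t^2R$, one has $g \in C^1$ with $g(0)=g(1)=0$, so Rolle's theorem produces $\xi_1 \in (0,1)$ with $h'(\xi_1)-h'(0) = 2\xi_1 R$. Lebourg's mean value theorem applied to the Lipschitz function $h'$ on $[0,\xi_1]$ then yields $\xi \in (0,\xi_1) \subset (0,1)$ and $\lambda \in \partial h'(\xi)$ with $h'(\xi_1) - h'(0) = \xi_1\lambda$. Combining gives $\lambda = 2R$ and hence
\[ \varphi(q) - \varphi(p) = h(1)-h(0) = \inner{\grad\varphi(p)}{\dot\gamma(0)} + \tfrac{1}{2}\lambda. \]

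The main obstacle is to write $\lambda \in \partial h'(\xi)$ as $\inner{P^{\xi\to 0}_\gamma M_\xi P^{0\to\xi}_\gamma \dot\gamma(0)}{\dot\gamma(0)}$ for some $M_\xi \in \partial X(\gamma(\xi))$. At every differentiability point $t$ of $X$ along $\gamma$, the standard identity $\tfrac{d}{dt}[P^{t\to 0}_\gamma Y(t)] = P^{t\to 0}_\gamma \nabla_{\dot\gamma(t)}Y(t)$ together with $\dot\gamma(t) = P^{0\to t}_\gamma \dot\gamma(0)$ gives $\tilde X'(t) = P^{t\to 0}_\gamma \nabla X(\gamma(t)) P^{0\to t}_\gamma \dot\gamma(0)$; the semigroup property of parallel transport along a single geodesic lets me rewrite this as $P^{\xi\to 0}_\gamma\bigl[P^{t\to\xi}_\gamma \nabla X(\gamma(t)) P^{\xi\to t}_\gamma\bigr] P^{0\to\xi}_\gamma \dot\gamma(0)$. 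For $t$ close to $\xi$, the short geodesic from $\gamma(\xi)$ to $\gamma(t)$ coincides with a piece of $\gamma$, so $P^{\xi\to t}_\gamma = P_{\gamma(\xi)\gamma(t)}$; taking a convergent subsequence $t_k \to \xi$ of differentiability points, the bracketed operators converge to some $H \in \partial_B X(\gamma(\xi))$ in the sense of Definition~\ref{def:clarke-generalized-covariant}. The chain rule for the smooth linear functional $v \mapsto \inner{v}{\dot\gamma(0)}$, combined with the fact that the isometric conjugation by parallel transport commutes with convex combinations of operators, then represents every element of $\partial h'(\xi)$ in the claimed form; substituting into the Taylor identity concludes the proof. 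The delicate point is exactly this matching: verifying that the ``along-$\gamma$'' limits of $\nabla X$ coincide with the short-geodesic limits used in Definition~\ref{def:clarke-generalized-covariant} and that convexification commutes with the parallel-transport conjugation.
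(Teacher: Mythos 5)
Your proof follows essentially the same route as the paper's: reduce to the scalar $C^{1,1}$ function $\hat\varphi=\varphi\circ\gamma$, obtain the second-order mean-value identity for it, and transport the resulting generalized second derivative back to an element $M_\xi\in\partial\grad\varphi(\gamma(\xi))$ via limits of covariant derivatives along $\gamma$ and the isometry of parallel transport. The only real difference is that you re-derive the scalar step by Rolle plus Lebourg's mean value theorem, whereas the paper simply cites Hiriart-Urruty's second-order Taylor theorem (whose proof is the same argument); the translation step, including the identification $P_\gamma^{\xi\to t}=P_{\gamma(\xi)\gamma(t)}$ needed to match Definition~\ref{def:clarke-generalized-covariant}, coincides with the paper's, and you are in fact somewhat more explicit about it than the paper is.
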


A direct consequence of Lemma~\ref{lem:peturbation} and \ref{lem:second-taylor} provides the following theorem that characterizes the second-order optimality conditions of \eqref{sub:obj_new}.
\begin{theorem}
	Suppose $\varphi: \cM \to \R$ is continuously differentiable with locally Lipschitz gradient, then the following properties hold 
	\begin{itemize}
		\item \emph{(Second-order necessary condition).}
		Suppose $x_* \in \cM$ is a local minimum of $\varphi$, then for any $v \in T_{x_*}\cM$, there is $H_v \in \partial\grad \varphi(x_*)$ such that $\inner{H_vv}{v} \geq 0$.
		\item \emph{(Second-order sufficient condition).}
		Suppose $x_* \in \cM$ such that $\grad \varphi(x_*) = 0$ and all elements in $\partial\grad\varphi(x_*)$ are positive definite, then $x_*$ is a strict local minimum of $\varphi$.
	\end{itemize}
	\label{thm:second-order-optimality}
\end{theorem}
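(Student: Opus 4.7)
For the necessary direction, the plan is to first invoke a first-order argument: since $\varphi$ is continuously differentiable and $x_*$ is a local minimum, composing with the geodesic $\gamma_v(t)=\exp_{x_*}(tv)$ and differentiating $\varphi\circ\gamma_v$ at $0$ forces $\grad\varphi(x_*)=0$. Fixing $v\in T_{x_*}\cM$, I would apply Lemma~\ref{lem:second-taylor} along the geodesic $\gamma_t(s)=\exp_{x_*}(stv)$ for each small $t>0$: this yields some $\xi_t\in(0,1)$ and $M_{\xi_t}\in\partial\grad\varphi(\gamma_t(\xi_t))$ with
\[
0\le \varphi(\gamma_t(1))-\varphi(x_*)=\frac{t^2}{2}\bigl\langle P^{\xi_t\to 0}_{\gamma_t} M_{\xi_t} P^{0\to \xi_t}_{\gamma_t} v,\, v\bigr\rangle.
\]
Writing $A_t:=P^{\xi_t\to 0}_{\gamma_t} M_{\xi_t} P^{0\to \xi_t}_{\gamma_t}\in\cL(T_{x_*}\cM)$, I would extract a subsequence $t_k\downarrow 0$ for which $A_{t_k}$ converges (its norm is uniformly bounded because $\partial\grad\varphi$ is locally bounded by upper semicontinuity, cf.\ Section~\ref{sec:background-semismooth}). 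The limit $H_v$ lies in $\partial\grad\varphi(x_*)$ by the definition of upper semicontinuity of $\partial\grad\varphi$ together with $P^{0\to\xi_t}_{\gamma_t}\to\mathrm{id}$ and $\gamma_t(\xi_t)\to x_*$. Passing to the limit in the displayed inequality then gives $\langle H_v v,v\rangle\ge 0$.

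For the sufficient direction, the strategy is to promote pointwise positive definiteness at $x_*$ to uniform positive definiteness on a neighborhood, and then apply the second-order Taylor formula. By Assumption~\ref{assumption:symmetric-op} every element of $\partial\grad\varphi(x_*)$ is self-adjoint; since $\partial\grad\varphi(x_*)$ is nonempty and compact and all its elements are positive definite, the minimum eigenvalue $\alpha:=\min\{\lambda_{\min}(H):H\in\partial\grad\varphi(x_*)\}$ is strictly positive. Using the upper semicontinuity of $\partial\grad\varphi$ (and the fact that parallel transport is a linear isometry, so it preserves self-adjointness and eigenvalues), I would choose a neighborhood $U$ of $x_*$ such that for every $p\in U$ and every $H\in\partial\grad\varphi(p)$, the transported operator $P_{px_*}H P_{x_*p}$ is within operator-norm distance $\alpha/2$ of $\partial\grad\varphi(x_*)$; consequently $\langle Hu,u\rangle\ge\frac{\alpha}{2}\|u\|^2$ for all $u\in T_p\cM$. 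Lemma~\ref{lem:peturbation} is not quite enough here because it only controls $\|H^{-1}\|$; the argument really requires combining compactness of $\partial\grad\varphi(x_*)$ with upper semicontinuity.

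With uniform positive definiteness in hand, I would pick $q\in U$ close enough to $x_*$ that the minimizing geodesic $\gamma$ from $x_*$ to $q$ stays in $U$, apply Lemma~\ref{lem:second-taylor}, and use $\grad\varphi(x_*)=0$ together with the isometric property of parallel transport to rewrite
\[
\varphi(q)-\varphi(x_*)=\frac{1}{2}\bigl\langle M_\xi w,\,w\bigr\rangle\ \ge\ \frac{\alpha}{4}\,\|w\|^2=\frac{\alpha}{4}\,d(x_*,q)^2,
\]
where $w=P^{0\to\xi}_\gamma\dot\gamma(0)$ and $\|w\|=\|\dot\gamma(0)\|=d(x_*,q)$. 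This strict inequality for $q\ne x_*$ proves that $x_*$ is a strict local minimizer.

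The two steps I expect to be delicate are, for the necessary part, justifying that the limit of the transported Hessians $A_{t_k}$ actually belongs to $\partial\grad\varphi(x_*)$ (this uses the precise footnote-definition of upper semicontinuity via parallel transport rather than a naive ``set-valued limit'' notion), and, for the sufficient part, the transfer of positive definiteness from $x_*$ to nearby points — since Lemma~\ref{lem:peturbation} alone gives only uniform invertibility and boundedness of inverses, one must invoke the self-adjointness assumption and continuity of the minimum eigenvalue on compact sets of symmetric operators to rule out a change of sign.
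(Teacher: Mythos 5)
Your proposal is correct and follows the route the paper intends: the paper states Theorem~\ref{thm:second-order-optimality} as a direct consequence of Lemma~\ref{lem:peturbation} and Lemma~\ref{lem:second-taylor} without writing out details, and your argument (Taylor expansion along geodesics, local boundedness and upper semicontinuity of $\partial\grad\varphi$ to pass to a limit in $\partial\grad\varphi(x_*)$ for the necessary part, and a uniform eigenvalue lower bound near $x_*$ for the sufficient part) is exactly the missing fleshing-out. Your side remark is also apt: Lemma~\ref{lem:peturbation} by itself only yields invertibility bounds, and the transfer of positive definiteness to a neighborhood indeed requires compactness of $\partial\grad\varphi(x_*)$, upper semicontinuity via parallel transport, and self-adjointness — which is precisely how the paper itself obtains the uniform bound $\inner{HV}{V}\geq 2\omega\norm{V}^2$ at the start of the proof of Theorem~\ref{thm:semismooth-LS-I-rate}.
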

\begin{remark}
	Using Lemma~\ref{lem:second-taylor}, we can see that the second-order sufficient condition implies the 
	\emph{strongly geodesic convexity} of $\varphi$ near $p_*$ as defined in \cite{zhang2016first}.
\end{remark}

The next two lemmas give the local analysis around the critical point of $X$. Their proofs are deferred to Appendices~\ref{app:proof-B-differentiability} and \ref{app:proof-mani-cvg}.
\begin{lemma}
	\label{lem:B-differentiability}
	Let $X$ be a locally Lipschitz vector field on $\cM$ and $p\in\cM$. If $X$ is directionally differentiable at $p$ and $X(p)=0$, then
	$\|P_{\exp_p v, p}X(\exp_p v)  - \nabla X(p; v)\| = o(\norm{v})$ as $T_p\cM  \ni v \to 0$.
\end{lemma}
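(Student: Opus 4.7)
The plan is to reduce the claim to a Euclidean-style B-differentiability result by pulling the problem back to the tangent space $T_p\cM$ via the exponential map. Define $g: V \to T_p\cM$ on a neighborhood $V$ of $0 \in T_p\cM$ by $g(v) := P_{\exp_p v,\, p}\, X(\exp_p v)$. Then $g(0) = X(p) = 0$, and because $X(p)=0$ the definition of the directional derivative in \eqref{eqn:directionally-differentiable} yields $g'(0;v) := \lim_{t\to 0^+} g(tv)/t = \nabla X(p;v)$. The claim thus becomes $\norm{g(v) - g'(0;v)} = o(\norm{v})$, i.e., B-differentiability of $g$ at $0$.

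The first key step is to show that $g$ is locally Lipschitz on a neighborhood of $0 \in T_p\cM$. I split the difference $g(v)-g(w)$ by inserting the composite $P_{\exp_p v,\, p}\circ P_{\exp_p w,\, \exp_p v}$ as an intermediate:
\begin{equation*}
g(v) - g(w) = P_{\exp_p v,\, p}\bigl[X(\exp_p v) - P_{\exp_p w,\, \exp_p v} X(\exp_p w)\bigr] + \bigl(P_{\exp_p v,\, p}\, P_{\exp_p w,\, \exp_p v} - P_{\exp_p w,\, p}\bigr) X(\exp_p w).
\end{equation*}
The first term is controlled by the local Lipschitz property of $X$ (Definition~\ref{def:lipschitz}) together with the isometry of parallel transport, giving a bound $\leq L\cdot d(\exp_p v,\exp_p w) \leq C\norm{v-w}$. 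For the second term, at $v=w$ the two operators coincide, so by smoothness of the operator-valued map $(v,w)\mapsto P_{\exp_p v,\, p} P_{\exp_p w,\, \exp_p v}$ near $(0,0)$ the operator discrepancy is $O(\norm{v-w})$; combined with $\norm{X(\exp_p w)} \leq L_X\,\norm{w}$ (which uses $X(p)=0$), this contributes $O(\norm{w}\norm{v-w})$. Hence $g$ is locally Lipschitz on a small ball about $0$.

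With the local Lipschitzness of $g$ in hand, $g'(0;\cdot)$ inherits positive homogeneity from its definition and Lipschitzness from the uniform Lipschitz control of the difference quotients $t^{-1}g(t\cdot)$ on the unit sphere for small $t$. I then argue by contradiction: suppose there exist $v_k\to 0$ and $\varepsilon>0$ with $\norm{g(v_k)-g'(0;v_k)} \geq \varepsilon\norm{v_k}$. Write $v_k = t_k u_k$ with $t_k = \norm{v_k}$ and $\norm{u_k}=1$; by compactness of the unit sphere in the finite-dimensional space $T_p\cM$, pass to a subsequence with $u_k\to u_*$. Then decompose
\begin{equation*}
g(v_k) - g'(0;v_k) = \bigl[g(t_k u_k) - g(t_k u_*)\bigr] + \bigl[g(t_k u_*) - t_k g'(0;u_*)\bigr] + t_k\bigl[g'(0;u_*) - g'(0;u_k)\bigr].
\end{equation*}
The first and third terms are $o(t_k)$ by the Lipschitz properties of $g$ and $g'(0;\cdot)$ together with $u_k\to u_*$; the middle term is $o(t_k)$ by directional differentiability of $g$ along the fixed direction $u_*$. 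Adding these gives a contradiction to $\norm{g(v_k)-g'(0;v_k)}\geq \varepsilon t_k$.

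The main obstacle is the verification of local Lipschitzness of $g$: parallel transports along different geodesics emanating from $p$ do not commute in general, so one must insert the correct intermediate transport so that the operator discrepancy vanishes at $v=w$ and varies smoothly in a joint neighborhood of $(0,0)$; the factor $\norm{X(\exp_p w)} = O(\norm{w})$ coming from $X(p)=0$ is then what makes the curvature-induced term sub-linear in $\norm{v-w}$ rather than merely bounded. Once this geometric bookkeeping is done, the finite dimensionality of $T_p\cM$ and the standard B-differentiability argument finish the proof.
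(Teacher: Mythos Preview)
Your proof is correct and follows essentially the same route as the paper's: both argue by contradiction, normalize $v_k = t_k u_k$, extract a limit direction $u_*$ by compactness of the unit sphere in $T_p\cM$, and split the error into a ``change of direction'' part, a ``directional derivative along the fixed direction $u_*$'' part, and a ``continuity of the directional derivative'' part. The only packaging difference is that you first establish local Lipschitzness of $g(v)=P_{\exp_p v,p}X(\exp_p v)$ (absorbing the holonomy estimate into the smoothness of the operator $(v,w)\mapsto P_{\exp_p v,p}P_{\exp_p w,\exp_p v}$ together with $\|X(\exp_p w)\|=O(\|w\|)$), whereas the paper treats the holonomy term explicitly via \cite[Lemma~10]{huang2015riemannian} inside a four-term decomposition on the manifold; the underlying estimates are the same.
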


\begin{lemma}
	\label{lem:mani-cvg}
	Suppose $\{ p_k \} \subset \cM$ and $V_k \in T_{p_k} \cM$ satisfy $\lim_{k \to \infty} p_k = p \in \cM$, $\lim\limits_{k \to \infty} \| V_k \| = 0$ and $d(R_{p_k} V_k, p) = o(d (p_k,p))$. 
	Then, $\lim\limits_{k\to\infty} \|V_k\| / d(p_k,p) = 1$ and $d(R_{p_k} V_k,p) = o( \|V_k \| )$.
\end{lemma}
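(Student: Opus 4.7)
The plan is to use the tangent vector $w_k := \exp_{p_k}^{-1}(p) \in T_{p_k}\cM$, which is well-defined for large $k$ by Lemma~\ref{lem:cosine-law}(i) since $p_k \to p$. Note $d(p_k, p) = \|w_k\|$. The strategy is to apply Lemma~\ref{lem:retraction-approx} to transfer the hypothesis from the retraction to the exponential map (up to a quadratic error), then apply the approximated cosine law, Lemma~\ref{lem:cosine-law}(ii), to convert everything into tangent-space norms, where both claims become easy.

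The first key step is to establish $\|V_k\| = O(d(p_k,p))$, which is needed to neutralize the quadratic retraction error. Lemma~\ref{lem:retraction-approx} gives $d(R_{p_k}V_k, \exp_{p_k}V_k) \leq C\|V_k\|^2$, and since $d(\exp_{p_k}V_k, p_k) = \|V_k\|$, the triangle inequality yields $d(R_{p_k}V_k, p_k) \geq \|V_k\| - C\|V_k\|^2 \geq \tfrac{1}{2}\|V_k\|$ for sufficiently large $k$. Combined with $d(R_{p_k}V_k, p_k) \leq d(R_{p_k}V_k, p) + d(p_k,p) = o(d(p_k,p)) + d(p_k,p)$, this proves $\|V_k\| = O(d(p_k,p))$, and in particular $\|V_k\|^2 = o(d(p_k,p))$ and $\|V_k\|^2\|w_k\|^2 = o(\|w_k\|^2)$.

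Next, Lemma~\ref{lem:retraction-approx} gives $d(\exp_{p_k}V_k, p) \leq d(R_{p_k}V_k, p) + C\|V_k\|^2 = o(d(p_k,p))$. Applying Lemma~\ref{lem:cosine-law}(ii) at $q = p_k$ with $v = V_k$, $w = w_k$,
\[
\bigl|\, d(\exp_{p_k}V_k, p)^2 - \|V_k - w_k\|^2 \,\bigr| \leq K\|V_k\|^2\|w_k\|^2,
\]
so $\|V_k - w_k\|^2 = o(\|w_k\|^2) + o(\|w_k\|^2) = o(\|w_k\|^2)$, i.e.\ $\|V_k - w_k\| = o(d(p_k,p))$. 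By the reverse triangle inequality, $|\|V_k\| - \|w_k\|| \leq \|V_k - w_k\| = o(\|w_k\|)$, which yields $\|V_k\|/d(p_k,p) \to 1$ (the first conclusion). The second conclusion then follows immediately: $d(R_{p_k}V_k, p) = o(d(p_k,p)) = o(\|V_k\|)$.

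The only real subtlety is the order of operations: one must first obtain $\|V_k\| = O(d(p_k,p))$ before invoking the cosine law, since otherwise the quadratic error $K\|V_k\|^2\|w_k\|^2$ and the retraction error $C\|V_k\|^2$ cannot be shown to be $o(d(p_k,p)^2)$ and $o(d(p_k,p))$ respectively. Once this linear comparability is in hand, everything reduces to standard Euclidean-style manipulations in $T_{p_k}\cM$.
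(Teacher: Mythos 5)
Your proof is correct, and it takes a somewhat different route than the paper's, even though both rest on the same two ingredients (Lemma~\ref{lem:cosine-law} and Lemma~\ref{lem:retraction-approx}). The paper never introduces the lifted point $w_k = \exp_{p_k}^{-1}p$: it works entirely with the three distances of the geodesic triangle $p_k$, $R_{p_k}V_k$, $p$, using the cosine law in the form $a^2 \leq (b+c)^2 + Kb^2c^2$ to run two $\limsup$ ratio estimates (one in each direction) plus an $\varepsilon$-argument to get $d(R_{p_k}V_k,p) = o\big(d(p_k,R_{p_k}V_k)\big)$, and only then converts $d(p_k,R_{p_k}V_k)$ into $\|V_k\|$ via \eqref{eqn:retraction-approx}; in fact its displayed conclusion is $d(p_k,p)/d(p_k,R_{p_k}V_k)\to 1$, with the final translation into $\|V_k\|/d(p_k,p)\to 1$ left implicit. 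You instead transfer everything to the tangent space $T_{p_k}\cM$, first establishing the comparability $\|V_k\| = O(d(p_k,p))$ to tame the quadratic retraction error, and then applying the cosine law once to get the stronger intermediate statement $\|V_k - w_k\| = o(d(p_k,p))$, from which both claims drop out by the reverse triangle inequality. Your version is arguably cleaner and lands directly on the stated limit; its only (harmless) overstatement is that the cosine-law error $K\|V_k\|^2\|w_k\|^2 = o(\|w_k\|^2)$ already follows from $\|V_k\|\to 0$ alone, so Step~1 is genuinely needed only for the retraction error $C\|V_k\|^2 = o(d(p_k,p))$. You should also note, as the paper does implicitly, that Lemma~\ref{lem:retraction-approx} is applied on a compact set containing the tail of $\{p_k\}$ (e.g.\ a closed ball around $p$), and that $\|w_k\| = d(p_k,p)$ and $d(\exp_{p_k}V_k,p_k)=\|V_k\|$ hold because for large $k$ everything stays inside the normal neighborhood furnished by Lemma~\ref{lem:cosine-law}(i); these are the same routine normalizations the paper uses.
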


\begin{theorem}
	\label{thm:semismooth-LS-I-rate}
	Under the same assumptions as in Theorem~\ref{thm:semismooth-LS-I-stationary}, 
    let $\cK$ be the set-valued map used in Algorithm~\ref{alg:semismooth}, and let $p_*$ be an accumulation point of $\{p_k\}$. If $X$ is semismooth at $p_*$ with order $\nu$ with respect to $\cK \cup \partial X$, 
    and all elements of $\cK(p_*) \cup \partial X(p_*)$ are positive definite,
	then we have $p_k\to p_*$ as $k\to\infty$ and for sufficiently large $k$, it holds
    \[ d(p_{k+1}, p_*) \leq O\big ( d(p_k, p_*)^{1 + \min\{ \nu, \bar \nu \}} \big ), \]
    where $\bar\nu\in(0,1]$ is the parameter defined in Algorithm~\ref{alg:semismooth}.
\end{theorem}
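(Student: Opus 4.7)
The plan is to localize the analysis near $p_*$, estimate the regularized Newton direction $V_k$ against the ``true'' step $v_k := \exp^{-1}_{p_k} p_*$, verify that the unit step is accepted by LS-I, and from these deduce both the superlinear rate on $d(p_{k+1},p_*)$ and whole-sequence convergence. Throughout I write $\rho := \min\{\nu,\bar\nu\}$.

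\emph{Spectral localization and Newton-step estimate.} By Assumption~\ref{assumption:symmetric-op} and the positive definiteness of the compact set $\cK(p_*)\cup\partial X(p_*)$, I first extract spectral bounds $\mu_0 I \preceq H \preceq \lambda_1 I$ on this set. The perturbation Lemma~\ref{lem:peturbation} (which uses only upper semicontinuity) propagates these bounds, with slightly weaker constants, to every $H \in \cK(p)\cup\partial X(p)$ for $p$ in a neighborhood $U$ of $p_*$. Semismoothness of $X$ at $p_*$ with $q = p_k$, combined with $X(p_*) = 0$ and the isometry of parallel transport, then yields $\|X(p_k) + H_k v_k\| \leq C\, d(p_k,p_*)^{1+\nu}$ for every $H_k \in \cK(p_k)$. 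Subtracting this from the CG stopping rule $(H_k+\omega_k I)V_k + X(p_k) = r_k$ with $\|r_k\| \leq \|X(p_k)\|^{1+\bar\nu}$, and using local Lipschitzness $\|X(p_k)\| \leq L_X d(p_k,p_*)$ together with $\|(H_k+\omega_k I)^{-1}\| \leq 1/\mu_0$, gives $\|V_k - v_k\| = O(d(p_k,p_*)^{1+\rho})$. Consequently $\|V_k\| = (1+o(1))\,d(p_k,p_*)$ and $\langle -X(p_k), V_k\rangle \geq (\mu_0 - o(1))\|V_k\|^2$, so the sufficient descent condition \eqref{eqn:sufficient-descent} holds for large $k$.

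\emph{Rate and whole-sequence convergence.} Assuming the unit step is accepted ($m_k = 0$) for $k$ large, Lemma~\ref{lem:retraction-approx} controls $d(R_{p_k}V_k, \exp_{p_k}V_k) = O(\|V_k\|^2)$, while Lemma~\ref{lem:cosine-law} yields the approximate cosine estimate
\[
d(\exp_{p_k}V_k, p_*) = d(\exp_{p_k}V_k, \exp_{p_k}v_k) \leq \|V_k - v_k\| + \sqrt{K}\,\|V_k\|\|v_k\|.
\]
Together these imply $d(p_{k+1},p_*) = O(d(p_k,p_*)^{1+\rho})$. Whole-sequence convergence is then a standard attractor argument: once an iterate along the subsequence accumulating at $p_*$ (which exists by Theorem~\ref{thm:semismooth-LS-I-stationary}) lies in a ball on which the superlinear contraction forces $d(p_{k+1},p_*) \leq \tfrac{1}{2} d(p_k,p_*)$, the tail of the sequence remains in the ball and converges to $p_*$.

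\emph{Main obstacle: accepting the unit step.} The technically delicate step is to verify that $m_k = 0$ satisfies the Armijo test \eqref{linesearch} for $k$ large. My plan is to expand $\varphi(R_{p_k}V_k) - \varphi(p_k)$ by first replacing $R$ with $\exp$ using Lemma~\ref{lem:retraction-approx} (with $O(\|V_k\|^2)$ slack coming from the Lipschitz continuity of $\varphi$ on $U$) and then applying Lemma~\ref{lem:second-taylor} to the resulting geodesic, which produces a second-order term $\tfrac12\langle M_k V_k, V_k\rangle$ with $M_k \in \partial X$ at some intermediate point and $\mu_0 I \preceq M_k \preceq \lambda_1 I$. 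Unlike the smooth case, $M_k$ need not agree with the CG matrix $H_k$, so the textbook identity $\langle M_k V_k, V_k\rangle \approx \langle -X(p_k), V_k\rangle$ is no longer automatic. To bypass this, I plan to sandwich $\varphi(p_k)$ and $\varphi(R_{p_k}V_k)$ against $\varphi(p_*)$ using the strong geodesic convexity at $p_*$ implied by Theorem~\ref{thm:second-order-optimality} together with Lemma~\ref{lem:second-taylor}: the lower estimate $\varphi(p_k) - \varphi(p_*) \geq \tfrac{\mu_0}{2}d(p_k,p_*)^2(1+o(1))$ combined with the superlinear upper estimate $\varphi(R_{p_k}V_k) - \varphi(p_*) = O(d(p_k,p_*)^{2+2\rho})$ gives a definite descent $\varphi(p_k) - \varphi(R_{p_k}V_k) \gtrsim \tfrac{\mu_0}{2}d(p_k,p_*)^2$ which, compared against $\mu\langle -X(p_k), V_k\rangle \lesssim \mu\lambda_1 d(p_k,p_*)^2$, produces the Armijo inequality for $k$ large once $\mu$ is chosen sufficiently small relative to $\mu_0/\lambda_1$; otherwise LS-I falls back on the first-order safeguard $V_k = -X(p_k)$ whose global behavior is already covered by Theorem~\ref{thm:semismooth-LS-I-stationary}.
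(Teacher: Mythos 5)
Your localization step, the estimate $\|V_k-\exp^{-1}_{p_k}p_*\|=O\big(d(p_k,p_*)^{1+\rho}\big)$ (writing $\rho:=\min\{\nu,\bar\nu\}$), the verification of \eqref{eqn:sufficient-descent}, and the distance recursion via Lemma~\ref{lem:retraction-approx} and Lemma~\ref{lem:cosine-law} all match the paper's argument. The genuine gap is exactly the step you flag as delicate: acceptance of the unit step. Your sandwich bounds the actual decrease below by roughly $\tfrac{\mu_0}{2}\,d(p_k,p_*)^2$ and the Armijo requirement above by roughly $\mu\lambda_1\, d(p_k,p_*)^2$, so it yields \eqref{linesearch} with $m_k=0$ only when $\mu\lesssim \mu_0/(2\lambda_1)$, i.e.\ when the Armijo parameter is small relative to the (unknown) conditioning of $\partial\grad\varphi(p_*)$. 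But $\mu\in(0,1/2)$ is fixed in Algorithm~\ref{alg:semismooth} before the problem is seen, and the theorem asserts the rate for every such $\mu$; for an ill-conditioned limit ($\lambda_1\gg\mu_0$) your comparison simply does not decide the Armijo test. The proposed escape, ``otherwise LS-I falls back on the first-order safeguard,'' misreads LS-I: the switch to $V_k=-X(p_k)$ happens only when \eqref{eqn:sufficient-descent} fails, which you have already shown cannot happen for large $k$; if the Armijo test fails at $m_k=0$ the method merely backtracks, and a damped step $\delta^{m_k}V_k$ with $m_k\geq 1$ does not deliver $d(p_{k+1},p_*)=O\big(d(p_k,p_*)^{1+\rho}\big)$. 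So the superlinear rate is not established by your plan as written.

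The paper closes this gap with a sharper, Dennis--Mor\'e-type identity rather than two-sided spectral bounds: applying Lemma~\ref{lem:second-taylor} at $p_*$ to both $p_k$ and $R_{p_k}V_k$, using the semismoothness at $p_*$ together with Lemma~\ref{lem:B-differentiability} to get $P_{p_kp_*}X(p_k)=\tilde R_kU_k+o(\|U_k\|)$ with $U_k=\exp^{-1}_{p_*}p_k$, and using Lemma~\ref{lem:mani-cvg} to show $\|W_k\|=o(\|V_k\|)$ and $\|V_k\|/\|U_k\|\to 1$, one obtains $\varphi(R_{p_k}V_k)-\varphi(p_k)=\tfrac12\langle X(p_k),V_k\rangle+o(\|V_k\|^2)$. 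Combined with $\langle -X(p_k),V_k\rangle\geq\omega\|V_k\|^2$, the actual decrease equals half the predicted decrease up to $o(\|V_k\|^2)$, so the Armijo test passes with $m_k=0$ for every fixed $\mu<1/2$, independently of conditioning; this is the ingredient your argument must reproduce. A secondary structural point: the paper proves $p_k\to p_*$ \emph{before} any rate, via the growth $\varphi(q)-\varphi(p_*)\geq\omega\, d(q,p_*)^2$, the monotonicity of $\varphi(p_k)$, and $\|V_k\|\to 0$, whereas your attractor argument leans on the contraction and hence on the very unit-step acceptance in question; with your current Armijo argument even whole-sequence convergence would be secured only for favorable $\mu$.
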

\begin{proof}
	Since $\cK(p_*) \cup \partial X(p_*)$ is positive definite, by Lemma~\ref{lem:peturbation} and the upper-semicontinuity of $\cK$ and $\partial X$, we can find a neighborhood $U \ni p_*$ and constants $\omega, M > 0$ such that 
	$M \|V\|^2 \geq \inner{HV}{V} \geq 2\omega \|V\|^2$ for every $p \in U, H \in \cK(p) \cup \partial X(p), V \in T_p\cM$. %
	By Lemma~\ref{lem:cosine-law} and the semismooth condition of $X$, there exist $C_0 > 0$ and $r_0 \in (0, 1/2]$ such that $B_{r_0}(p_*) \subset U$, the unique shortest geodesic joining points in $B_{r_0}(p_*)$ exists, and 
	\begin{equation}
	    \norm{X(q)+H_q\exp^{-1}_qp_*} \leq C_0 d(p_*,q)^{1+\nu},~\forall q\in B_{r_0}(p_*),
	\end{equation}
	and $X$ is $L$-Lipschitz in $B_{r_0}(p_*)$.
	From Lemma~\ref{lem:retraction-approx} and the compactness of $\Omega$ (the set defined in Theorem~\ref{thm:semismooth-LS-I-stationary}), we can further assume that \eqref{eqn:retraction-approx} holds for $q \in \Omega$ and $\| v \| < r_0$,
	and denote the constant in \eqref{eqn:retraction-approx} as $C_1 > 0$.

	From Theorem~\ref{thm:semismooth-LS-I-stationary}, both $\|V_k\|$ and $\|X(p_k)\|$ converge to zero as $k \to \infty$.
    Given arbitrary $0<r<r_0$, there exists some $K_0>0$ such that  $\omega_k = \|X(p_k) \|^{\bar \nu} < 1/2$, $\|V_{k}\| < r/(2C_1)$, $\beta_1 \|V_k\|^p \leq \omega$  and $(2M + 1)^{1 + \bar \nu} \|V_k\|^{\bar\nu} \leq \omega$ for $k \geq K_0$. 
	From Lemma~\ref{lem:second-taylor}, we know $\varphi(q) - \varphi(p_*) \geq \omega d(q, p_*)^2$ whenever $q \in B_r(p_*)$.
	Since $p_*$ is an accumulation point, we can find $K_1\geq K_0>0$ such that $\varphi(p_{K_1}) - \varphi(p_*) < \omega r^2 /4$ and $p_{K_1} \in B_r(p_*)$.
	Note that from \eqref{eqn:retraction-approx}, $d(p_{K_1}, p_{K_1 + 1}) \leq C_1\delta^{m_{K_1}}\|V_{K_1}\| < r/2$.
	Then, $d(p_{K_1 + 1}, p_*) \leq d(p_{K_1}, p_*) + d(p_{K_1}, p_{K_1 + 1}) < r$, i.e., $p_{K_1 + 1} \in B_r(p_*)$.
	Since $\varphi(p_k)$ is non-increasing, we still have $\varphi(p_{K_1 + 1}) - \varphi(p_*) < \omega r^2 /4$. By induction, we know $\{ p_k \}_{k \geq K_1} \subset B_r(p_*)$ which implies that $p_k\to p_*$ as $k\to\infty$.
	Below we assume that $k \geq K_1$.

	By the positive definiteness of $\cK \cup \partial X$ near $p_*$ and the self-adjoint assumption of $\cK$ (Assumption~\ref{assumption:symmetric-op}), the CG method in Algorithm~\ref{alg:semismooth} is able to find a direction $V_{k}$ satisfying \eqref{eqn:newton-stopping}.
	Thus, we know
    $\norm{X(p_{k})} \leq \norm{H_{k} + \omega_{k} I}\norm{V_{k}} / (1 - \norm{X(p_{k})}^{\bar \nu}) \leq (2M + 1)\norm{V_{k}}$.
	Then, we obtain
	\begin{equation}
		\label{eqn:V_k-control}
		\begin{aligned}
        \inner{-X(p_{k})}{V_{k}} 
        &= \inner{(H_{k} + \omega_{k} I)V_{k}}{V_{k}} - \inner{(H_{k} + \omega_{k} I)V_{k} + X(p_{k})}{V_{k}} \\
        & \geq 2\omega\norm{V_{k}}^2  - \tilde \eta_{k} \norm{V_{k}}
         \geq 2\omega\norm{V_{k}}^2  - (2M + 1)^{1 + \bar \nu} \norm{V_{k}}^{2 + \bar \nu} \geq \omega \|V_{k}\|^2.
		\end{aligned} 
	\end{equation}
    Thus, the condition \eqref{eqn:sufficient-descent} holds as $\omega \geq \beta_1\|V_k\|^p$.
	Note that $\| (H_{k} + \omega_{k} I)^{-1} \| \leq (2\omega)^{-1}$, and $\|X(p_{k})\| \leq L d(p_{k}, p_*)$. Define $C := 2\max\{C_0(2\omega)^{-1} , L^{1 + \bar \nu} + (2\omega)^{-1}L\}$, by the definition of $\omega_k, \tilde \eta_k$, we have
	\[ \begin{aligned}
			\peq  \normz{V_{k} - \exp_{p_{k}}^{-1} p_*} & \leq \normz{ (H_{k} + \omega_{k} I)^{-1}X(p_{k}) +  \exp_{p_{k}}^{-1} p_*} 
			  +(2\omega)^{-1} \tilde \eta_{k}  \\
			  & \leq (2\omega)^{-1} \normz{X(p_{k}) +  (H_{k}+\omega_{k} I)\exp_{p_{k}}^{-1} p_*} + (2\omega)^{-1} \tilde \eta_{k}  \\
			  & \leq (2\omega)^{-1} 
			  \left ( \normz{X({p_{k}}) +  H_{k}\exp_{p_{k}}^{-1} p_*} 
			  + \omega_{k} \normz{ \exp_{p_{k}}^{-1} p_* } \right )
			  + (2\omega)^{-1} \tilde \eta_{k}  \\
			  & \leq (2\omega)^{-1} 
			  \left ( C_0 d({p_{k}}, p_*)^{1 + \nu}  
			  + \omega_{k} d({p_{k}}, p_*) \right )
			  + (2\omega)^{-1} \tilde \eta_{k}  
			  \leq C d({p_{k}}, p_*)^{1 + \min\left\{ \nu, \bar\nu \right\}}.
	\end{aligned} \]
	
	Using \eqref{eqn:V_k-control} we find $\|V_k\| \leq \|X(p_k)\|/\omega \leq Ld(p_k, p_*) / \omega$.
	From Lemma~\ref{lem:cosine-law}, we know 
	\begin{equation} \label{eqn:locally-superlinear}\begin{aligned}
		d(R_{p_k} V_k, p_*)^2 
		\overset{\eqref{eqn:retraction-approx}}&{=} O\big (d(\exp_{p_k} V_k, p_*)^2 \big ) \\
		&\leq O \big ( \normz{V_k - \exp_{p_k}^{-1} p_*}^2 + K \|V_k\|^2 d(p_k, p_*)^2 \big) 
		\leq O\big(d({p_{k}}, p_*)^{2 + 2\min\left\{ \nu, \bar\nu \right\}}\big),
	\end{aligned} \end{equation}
	where $K$ is the constant defined in Lemma~\ref{lem:cosine-law}. 
	To complete the proof, we need to show that for $\mu \in (0, 1/2)$ and sufficiently large $k$, the linesearch condition 
	\begin{equation}
		\label{eqn:line-search-cond}
		\varphi(R_{p_k}V_k) \leq \varphi(p_k) + \mu \inner{X(p_k)}{V_k},
    \end{equation}
    holds with $m_k=0$. 
	Define $U_k := \exp^{-1}_{p_*} p_k$, $W_k := \exp^{-1}_{p_*} R_{p_k}V_k$.
    Applying Lemma~\ref{lem:second-taylor} to $p_*$, $p_k$ and $R_{p_k}V_k$, we have
    \[ \begin{aligned}
        \varphi(p_k)
        = \varphi(p_*) + \frac{1}{2}\innerz{\tilde R_kU_k}{U_k}, \text{ and }
        \varphi(R_{p_k}V_k) 
        = \varphi(p_*) + \frac{1}{2}\innerz{\tilde M_kW_k}{W_k},
	\end{aligned} \]
	where $\tilde R_k = P_{p_rp_*}R_kP_{p_*p_r}$, 
	$\tilde M_k = P_{p_mp_*}M_kP_{p_*p_m}$, 
	$R_k$ is a Clarke generalized covariant derivative at $p_r := \exp_{p_*}(\theta_kU_k)$ for some $\theta_k \in (0, 1)$, 
    and $M_k$ is a Clarke generalized covariant derivative at $p_m := \exp_{p_*}(\xi_kW_k)$ for some $\xi_k \in (0, 1)$.
    Subtracting the above two equations, we know
    \[ \begin{aligned}
        \varphi(R_{p_k}V_k)  - \varphi(p_k)
        &= -\frac{1}{2}\innerz{\tilde R_kU_k}{U_k} + O\left ( \normz{W_k}^2 \right ).
	\end{aligned} \]
	Thus, we have
    \begin{equation}\label{eqn:to_be_estimate_decrease}
        \begin{aligned}
        \peq \varphi(R_{p_k}V_k)  - \varphi(p_k) - & \frac{1}{2} \inner{X(p_k)}{V_k}  = -\frac{1}{2}\innerz{\tilde R_kU_k}{U_k} - \frac{1}{2}\inner{X(p_k)}{V_k} + O\left ( \normz{W_k}^2 \right ) \\
        &= -\frac{1}{2}\innerz{V_k}{X(p_k) - P_{p_*p_k}\tilde R_kU_k} - \frac{1}{2}\innerz{P_{p_kp_*}V_k + U_k}{\tilde R_k U_k}  + O\left ( \normz{W_k}^2 \right ).
	\end{aligned} 
    \end{equation} 
	Since $P_{p_rp_*}\exp^{-1}_{p_r}p_* = -\exp^{-1}_{p_*}p_r = -\theta_kU_k$ and $X(p_*) = 0$,
	then by the semismoothness
	$P_{p_rp_*} X(p_r) - \theta_k \tilde R_kU_k = P_{p_rp_*}(X(p_r) + R_k \exp^{-1}_{p_r}p_*) - X(p_*) =  o( \normz{\theta_k U_k} )$.
	From Lemma~\ref{lem:B-differentiability}, we know
	$P_{p_rp_*}X(p_r) = \theta_k \nabla X(p_*; U_k) + o(\normz{\theta_kU_k})$
	and $P_{p_kp_*}X(p_k) = \nabla X(p_*; U_k) + o(\normz{U_k})$, which implies
	\begin{equation}\label{eqn:transport_X_p_k}
	    P_{p_kp_*}X(p_k) = \tilde R_k U_k + o(\normz{U_k}). 
	\end{equation}
    Moreover, applying $\lim\limits_{k \to \infty} p_k = p_*$ and \eqref{eqn:locally-superlinear} to Lemma~\ref{lem:mani-cvg}, we have
	\begin{equation}\label{eqn:compare_U_V}
	    \frac{\norm{V_k}}{\norm{U_k}} = \frac{\norm{V_k}}{d(p_k,p_*)}\to 1 \text{ and } \|W_k\| = d(R_{p_k}V_k,p_*) = o(\norm{V_k})\text{ as }k\to\infty.
	\end{equation}
	From Lemma~\ref{lem:cosine-law}, it holds
	\begin{equation}
		\begin{aligned}
		 \| V_k - \exp^{-1}_{p_k}p_* \|^2 
		 &\leq d(\exp_{p_k} V_k, p_*)^2 + K \| V_k \|^2 \|\exp^{-1}_{p_k}p_*\|^2     \\
		 \overset{\eqref{eqn:retraction-approx}}&{\leq} \|W_k\|^2 + o( \|V_k\|^2)
		 \overset{\eqref{eqn:compare_U_V}}{=}  o( \|V_k\|^2).
		\end{aligned}
	\end{equation}
	Then, the above three displays imply that
	\begin{equation}\label{eqn:V_P_inequality}
	\begin{aligned}
	      |\innerz{V_k}{X(p_k) - P_{p_*p_k}\tilde R_kU_k}| &\leq \norm{V_k}\normz{X(p_k)-P_{p_*p_k}\tilde R_k U_k}
		  \leq o(\norm{V_k}\norm{U_k}) = o\left(\normz{V_k}^2\right),\\
	     |\innerz{P_{p_kp_*}V_k + U_k}{\tilde R_kU_k}| 
    &\leq \normz{V_k - \exp^{-1}_{p_k}p_*}\normz{\tilde R_k U_k}  
	\leq o\left ( \normz{V_k}^2 \right ).
	\end{aligned}
	\end{equation}
	Combining \eqref{eqn:V_k-control}, \eqref{eqn:to_be_estimate_decrease} and \eqref{eqn:V_P_inequality}, we have
    \[ \begin{aligned}
        \peq \varphi(R_{p_k}V_k)  - \varphi(p_k) 
        &= \frac{1}{2} \inner{X(p_k)}{V_k} + o\left( \normz{V_k}^2 \right )  \\
		&\leq \mu\inner{X(p_k)}{V_k}-\omega \left ( \frac{1}{2} - \mu \right) \normz{V_k}^2 + o\left( \normz{V_k}^2 \right ). 
	\end{aligned} \]
	Thus, \eqref{eqn:line-search-cond} holds for sufficiently large $k$ and $m_k = 0$ in LS-I. \hfill $\Box$
\end{proof}

\subsection{The Semismooth Condition on Submanifolds} \label{sec:semismoothness-from-euc}
In this section, we assume that the manifold $\cM$ is embedded in an ambient space $\bar \cM$ and the vector field $X$ on $\cM$ is the restriction of a vector field $\bar X$ defined on $\bar \cM$.
Our goal is to answer a natural question: can the Lipschitzness, the directional differentiability and the semismoothness of $X$ be inherited from those of~$\bar X$?
Throughout this section, we make the following assumptions.
\begin{assumption}
	\label{assumption:submanifolds}
	The $n$-dimensional Riemannian manifold $\cM$ is a compact embedded submanifold of the $d$-dimensional Riemannian manifold $\bar \cM$, i.e., $\cM \subset \bar \cM$ and the inclusion map $\iota: \cM \hookrightarrow \bar \cM$ is a smooth injection, and the differential of $\iota$ is injective, $\cM$ and $\iota(\cM)$ are homeomorphism, and the Riemannian metric of $\cM$ is inherited from that of $\bar \cM$~(see, e.g., \cite[Chapter 8]{lee2018introduction}).
\end{assumption}
\begin{assumption}
	\label{assumption:extension-of-X}
	The vector field $X$ is the restriction of $\bar X$ on $\cM$, where $\bar X$ is a vector field on $\bar \cM$.
\end{assumption}
It is noted that the compactness of $\cM$ is not essential as the semismoothness involves only local properties. For simplicity, we impose the assumption that $\cM$ is compact.
To answer the aforementioned question, we need to introduce some concepts about submanifolds.
Notations in Table~\ref{Table:Notations} are defined for $\cM$, and we add a line over them to represent corresponding notations on $\bar \cM$, e.g., $\overline \exp_p$, $\bar P_\gamma^{0 \to t}$ and $\bar \nabla_X Y$ are the exponential map, the parallel transport and the Levi-Civita connection on $\bar \cM$, respectively.
We use $d_\cM$ and $d_{\bar \cM}$ to denote the distance on $\cM$ and $\bar \cM$, respectively.

According to \cite[p.~226]{lee2018introduction}, for every $p \in \cM$, the tangent space
$T_p \bar \cM$ can be decomposed into $T_p \cM \oplus (T_p\cM)^\perp$,
i.e., $v \in T_p \bar \cM$ can be uniquely written as $v_\top + v_\perp$, where $v_\top \in T_p \cM$ and $v_\perp \in (T_p\cM)^\perp$.
Let $Y, Z$ be smooth vector fields on $\cM$, and $\bar Y, \bar Z$ be smooth vector fields on $\bar \cM$ such that $Y = \bar Y|_\cM$ and $Z = \bar Z|_\cM$, then the \emph{second fundamental form of $\cM$} is defined as $\II(Y, Z) := \left( \bar \nabla_{\bar Y} \bar Z \right)_\perp$.
Proposition~8.1 in~\cite{lee2018introduction} shows that $\II(Y, Z)$ is independent of the extensions $\bar Y, \bar Z$; $\II(Y, Z)$ is $C^\infty(\cM)$-bilinear; and the value of $\II(Y, Z)$ at $p \in \cM$ depends only on $Y(p), Z(p)$. Thus, we could safely write $\II(v, w)$ for $v, w \in T_p \cM$, and $\II(Y, Z)$ is still valid even for non-smooth vector fields $Y, Z$.
Moreover, the Gauss formula~\cite[Theorem 8.2, Corollary 8.3]{lee2018introduction} relates Levi-Civita connections on $\cM$ and $\bar \cM$ as follows:
\begin{equation}
	\label{eqn:gauss-formula}
	\bar \nabla_Y Z = \nabla_YZ + \II(Y, Z) \quad \text{ and } \quad
	\bar \nabla_{\dot\gamma} Z = \nabla_{\dot\gamma} Z + \II(\dot\gamma, Z),
\end{equation}
where $\gamma: (-1, 1) \to \cM$ is a smooth curve.
Note that $\II(Y, Z)$ is orthogonal to the tangent space of $\cM$, the above equations imply that $\nabla_YZ$ is the projection of $\bar \nabla_YZ$ onto $T\cM$, i.e., 
\begin{equation}
	\label{eqn:gauss-formula-projection}
	\nabla_YZ = (\bar \nabla_Y Z)_\top.
\end{equation}

Before presenting the main theorem of this section, we need to introduce the directional differentiability in the Hadamard sense whose Euclidean counterpart can be found in \cite[Definition 2.45]{bonnans2000perturbation}.
\begin{definition}
	\label{def:Hadamard-directionally-differentiable}
	A vector field $X: \cM \to T\cM$ is \emph{directionally differentiable at $p \in \cM$ in the Hadamard sense} if it is directionally differentiable at $p$ and 
	\begin{equation}
		\label{eqn:Hadamard-directionally-differentiable}
		\nabla X(p; v)= \lim_{\substack{t \downarrow 0 \\ T_p \cM \ni v^\prime \to v}} \frac{1}{t} \big [ P_{\exp_p(tv^\prime),p}X(\exp_p(tv^\prime)) - X(p) \big ].
	\end{equation}
\end{definition}
It is clear that the above definition is a stronger version than the directional differentiability in Definition~\ref{def:directional-differentiable}.
In Euclidean spaces, the Lipschitzness and the directional differentiability can imply the directional differentiability in the Hadamard sense~\cite[Proposition 2.49]{bonnans2000perturbation}. However, it is unclear whether this result can be generalized to Riemannian manifolds.

Below is our main theorem whose proof is deferred to Appendix~\ref{app:proof-semismoothness-from-euc}.

\begin{theorem}
	\label{thm:semismoothness-from-euc}
		Let $\bar \cK: \bar \cM \rightrightarrows \cL(T\bar \cM)$ be an upper semicontinuous set-valued map such that 
		$\bar \cK(p)$ is a non-empty compact set for every $p \in \bar \cM$,
		and let $\cK: \cM \rightrightarrows \cL(T\cM)$ be a set-valued map satisfying the conditions in Definition~\ref{def:semismooth-general}.
		Suppose $X$ is continuous on $\cM$, then the following statements hold.
	\begin{enumerate}[(i)]
	    \item If $\bar X$ is locally Lipschitz at $p \in \cM$, then $X$ is locally Lipschitz at $p$ (see Definition~\ref{def:lipschitz}).
	    \item If $\bar X$ is directionally differentiable at $p \in \cM$ in the Hadamard sense, then $X$ is directionally differentiable at $p$ in the Hadamard sense, and the directional derivative is 
		\begin{equation}
			\label{eqn:directional-derivative-on-submanifolds}
			\nabla X(p; v) = \bar \nabla X(p; v) - \II(v, X(p)) = (\nabla \bar X(p; v))_\top. 
		\end{equation} 
		\item 
		Suppose there exists a neighborhood $V_p \subset \cM$ of $p$ such that one of the following assumptions holds:
		 \begin{enumerate}[(a)]
			 \item $X(p) = 0$, and 
			 for every $q \in V_p, H_q \in \cK(q), v \in T_q \cM$, there exists $\bar H_q \in \bar \cK(q)$ such that $(\bar H_qv)_\top = H_q v$.
			 \item $X$ is locally Lipschitz at $p$, and for every $q \in V_p, H_q \in \cK(q), v \in T_q \cM$, there exists $\bar H_q \in \bar \cK(q)$
			 such that $(\bar H_q v)_\top = H_q v$ and $(\bar H_q v)_\perp = \II(v, X(q))$.
		 \end{enumerate}
		 Fix $\mu \in [0, 1]$ and $C > 0$, if there exists $\delta > 0$ such that for every $q \in \cM$ with $d_{\bar \cM}(p, q) < \delta$ and every $\bar H_q \in \bar \cK(q)$, the inequality \eqref{eqn:semismooth-general} holds, i.e., 
		 \begin{equation} 
			\label{eqn:theorem-semismooth-ambient}
			\| \bar X(p) - \bar P_{qp} [ \bar X(q) + \bar H_q \overline \exp_q^{-1}p] \| \leq C d_{\bar \cM}(p, q)^{1 + \mu}. 
		 \end{equation} 
		 Then, there exist $\hat C > 0, \tilde \delta> 0$ such that for every $q \in \cM$ with $d_\cM(p, q) < \tilde \delta$ and every $H_q \in \cK(q)$, the following inequality holds.
		\begin{equation}
			\label{eqn:theorem-semismooth-submanifolds}
		  \| X(p) - P_{qp} [ X(q) +  H_q \exp_q^{-1}p] \| \leq \hat Cd_{\cM}(p, q)^{1 + \mu}. 
		\end{equation}
		Moreover, when $\mu \in [0, 1)$, the constant $\hat C$ can be chosen as $2C$.

		 \item 
		 Suppose either the assumption (iii.a) or (iii.b) is fulfilled, and $\bar X$ is Lipschitz and directionally differentiable in the Hadamard sense in a neighborhood $\bar U_p \subset \bar \cM$ of $p \in \cM$.
		If $\bar X$ is semismooth at $p$ with respect to $\bar \cK$, then $X$ is also semismooth at $p$ with respect to $\cK$.
		 Moreover, if $\bar X$ is semismooth at $p$ with order $\mu \in (0, 1]$ with respect to $\bar \cK$, then $X$ is semismooth at $p$ with order $\mu$ with respect to $\cK$.
	\end{enumerate}
\end{theorem}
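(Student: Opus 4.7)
My strategy is to handle the four parts in order, with the Gauss formula \eqref{eqn:gauss-formula} as the main bridge between intrinsic and ambient quantities. Under Assumption~\ref{assumption:submanifolds}, I will freely use two standard local facts near $p$: (a) the distances $d_\cM$ and $d_{\bar\cM}$ are locally comparable, i.e.\ there exist a neighborhood $V\ni p$ and $\kappa>0$ with $d_{\bar\cM}(x,y)\leq d_\cM(x,y)\leq \kappa\,d_{\bar\cM}(x,y)$ for all $x,y\in V$; and (b) for small $v\in T_q\cM$, $\overline\exp_q^{-1}(\exp_q v) - v$ has magnitude $O(\|v\|^2)$ with leading part normal to $\cM$, obtained by comparing the two geodesic equations via \eqref{eqn:gauss-formula}. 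A related consequence is that, for a geodesic $\gamma$ of $\cM$ and $v\in T_{\gamma(0)}\cM$, one has $(\bar P_\gamma^{0\to t}-P_\gamma^{0\to t})v = O(t\|v\|)$ with leading normal component expressible via $\II$.

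For part (i), I fix a geodesic $\gamma$ of $\cM$ joining $x,y$ and set $E(t) := P_\gamma^{0\to t}X(x) - \bar P_\gamma^{0\to t}X(x)$; \eqref{eqn:gauss-formula} gives $\bar\nabla_{\dot\gamma}E = \II(\dot\gamma, P_\gamma^{0\to t}X(x))$, so $\|E(1)\| = O(\ell(\gamma))$. Combining with local Lipschitzness of $\bar X$ and $X(y)=\bar X(y)\in T_y\cM$ yields the desired bound $\|P_\gamma^{0\to1}X(x)-X(y)\| \leq L\,\ell(\gamma)$. For part (ii), using (b) I write $\exp_p(tv^\prime) = \overline\exp_p(w(t))$ with $w(t)/t\to v$ as $t\downarrow 0$ and $v^\prime\to v$; Hadamard directional differentiability of $\bar X$ then gives $\bar X(\exp_p(tv^\prime)) = X(p)+t\,\bar\nabla X(p;v)+o(t)$, while swapping the ambient parallel transport for the intrinsic one along the same curve contributes $-t\,\II(v,X(p))+o(t)$ by the argument used in (i). Dividing by $t$ and passing to the limit gives \eqref{eqn:directional-derivative-on-submanifolds}.

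Part (iii) is the main technical step. Starting from \eqref{eqn:theorem-semismooth-ambient}, I project onto $T_p\cM$; since $\bar X(p) = X(p)\in T_p\cM$, this yields a bound of order $C\,d_{\bar\cM}(p,q)^{1+\mu}$ on $\|X(p)-(\bar P_{qp}\bar X(q))_\top - (\bar P_{qp}\bar H_q\overline\exp_q^{-1}p)_\top\|$. I then match this with the intrinsic expression $X(p)-P_{qp}X(q)-P_{qp}H_q\exp_q^{-1}p$ and estimate each discrepancy using (a), (b), and the expansion of $\bar P_{qp}-P_{qp}$. In case (iii.a), $X(p)=0$ and Lipschitzness from (i) give $\|X(q)\| = O(d_\cM(p,q))$, so every parallel-transport/exponential mismatch contributes $O(d_\cM(p,q)^2)$, absorbed into $O(d_\cM(p,q)^{1+\mu})$, while the hypothesis $(\bar H_q v)_\top = H_q v$ handles the operator term. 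In case (iii.b), the identity $(\bar H_q v)_\perp = \II(v,X(q))$ is tailored to cancel, under the tangential projection, exactly the leading normal component produced by $(\bar P_{qp}-P_{qp})X(q)$, leaving a residual of order $O(d_\cM(p,q)^{1+\mu})$. For $\mu<1$ the higher-order leftovers are of strictly smaller order, so $\hat C$ can be taken as $2C$.

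Part (iv) then assembles the three ingredients: (i) gives Lipschitzness of $X$ on a neighborhood of $p$, (ii) gives Hadamard directional differentiability there, and (iii) gives the semismoothness inequality required by Definition~\ref{def:semismooth-general} (at the appropriate order $\mu$, or with arbitrarily small constants replacing $Cd^{1+\mu}$ in the ordinary-semismoothness case). The main obstacle is case (iii.b): one must identify, within the ambient estimate, the precise $\II$-type normal corrections that $(\bar H_q v)_\perp = \II(v,X(q))$ is designed to neutralize, which requires careful leading-order Taylor bookkeeping of three distinct objects (the two parallel transports, the two inverse exponentials, and the action of $\bar H_q$ normal to $\cM$) via \eqref{eqn:gauss-formula}.
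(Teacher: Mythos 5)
Your plan follows essentially the same route as the paper's proof: the preliminary facts you invoke are precisely the paper's Lemma~\ref{lem:compare-transport} (along an $\cM$-geodesic $\gamma$, the intrinsic and ambient transports differ at first order by a term $t\,\II(\dot\gamma(t),P_\gamma^{0\to t}v)$ normal to $\cM$, with $O(t^2)$ remainder), Lemma~\ref{lem:diff-submanifold-retraction} ($\|\exp_q^{-1}p-\overline\exp_q^{-1}p\|=O(d_{\bar\cM}(p,q)^2)$), and the trivial inequality $d_{\bar\cM}\le d_\cM$ (only this direction of your fact (a) is needed). Parts (i), (ii) and (iv) then proceed as you describe, and your handling of (iii) -- use the compatibility $(\bar H_qv)_\top=H_qv$, absorb all geometric mismatches as $O(d^2)$, shrink $\tilde\delta$ to get $\hat C=2C$ when $\mu<1$ -- matches the paper's term-by-term estimate.

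There is, however, one concrete omission in your bookkeeping: three parallel transports are involved, not two. The ambient hypotheses (Hadamard directional differentiability and \eqref{eqn:theorem-semismooth-ambient}) transport along the $\bar\cM$-geodesic joining the two points, i.e.\ via $\bar P_{qp}$, whereas your Gauss-formula ODE comparison ``along the same curve'' lives on the $\cM$-geodesic and only compares $P_\gamma$ with $\bar P_\gamma$. You still need the estimate $\|\bar P_{pq}-\bar P_\gamma^{0\to\alpha}\|=O(\alpha^2)$ (ambient transport along the ambient geodesic versus along the $\cM$-geodesic between the same endpoints); the paper proves this separately in Lemma~\ref{lem:cycle-transport} and uses it both in (ii) and in (iii). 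The correction is second order, so no conclusion changes, but without it your transport swap does not connect to the hypotheses on $\bar X$. Two smaller points. First, in (iii.b) the cancellation is not really ``under the tangential projection'': if you project at the base point, the first-order $\II$-terms are annihilated individually because they are normal there, and only the bound $\|(\bar H_qv)_\perp\|=O(\|v\|)$ enters, whereas the exact identity $(\bar H_qv)_\perp=\II(v,X(q))$ is what makes the paper's un-projected decomposition (which pairs it, via the Lipschitz bound $\|P_{pq}X(p)-X(q)\|=O(\alpha)$, against the first-order normal discrepancy of the transports) close up exactly; either variant works, but keep the accounting consistent with the one you run. Second, in (iii.a) you may not invoke part (i) to get $\|X(q)\|=O(d)$, since part (iii) does not assume $\bar X$ (or $X$) locally Lipschitz; instead, as in the paper, bound $\|X(q)+H_q\exp_q^{-1}p\|\le\|\bar X(q)+\bar H_q\overline\exp_q^{-1}p\|+\|\bar H_q(\overline\exp_q^{-1}p-\exp_q^{-1}p)\|$ and use \eqref{eqn:theorem-semismooth-ambient} with $\bar X(p)=0$ together with the local boundedness of $\bar\cK$.
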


Next, we show that the Clarke generalized covariant derivative satisfies the assumption (iii.b), and illustrate how to verify the semismoothness condition in Theorem~\ref{thm:semismooth-LS-I-rate}. %

The following proposition relates the Clarke generalized covariant derivatives on $\cM$ and its ambient space $\bar \cM$.
As a special case when $\cM := \R^n \subset \R^{n+k} =: \bar \cM$, the second fundamental form $\II(u, v) \equiv 0$ and \eqref{eqn:clarke-inclusion} can be obtained from the corollary in \cite[p.~75]{clarke1990optimization}. 
The proof of this proposition is deferred to Appendix~\ref{app:proof-clarke-inclusion}.
\begin{proposition}
	\label{prop:clarke-inclusion}
	Let $p \in \cM$ and $v \in T_p \cM$, if $\bar X$ is locally Lipschitz at $p$, then
	\begin{equation}
	\label{eqn:clarke-inclusion}
	 \partial X(p)[v] \subseteq \bar \partial \bar X(p)[v] - \II(v, X(p)),
	\end{equation}
	where $\partial X(p)[v] := \{ Hv : H \in \partial X(p) \}$ and $\bar \partial \bar X(p)[v] = \{\bar H v:\bar H\in\partial \bar X(p)\}$.
\end{proposition}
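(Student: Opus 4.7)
The plan is to reduce the statement to an inclusion on the B-derivatives by convexity, then lift each element of $\partial_B X(p)$ to an element of $\bar\partial\bar X(p)$ via the Gauss formula and an approximation argument exploiting the local Lipschitzness of $\bar X$. First, since $\bar\partial\bar X(p) = \mathrm{co}(\bar\partial_B\bar X(p))$, the set $\bar\partial\bar X(p)[v]$ is convex, and translating by the fixed vector $-\II(v,X(p))$ preserves convexity; because $\partial X(p) = \mathrm{co}(\partial_B X(p))$, it suffices to prove $\partial_B X(p)[v] \subseteq \bar\partial\bar X(p)[v] - \II(v,X(p))$. So fix $H \in \partial_B X(p)$ and a defining sequence $\{p_k\} \subset \cD_X$ with $p_k \to p$ and $\nabla X(p_k)[P_{pp_k}v] \to P_{pp_k}Hv$, and write $v_k := P_{pp_k}v \in T_{p_k}\cM$.

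The central step is to produce, for each $k$, an element $\bar H_k \in \bar\partial\bar X(p_k)$ satisfying the Gauss-type identity $\bar H_k v_k = \nabla X(p_k)\, v_k + \II(v_k,X(p_k))$. To obtain such $\bar H_k$, I would approximate $p_k$ from within $\cM$ by a sequence $\{q_{k,j}\}_j \subset \cM \cap \cD_{\bar X}$; at each $q_{k,j}$ the classical Gauss formula
\begin{equation*}
\bar\nabla_w\bar X(q_{k,j}) = \nabla_w X(q_{k,j}) + \II(w,X(q_{k,j})), \quad w \in T_{q_{k,j}}\cM,
\end{equation*}
is valid, local Lipschitzness of $\bar X$ bounds $\{\bar\nabla\bar X(q_{k,j})\}_j$, and upper semicontinuity together with a subsequence in $j$ provides a limit $\bar H_k \in \bar\partial_B\bar X(p_k)$; continuity of $\II$, $X$, and the parallel transports then promotes the Gauss identity through the limit to the displayed equality at $v_k$.

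Finally, local boundedness and upper semicontinuity of $\bar\partial\bar X$ at $p$ deliver a further subsequence in $k$ along which $\bar P_{p_kp}\bar H_k\bar P_{pp_k} \to \bar H$ for some $\bar H \in \bar\partial\bar X(p)$, while comparing the $\cM$- and $\bar\cM$-parallel transports of $v$ along the $\cM$-geodesic from $p$ to $p_k$ (whose discrepancy is $\II$-driven and vanishes in the limit) gives $\bar P_{p_kp}v_k \to v$; passing to the limit in the Gauss-type identity yields $\bar H v = Hv + \II(v,X(p))$, so $Hv = \bar H v - \II(v,X(p)) \in \bar\partial\bar X(p)[v] - \II(v,X(p))$, as required. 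The main obstacle will be the construction of $\bar H_k$: differentiability of $X$ on $\cM$ at $p_k$ is strictly weaker than differentiability of $\bar X$ on $\bar\cM$, so the Gauss formula is not directly available at $p_k$ itself; and since $\cM$ typically has $\bar\cM$-measure zero, one cannot invoke Rademacher's theorem to guarantee a dense supply of approximants $q_{k,j} \in \cM \cap \cD_{\bar X}$, so this step likely requires either a tubular-neighborhood mollification of $\bar X$ or an appropriate Clarke selection in place of $\bar\nabla\bar X(q_{k,j})$.
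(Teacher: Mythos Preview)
Your self-identified obstacle is indeed fatal to the approach as written, and neither of your suggested fixes closes it. Because $\cM$ has $\bar\cM$-measure zero, Rademacher's theorem gives no information about $\cM\cap\cD_{\bar X}$; that set could be empty. Even if approximants $q_{k,j}\in\cM\cap\cD_{\bar X}$ existed, passing to the limit in the Gauss identity would produce some $L_k\in\partial_B X(p_k)$ satisfying $(\bar H_k w)_\top=L_k w$, but there is no mechanism forcing $L_k=\nabla X(p_k)$: differentiability of $X$ at $p_k$ does not make $\nabla X$ continuous there, so the specific element $\nabla X(p_k)$ you began with is lost. A mollification would replace $\bar\partial\bar X$ by something smoother but would no longer give elements of $\bar\partial\bar X(p)$; a Clarke selection at $p_k$ just restates the proposition one level down.

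The paper sidesteps the whole difficulty by a dual, scalar-valued argument. Fix $\xi\in T_p\cM$ and choose a smooth vector field $V$ on $\bar\cM$ with $V_\top(q)=P_{pq}\xi$ near $p$; set $\bar f_V=\langle\bar X,V\rangle$ and $f_V=\bar f_V\circ\iota$. The Clarke chain rule for composition with the smooth embedding $\iota$ gives $\partial f_V(p)\subseteq\bar\partial\bar f_V(p)|_{T_p\cM}$, while a product rule (proved by pulling back to normal coordinates) gives $\partial f_V(p)[v]=\langle\partial X(p)[v],V(p)\rangle$ and $\bar\partial\bar f_V(p)[v]=\langle\bar\partial\bar X(p)[v],V(p)\rangle+\langle\bar X(p),\bar\nabla_vV(p)\rangle$. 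The Weingarten relation $\langle\bar X(p),\bar\nabla_vV_\perp(p)\rangle=-\langle\II(v,X(p)),V_\perp(p)\rangle$ together with $\nabla_vV_\top(p)=0$ turns the last term into $-\langle\II(v,X(p)),V(p)\rangle$, yielding $\langle\partial X(p)[v],V(p)\rangle\subseteq\langle\bar\partial\bar X(p)[v]-\II(v,X(p)),V(p)\rangle$. Since $V(p)$ is arbitrary in $T_p\bar\cM$ and both sides are compact and convex, separation gives the inclusion. The key advantage is that scalar Clarke calculus transfers through charts without ever asking $\bar X$ to be differentiable at a prescribed point of $\cM$.
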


\begin{corollary}
	\label{cor:semismoothness-from-euc}
	Suppose that $\bar X$ is Lipschitz and directionally differentiable in the Hadamard sense in a neighborhood $\bar U_p \subset \bar \cM$ of $p \in \cM$.
	If $\bar X$ is semismooth at $p$ with respect to $\bar \partial \bar X(p)$, then $X$ is also semismooth at $p$ with respect to $\partial X(p)$. 
	Moreover, if $\bar X$ is semismooth at $p$ with order $\mu \in (0, 1]$ with respect to $\bar \partial \bar X(p)$, 
	then $X$ is also semismooth at $p$ with order $\mu \in (0, 1]$ with respect to $\partial X(p)$.
\end{corollary}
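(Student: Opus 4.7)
The plan is to derive this corollary as a direct application of Theorem~\ref{thm:semismoothness-from-euc}~(iv), taking $\cK := \partial X$ on $\cM$ and $\bar \cK := \bar \partial \bar X$ on $\bar \cM$. The hypotheses on $\bar X$ (Lipschitz and Hadamard directionally differentiable on a neighborhood $\bar U_p$, and semismooth at $p$ with respect to $\bar \partial \bar X$) are given by assumption; what remains to check are the two standing requirements of that theorem: first, that $\cK$ and $\bar \cK$ have the regularity properties demanded by Definition~\ref{def:semismooth-general} (non-empty compact values and upper semicontinuity); second, that the compatibility assumption (iii.b) holds on a suitable neighborhood of $p$ in $\cM$.

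The first requirement is immediate. The paragraph following Definition~\ref{def:clarke-generalized-covariant} records, citing Proposition~3.1 of~\cite{de2018newton}, that the Clarke generalized covariant derivative of a locally Lipschitz vector field is locally bounded, upper semicontinuous, and takes non-empty compact convex values. Since $\bar X$ is Lipschitz on $\bar U_p$, Theorem~\ref{thm:semismoothness-from-euc}~(i) gives that $X$ is locally Lipschitz at every point of $V_p := \cM \cap \bar U_p$, so both $\partial X$ on $V_p$ and $\bar \partial \bar X$ on $\bar U_p$ satisfy the required regularity conditions.

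The substantive step is the verification of (iii.b), and this is precisely where Proposition~\ref{prop:clarke-inclusion} does the work. Fix any $q \in V_p$, any $H_q \in \partial X(q)$, and any $v \in T_q \cM$. Applying Proposition~\ref{prop:clarke-inclusion} at $q$ (valid because $\bar X$ is locally Lipschitz there) yields some $\bar H_q \in \bar \partial \bar X(q)$ with $H_q v = \bar H_q v - \II(v, X(q))$, equivalently $\bar H_q v = H_q v + \II(v, X(q))$. Since $H_q v \in T_q \cM$ and $\II(v, X(q)) \in (T_q\cM)^\perp$, the orthogonal decomposition of $\bar H_q v$ in $T_q \bar \cM = T_q \cM \oplus (T_q \cM)^\perp$ reads exactly $(\bar H_q v)_\top = H_q v$ and $(\bar H_q v)_\perp = \II(v, X(q))$, which is the content of assumption (iii.b) of Theorem~\ref{thm:semismoothness-from-euc}.

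With both prerequisites in hand, invoking Theorem~\ref{thm:semismoothness-from-euc}~(iv) transfers the semismoothness of $\bar X$ at $p$ with respect to $\bar \partial \bar X$ into the semismoothness of $X$ at $p$ with respect to $\partial X$, and preserves the order $\mu$ in the quantitative statement. I do not anticipate any serious obstacle here: the corollary is essentially a bookkeeping consequence of the two preceding results. The only subtlety worth flagging is that (iii.b) must be verified on an entire neighborhood $V_p$ rather than at the single point $p$; this is ensured because the Lipschitzness of $\bar X$ on $\bar U_p$ lets Proposition~\ref{prop:clarke-inclusion} be applied pointwise at every $q \in V_p$.
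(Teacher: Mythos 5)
Your proposal is correct and matches the paper's proof, which likewise verifies assumption (iii.b) of Theorem~\ref{thm:semismoothness-from-euc} via Proposition~\ref{prop:clarke-inclusion} and then invokes part (iv) of that theorem. Your additional checks (regularity of $\partial X$ and $\bar\partial\bar X$, and applying Proposition~\ref{prop:clarke-inclusion} pointwise on a neighborhood) are sound bookkeeping that the paper leaves implicit.
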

\begin{proof}
	Proposition~\ref{prop:clarke-inclusion} shows that the assumption (iii.b) in Theorem~\ref{thm:semismoothness-from-euc} is satisfied, then the conclusion follows from Theorem~\ref{thm:semismoothness-from-euc}~(iv).
	\hfill $\Box$
\end{proof}
\begin{remark}
	When $X = X_1 + X_2 + X_3$ as in Assumption~\ref{assumption:vector-field-decomposition}, where $X_2, X_3$ are locally Lipschitz and $X_1$ is smooth. 
	Suppose $X_2, X_3$ can be extended to $\bar X_2, \bar X_3$ in a neighborhood of $\cM$ in $\bar \cM$.
	If we could apply Corollary~\ref{cor:semismoothness-from-euc} to show that $X_2, X_3$ are semismooth with respect to the Clarke generalized covariant derivatives, then Proposition~\ref{prop:semismooth-sum} would imply that $X_2 + X_3$ is semismooth with respect to $\partial X_2 + \partial X_3$.
\end{remark}
\begin{example} \label{example:semismoothness-of-CM}
	In the CM problem~\eqref{eqn:cm-problem}, the function $L_k$ used in Algorithm~\ref{alg:semismooth} (see \eqref{eqn:Lk-grad-cm} for its gradient) can be naturally extended to the Euclidean space $\R^{n \times r}$.
	We denote this extension by $\bar L_k$.
	It is the sum of the smooth function $\tr(X^\trans H X)$ and the Moreau-Yosida regularization of the $\| \cdot \|_1$-norm whose Euclidean gradient is a piecewise linear map.
	It is well-known that a piecewise linear map (in Euclidean spaces) is strongly semismooth~(see, e.g., \cite[Proposition 7.4.7]{facchinei2007finite}), 
	and hence $\nabla \bar L_k$ is also strongly semismooth (with respect to the Clarke generalized derivative).
	Since $X(Q) := \grad L_k(Q) = \Proj_Q \bar \nabla \bar L_k$ for $Q \in \St{n, r}$, and $\Proj_Q(V) = V - Q (Q^\trans V + V^\trans Q) / 2$~(see, e.g., \cite{hu2019brief}), then $X$ can also be extended to a vector-valued map $\bar X$ in $\R^{n\times r}$.
	Note that $\bar X$ is the composition of a smooth map and a strongly semismooth map, so $\bar X$ is also strongly semismooth.
	By Corollary~\ref{cor:semismoothness-from-euc}, we know that $\grad L_k = X$ is strongly semismooth with respect to $\partial \grad L_k(p_*)$.
	Indeed, the SPCA problem~\eqref{eqn:spca-problem} and the CM problem have the same non-smooth term, so $\grad L_k$ for \eqref{eqn:spca-problem} is also strongly semismooth.
	The constrained SPCA problem~\eqref{eqn:cons-spca-problem} has an additional non-smooth term, i.e., the indicator function of a convex set. The Euclidean gradient of the Moreau-Yosida regularization of this term is also strongly semismooth, and therefore $\grad L_k$ for \eqref{eqn:cons-spca-problem} is strongly semismooth.
\end{example}

\subsection{Calculations of Clarke Generalized Covariant Derivatives} \label{sec:calc-clarke}

In this section, we discuss the approach of calculating the Clarke generalized covariant derivative of a locally Lipschitz vector field $X$, which is difficult in general manifolds.
Here, we follow Assumption~\ref{assumption:submanifolds} and \ref{assumption:extension-of-X}, and further assume that $\bar \cM$ is the Euclidean space $\R^d$.

If $X$ is differentiable at $p \in \cM$,  it is known from \eqref{eqn:gauss-formula-projection} (see also \cite{hu2019brief}) that for every $v \in T_p\cM$,
\begin{equation}
\label{eqn:hess-euc}
\nabla X(p)[v] = \Proj_{p}(\bar \nabla \bar X(p)[v]),
\end{equation}
where $\Proj_p$ is the projection onto $T_p\cM$.
If $X$ is not differentiable at $p\in\cM$, Proposition~\ref{prop:clarke-inclusion} shows that 
	$\partial X(p)[v] \subseteq \Proj_p(\bar \partial \bar X(p)[v])$.
 As illustrated by the following example, the inclusion relationship can be strict, and thus we need further discussion for finding an element in $\partial X(p)$.
\begin{example}
	\label{example:clarke-s1}
Consider the manifold $\mathbb S^1 = \{ (x, y) \in \R^2 : x^2 + y^2 = 1 \} \subset \R^2$ and the vector-valued function $Y: \R^2 \to \R^2$ defined by
$Y(x, y) = \big( 2| x - 1/ \sqrt 2 |,\ 4| y - 1/\sqrt 2 | \big)^\trans$.
Let $p_0 = (1/\sqrt 2, 1/\sqrt 2)^\trans \in \mathbb S^1$. Note that the projection onto the tangent space $T_p \mathbb S^1$ is $I - pp^\trans$, we can define the vector field $X: \mathbb S^1 \to T\mathbb S^1$ by $X(p) = (I - pp^\trans)Y(p) \in T_p \mathbb S^1$ for $p \in \mathbb S^1$. 
In this case, $\bar X:\R^2\to\R^2$ is also $\bar X(p) = (I-pp^\trans)Y(p)$ for $p \in \R^2$.
If $\bar X$ is differentiable  at $p\in\mathbb S^1$, we know from \eqref{eqn:hess-euc} that
\begin{equation}
	\label{eqn:S1-euc-proj}
	\nabla X(p) = (I - pp^\trans) \bar \nabla \bar X(p) = (I - pp^\trans) (\bar \nabla Y(p) - p^\trans Y(p) I).
\end{equation} 
Moreover, by a direct calculation, it is known that 
\begin{equation}
	\label{eqn:S1-euc-proj-p0}
\Proj_{p_0}(\bar \partial\bar X(p_0)) =  (I - p_0p_0^\trans) \bar \partial \bar X(p_0)  = \mathrm{co}\left \{  
	\pm \begin{pmatrix}
		1 & -2 \\ -1 & 2
	\end{pmatrix},\
	\pm \begin{pmatrix}
		1 & 2 \\ -1 & -2
	\end{pmatrix}
\right \},
\end{equation}
where ``$\mathrm{co}$'' is the convex hull. Since $T_{p_0} \mathbb S^1 = \{ v \in \R^2 : v^\trans p_0 = 0 \} = \{ (t, -t) \in \R^2 : t \in \R \}$, 
then a linear operator on $T_{p_0}\mathbb S^1$ can be uniquely determined by a real number $a \in \R$, i.e., $(t, -t) \mapsto a(t, -t)$. As $\Proj_{p_0}(\partial\bar X(p_0))\subset \cL(T_{p_0}\mathbb S^1)$, we know the set $\Proj_{p_0}(\partial\bar X(p_0))$ is equivalent to $\mathrm{co}\{ \pm 3, \pm 1 \} = [-3, 3]$.

Next, we consider $\partial X(p_0)$.
Let $p_k \in \mathbb S^1$ be a sequence converging to $p_0$. 
We may assume $p_k = (\cos \theta_k, \sin \theta_k)$, where $\theta_k \to \pi /4$ and $\theta_k \neq \pi /4$. 
The derivative of $X(p_k)$ can be given by \eqref{eqn:S1-euc-proj}.
The second term in \eqref{eqn:S1-euc-proj} converges to $0$ since $Y(p_0) = 0$.
Observe that
$\bar \nabla Y(p_k) = \mathrm{diag}(2, -4)$ when $\theta_k > \pi / 4$ and $\bar \nabla Y(p_k) = \mathrm{diag}(-2, 4)$ when $\theta_k < \pi /4$.
Then, we know the Clarke generalized covariant derivative of $X$ at $p_0$ is
\[ \begin{aligned}
	 \partial X(p_0) 
= \mathrm{co} \left \{ 
	\pm \begin{pmatrix}
		1 & 2 \\ -1 & -2
	\end{pmatrix} \right \},
\end{aligned}\]
which is equivalent to $\mathrm{co} \{ \pm 1 \} = [-1, 1]$. It is clear that 
$[-1,1]\subset[-3,3]$, and thus $\partial X(p_0) \subset \Proj_p(\bar \partial \bar X(p))$.
This strict inclusion is mainly because we can only find the two tangent directions on $\mathbb S^1$ converging to $p_0$, while the other two normal directions are available only in $\R^2$. 
\end{example}
 We make the next assumption on the vector field $X$, which covers the problems in our experiments.
\begin{assumption}\label{ass:vector_field}
	For any  $p = (p_1, \dots, p_d) \in \cM$, the vector field $X(p) := F(p, f_1(p_1),\dots, f_d(p_d))\allowbreak \in T_p\cM$ is locally Lipschitz, where
	\begin{enumerate}[(i)]
		\item $F: \R^d \times \R^{n_1} \times \dots \times \R^{n_d} \to T_p \cM$ is continuously differentiable;
		\item $f_j: \R \to \R^{n_{j}}$, $j=1,\ldots,d$ and the non-differentiable points of $f_j$ are isolated, i.e., for any point $q\in\R$, there exists some $\delta>0$ such that $f_j$ is continuously differentiable on $(q-\delta,q+\delta)\backslash\{q\}$;
		\item The left and right derivatives of $f_j$ exist.
	\end{enumerate} 
\end{assumption}
Based on the above assumptions, the next lemma finds an element in $\partial X(q)$ by choosing a proper path that converges to the point $q \in \cM$.
\begin{lemma}
	\label{lemma:calc-clarke}
	Fix $q \in \cM$.
	Suppose that Assumption~\ref{ass:vector_field} holds, and $\{q^{(n)}\}\subset\cM$ is a sequence converging to $q$ such that for each $j\in[d]$, 
	it holds that either $q_j^{(n)} > q_j$ for all $n \in \N$, or $q_j^{(n)}< q_j$ for all $n \in \N$.
	Then, we have 
	$\Proj_q \circ dF|_q \circ (\mathrm{id}_{\R^d}, r_1, r_2, \dots, r_d) \in \partial X(q)$,
	where $dF|_q$ is the differential of $F: \R^{d + \sum_{i=1}^d n_i} \to \R^d$ at $(q, f_1(q_1), \dots, f_d(q_d))$, and $r_j: T_q \cM \to \R^{n_j}$ is a linear operator such that
	\begin{equation}
	r_j(v)  = \begin{cases}
	v_j \lim\limits_{t \downarrow q_j} f^\prime_j(t), & \text{ if } q^{(n)}_j > q_j,\\
	v_j \lim\limits_{t \uparrow q_j} f^\prime_j(t), & \text { if } q^{(n)}_j < q_j.
	\end{cases}
	\end{equation}
\end{lemma}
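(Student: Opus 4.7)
The plan is to show directly that the given operator belongs to the B-derivative $\partial_B X(q)$, which lies inside the Clarke generalized covariant derivative $\partial X(q)$ by Definition~\ref{def:clarke-generalized-covariant}. The sequence of differentiable points that witnesses this membership will simply be (a tail of) the given $\{q^{(n)}\}$.

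First I would reduce to a subsequence along which $X$ is differentiable at every $q^{(n)}$. By Assumption~\ref{ass:vector_field}(ii) each $f_j$ has only isolated non-differentiable points, so there is some $\delta_j>0$ such that $f_j$ is $C^1$ on $(q_j-\delta_j,q_j+\delta_j)\setminus\{q_j\}$. The hypothesis that $q^{(n)}_j$ approaches $q_j$ strictly from one side means that, after discarding finitely many indices, $q^{(n)}_j$ lies in this punctured neighborhood for every $j$, hence $f_j'(q^{(n)}_j)$ exists. Combined with the $C^1$-smoothness of $F$, this gives differentiability of $X$ at each such $q^{(n)}$, so $\{q^{(n)}\}\subset \cD_X$ for large $n$.

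Next I would compute $\nabla X(q^{(n)})$ explicitly. Because $\bar \cM=\R^d$, formula \eqref{eqn:hess-euc} gives, for $w\in T_{q^{(n)}}\cM$,
\[
\nabla X(q^{(n)})[w] \;=\; \Proj_{q^{(n)}}\bigl(\bar\nabla \bar X(q^{(n)})[w]\bigr) \;=\; \Proj_{q^{(n)}}\,dF|_{q^{(n)}}\!\bigl(w,\, f_1'(q^{(n)}_1)\,w_1,\,\dots,\,f_d'(q^{(n)}_d)\,w_d\bigr),
\]
where the second equality is the Euclidean chain rule applied to $\bar X(p)=F(p,f_1(p_1),\dots,f_d(p_d))$. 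Fix $v\in T_q\cM$ and set $v^{(n)}:=P_{qq^{(n)}}v\in T_{q^{(n)}}\cM$. Since $\cM$ is an embedded submanifold of $\R^d$ and parallel transport depends continuously on its endpoints, $v^{(n)}\to v$ in $\R^d$, and in particular $v^{(n)}_j\to v_j$. By Assumption~\ref{ass:vector_field}(iii) and the monotonicity hypothesis, $f_j'(q^{(n)}_j)$ converges to the one-sided limit prescribed in the definition of $r_j$, so $f_j'(q^{(n)}_j)\,v^{(n)}_j\to r_j(v)$. Using the continuity of $dF$ and of the projection $\Proj_p$ in $p$ (a standard property for embedded submanifolds), I obtain
\[
\nabla X(q^{(n)})[v^{(n)}] \;\longrightarrow\; \Proj_q \, dF|_q\!\bigl(v,r_1(v),\dots,r_d(v)\bigr) \;=\; H v.
\]
Since $P_{qq^{(n)}}Hv \to Hv$ in $\R^d$ as well, the convergence criterion $\|\nabla X(q^{(n)})[P_{qq^{(n)}}v]-P_{qq^{(n)}}Hv\|\to 0$ in Definition~\ref{def:clarke-generalized-covariant} holds for every $v\in T_q\cM$. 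Therefore $H\in\partial_B X(q)\subseteq\partial X(q)$.

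The conceptual content of the argument is straightforward once the right sequence is in hand; the only points requiring care are (a) extracting the ``eventually $C^1$'' subsequence, which uses isolation of non-differentiable points, and (b) matching the sided limits of $f_j'$ correctly, which is exactly what the coordinate-wise one-sided monotonicity of $\{q^{(n)}\}$ ensures. I expect the main subtlety in the write-up to be keeping track of vectors that live in different tangent spaces $T_{q^{(n)}}\cM$ while carrying out the limit; working in the ambient Euclidean space and invoking continuity of both $\Proj_p$ and the parallel transport map along short geodesics absorbs this difficulty cleanly.
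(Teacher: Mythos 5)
Your argument is correct and follows exactly the route the paper intends: the paper's proof is the one-line remark that the lemma is a direct consequence of Definition~\ref{def:clarke-generalized-covariant} and \eqref{eqn:hess-euc}, and your write-up simply fills in those details — using the given sequence (eventually consisting of differentiable points by the isolation hypothesis) as the witness for membership in $\partial_B X(q)$, computing $\nabla X(q^{(n)})$ via the projection formula and the Euclidean chain rule, and passing to the limit with the one-sided convergence of $f_j'$ and continuity of $dF$, $\Proj_p$ and the parallel transport. No gaps beyond the level of detail the paper itself suppresses.
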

\begin{proof}
	This is a direct consequence of Definition~\ref{def:clarke-generalized-covariant} and \eqref{eqn:hess-euc}.
	\hfill $\Box$
\end{proof}

It is noted that the existence of the sequence $\{q^{(n)}\}$ depends on the manifold. The next theorem gives a construction of such a sequence on the Stiefel manifold, which relies on the following assumption.

\begin{assumption}\label{ass:projection_to_tangent}
    Let $Q \in \St{n, r}$ and $V  \in T_Q\St{n, r}$. Then, for all $i \in [n], j \in [r]$, it holds that $Q_{ij}\in\{\pm1\}$ whenever $V_{ij}=0$.
\end{assumption}

\begin{theorem}
	\label{thm:clarke-stiefel}
	Let $Q \in \St{n, r}$, $V\in T_Q\St{n, r}$, and $X(P) = F( P, f_{11}(P_{11}), f_{12}(P_{12}), \dots, f_{nr}(P_{nr}))$ be a locally Lipschitz vector field on $\St{n,r}$ such that Assumption~\ref{ass:vector_field} and \ref{ass:projection_to_tangent} hold.
	Then, 
	    $\Proj_Q \circ dF|_Q \circ (\mathrm{id}_{\R^{n\times r}}, H_{11}, \dots, H_{nr}) \in \partial X(Q)$,
	where $H_{ij}: T_Q\St{n, r} \to \R^{n_{ij}}$ is a linear map satisfying $H_{ij}(W) = W_{ij} D_{ij}$ and 
	\[ D_{ij} =  \begin{cases}
	\lim_{t \downarrow Q_{ij}} f^\prime_{ij}(t), & V_{ij} > 0 \text{, or } V_{ij} = 0 \text{ but } Q_{ij} = -1,  \\
	\lim_{t \uparrow Q_{ij}} f^\prime_{ij}(t), & V_{ij} < 0 \text{, or } V_{ij} = 0 \text{ but } Q_{ij} = 1.
	\end{cases} \]
\end{theorem}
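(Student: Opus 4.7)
The plan is to reduce the theorem to Lemma~\ref{lemma:calc-clarke} by constructing a single sequence $\{Q^{(n)}\}\subset\St{n,r}$ converging to $Q$ whose components $Q^{(n)}_{ij}$ approach $Q_{ij}$ from precisely the one-sided direction that picks out the prescribed derivative $D_{ij}$. The natural candidate is $Q^{(n)}:=R_Q(V/n)$ for any smooth retraction $R$ on the Stiefel manifold (e.g., polar or QR). By Definition~\ref{def:retraction}, $Q^{(n)} = Q + V/n + O(1/n^2)$, so whenever $V_{ij}\neq 0$ the sign of $Q^{(n)}_{ij} - Q_{ij}$ agrees with the sign of $V_{ij}$ for all large $n$. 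This immediately handles the two ``$V_{ij}\neq 0$'' branches in the definition of $D_{ij}$.

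The main obstacle is the case $V_{ij} = 0$, where the first-order term is trivial and one must extract strict one-sided convergence from the manifold constraint itself. By Assumption~\ref{ass:projection_to_tangent}, $Q_{ij}\in\{\pm1\}$; assume $Q_{ij} = 1$ (the $-1$ case is symmetric). Then $\|Q_{\cdot j}\|_2 = 1$ forces $Q_{\cdot j} = e_i$, and since $Q^{(n)}$ lies on $\St{n,r}$ we have $(Q^{(n)}_{ij})^2 \le 1 - \sum_{k\neq i}(Q^{(n)}_{kj})^2 \le 1$. The required strict inequality $Q^{(n)}_{ij} < 1$ is therefore equivalent to exhibiting some $k\neq i$ with $Q^{(n)}_{kj}\neq 0$. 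The key observation I would use is that the $j$-th column of $V$ cannot vanish identically: if $V_{kj} = 0$ for every $k\in[n]$, Assumption~\ref{ass:projection_to_tangent} would force $Q_{kj}\in\{\pm1\}$ for every $k$, giving $\sum_k Q_{kj}^2 \ge n$ and contradicting $\|Q_{\cdot j}\|_2 = 1$ whenever $n \ge 2$ (the case $n=1$ is trivial since $\St{1,1}=\{\pm1\}$). So some $k\neq i$ has $V_{kj}\neq 0$, and the retraction expansion yields $Q^{(n)}_{kj} = V_{kj}/n + O(1/n^2)\neq 0$ for large $n$, hence $Q^{(n)}_{ij} < 1 = Q_{ij}$ for large $n$. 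This one-sided approach matches the prescription $\lim_{t\uparrow Q_{ij}}f'_{ij}(t)$ exactly, and the analogous argument handles $Q_{ij} = -1$.

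With the componentwise monotone approach established for every index $(i,j)$, the conclusion is a direct application of Lemma~\ref{lemma:calc-clarke}: identifying $\R^d$ with $\R^{n\times r}$ and taking the coordinate maps $f_{ij}$ as the nonsmooth factors in Assumption~\ref{ass:vector_field}, the linear operators $r_{ij}$ produced by the lemma coincide with the $H_{ij}$ defined in the theorem, and the output $\Proj_Q\circ dF|_Q\circ(\mathrm{id}_{\R^{n\times r}}, H_{11},\dots,H_{nr})$ lies in $\partial X(Q)$. The genuinely new content of the argument is thus concentrated in the second paragraph, where Assumption~\ref{ass:projection_to_tangent} is used twice: first to pin down the sign of $Q_{ij}$ at indices where $V$ gives no information, and second (via a counting argument on the column) to guarantee enough movement off that column entry to ensure strict approach.
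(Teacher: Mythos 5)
Your proposal is correct and follows essentially the same route as the paper's proof: move away from $Q$ along the tangent direction $V$ (the paper uses $\exp_Q(t_nV)$, you use a retraction $R_Q(V/n)$, which is immaterial), check that each entry approaches $Q_{ij}$ strictly from the prescribed side, and then invoke Lemma~\ref{lemma:calc-clarke}. In fact your handling of the $V_{ij}=0$ entries---applying Assumption~\ref{ass:projection_to_tangent} to the whole $j$-th column of $V$ to produce a neighboring entry that moves, which forces $|Q^{(n)}_{ij}|<1$ and hence the strict one-sided approach---spells out a step that the paper's proof only asserts via the constraint $P^\trans P=I_r$.
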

\begin{proof}
	When $V_{ij} > 0$, we consider the sequence $Q^{(n)} = \exp_Q(t_nV)$ with $t_n \downarrow 0$.
	Since $\left .\frac{d}{dt} \right|_{t = 0} \exp_Q(tV) = V$, then $Q^{(n)}_{ij} > Q_{ij}$ for sufficiently large $n$. 
	In the case where $V_{ij} = 0$ and $Q_{ij} = -1$, 
	since $P^\trans P = I_r$ for every $P \in \St{n, r}$, then there exists a sequence $Q^{(n)}$ converging to $Q$ such that $Q^{(n)}_{ij} > -1 = Q_{ij}$ for all $n \in \N$.
	The other two cases are similar, and hence we can use Lemma~\ref{lemma:calc-clarke} to find the derivative.
	\hfill $\Box$
\end{proof}

The following proposition shows that for a fixed $Q \in \St{n, r}$, Assumption~\ref{ass:projection_to_tangent} holds for almost every $V \in T_Q \St{n, r}$.
\begin{proposition}
	Let $Q \in \St{n, r}$ and $\cI = \{ (i, j) : V_{ij} = 0$ for all $V \in T_Q\St{n, r} \}$, then the following properties hold:
	\begin{enumerate}[(i)]
		\item If $(i, j) \in \cI$, then $Q_{ij} \in \{ \pm 1 \}$;
		\item If $(i, j) \notin \cI$, then the set $\cZ_{ij} := \{ Z \in \R^{n\times r} : (\Proj_QZ)_{ij} = 0 \}$ has zero Lebesgue measure.
	\end{enumerate}
\end{proposition}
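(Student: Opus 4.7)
The plan is to reduce both parts to straightforward linear algebra by exploiting the well-known structure of the Stiefel manifold: at $Q \in \St{n,r}$, the tangent space is $T_Q\St{n,r} = \{V \in \R^{n\times r} : Q^\trans V + V^\trans Q = 0\}$, the Frobenius-orthogonal normal space is $N_Q\St{n,r} = \{QS : S^\trans = S\}$, and the orthogonal projection is $\Proj_Q Z = Z - \tfrac12 Q(Q^\trans Z + Z^\trans Q)$. The entrywise evaluation $V \mapsto V_{ij}$ is just the Frobenius inner product with $E_{ij} := e_i e_j^\trans$, so the membership $(i,j) \in \cI$ translates directly to $E_{ij} \in N_Q\St{n,r}$.

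For (i), I would write $E_{ij} = QS$ with $S = S^\trans$, then apply $Q^\trans$ on the left and use $Q^\trans Q = I_r$ to get $S = (Q^\trans e_i)\, e_j^\trans$. The symmetry condition $(Q^\trans e_i)\, e_j^\trans = e_j (Q^\trans e_i)^\trans$ forces the rank-$1$ matrix on each side to share column and row spaces, which yields $Q^\trans e_i = c\, e_j$ for some scalar $c$. Substituting back gives $E_{ij} = Q(Q^\trans e_i)\, e_j^\trans = (QQ^\trans e_i)\, e_j^\trans$, hence $QQ^\trans e_i = e_i$. Since $QQ^\trans$ is the orthogonal projector onto the column space of $Q$, this implies $\|Q^\trans e_i\|^2 = e_i^\trans QQ^\trans e_i = 1$, and combined with $Q^\trans e_i = ce_j$ we get $|c|=1$, so $Q_{ij} = c \in \{\pm 1\}$.

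For (ii), the set $\cZ_{ij}$ is the kernel of the linear functional $Z \mapsto (\Proj_Q Z)_{ij} = \langle E_{ij}, \Proj_Q Z \rangle_F = \langle \Proj_Q E_{ij}, Z \rangle_F$, where the last equality uses that $\Proj_Q$ is self-adjoint. This kernel is either all of $\R^{n\times r}$, which happens precisely when $\Proj_Q E_{ij} = 0$ (i.e., $E_{ij} \in N_Q\St{n,r}$), or a proper hyperplane of Lebesgue measure zero. By the equivalence used in (i), $\Proj_Q E_{ij} = 0$ is exactly the condition $(i,j) \in \cI$, which is excluded by hypothesis. Alternatively, picking any $V \in T_Q\St{n,r}$ with $V_{ij}\neq 0$ yields $\langle \Proj_Q E_{ij}, V\rangle_F = \langle E_{ij}, V\rangle_F = V_{ij} \neq 0$, so the functional is nonzero and $\cZ_{ij}$ is a proper hyperplane.

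The only mildly non-routine step is the symmetry deduction in (i) that $(Q^\trans e_i)e_j^\trans$ being symmetric forces $Q^\trans e_i \parallel e_j$; everything else is a direct transcription of tangent/normal structure and the elementary fact that the kernel of a nonzero linear functional on $\R^{n\times r}$ has Lebesgue measure zero.
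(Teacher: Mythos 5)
Your proof is correct, and for part (i) it takes a genuinely different (and slicker) route than the paper. Both arguments start from the same normal-space characterization $(T_Q\St{n,r})^\perp=\{QS: S^\trans=S\}$ and the observation that $(i,j)\in\cI$ means exactly $e_ie_j^\trans\in(T_Q\St{n,r})^\perp$; but where the paper reduces (after assuming $i=j=1$) to the system $QS=N$, $Q^\trans Q=I_r$ and grinds through a block-matrix computation to extract $Q_{11}=\pm1$, you instead hit $e_ie_j^\trans=QS$ with $Q^\trans$ to get $S=(Q^\trans e_i)e_j^\trans$, use symmetry of this rank-one matrix to force $Q^\trans e_i=ce_j$, and then read off $QQ^\trans e_i=e_i$, hence $\|Q^\trans e_i\|=1$ and $|c|=1$, so $Q_{ij}=c=\pm1$. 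This avoids the case analysis and blockwise bookkeeping entirely and is arguably more transparent; the paper's computation buys nothing extra here. (One tiny point: your phrase ``share column and row spaces'' implicitly assumes $Q^\trans e_i\neq0$, but the degenerate case is harmless since $c=0$ is then excluded a posteriori by $\|Q^\trans e_i\|=1$, or immediately because $S=0$ would give $e_ie_j^\trans=0$.) For part (ii) you and the paper use the same idea — $\cZ_{ij}$ is a linear subspace of dimension less than $nr$ — except that the paper simply asserts properness while you justify it, via self-adjointness of $\Proj_Q$ and a tangent vector $V$ with $V_{ij}\neq0$ guaranteed by $(i,j)\notin\cI$; that extra step is a genuine (if small) improvement in completeness rather than a different method.
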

\begin{proof}
	First, the second property holds since $\cZ_{ij}$ is a linear subspace in $\R^{n \times r}$ with $\dim \cZ_{ij} < nr$.

	Next, we prove that $Q_{ij}\in\{\pm1\}$ for all $(i,j)\in\cI$.
	If $(i_0, j_0) \in \cI$, then the matrix $N \in (T_Q\St{n, r})^\perp$, where $N_{ij} = 1$ if $i = i_0, j = j_0$ and $N_{ij} = 0$ otherwise.
	From \cite[Example 3.6.2]{absil2009optimization}, the normal space can be written as $(T_Q\St{n, r})^\perp = \{ QS: S\in\R^{r\times r},S^\trans = S \}$.
	Without the loss of generality, we can assume $i = j = 1$ and there exists $S = S^\trans$ such that $QS = N$, and then
	the equations $QS = N, Q^\trans Q = I_r$ can be rewritten using block matrices:
	\[ 
	\begin{pmatrix}
		a & x^\trans \\
		y & A
	\end{pmatrix} 
	\begin{pmatrix}
		b & z^\trans \\
		z & B
	\end{pmatrix} 
	=
	\begin{pmatrix}
		1 & 0 \\
		0 & 0
	\end{pmatrix} ,
	\quad 
	\begin{pmatrix}
		a & y^\trans \\
		x & A^\trans
	\end{pmatrix} 
	\begin{pmatrix}
		a & x^\trans \\
		y & A
	\end{pmatrix} 
	=
	\begin{pmatrix}
		1 & 0 \\
		0 & I_{r-1}
	\end{pmatrix} ,
	\]
	where $a, b \in \R$, $y, z \in \R^{r - 1}$, $x \in \R^{n-1}$, $A \in \R^{(n-1)\times (r-1)}$ and $B^\trans = B \in \R^{(r-1)\times (r - 1)}$.

	From $a(ab + x^\trans z) = a$ and $y^\trans (yb + Az) = 0$, note that $a^2 + y^\trans y = 1$ and $ax^\trans + y^\trans A = 0$, we know $a = b$.
	Similarly, $B = 0$ holds by combining equations $x(az^\trans + x^\trans B) = 0$, $A^\trans(yz^\trans + AB) = 0$, $xa + A^\trans y = 0$ and $xx^\trans + A^\trans A = I_{r - 1}$.
	Note $yz^\trans + AB = 0$ and $B = 0$, then either $y = 0$ or $z = 0$ holds.
	Therefore, $1 = a^2 + y^\trans y = a^2$ when $y = 0$; and $1 = ab + x^\trans z = a^2$ when $z = 0$.
	Thus, $Q_{11} = a = \pm 1$. \hfill $\Box$
\end{proof}

When $Z \in \R^{n\times r}$ is randomly sampled from a probability measure that is absolutely continuous with respect to the Lebesgue measure in $\R^{n\times r}$, the tangent vector $V := \Proj_Q Z$ fulfills Assumption~\ref{ass:projection_to_tangent} almost surely.
Therefore, we derive the following algorithm, which finds an element of the Clarke generalized covariant derivative almost surely.

\begin{alg}
	\label{alg:compute-clarke}
{\bf Input:} $Q\in\St{n,r}$, {\bf Output:} $H\in \partial X(Q)$.
\begin{enumerate}[label=(\roman*)]
    \item Sample $Z \in \R^{n \times r}$ from the standard Gaussian distribution.
    \item Calculate the projection $V = \Proj_QZ$.
    \item If there exists $(i, j)$ such that $V_{ij} = 0$ and $Q_{ij} \notin \{ \pm 1\}$, re-run (i)-(ii).
    \item Calculate $H$ by Theorem~\ref{thm:clarke-stiefel}.
\end{enumerate}
\end{alg}

\section{Numerical Experiments}
\label{sec:exp}
In this section, we evaluate our algorithm on three problems mentioned before: compressed modes (CM)~\cite{ozolina2013compressed}, sparse PCA and the constrained sparse PCA~\cite{lu2012spca}. 
In CM and SPCA, we compare our algorithm with SOC~\cite{lai2014splitting}, ManPG~\cite[ManPG-Ada (Algorithm 2)]{chen2020proximal}, accelerated ManPG (AManPG)~\cite{huang2019extending}, and accelerated Riemannian proximal gradient (ARPG)~\cite{huang2019riemannian}. 
In the constrained SPCA, we compare our algorithm with ALSPCA~\cite{lu2012spca}. 
Codes of SOC and ManPG are provided in~\cite{chen2020proximal}, codes of AManPG and ARPG are provided by~\cite{huang2019riemannian}, and the code of ALSPCA is provided by~\cite{lu2012spca}. 
The ManOPT package is used in our implementation~\cite{manopt}.
All codes are implemented in MATLAB and evaluated on Intel i9-9900K CPU. 
Reported results are averaged over $20$ runs with different random initial points.

\subsection{Compressed Modes} \label{sec:exp-cm}
\begin{figure}[t]
	\centering
	\begin{subfigure}{0.328\linewidth}
		\centering
		\includegraphics[width=\linewidth]{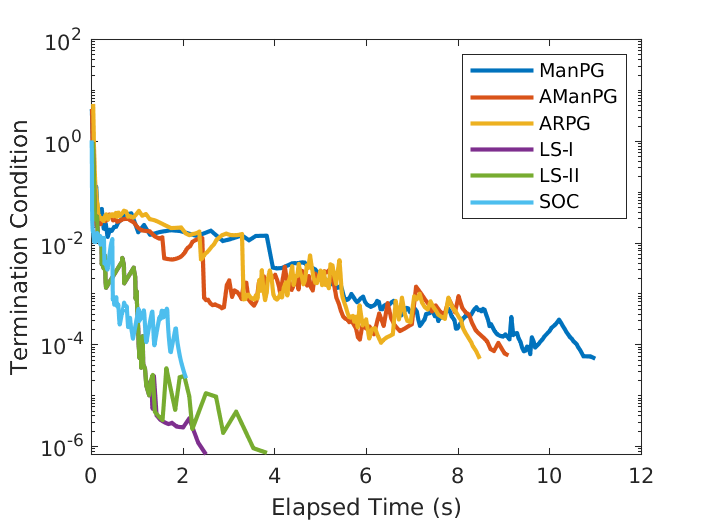}
	\end{subfigure} \hfill
	\begin{subfigure}{0.328\linewidth}
		\centering
		\includegraphics[width=\linewidth]{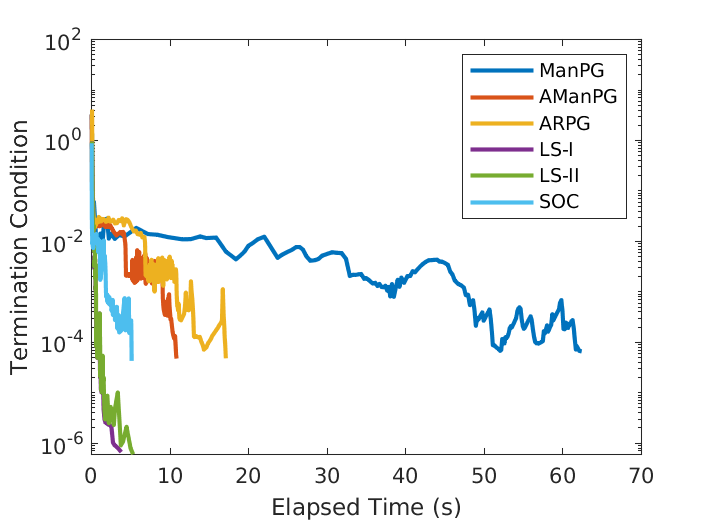}
	\end{subfigure} \hfill
	\begin{subfigure}{0.328\linewidth}
		\centering
		\includegraphics[width=\linewidth]{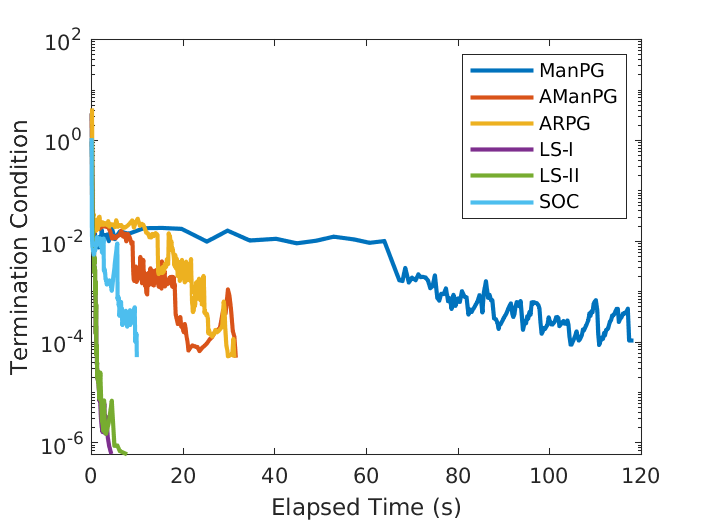}
	\end{subfigure}\\
	\begin{subfigure}{0.328\linewidth}
		\centering
		\includegraphics[width=\linewidth]{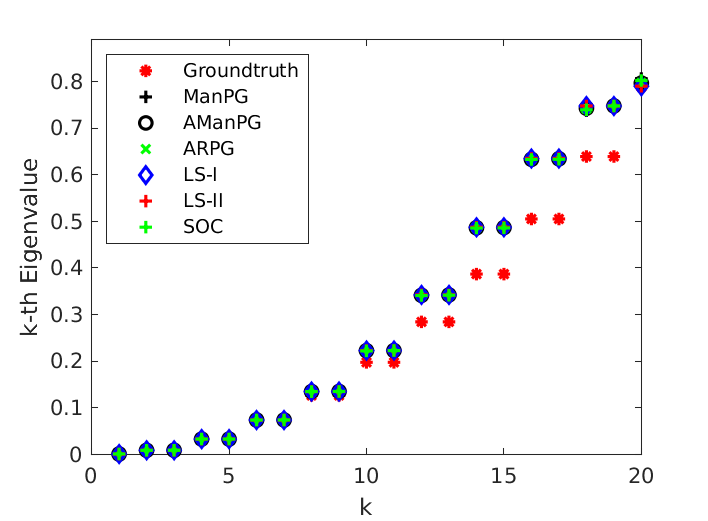}
	\end{subfigure} \hfill
	\begin{subfigure}{0.328\linewidth}
		\centering
		\includegraphics[width=\linewidth]{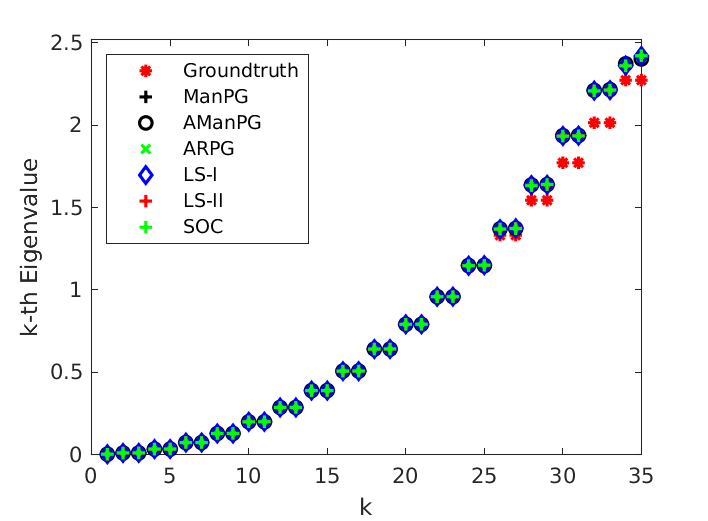}
	\end{subfigure} \hfill
	\begin{subfigure}{0.328\linewidth}
		\centering
		\includegraphics[width=\linewidth]{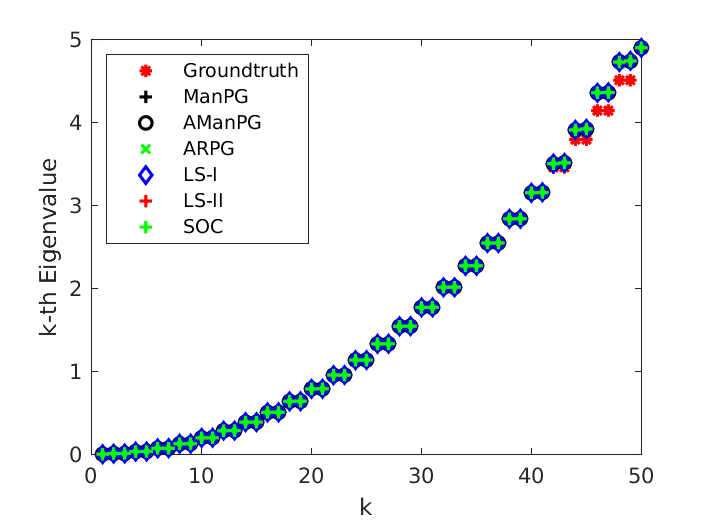}
	\end{subfigure}
	\caption{Comparisons on CM with $(n, \mu) = (500, 0.05)$ and $r = 20, 35, 50$. The top row plots the termination condition, which is the sum of the left-hand parts in \eqref{eqn:soc-term-1} and \eqref{eqn:soc-term-2} for SOC, and similar for other algorithms. The bottom row plots the eigenvalues of $Q^\trans HQ$, where $Q$ is the solution of the corresponding method. ``LS-I'' and ``LS-II'' refer to our algorithms using the exponential map.}
	\label{fig:cm}
\end{figure}

\begin{figure}[t]
	\centering
	\begin{subfigure}{0.328\linewidth}
		\centering
		\includegraphics[width=\linewidth]{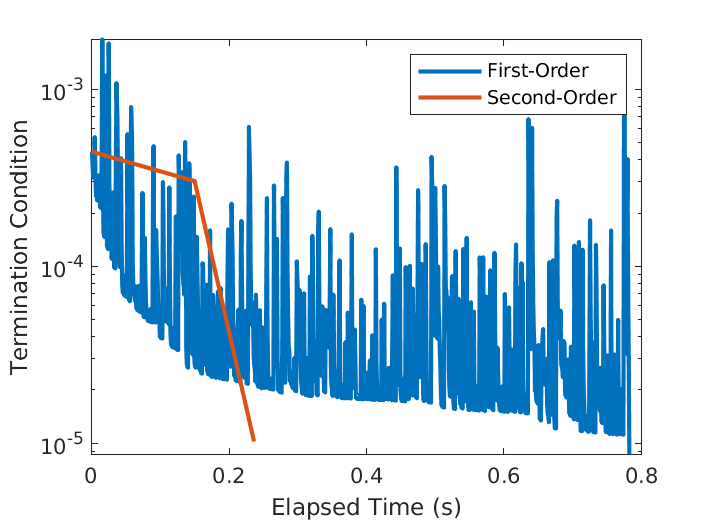}
	\end{subfigure} \hfill
	\begin{subfigure}{0.328\linewidth}
		\centering
		\includegraphics[width=\linewidth]{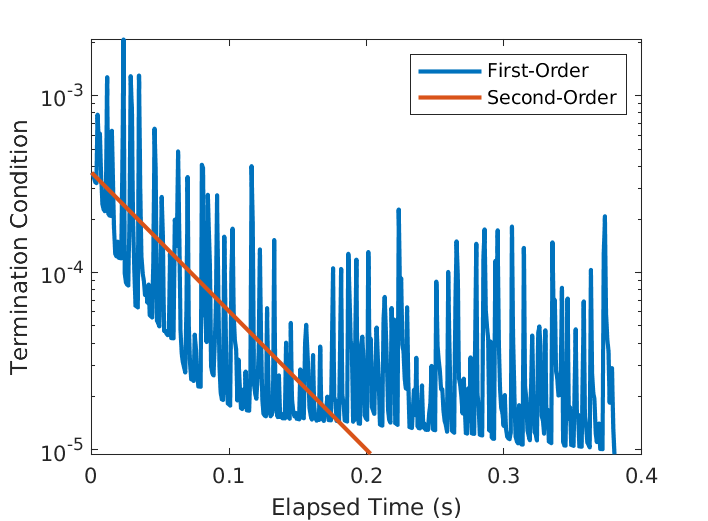}
	\end{subfigure} \hfill
	\begin{subfigure}{0.328\linewidth}
		\centering
		\includegraphics[width=\linewidth]{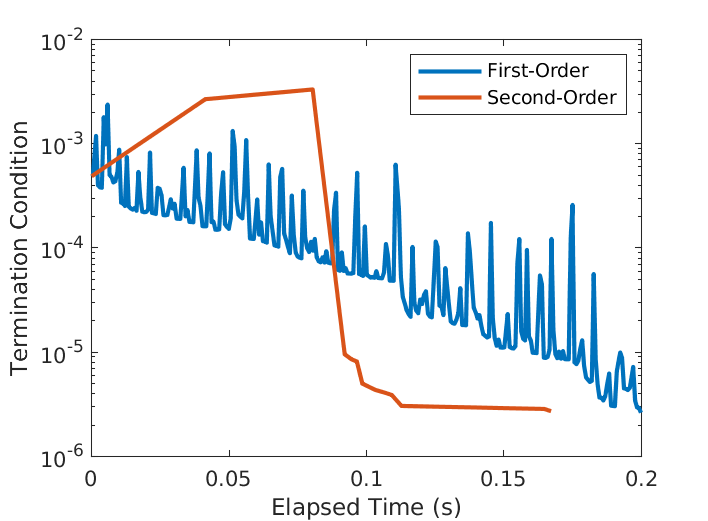}
	\end{subfigure}\\
	\begin{subfigure}{0.328\linewidth}
		\centering
		\includegraphics[width=\linewidth]{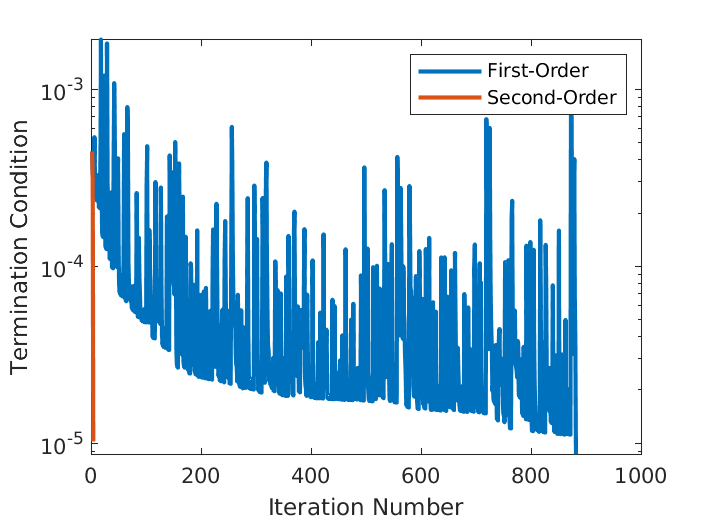}
	\end{subfigure} \hfill
	\begin{subfigure}{0.328\linewidth}
		\centering
		\includegraphics[width=\linewidth]{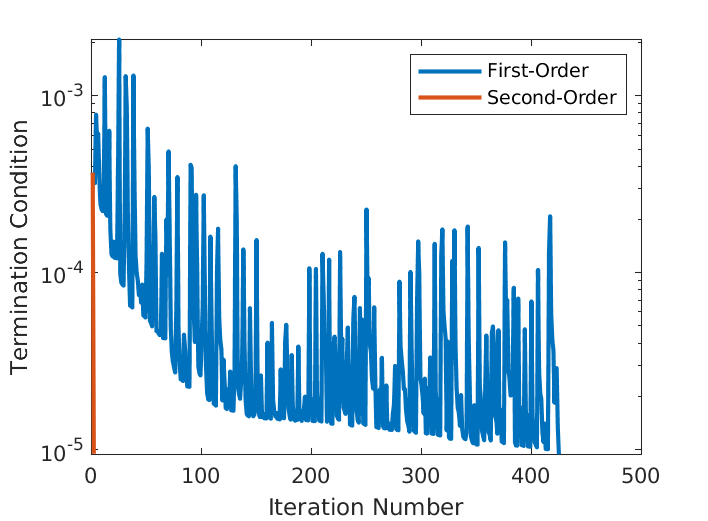}
	\end{subfigure} \hfill
	\begin{subfigure}{0.328\linewidth}
		\centering
		\includegraphics[width=\linewidth]{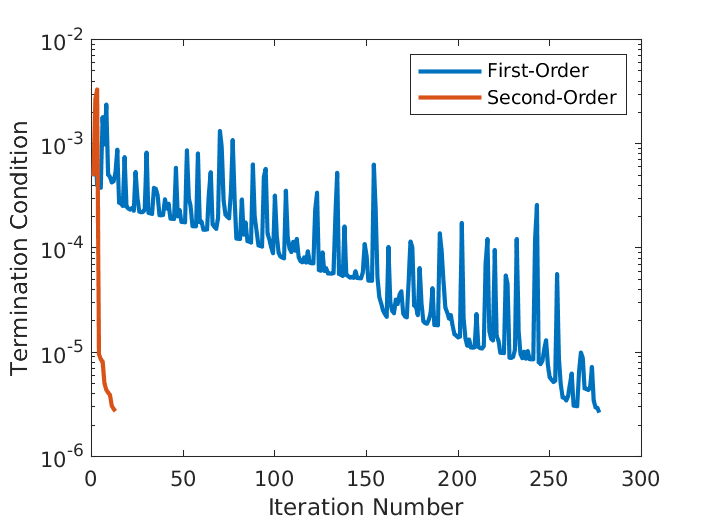}
	\end{subfigure}
	\caption{Comparisons on the first-order method~\cite{wen2013feasible} and the second-order method in solving the CM subproblem \eqref{eqn:Lk}. Columns represent the behavior of these methods in solving different subproblems in Algorithm~\ref{alg:alm-outer}.}
	\label{fig:cm-first-second}
\end{figure}

\begin{figure}[t]
	\centering
	\begin{subfigure}{0.328\linewidth}
		\centering
		\includegraphics[width=\linewidth]{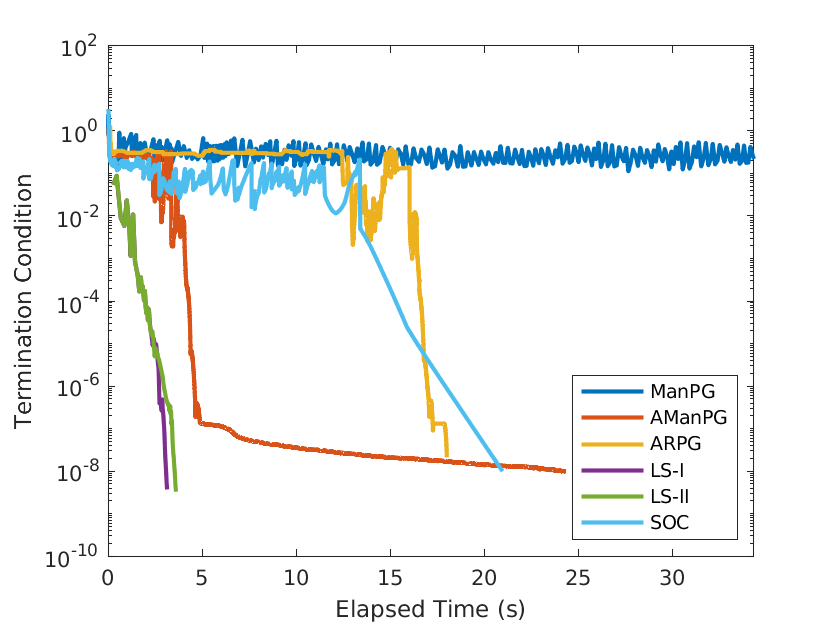}
	\end{subfigure} 
	\begin{subfigure}{0.328\linewidth}
		\centering
		\includegraphics[width=\linewidth]{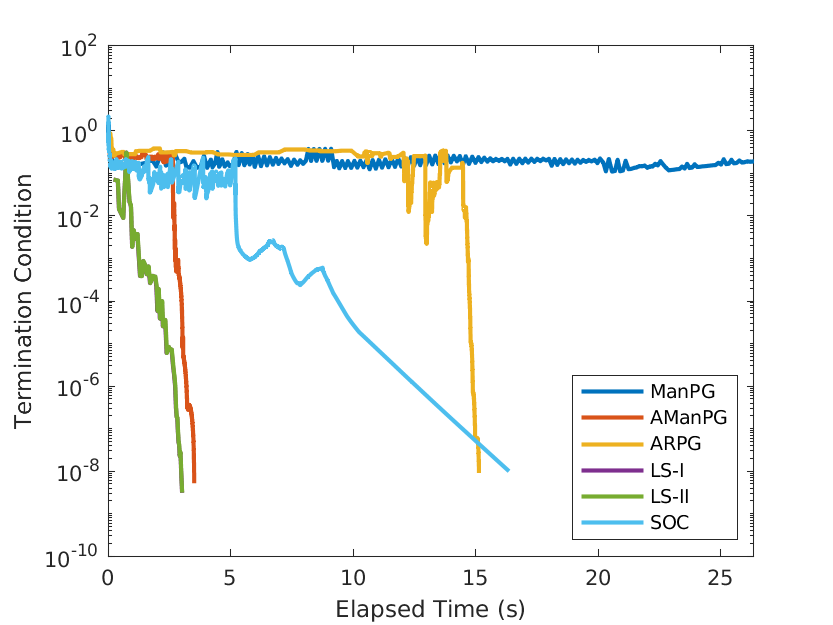}
	\end{subfigure} 
	\begin{subfigure}{0.328\linewidth}
		\centering
		\includegraphics[width=\linewidth]{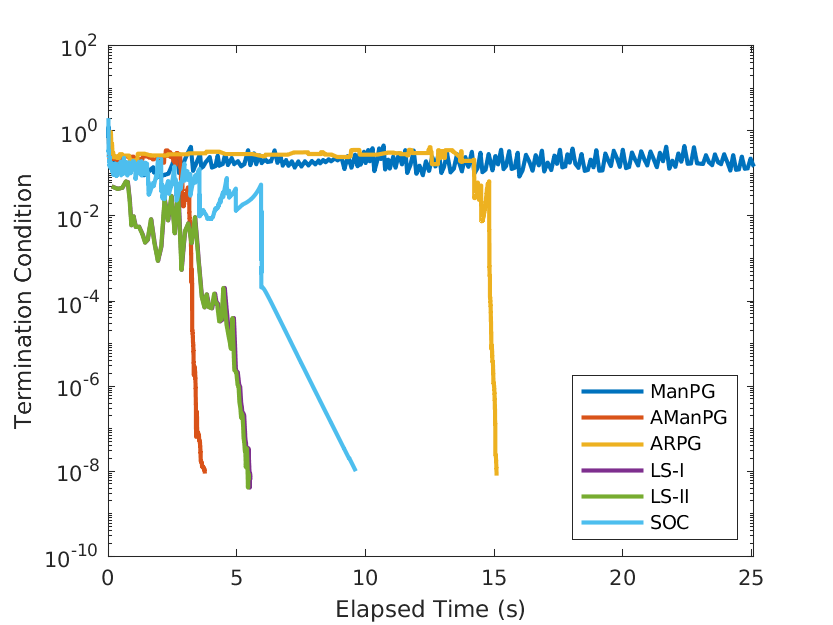}
	\end{subfigure} 
	\begin{subfigure}{0.328\linewidth}
		\centering
		\includegraphics[width=\linewidth]{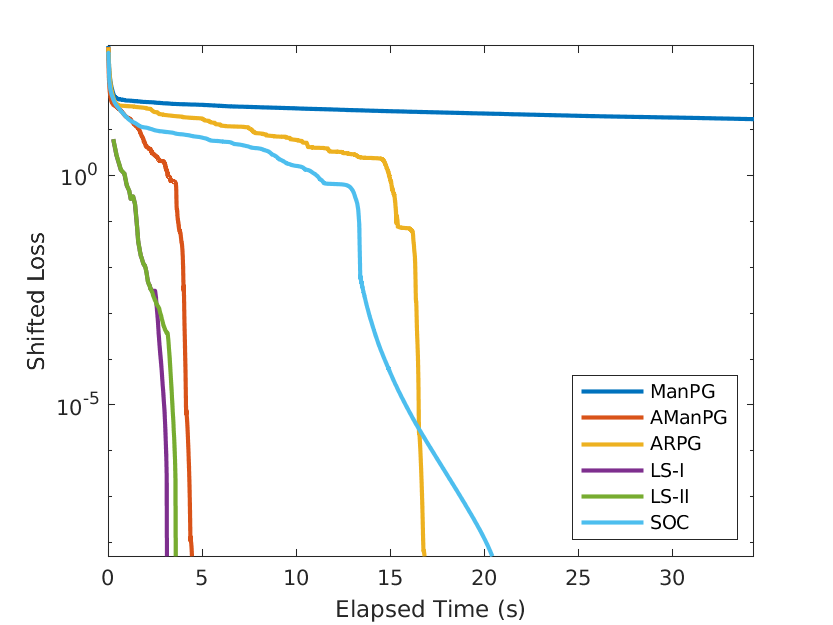}
	\end{subfigure} 
	\begin{subfigure}{0.328\linewidth}
		\centering
		\includegraphics[width=\linewidth]{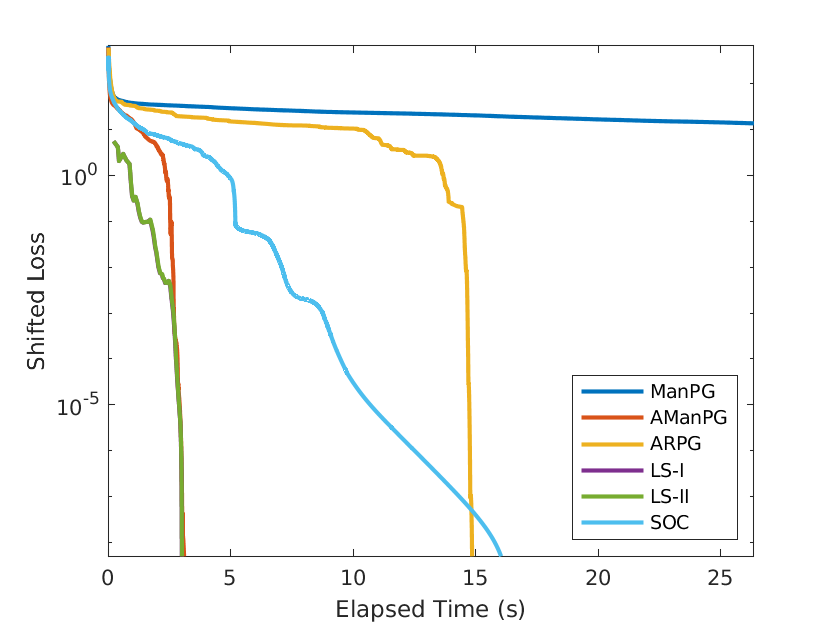}
	\end{subfigure} 
	\begin{subfigure}{0.328\linewidth}
		\centering
		\includegraphics[width=\linewidth]{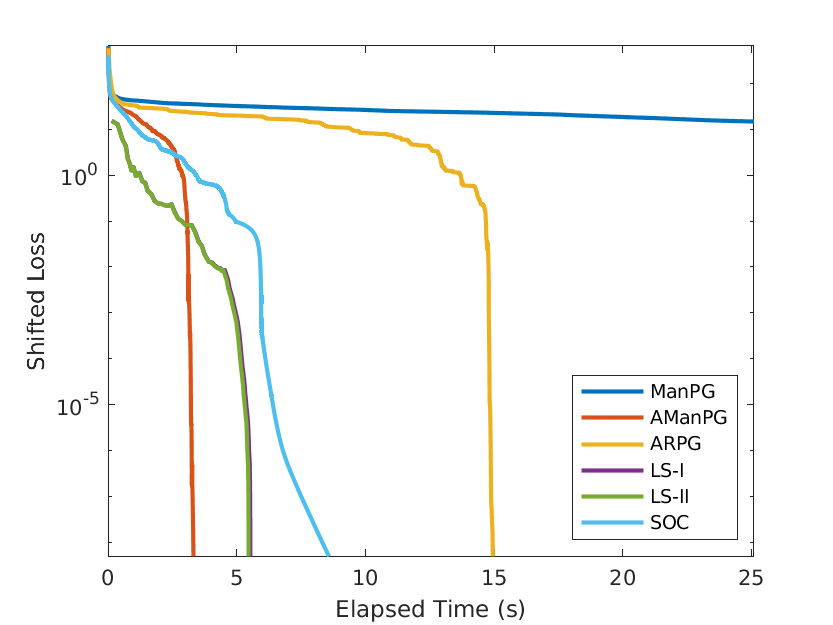}
	\end{subfigure} 
	\caption{Comparisons on SPCA with $(n, r, \mu) = (500, 20, 1.00)$ and three different $A^\trans A$. The shifted loss is the loss subtracted by the minimal loss among all iterations, i.e., $F(x_k) - \min_j F(x_j)$. 
	``LS-I'' and ``LS-II'' refer to our algorithm using the retraction based on the QR decomposition.}
	\label{fig:spca-comparison}
\end{figure}

We consider the CM problem \eqref{eqn:cm-problem} and follow the setting of \cite{chen2020proximal} to solve the Schr\"odinger equation of 1D free-electron model with periodic boundary condition:
\[ -\frac{1}{2}\Delta \phi(x) = \lambda \phi(x), \quad x \in [0, 50]. \]

We discretize the domain $[0, 50]$ into $n$ nodes, and let $H$ be the discretized version of $-\frac{1}{2}\Delta$. The CM problem needs to solve \eqref{eqn:cm-problem}.
The first-order optimality condition is 
\begin{equation}\label{eqn:opt-cond-cm}
    0 \in 2\Proj_Q(HQ) + \mu \Proj_Q(\partial \norm{Q}_1).
\end{equation}
It is worth mentioning that this condition is difficult to check in general because of the existence of the projection.
Recall that ManPG solves the following subproblem:
\[ V_* = \argmin_{V \in T_Q \St{n, r}} \left \{ \inner{\grad_Q \tr(Q^\trans HQ)}{V} + \frac{1}{2t}\norm{V}_F^2 + \norm{Q + V}_1 \right \}. \]
The solution of the above problem satisfies
\[ -V_*/ t \in 2\Proj_Q(HQ) + \mu \Proj_Q\left (\partial \norm{Q + V_*}_1 \right ). \]
As suggested by \cite{chen2020proximal, huang2019extending}, ManPG and AManPG use $t^{-1} \norm{V_*}_\infty / (\|Q\|_F + 1) \leq 5 \times 10^{-5}$ as the termination condition, which can be regarded as an approximation of the optimality condition \eqref{eqn:opt-cond-cm} of the CM problem.

For SOC and our algorithm, the termination rules are their KKT conditions. %
Specifically, SOC rewrites \eqref{eqn:cm-problem} into the following problem:
\begin{equation}
	\label{eqn:soc-problem}
	\begin{aligned}
		\min_{Q, P, R \in \R^{n\times r}}\  & \left \{ \tr(P^\trans HP) + \mu\norm{R}_1 \right \}, \\
		\mathrm{s.t.}\ & Q = P, R = P, Q \in \St{n, r}. 
	\end{aligned}
\end{equation}

The Lagrangian of \eqref{eqn:soc-problem} is $L_{S}(Q, P, R, \Gamma, \Lambda) = \tr(P^\trans HP) + \mu\norm{R}_1 + \Gamma^\trans (Q - P) + \Lambda^\trans (R - P)$, where $P, R \in \R^{n\times r}$ and $Q \in \St{n, r}$.
We terminate SOC when both the following conditions are satisfied
\begin{align}
	\frac{\norm{Q - P}_\infty}{\max\{ \norm{Q}_F, \norm{P}_F \} + 1}
	+ \frac{\norm{R - P}_\infty}{\max\{\norm{R}_F, \norm{P}_F\} + 1} 
	&\leq 5 \times 10^{-7}, \label{eqn:soc-term-1} \\
	\frac{\norm{\grad_Q L_S}_\infty}{\norm{Q}_F + 1}
	+ \frac{\norm{\nabla_P L_S}_\infty}{\norm{P}_F + 1}
	+ \frac{\min_{G \in \partial_R L_S}\norm{G}_\infty}{\norm{R}_F + 1}
	&\leq 5 \times 10^{-5}. \label{eqn:soc-term-2}
\end{align}

Similarly, our algorithm rewrites the problem into
\begin{equation}
	\label{eqn:our-problem}
	\begin{aligned}
		\min_{Q, R \in \R^{n\times r}}\  & \left \{ \tr(Q^\trans HQ) + \mu\norm{R}_1 \right \}, \\
		\mathrm{s.t.}\ & Q = R, Q \in \St{n, r}. 
	\end{aligned}
\end{equation}

The Lagrangian is $L_{N}(Q, R, \Lambda) = \tr(Q^\trans HQ) + \mu\norm{R}_1 + \Lambda^\trans (Q - R)$, where $R \in \R^{n\times r}$ and $Q \in \St{n, r}$.
The termination conditions are both \eqref{eqn:term-ours-feasibility} and \eqref{eqn:term-ours-stationary}:
\begin{align}
	\frac{\norm{Q - R}_\infty}{\max\{\norm{Q}_F, \norm{R}_F\} + 1}
	&\leq 5 \times 10^{-7}, 
	\label{eqn:term-ours-feasibility}
	\\
	\frac{\norm{\grad_Q L_N}_\infty}{\norm{Q}_F + 1}
	+ \frac{\min_{G \in \partial_R L_N}\norm{G}_\infty}{\norm{R}_F + 1}
	&\leq 5 \times 10^{-5}.
	\label{eqn:term-ours-stationary}
\end{align}

Below we illustrate how to apply our algorithm in this problem. In the subproblem of our algorithm, we need to find an approximated stationary point of
$L_k(Q) = \tr(Q^\trans HQ) + G_{\sigma}(Q + \Lambda / \sigma)$,
where $G_{\sigma}$ is the Moreau-Yosida regularization of $\lambda \norm{\cdot}_1$. Let $\bar L_k: \R^{n\times r} \to \R$ be the extension of $L_k$ into the Euclidean space, i.e., $\bar L_k|_{\St{n, r}} = L_k$. The Euclidean gradient and Hessian of $\bar L_k$ at a differentiable point $Q$ can be easily computed as
\begin{align}
	\label{eqn:Lk-grad-cm}
	\nabla \bar L_k(Q) &= 2HQ + (\sigma Q + \Lambda) - \sigma F, \\
	\nabla^2 \bar L_k(Q)[Z] &= 2HZ + \sigma Z\odot E,
\end{align} 
where $\odot$ is the Hadamard product and $E, F \in \R^{n\times r}$ are matrices such that $E_{ij} = \bone_{\{ |Q_{ij} + \Lambda_{ij}/\sigma|\leq \lambda / \sigma \}}$ and $F_{ij} = \prox_{\lambda\norm{\cdot}_1/\sigma}(Q_{ij} + \Lambda_{ij}/\sigma)$. %
When $\bar L_k$ is differentiable at $Q$, the Riemannian gradient and Hessian can be written as~\cite{absil2009optimization}
 \begin{align}
	\grad L_k(Q) &= \Proj_Q \nabla \bar L_k(Q), \\
	\Hess L_k(Q)[Z] &= \Proj_Q (\nabla^2 \bar L_k(Q)[Z] - Z\,\mathrm{sym}(Q^\trans\nabla\bar L_k(Q))),
\end{align} 
where $\mathrm{sym}\, Q := (Q + Q^\trans) / 2$.
We note that $\grad L_k$ fulfills Assumption~\ref{ass:vector_field} with $f_{ij}(Q_{ij})=F_{ij}$, and thus Algorithm~\ref{alg:compute-clarke} can be applied to find an element of $\partial \grad L_k$ when $\bar L_k$ is non-differentiable at $Q$.

The parameters of ManPG and SOC are the same as those in \cite{chen2020proximal}. 
The codes and parameters of AManPG and ARPG are adopted from the SPCA implementation of Huang et al. \cite{huang2019riemannian}. 
All the four methods are terminated when their termination conditions are satisfied or the number of iterations exceeds $30000$. 

We consider the exponential map and two retractions from \cite[Example 4.1.3]{absil2009optimization} in Algorithm~\ref{alg:semismooth}.
One of these retractions is based on the polar decomposition:
\[ R^{\text{polar}}_X(V) := (X + V)(I + V^\trans V)^{-\frac{1}{2}}, \]
and the other one is based on the QR decomposition:
\[ R^{\text{QR}}_X(V) := \text{qf}(X + V), \]
where $\text{qf}(X)$ is the $Q$ component in the QR decomposition of $X$.

In our algorithm, let $\Delta_k, \Gamma_k$ be the left-hand sides of \eqref{eqn:term-ours-feasibility} and \eqref{eqn:term-ours-stationary} evaluated at the $k$-th step, respectively.
We set $\tau = 0.97$, $\rho = 1.25$, $\alpha = 1.01$, $\sigma_0 = 1$, and $\varepsilon_k = \min \{ 0.95^k, 5 \Delta_k \}$, in which the $0.95^k$ component ensures that $\varepsilon_k \to 0$ and the $5\Delta_k$ component suggests that the violation of the stationary condition \eqref{eqn:term-ours-stationary} would not be large when the violation of the feasibility condition \eqref{eqn:term-ours-feasibility} is small.
We also update $\sigma_k$ using \eqref{eqn:alm-update-penalty} when $\Delta_k > 2.5\Gamma_k$, i.e., the converse of the aforementioned case.

To solve our subproblem,
we use the first-order method to find a good initial point, and start the second-order method when $\norm{\grad L_k} < 5 \times 10^{-4} =: \Delta_G$, where $L_k$ is defined in \eqref{eqn:Lk}. 
The maximum numbers of iterations in CG and the first-order method are both $1000$.
In Algorithm~\ref{alg:semismooth}, the linesearch parameter is $\mu = 0.1$. We set $\omega_k = \min \{ 0.7^k, 200 \| X(p_k) \| \}$ and the minimal stepsize of the linesearch is $10^{-4}$. 
When CG finds a negative direction $p_k$, we set $\omega_k = -2\mathrm{Hess}\, L_k(Q_k) [p_k, p_k] / \norm{p_k}^2$ and restart the iteration.
Parameters in the condition \eqref{eqn:sufficient-descent} are $p = 0.05, \beta_0 = 1, \beta_1 = 10^{-3}$.
The threshold $\Delta_G$ will be modified during the iterations when one of the following cases occurs.
\begin{itemize}
    \item When the linesearch does not find an $m_k$ such that $\delta^{m_k} > 10^{-4}$, we replace $\Delta_G$ with $0.95\Delta_G$.
    \item When the number of iterations of Algorithm~\ref{alg:semismooth} exceeds $10$, we replace $\Delta_G$ with $0.9\Delta_G$.
    \item When $\| V_k \| > 10^4$, i.e., $H_k + \omega_k I$ is nearly singular, we replace $\Delta_G$ with $0.8\Delta_G$.
\end{itemize}
These modifications of $\Delta_G$ may be helpful when Algorithm~\ref{alg:semismooth} starts too early. 
We also note that from Table~\ref{tab:cm-newton-it} the second-order method generally starts (i.e., the condition $\| \grad L_k \| < \Delta_G$ holds) during nearly the last $1/4$ iterations of Algorithm~\ref{alg:alm-outer}.

We report the results in Table~\ref{tab:cm} and Figure~\ref{fig:cm}, from which
we see that all methods find solutions with similar objective function values and comparable sparsity levels.
It is noted that our method is generally faster than (or comparable to) other methods. 
The running time for different retractions in our algorithm are close, 
except that when $r = 10$ and $15$, LS-I seems extremely slow because the second-order method starts too early such that most evaluations of second-order directions are wasted.
This issue could be addressed by a careful tuning of parameters.
From Figure~\ref{fig:cm}, we find ManPG has difficulty attaining the desired termination condition when $r$ is large.
The performance of the two linesearch methods are similar, except for the aforementioned cases in which  the second-order method does not start appropriately.

Next, we examine whether the second-order method is helpful in this problem. 
We terminate our method when both \eqref{eqn:term-ours-feasibility} and \eqref{eqn:term-ours-stationary} are less than $5 \times 10^{-8}$ and report the number of iterations and the computational cost of the second-order method used to solve subproblems.\footnote{We only consider our method in this setting since other algorithms mentioned in Table~\ref{tab:cm} have difficulty in reaching these stopping conditions. Indeed, these observations are also valid for the original termination conditions  with a lower significance.}
We also try to run the first-order method~\cite{wen2013feasible} from the same initial point in each subproblem and terminate it when it attains the same accuracy as the second-order method.
Results are illustrated in Figure~\ref{fig:cm-first-second} and reported in Table~\ref{tab:cm-high}.
From Table~\ref{tab:cm-high} and the bottom row of Figure~\ref{fig:cm-first-second}, we see that to achieve the same accuracy, the number of iterations of the second-order method is significantly smaller than the first-order method. 
Since the second-order method requires solving a linear equation in each iteration, the first-order method may be faster than the second-order method in terms of the computational cost as can be observed in Table~\ref{tab:cm-high}.
However, by comparing the computational cost of these two methods in LS-I, we see that the second-order method can be faster in some cases due to its fast convergence, which is also illustrated in the top row of Figure~\ref{fig:cm-first-second}.
The average iteration number of CG and the percentage that CG detects a negative curvature direction are also reported in Table~\ref{tab:cm-high}, 
from which  we observe that negative curvature directions are frequently detected when $r = 10$ and $15$. 
We suspect that this is possibly because the CM problem is more ``singular'' in these cases, which may also be supported by Table~\ref{tab:cm-eigen}, in which the minimum eigenvalue is found to be significantly smaller.

Finally, although the CM problem appears ``convex'', the positive definiteness of $\partial \grad L_k$ required by Theorem~\ref{thm:semismooth-LS-I-rate} is neither obvious nor easy to theoretically verify.
Indeed, finding all elements in $\partial \grad L_k$ is also difficult (see the discussions in Sec.~\ref{sec:calc-clarke}).
We partially verify this condition using numerical simulations and report the minimum eigenvalue of one element in $\partial \grad L_k$. The results in Table~\ref{tab:cm-eigen} suggest that the positive-definite condition may be fulfilled in this problem.

We also note that in the Euclidean setting the sparsity may be exploited to accelerate the conjugate gradient in the second-order method~\cite{li2018highly}. 
However, this is not straightforward in the Riemannian setting since the existence of the projection in the Riemannian Hessian may destroy the sparsity. 
Our current implementation of Algorithm~\ref{alg:semismooth} does not exploit the sparsity of the solution. It is believed that the second-order could be accelerated if we could exploit the sparsity to design a faster CG method.

\begin{table}[t]
	\centering
	\footnotesize
	\caption{Comparison on CM. $(n, r, \mu) = (1000, 20, 0.1)$ and one of them varies. {\sc ManPG} is the adaptive version (ManPG-Ada) in \cite{chen2020proximal}. 
	{\sc LSe, LSq, LSp} denote our algorithm with the exponential map, the retraction using QR decomposition the retraction using polar decomposition, respectively.} %
	\label{tab:cm}
	\begin{subtable}{\linewidth}
	\begin{center}
		
		\begin{tabular}{c|c|cccccccccc} \toprule
			\multicolumn{2}{c|}{} & 
			\sc ManPG & \sc AManPG & \sc ARPG & \sc SOC & \sc LSe-I & \sc LSe-II & \sc LSq-I & \sc LSq-II & \sc LSp-I & \sc LSp-II \\ 
			\multicolumn{2}{c|}{} & \multicolumn{10}{c}{Running Time (s)} \\ \midrule
			 \multirow{5}{*}{$n$}

 & 200 &
11.54 & 3.86 & 6.43 & 1.78 & 1.20 & 1.70 & 1.21 & 1.76 & \textbf{1.10} & 1.54
\\
 & 500 &
21.02 & 8.32 & 9.15 & 5.41 & 4.00 & 6.19 & \textbf{3.96} & 5.88 & 4.15 & 6.14
\\
 & 1000 &
66.30 & \textbf{8.60} & 11.30 & 14.99 & 12.06 & 11.34 & 14.07 & 11.11 & 11.93 & 9.49
\\
 & 1500 &
44.73 & 40.18 & 42.85 & 39.69 & \textbf{24.00} & 27.93 & 24.42 & 27.85 & 25.88 & 33.46
\\
 & 2000 &
42.48 & 46.86 & 51.47 & 46.33 & 33.50 & 28.42 & 31.77 & 29.05 & 27.91 & \textbf{26.22}
\\
\midrule \multirow{4}{*}{$r$}
 & 10 &
15.35 & 15.63 & 15.71 & 17.28 & 31.60 & 12.74 & 70.55 & 15.74 & 80.23 & \textbf{11.26}
\\
 & 15 &
35.35 & \textbf{9.55} & 11.17 & 15.47 & 49.14 & 12.07 & 53.59 & 11.74 & 51.58 & 12.01
\\
 & 25 &
87.64 & 22.73 & 23.93 & 27.11 & \textbf{17.39} & 20.99 & 17.79 & 20.61 & 18.41 & 20.53
\\
 & 30 &
83.13 & 27.41 & 29.35 & 18.10 & 11.49 & 15.49 & 11.40 & 15.13 & \textbf{11.02} & 14.42
\\
\midrule \multirow{4}{*}{$\mu$}
 & 0.05 &
102.63 & 14.86 & 13.98 & \textbf{6.34} & 7.19 & 8.04 & 7.17 & 7.97 & 7.03 & 7.60
\\
 & 0.15 &
65.08 & 17.09 & 21.48 & 32.51 & 14.22 & 16.03 & 14.13 & 15.69 & \textbf{13.98} & 15.59
\\
 & 0.20 &
51.46 & \textbf{12.96} & 24.12 & 33.71 & 17.31 & 20.66 & 19.94 & 19.92 & 21.92 & 22.65
\\
 & 0.25 &
42.74 & \textbf{13.41} & 25.43 & 34.80 & 18.08 & 20.84 & 61.23 & 53.44 & 35.93 & 44.30
\\

\midrule 
		\multicolumn{2}{c|}{} & \multicolumn{10}{c}{Loss Function: $\tr (X^\trans H X) + \mu \| X \|_1$} \\ \midrule
			 \multirow{5}{*}{$n$}

 & 200 &
14.10 & 14.10 & 14.10 & 14.10 & 14.10 & 14.10 & 14.10 & 14.10 & 14.10 & 14.10
\\
 & 500 &
18.60 & 18.60 & 18.60 & 18.60 & 18.60 & 18.60 & 18.60 & 18.60 & 18.60 & 18.60
\\
 & 1000 &
23.30 & 23.30 & 23.30 & 23.30 & 23.30 & 23.30 & 23.30 & 23.30 & 23.30 & 23.30
\\
 & 1500 &
26.90 & 26.80 & 26.80 & 26.80 & 26.80 & 26.80 & 26.80 & 26.80 & 26.80 & 26.80
\\
 & 2000 &
29.80 & 29.70 & 29.70 & 29.70 & 29.70 & 29.70 & 29.70 & 29.70 & 29.70 & 29.70
\\
\midrule \multirow{4}{*}{$r$}
 & 10 &
10.70 & 10.70 & 10.70 & 10.70 & 10.70 & 10.70 & 10.70 & 10.70 & 10.70 & 10.70
\\
 & 15 &
16.50 & 16.40 & 16.40 & 16.40 & 16.40 & 16.40 & 16.40 & 16.40 & 16.40 & 16.40
\\
 & 25 &
32.00 & 32.00 & 32.00 & 32.00 & 32.00 & 32.00 & 32.00 & 32.00 & 32.00 & 32.00
\\
 & 30 &
42.90 & 42.90 & 42.90 & 42.90 & 42.90 & 42.90 & 42.90 & 42.90 & 42.90 & 42.90
\\
\midrule \multirow{4}{*}{$\mu$}
 & 0.05 &
15.10 & 15.10 & 15.10 & 15.10 & 15.10 & 15.10 & 15.10 & 15.10 & 15.10 & 15.10
\\
 & 0.15 &
31.00 & 31.00 & 31.00 & 31.00 & 31.00 & 31.00 & 31.00 & 31.00 & 31.00 & 31.00
\\
 & 0.20 &
38.30 & 38.20 & 38.20 & 38.20 & 38.20 & 38.20 & 38.20 & 38.20 & 38.20 & 38.20
\\
 & 0.25 &
45.30 & 45.20 & 45.20 & 45.30 & 45.20 & 45.20 & 45.20 & 45.20 & 45.20 & 45.20
\\

			\bottomrule
        \end{tabular}%

	\end{center}
	\end{subtable}
\end{table}

\begin{table}[hbpt]
	\centering
	\footnotesize
	\caption{The average iteration number of Algorithm~\ref{alg:alm-outer} in the CM problem.
	The number in the bracket is the percentage that the second-order method starts.}
	\label{tab:cm-newton-it}
	\begin{center}
		
		\begin{tabular}{c|c|cccccc} \toprule
			\multicolumn{2}{c|}{} & 
			\sc LSe-I & \sc LSe-II & \sc LSq-I & \sc LSq-II & \sc LSp-I & \sc LSp-II \\  \midrule
			 \multirow{5}{*}{$n$}
 & 200 &
78 (23\%) & 78 (23\%) & 78 (23\%) & 79 (23\%) & 74 (25\%) & 74 (25\%)
\\
 & 500 &
75 (31\%) & 76 (32\%) & 75 (31\%) & 76 (32\%) & 74 (31\%) & 75 (32\%)
\\
 & 1000 &
46 (25\%) & 44 (22\%) & 45 (18\%) & 44 (21\%) & 44 (17\%) & 43 (20\%)
\\
 & 1500 &
57 (37\%) & 58 (38\%) & 57 (37\%) & 58 (38\%) & 57 (36\%) & 58 (38\%)
\\
 & 2000 &
56 (34\%) & 55 (34\%) & 56 (31\%) & 55 (34\%) & 55 (33\%) & 55 (35\%)
\\
\midrule \multirow{4}{*}{$r$}
 & 10 &
103 (38\%) & 75 (16\%) & 93 (16\%) & 75 (16\%) & 98 (13\%) & 80 (17\%)
\\
 & 15 &
67 (33\%) & 48 (7\%) & 70 (4\%) & 48 (6\%) & 71 (4\%) & 50 (7\%)
\\
 & 25 &
61 (36\%) & 61 (36\%) & 61 (36\%) & 61 (37\%) & 61 (34\%) & 61 (35\%)
\\
 & 30 &
56 (27\%) & 59 (30\%) & 56 (27\%) & 59 (31\%) & 56 (28\%) & 58 (30\%)
\\
\midrule \multirow{4}{*}{$\mu$}
 & 0.05 &
67 (34\%) & 67 (34\%) & 67 (34\%) & 67 (34\%) & 67 (33\%) & 68 (34\%)
\\
 & 0.15 &
54 (32\%) & 54 (32\%) & 54 (32\%) & 54 (32\%) & 54 (33\%) & 55 (33\%)
\\
 & 0.20 &
51 (23\%) & 51 (24\%) & 51 (19\%) & 50 (21\%) & 53 (22\%) & 51 (22\%)
\\
 & 0.25 &
49 (9\%) & 48 (8\%) & 59 (8\%) & 49 (5\%) & 54 (7\%) & 49 (5\%)
\\

			\bottomrule
        \end{tabular}%

	\end{center}
\end{table}

\begin{table}[hbpt]
	\centering
	\footnotesize
	\caption{Comparison on CM with higher accuracy. $(n, r, \mu) = (1000, 20, 0.1)$ and one of them varies. The termination threshold is $5 \times 10^{-8}$. 
	The column {\sc Second-Order} is the total number of iterations, CPU time, average CG iteration number of the second-order method, together with the percentage that CG detects the negative curvature direction (denoted by {\sc CG Rst}).
	The column {\sc First-Order} is the total number of iterations and CPU time of the first-order method~\cite{wen2013feasible} started from the same initial point as the second-order method and terminated when it attains the same optimality condition.} 
	\label{tab:cm-high}
	\begin{center}
		
\begin{tabular}{c|c|cc|cc|cc|cc|cc|cc} \toprule
    \multicolumn{2}{c|}{}
    & \multicolumn{8}{c|}{\sc Second-Order}
    & \multicolumn{4}{c}{\sc First-Order}
    \\ %
    \multicolumn{2}{c|}{}
    & \multicolumn{2}{c|}{\sc Iteration}
    & \multicolumn{2}{c|}{\sc Time (s)}
    & \multicolumn{2}{c|}{\sc Avg. CG Iter}
    & \multicolumn{2}{c|}{\sc CG Rst (\%)}
    & \multicolumn{2}{c|}{\sc Iteration}
    & \multicolumn{2}{c}{\sc Time (s)}
    \\
    \multicolumn{2}{c|}{}
    & \sc LS-I & \sc LS-II &
        \sc LS-I & \sc LS-II &
        \sc LS-I & \sc LS-II &
        \sc LS-I & \sc LS-II &
        \sc LS-I & \sc LS-II &
        \sc LS-I & \sc LS-II \\  \midrule
    \multicolumn{2}{c|}{} & \multicolumn{12}{c}{Exponential Map} \\ \midrule
        \multirow{5}{*}{$n$}
 & 200 &
288 & 369
& 3.06 & 5.12
& 66.13 & 82.87
& 0.00 & 0.26
& 28924 & 23605
& 9.22 & 7.52
\\
 & 500 &
298 & 358
& 5.73 & 10.44
& 60.53 & 88.65
& 0.00 & 0.00
& 39901 & 27498
& 18.83 & 13.17
\\
 & 1000 &
244 & 90
& 7.22 & 4.27
& 45.13 & 96.49
& 4.34 & 24.03
& 3504 & 4374
& 2.65 & 3.24
\\
 & 1500 &
251 & 363
& 26.61 & 65.24
& 156.09 & 256.81
& 1.66 & 3.97
& 68778 & 58475
& 70.11 & 60.56
\\
 & 2000 &
203 & 202
& 21.52 & 33.61
& 157.13 & 249.48
& 4.72 & 0.18
& 37675 & 21100
& 53.22 & 27.54
\\
\midrule \multirow{4}{*}{$r$}
 & 10 &
267 & 373
& 24.93 & 76.76
& 290.48 & 568.20
& 21.91 & 18.23
& 3955 & 23636
& 1.94 & 11.84
\\
 & 15 &
240 & 306
& 30.79 & 81.31
& 313.80 & 592.89
& 25.47 & 19.91
& 6277 & 11495
& 4.20 & 9.39
\\
 & 25 &
237 & 334
& 14.51 & 28.99
& 104.94 & 141.85
& 0.00 & 0.00
& 29139 & 25811
& 27.10 & 23.94
\\
 & 30 &
239 & 441
& 24.02 & 83.16
& 139.21 & 254.22
& 0.00 & 0.79
& 33426 & 20143
& 36.68 & 22.24
\\
\midrule \multirow{4}{*}{$\mu$}
 & 0.05 &
174 & 292
& 5.61 & 13.44
& 73.71 & 93.62
& 0.00 & 0.00
& 9940 & 7906
& 7.40 & 5.91
\\
 & 0.15 &
182 & 215
& 7.28 & 13.31
& 83.12 & 124.71
& 0.00 & 0.00
& 27398 & 19627
& 22.43 & 16.62
\\
 & 0.20 &
121 & 178
& 6.11 & 27.31
& 104.88 & 297.51
& 1.13 & 4.47
& 13972 & 16872
& 10.42 & 13.02
\\
 & 0.25 &
133 & 378
& 10.96 & 142.51
& 169.43 & 652.74
& 3.53 & 10.39
& 15413 & 23251
& 12.23 & 19.91
\\

\midrule 
    \multicolumn{2}{c|}{} & \multicolumn{12}{c}{Retraction using QR Decomposition} \\ \midrule
        \multirow{5}{*}{$n$}
 & 200 &
264 & 377
& 2.39 & 5.12
& 60.13 & 83.44
& 0.00 & 0.00
& 25699 & 25639
& 8.15 & 8.16
\\
 & 500 &
265 & 373
& 4.32 & 9.96
& 53.93 & 84.43
& 0.00 & 0.00
& 30973 & 29728
& 14.61 & 14.04
\\
 & 1000 &
110 & 86
& 5.12 & 3.63
& 89.69 & 85.90
& 20.93 & 18.24
& 2478 & 4965
& 1.85 & 3.69
\\
 & 1500 &
238 & 360
& 22.33 & 65.66
& 142.90 & 263.24
& 2.35 & 5.87
& 69516 & 59054
& 71.92 & 61.12
\\
 & 2000 &
136 & 197
& 14.89 & 32.68
& 182.04 & 249.38
& 5.51 & 0.31
& 23681 & 21536
& 29.78 & 28.15
\\
\midrule \multirow{4}{*}{$r$}
 & 10 &
603 & 335
& 87.72 & 65.44
& 415.50 & 548.50
& 34.42 & 14.05
& 44448 & 26438
& 22.12 & 13.01
\\
 & 15 &
393 & 331
& 50.38 & 70.59
& 286.62 & 481.59
& 44.88 & 23.94
& 24163 & 18022
& 16.03 & 11.89
\\
 & 25 &
225 & 327
& 12.83 & 26.98
& 103.27 & 137.76
& 0.55 & 0.00
& 29205 & 24441
& 26.70 & 22.44
\\
 & 30 &
177 & 439
& 16.66 & 77.70
& 145.66 & 238.25
& 0.00 & 0.00
& 27444 & 20960
& 29.86 & 23.22
\\
\midrule \multirow{4}{*}{$\mu$}
 & 0.05 &
165 & 278
& 5.25 & 13.06
& 74.65 & 97.48
& 0.00 & 0.00
& 9431 & 7887
& 7.00 & 5.85
\\
 & 0.15 &
136 & 217
& 5.64 & 12.71
& 98.06 & 120.97
& 0.00 & 0.00
& 21286 & 20372
& 17.11 & 17.39
\\
 & 0.20 &
99 & 161
& 6.75 & 16.89
& 151.63 & 211.26
& 8.47 & 0.33
& 18256 & 13665
& 13.92 & 10.27
\\
 & 0.25 &
210 & 354
& 33.98 & 89.84
& 313.85 & 450.47
& 20.33 & 0.36
& 79478 & 37584
& 61.58 & 29.45
\\

\midrule 
    \multicolumn{2}{c|}{} & \multicolumn{12}{c}{Retraction using Polar Decomposition} \\ \midrule

        \multirow{5}{*}{$n$}
 & 200 &
274 & 369
& 2.62 & 5.07
& 61.67 & 83.38
& 0.00 & 0.00
& 28362 & 25357
& 9.02 & 8.06
\\
 & 500 &
295 & 335
& 5.19 & 9.91
& 55.56 & 91.45
& 0.00 & 0.00
& 39868 & 27719
& 19.18 & 13.40
\\
 & 1000 &
113 & 71
& 4.84 & 3.78
& 84.84 & 106.35
& 18.23 & 18.42
& 2783 & 5549
& 2.09 & 4.18
\\
 & 1500 &
251 & 390
& 23.54 & 74.01
& 142.35 & 267.34
& 2.79 & 4.64
& 73268 & 62999
& 75.87 & 65.86
\\
 & 2000 &
137 & 201
& 14.82 & 32.11
& 184.55 & 240.82
& 5.04 & 0.18
& 23561 & 21548
& 29.89 & 29.70
\\
\midrule \multirow{4}{*}{$r$}
 & 10 &
491 & 264
& 69.46 & 50.06
& 408.00 & 536.71
& 32.46 & 14.51
& 45323 & 34803
& 22.18 & 17.09
\\
 & 15 &
394 & 274
& 50.06 & 56.21
& 284.55 & 462.63
& 44.71 & 23.48
& 25270 & 18112
& 16.94 & 12.11
\\
 & 25 &
217 & 328
& 12.65 & 28.22
& 106.78 & 143.70
& 0.55 & 0.00
& 28576 & 26171
& 25.92 & 23.86
\\
 & 30 &
160 & 432
& 15.85 & 76.47
& 156.98 & 239.43
& 0.00 & 0.00
& 24631 & 20308
& 26.58 & 22.41
\\
\midrule \multirow{4}{*}{$\mu$}
 & 0.05 &
160 & 281
& 5.17 & 13.37
& 77.30 & 98.57
& 0.00 & 0.00
& 9345 & 8016
& 6.88 & 5.97
\\
 & 0.15 &
136 & 214
& 5.73 & 12.49
& 96.72 & 122.45
& 0.00 & 0.00
& 21143 & 18790
& 17.08 & 14.89
\\
 & 0.20 &
93 & 151
& 5.74 & 14.92
& 143.59 & 199.05
& 7.25 & 0.28
& 16898 & 13309
& 12.59 & 10.08
\\
 & 0.25 &
194 & 281
& 26.74 & 72.03
& 269.42 & 465.45
& 20.43 & 0.41
& 75149 & 36278
& 57.65 & 28.19
\\

    \bottomrule
\end{tabular}%

	\end{center}
\end{table}
\begin{table}[t]
	\centering
	\footnotesize
	\caption{The minimum eigenvalue of the Hessian matrix of \eqref{eqn:Lk} in the CM problem~\eqref{eqn:cm-problem}. 
	$(n, r, \mu) = (1000, 20, 0.1)$ and one of them varies. 
	We report the results of $5$ different runs. 
	Let $\{ x_k \}$ be the sequence generated by Algorithm~\ref{alg:alm-outer} and $I$ be the index set at which Algorithm~\ref{alg:semismooth} is called, then each number of this table is $\min_{k \in I} \lambda_{\min} (H_k)$, where $\lambda_{\min}$ is the minimum eigenvalue and $H_k \in \partial \grad L_k(x_k)$ is from Algorithm~\ref{alg:compute-clarke}.  }
	\label{tab:cm-eigen}
	\begin{subtable}{\linewidth}
	\begin{center}
		
		\begin{tabular}{c|c|ccccc} \toprule
			\multicolumn{2}{c|}{} & 
			\multicolumn{5}{c}{\sc Minimum Eigenvalue ($\times 10$)} 
 \\ \midrule
			 \multirow{5}{*}{$n$}
			 & 200 &
3.04960 & 3.05820 & 3.05160 & 3.23330 & 1.75420
\\
 & 500 &
0.97663 & 1.08610 & 1.14290 & 1.17090 & 1.01000
\\
 & 1000 &
0.13422 & 2.06550 & 0.22452 & 1.88420 & 0.05863
\\
 & 1500 &
0.65285 & 0.61208 & 0.63683 & 0.86749 & 0.80422
\\
 & 2000 &
0.24362 & 0.24363 & 0.24154 & 0.24101 & 0.23518
\\
\midrule \multirow{4}{*}{$r$}
 & 10 &
0.00264 & 0.00570 & 0.00287 & 0.00901 & 0.00005
\\
 & 15 &
0.06096 & 0.05403 & 0.01370 & 0.00334 & 0.06086
\\
 & 25 &
0.60886 & 0.60154 & 0.36930 & 0.76388 & 0.06192
\\
 & 30 &
3.01550 & 6.18060 & 2.96390 & 6.18660 & 5.35900
\\
\midrule \multirow{4}{*}{$\mu$}
 & 0.05 &
0.58725 & 1.03500 & 1.01720 & 1.12870 & 1.15590
\\
 & 0.15 &
0.57333 & 0.73569 & 0.39304 & 0.57299 & 0.53624
\\
 & 0.20 &
0.34736 & 0.20896 & 0.45627 & 0.34404 & 0.03558
\\
 & 0.25 &
0.01539 & 0.16556 & 0.05790 & 0.01262 & 0.05763
\\

			\bottomrule
        \end{tabular}%

	\end{center}
	\end{subtable}
\end{table}

\subsection{Sparse PCA} \label{sec:spca}
In this section, we consider the SPCA problem \eqref{eqn:spca-problem}.
We compare our algorithm with AManPG, ARPG and SOC in high accuracy. The termination conditions are similar to those in CM and the threshold is set to $5 \times 10^{-8}$.
In our algorithm, we set $\tau = 0.99$, the maximum numbers of iterations in CG and the first-order method are both $300$. 
We set $\varepsilon_k = 0.9^k$ and the initial value of $\sigma$ is $3\lambda_{\mathrm{max}}(A^\trans A)$, where $\lambda_{\mathrm{max}}$ denotes the maximal eigenvalue. 
Other parameters of our algorithm are the same as those in CM.
Note that since ManPG generally cannot achieve our requirement on the accuracy, we omit it in this experiment.

\begin{table}[h]
	\centering
	\footnotesize
	\caption{Comparison on SPCA. $A\in\R^{50\times n}$, $(n, r, \mu) = (2000, 20, 1.0)$ and one of them varies. 
	{\sc LSe, LSq, LSp} denote our algorithm with the exponential map, the retraction using QR decomposition the retraction using polar decomposition, respectively.} 
	\label{tab:spca}
	\begin{center}
    \resizebox{\columnwidth}{!}{%
	
		\begin{tabular}{c|c|ccccccccc} \toprule
			\multicolumn{2}{c|}{} & 
			\sc AManPG & \sc ARPG & \sc SOC & \sc LSe-I & \sc LSe-II & \sc LSq-I & \sc LSq-II & \sc LSp-I & \sc LSp-II \\ 
			\multicolumn{2}{c|}{} & \multicolumn{8}{c}{Running Time (s)} \\ \midrule
			 \multirow{6}{*}{$n$}
 & 500 &
6.77 & 48.25 & 18.85 & 7.69 & 7.60 & 5.12 & 5.19 & 5.14 & \textbf{5.03}
\\
 & 1000 &
12.05 & 79.78 & 34.94 & 14.66 & 18.24 & 13.56 & \textbf{11.93} & 13.19 & 12.06
\\
 & 1500 &
34.72 & 87.92 & 57.74 & 29.49 & 28.77 & 24.94 & 25.12 & 27.76 & \textbf{23.55}
\\
 & 2000 &
34.64 & 112.64 & 94.48 & 33.30 & 42.67 & 47.19 & 34.15 & 45.30 & \textbf{30.55}
\\
 & 2500 &
57.50 & 116.51 & 124.66 & 45.11 & 42.63 & 46.11 & \textbf{37.25} & 52.52 & 37.63
\\
 & 3000 &
81.80 & 134.70 & 158.95 & 77.56 & 81.44 & 99.62 & \textbf{58.87} & 79.36 & 64.45
\\
\midrule \multirow{4}{*}{$r$}
 & 5 &
\textbf{7.37} & 25.00 & 55.96 & 20.86 & 19.32 & 17.73 & 17.86 & 18.37 & 16.04
\\
 & 10 &
18.52 & 44.10 & 75.23 & 22.49 & 32.83 & 25.89 & \textbf{18.12} & 29.05 & 19.12
\\
 & 15 &
38.14 & 67.22 & 86.39 & 35.84 & 31.50 & 38.00 & \textbf{18.94} & 30.52 & 19.12
\\
 & 25 &
62.88 & 146.31 & 105.45 & 39.49 & 35.56 & 27.73 & \textbf{27.45} & 33.83 & 28.75
\\
\midrule \multirow{4}{*}{$\mu$}
 & 0.25 &
\textbf{70.97} & 80.56 & 93.19 & 116.40 & 141.50 & 89.27 & 96.15 & 87.74 & 97.62
\\
 & 0.50 &
57.01 & 86.69 & 93.66 & 53.81 & 50.27 & 58.37 & 41.38 & 57.38 & \textbf{38.75}
\\
 & 0.75 &
51.82 & 98.02 & 94.44 & 37.36 & 39.85 & 43.89 & 32.15 & 39.06 & \textbf{30.23}
\\
 & 1.25 &
37.36 & 112.67 & 93.86 & 40.34 & 30.16 & 53.26 & \textbf{23.25} & 52.10 & 24.18
\\

 \midrule 
		\multicolumn{2}{c|}{} & \multicolumn{9}{c}{Loss Function: $-\tr (X^\trans A^\trans A X) + \mu \| X \|_1$} \\ \midrule
			 \multirow{6}{*}{$n$}
 & 500 &
-337.79 & -337.73 & -337.32 & \textbf{-337.82} & \textbf{-337.82} & \textbf{-337.82} & \textbf{-337.82} & \textbf{-337.82} & \textbf{-337.82}
\\
 & 1000 &
-749.79 & \textbf{-749.92} & -749.62 & -749.61 & -749.61 & -749.61 & -749.61 & -749.61 & -749.61
\\
 & 1500 &
-1207.65 & \textbf{-1207.83} & -1206.96 & -1207.57 & -1207.48 & -1207.57 & -1207.48 & -1207.57 & -1207.48
\\
 & 2000 &
-1621.52 & -1621.85 & -1621.35 & \textbf{-1622.69} & \textbf{-1622.69} & \textbf{-1622.69} & \textbf{-1622.69} & \textbf{-1622.69} & \textbf{-1622.69}
\\
 & 2500 &
-2077.05 & \textbf{-2077.30} & -2076.24 & -2077.13 & -2077.13 & -2077.13 & -2077.13 & -2077.13 & -2077.13
\\
 & 3000 &
-2542.05 & -2541.80 & -2541.15 & \textbf{-2542.36} & \textbf{-2542.36} & \textbf{-2542.36} & \textbf{-2542.36} & \textbf{-2542.36} & \textbf{-2542.36}
\\
\midrule \multirow{4}{*}{$r$}
 & 5 &
-1497.56 & -1497.50 & -1497.52 & -1497.66 & \textbf{-1497.67} & \textbf{-1497.67} & \textbf{-1497.67} & \textbf{-1497.67} & \textbf{-1497.67}
\\
 & 10 &
-1640.15 & -1640.17 & -1639.74 & \textbf{-1640.63} & \textbf{-1640.63} & \textbf{-1640.63} & \textbf{-1640.63} & \textbf{-1640.63} & \textbf{-1640.63}
\\
 & 15 &
-1641.56 & \textbf{-1641.60} & -1640.95 & -1640.68 & -1640.79 & -1640.79 & -1640.79 & -1640.79 & -1640.79
\\
 & 25 &
-1636.00 & -1636.46 & \textbf{-1636.54} & -1636.48 & -1636.48 & -1636.48 & -1636.48 & -1636.48 & -1636.48
\\
\midrule \multirow{4}{*}{$\mu$}
 & 0.25 &
-1874.13 & -1874.15 & -1871.42 & \textbf{-1874.23} & \textbf{-1874.23} & -1874.22 & -1874.22 & -1874.22 & -1874.22
\\
 & 0.50 &
\textbf{-1780.83} & -1780.81 & -1778.62 & -1780.71 & -1780.69 & -1780.69 & -1780.69 & -1780.69 & -1780.69
\\
 & 0.75 &
-1698.08 & -1697.79 & -1695.84 & \textbf{-1698.17} & \textbf{-1698.17} & \textbf{-1698.17} & \textbf{-1698.17} & \textbf{-1698.17} & \textbf{-1698.17}
\\
 & 1.25 &
-1552.05 & -1552.28 & -1552.04 & \textbf{-1553.42} & -1553.41 & -1553.41 & -1553.41 & -1553.41 & -1553.41
\\

			\bottomrule
        \end{tabular}%

	}
	\end{center}
\end{table}

The data matrix $A \in \R^{50\times n}$ is generated as follows: First, we randomly generate $A$ from the standard Gaussian distribution. 
Then, we modify the singular values  of $A$ to $\{ w_i^4 + 10^{-5} \}_{i=1}^{50}$ to make $A$ ill-conditioned, where $\{ w_i \}$ is sampled from the standard Gaussian distribution. Finally, columns of $A$ are normalized to have zero mean and unit length. 
Results are reported in Table~\ref{tab:spca} and Figure~\ref{fig:spca-comparison}. 
We find that the losses of the solutions obtained by these methods are comparable, and our algorithm (LS-II) is faster than other methods when one of $\mu, r, n$ is large.

\subsection{Constrained Sparse PCA}
\begin{table}[t]
	\centering
	\footnotesize
	\caption{Comparison on constrained SPCA. $(n, r, \mu) = (2000, 20, 1.0)$ and one of them varies. {\sc Equality} and {\sc Inequality} are the logarithm of the violation of equality/manifold and inequality constraints, respectively.}
	\label{tab:constrained-spca}
	\begin{center}
    \resizebox{\columnwidth}{!}{%
   		\begin{tabular}{c|c|cc|cc|cc|cc|cc} \toprule
			\multicolumn{2}{c|}{}
			& \multicolumn{2}{c|}{\sc CPU (s)} 
			& \multicolumn{2}{c|}{\sc Sparsity (\%)} 
			& \multicolumn{2}{c|}{\sc CPAV (\%)} 
			& \multicolumn{2}{c|}{\sc Equality} 
			& \multicolumn{2}{c}{\sc Inequality} 
			\\
			\multicolumn{2}{c|}{}
			 & \sc Ours & \sc ALSPCA & 
			 \sc Ours & \sc ALSPCA & 
			 \sc Ours & \sc ALSPCA & 
			 \sc Ours & \sc ALSPCA & 
			 \sc Ours & \sc ALSPCA  \\
			 \midrule
			 \multirow{5}{*}{$r$}
& 5 &
\textbf{9.01} & 12.17 &
\textbf{37.83} & 34.43 &
11.82 & \textbf{11.93} &
\textbf{-15.31} & -9.19 &
-8.21 & \textbf{-8.46}\\
& 10 &
\textbf{14.67} & 20.80 &
\textbf{38.16} & 33.74 &
22.77 & \textbf{23.19} &
\textbf{-15.32} & -8.74 &
\textbf{-8.29} & -8.19\\
& 15 &
\textbf{21.10} & 31.29 &
\textbf{38.85} & 38.62 &
\textbf{33.03} & 32.97 &
\textbf{-15.27} & -8.73 &
\textbf{-8.33} & -8.19\\
& 25 &
\textbf{32.51} & 33.67 &
\textbf{39.23} & 38.68 &
51.59 & \textbf{52.09} &
\textbf{-15.14} & -9.06 &
\textbf{-8.36} & -7.98\\
& 30 &
\textbf{39.79} & 39.84 &
39.65 & \textbf{40.51} &
60.09 & \textbf{60.31} &
\textbf{-15.12} & -9.03 &
\textbf{-8.23} & -7.91\\
\midrule
			\multirow{5}{*}{$n$}
& 500 &
\textbf{10.14} & 17.13 &
\textbf{72.62} & 68.67 &
\textbf{35.71} & 35.61 &
\textbf{-15.23} & -8.93 &
\textbf{-8.32} & -8.26\\
& 1000 &
\textbf{15.64} & 22.97 &
\textbf{56.07} & 49.80 &
40.57 & \textbf{41.78} &
\textbf{-15.20} & -8.99 &
\textbf{-8.23} & -7.96\\
& 2000 &
\textbf{26.23} & 28.91 &
\textbf{38.74} & 38.01 &
42.65 & \textbf{42.94} &
\textbf{-15.12} & -8.76 &
-8.11 & \textbf{-8.22}\\
& 4000 &
52.48 & \textbf{44.21} &
\textbf{23.30} & 21.65 &
43.22 & \textbf{44.06} &
\textbf{-15.08} & -9.13 &
\textbf{-8.62} & -7.89\\
& 6000 &
71.72 & \textbf{60.98} &
\textbf{17.68} & 16.99 &
43.13 & \textbf{43.74} &
\textbf{-15.22} & -8.89 &
\textbf{-8.79} & -7.94\\

			\bottomrule
		\end{tabular}
	}
	\end{center}
\end{table}

\begin{figure}[t]
	\centering
	\begin{subfigure}{0.328\linewidth}
		\centering
		\includegraphics[width=\linewidth]{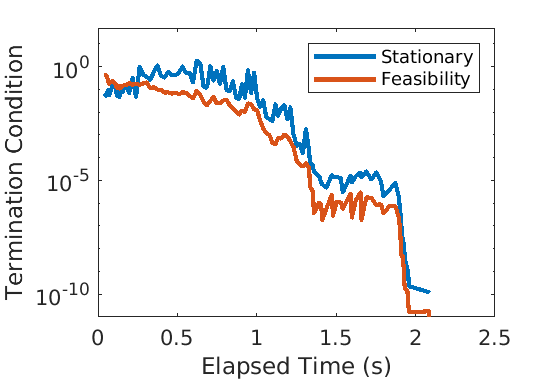}
	\end{subfigure} \hfill
	\begin{subfigure}{0.328\linewidth}
		\centering
		\includegraphics[width=\linewidth]{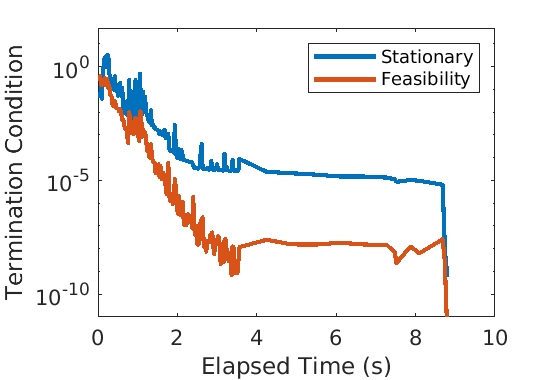}
	\end{subfigure} \hfill
	\begin{subfigure}{0.328\linewidth}
		\centering
		\includegraphics[width=\linewidth]{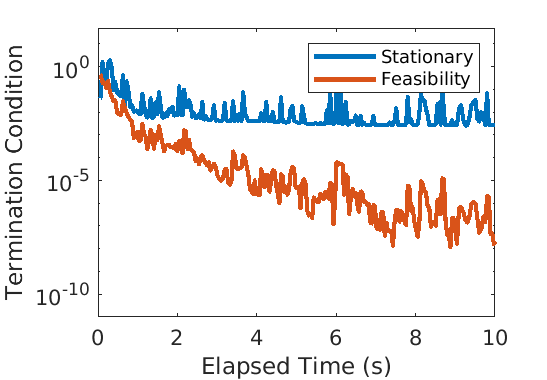}
	\end{subfigure}
	\caption{Results of Algorithm~\ref{alg:alm-outer} and \ref{alg:semismooth} on constrained SPCA with $(n, \mu) = (600, 1.0)$ and $r = 3, 5, 10$, respectively. We use the first-order method to find an initial point for Algorithm~\ref{alg:semismooth} and start the second-order method when $\| \grad L_k \| \leq 5 \times 10^{-4}$. ``Feasibility'' refers to the condition like \eqref{eqn:term-ours-feasibility} and ``Stationary'' refers to the condition like \eqref{eqn:term-ours-stationary}.
	The second-order method is able to start when $r = 3, 5$ and fails to start when $r = 10$.}
	\label{fig:ortho-spca}
\end{figure}

Since the SPCA problem in Sec.~\ref{sec:spca} has no guarantee on finding eigenvectors, 
the constrained SPCA problem \eqref{eqn:cons-spca-problem} is considered in~\cite{lu2012spca}.
In this section, we compare our algorithm with the method in~\cite{lu2012spca}, which will be referred to as ALSPCA. In our implementation, we apply the first-order feasible method on Stiefel manifolds~\cite{wen2013feasible} to solve the subproblem when the iterates do not meet the starting conditions of the semismooth Newton method. Following~\cite{lu2012spca}, the termination condition of the subproblem is based on the feasibility conditions~\eqref{eqn:term-ours-feasibility}. 
In our algorithm, we set $\tau = 0.25$, $\rho = 10$, $\alpha = 1.01$, the maximum number of iterations of the first-order method is $2000$. We set $\varepsilon_k = 0.1^k$ and the initial value of $\sigma$ is $1$. The parameters of ALSPCA are the same as those in \cite{lu2012spca}. The data matrix $A \in \R^{50 \times n}$ is generated from the standard Gaussian distribution and each column of $A$ is normalized to have zero mean and unit length.

In our experiments, we observe that our algorithm and ALSPCA find very different solutions for the same penalty parameter $\mu$. To better compare their performance, we set $\mu = 1$ for our algorithm, and tune $\mu$ to achieve a comparable sparsity for ALSPCA. We compare the quality of solutions in terms of the cumulative percentage of adjusted variance (CPAV) proposed by \cite{lu2012spca}. It is defined as follows:
\[ \text{CPAV}(V) = \frac{1}{\tr(A^\trans A)} \Big ( \tr(V^\trans A^\trans AV) - \sqrt{\sum_{i \neq j} (V_i^\trans A^\trans AV_j)^2 } \Big ). \]
This quantity measures how well the data is explained by the found principal components.
We set $\Delta_{ij} = 10^{-8}$ and report the results in Table~\ref{tab:constrained-spca}. We find solutions obtained from these algorithms are comparable in terms of CPAV, and we also find there is a trade-off between the sparsity and the explainability (CPAV). The speed of our algorithm is comparable with ALSPCA for large $n, r$, and is faster for small $n, r$. Moreover, as shown in the first two columns in Figure~\ref{fig:ortho-spca}, the semismooth Newton method can start when $r=3$ and $r=5$. When $r=10$, the behavior of our algorithm is similar to the first order methods as it is difficult to fulfill the stationary conditions like \eqref{eqn:term-ours-stationary}. One possible reason is that \eqref{eqn:cons-spca-problem} has $r(r-1)$ inequality constraints such that the non-degeneracy condition required by Theorem~\ref{thm:semismooth-LS-I-rate} is likely violated for the case when $r$ is large. 
A similar phenomenon has also been observed in SDP~\cite{zhao2010newton,yang2015sdpnal}. Even in this case, we find that our algorithm is still faster than the ALSPCA method.

\section{Conclusion}
\label{sec:conclusions}
The paper proposes an augmented Lagrangian method for solving a class of nonsmooth optimization problems on manifolds with proved convergence. Using the Moreau-Yosida regularization, the augmented Lagrangian subproblem can be efficiently solved by the globalized semismooth Newton method, which exploits the second order geometry structure of manifolds. The local superlinear convergence rate is established under certain non-degenerate conditions that are similar to the case in the Euclidean space. Our numerical experiments on various applications show the advantages of the proposed method over the existing methods. The work done in  this paper on ALM for solving the nonsmooth optimization problems on manifolds is by no means complete. There are many unanswered questions on both theory and algorithm design. For example, the  convergence results of ALM for solving the nonsmooth manifold optimization problem are established under the constraint qualifications such as CPLD and  LICQ. A systematical study on the  convergence analysis of ALM under weaker assumptions is certainly of paramount necessity for solving the nonsmooth manifold optimization problems. Another direction is to design more efficient algorithms for solving the ALM subproblems in particular for the more challenging problems such as the constrained sparse PCA. It is our firm belief that a better usage of the inherent second order geometry structure of manifolds rather than projecting them into the tangent spaces will lead to more efficient optimization methods for solving nonsmooth manifold optimization problems.

\section*{Acknowledgement}
The authors would like to thank the two anonymous referees and the associate editor for their valuable comments and constructive suggestions,
which have greatly improved the quality and the presentation of this paper.

\begin{appendices}

\section{The Deferred Proofs in Sections~\ref{sec:global-convergence} and \ref{sec:local-convergence}}
\subsection{Proof of Lemma~\ref{lem:retraction-approx}}
\label{app:proof-retraction-approx}
\begin{proof}
	From the compactness of $U$, there exist $C, K,  r_0 > 0$ such that Lemma~\ref{lem:cosine-law} holds for every $q \in U$.
	By shrinking the constant $r_0$ if necessary, we could also assume that for every $q \in U$, $R_q v$ is defined for every $v \in T_q\cM$ with $\|v \| < r_0$.
	For a fixed point $q \in \cM$, let $\{ e_i \}_{i \in [n]}$ be an orthonormal basis of $T_q \cM$. We define the isomorphism $\psi_q: \R^n \to T_q \cM$ as $\psi_q(w_1, \dots, w_n) = \sum_{i=1}^n w_i e_i$,
	and the chart $\varphi_q: B_{r_0}(q) \to \R^n$ as 
	$\varphi_q\left ( \exp_q \psi_q(w)  \right ) = w$, where $\|w\|_{\R^n} < r_0$,\footnote{Indeed, this is called the \emph{normal coordinate} (see, e.g., \cite[p.~86]{carmo1992riemannian} and \cite[p.~131]{lee2018introduction}).}
	which implies that $(\psi_q \circ \varphi_q )(p) = \exp_q^{-1} p$ for every $p \in B_q(r_0)$.

	For $\|v \| < r_0$, we define the curve $\gamma(t) := R_q (tv)$, then 
	\begin{equation}
		\label{eqn:first-order-retraction-approx}
		d(R_q v, q) \leq \ell(\gamma|_{[0, 1]})
		= \int_0^1 \| \dot \gamma(t) \| \dd t
		= \int_0^1 \| \dd R_q|_{tv} [v] \|  \dd t
		\leq \tilde C_q \| v \|,
	\end{equation}
	where $\tilde C_q := \sup_{\|v\| < r_0} \| \dd R_q|_v \|$. Since $R_q v$ is $C^2$ with respect to $q$ and $v$ and $U$ is compact, we know $\tilde C := \sup_{q \in U} \tilde C_q < \infty$.
	Thus, $\exp^{-1}_q R_q v$ is defined for $q \in U$ and $\|v \| < r := r_0 / \tilde C$.

	Let $\hat B_{r} := \{ w \in \R^n : \|w \|_{\R^n} < r \}$, and $\hat R_q, \hat E_q: \hat B_{r} \to \R^n$ be the maps $\varphi_q \circ R_q \circ \psi_q|_{\hat B_r}$, $\varphi_q \circ \exp_q \circ \psi_q|_{\hat B_r}$, respectively.
	Since the differentials of $\exp_q$ and $R_q$ are the same at $0$, then the differentials of $\hat E_q$ and $\hat R_q$ also coincide at $0$.
	Let $\hat D_q := \hat E_q - \hat R_q$, 
	then Taylor's theorem states that for any $w \in \hat B_r$, there exists $t_w \in (0, 1)$ such that
	$\| \hat D_q(w) \|_{\R^n}  = \frac{1}{2} \| \nabla^2 \hat D_q(t_w w)[w, w] \|_{\R^n} \leq \hat C_q \| w \|^2_{\R^n}$, where $\hat  C_q := \frac{1}{2}\sup_{ \| w \|_{\R^n} < r} \| \nabla^2 \hat D_q(w) \|$.

	Since $\psi_q$ is a linear isomorphism, then
	$(\psi_q \circ \hat D_q \circ \psi_q^{-1})(v) 
	= (\psi_q \circ (\varphi_q \circ \exp_q - \varphi_q \circ R_q))(v) 
	= (\psi_q \circ \varphi_q \circ \exp_q)(v) - (\psi_q \circ \varphi_q \circ R_q)(v) 
	= v - \exp^{-1}_q R_q v$.
	Therefore, by Lemma~\ref{lem:cosine-law}, we have 
	\[ d(R_q v, \exp_q v)^2 
	\leq  \| v - \exp^{-1}_q R_q v \|^2 + K  \| \exp^{-1}_q R_q v \|^2 \| v \|^2
	\leq (\hat C_q^2+ \tilde C^2 K ) \| v \|^4
	\] 
	for every $v \in T_q \cM$ with $\| v \| < r$, where the last inequality follows from \eqref{eqn:first-order-retraction-approx} and $\| \exp^{-1}_q R_q v \| = d(R_q v, q)$.
	Finally, since $R_q v$ and $\exp_q v$ are both $C^2$ with respect to $q$ and $v$, $\hat C_q$ is uniformly bounded on the compact set $U$. 
	Thus, the required constant $C$ exists and is finite. \hfill $\Box$
\end{proof}

\subsection{Proof of Lemma~\ref{lem:second-taylor}}
\label{app:proof-second-taylor}
\begin{proof}
	Define $\hat \varphi = \varphi \circ \gamma$. We know that $\hat \varphi$ is continuously differentiable with Lipschitz gradient in~$[0, 1]$.
	By Theorem 2.3 in \cite{hiriart1984generalized}, there exist $\xi \in (0, 1), \hat M_\xi \in \partial^2 \hat\varphi(\xi)$ such that $\hat \varphi(1) - \hat \varphi(0) = \hat\varphi^\prime(0) + \frac{\hat M_\xi}{2}$. Note that due to $\hat\varphi^\prime(t) = \inner{\grad \varphi(\gamma(t))}{\dot \gamma(t)}$ 
	and $\nabla_{\dot \gamma}\dot\gamma = 0$, we find $\hat \varphi^{\prime\prime}(t) = \inner{\nabla_{\dot\gamma(t)} \grad \varphi(\gamma(t))}{\dot\gamma(t)}$,  whenever~$\hat \varphi^\prime$ is differentiable at $t$.
	From the linearity of the parallel transport, it suffices to consider the case $\hat M_\xi \in \partial^2_B \hat \varphi(\xi)$, i.e.,
	there exists $\{ t_k \} \to \xi$ such that $\hat \varphi^{\prime\prime}$ exists at $t_k$ and $\hat \varphi^{\prime\prime}(t_k) \to \hat M_\xi$.
	Since by the isometry property of the parallel transport, $\hat \varphi^{\prime\prime}(t_k) = \langle  \nabla_{ \dot \gamma(t_k)} \grad \varphi(\gamma(t_k)) , \dot \gamma(t_k) \rangle = \langle P_{\gamma}^{t_k \mapsto \xi} \nabla_{P_\gamma^{\xi \mapsto t_k} \dot \gamma(\xi)} \grad \varphi(\gamma(t_k)) , \dot \gamma(\xi) \rangle$, then
	there is $M_\xi \in \partial \grad \varphi(\gamma(\xi))$ such that $\hat M_\xi = \inner{M_\xi \dot \gamma(\xi)}{\dot \gamma(\xi)}$.
	Note $\dot\gamma(\xi) = P_\gamma^{0\to\xi}\dot\gamma(0)$, then the conclusion holds. \hfill $\Box$
\end{proof} 

\subsection{Proof of Lemma~\ref{lem:B-differentiability}}
\label{app:proof-B-differentiability}
\begin{proof}
	Suppose the conclusion does not hold, then there exist $\delta>0$ and $v_k \in T_p\cM \to 0$ such that 
	\[ \normz{P_{\exp_p v_k, p}X(\exp_p v_k) - \nabla X(p; v_k)} \geq \delta \norm{v_k}. \]
	By Lemma~\ref{lem:cosine-law} and the locally Lipschitz condition on $X$, there exists $r_0>0$ such that for every $p_1,p_2\in B_{r_0}(p)$, there exists a unique shortest geodesic joining $p_1$ and $p_2$ and $X$ is $L$-Lipschitz in $B_{r_0}(p)$. 
	Let $0<r<r_0$, by taking the subsequence of $\{v_k\}$, we assume that $\bar v_k := \frac{rv_k}{\norm{v_k}} \to v \in T_p\cM$ with $\norm{v} = r$. 
	We define $t_k = \norm{v_k}/r$, $q_k = \exp_p v_k$, $r_k = \exp_p \bar v_k$ and $s_k = \exp_p t_k v$, then it holds that $\{q_k,r_k,s_k\}\subset B_{r_0}(p)$, and hence we have
	\begin{equation}\label{estimate_412}
	    \begin{aligned}
	    \|P_{q_k,p}X(q_k) - \nabla X(p;v_k)\| &\leq \|P_{q_k,p} X(q_k)-P_{s_k,p} X(s_k)\| + \|P_{s_k,p} X(s_k) - \nabla X(p;v_k)\|\\
	    &\leq \underbrace{\|P_{p,s_k}P_{q_k,p} X(q_k) - P_{q_k,s_k} X(q_k))\|}_{\rm (A)} + \underbrace{\|P_{q_k,s_k} X(q_k) -  X(s_k)\|}_{\rm (B)}\\
	    &\peq + \underbrace{\|P_{s_k,p} X(s_k) - t_k \nabla X(p;v)\|}_{\rm (C)} + \underbrace{\|\nabla X(p;v_k) - t_k\nabla X(p;v)\|}_{\rm (D)}.
	\end{aligned}
	\end{equation}
	 From Lemma 10 in~\cite{huang2015riemannian}, there exists $C>0$ such that
	\begin{equation}\label{estimate:I}
	     {\rm (A)} \leq \|P_{s_k,q_k}P_{p,s_k}P_{q_k,p} - \id\|\|X(q_k)\| \leq C\max(d(p,q_k), d(q_k,s_k)) \|X(q_k)\| = o(t_k), 
	\end{equation}
	since $\|X(q_k)\|\leq L \|v_k\|\rightarrow 0$ as $v_k\to 0$. Moreover, since $X$ is locally Lipschitz and directional differentiable at $p$, using Lemma~\ref{lem:cosine-law}, as $k\to\infty$, we have
	\begin{equation}\label{estimate:II_III}
	   {\rm (B)} \leq L d(q_k,s_k) = o(t_k), \quad {\rm (C)}  = \|P_{s_k,p} X(s_k) - X(p) - t_k\nabla X(p;v)\| = o(t_k),
	\end{equation}
	where the first estimate follows from Definition~\ref{def:lipschitz} and $\ell(\gamma) = d(q_k, s_k)$.
	Let $q_k^t = \exp_p t\bar v_k$ and $q_v^t = \exp_p tv$, from Lemma 10 in~\cite{huang2015riemannian} and Lemma~\ref{lem:cosine-law}, we have when $t \to 0$,
	\begin{equation*}
	\begin{aligned}
	    \|P_{q_k^t,p}X(q_k^t)-P_{q_v^t,p}X(q_v^t)\|
	    & \leq  \|P_{p,q_v^t}P_{q_k^t,p}X(q_k^t)- P_{q_k^t,q_v^t}X(q_k^t)\| + \|P_{q_k^t,q_v^t}X(q_k^t)-X(q_v^t)\| \\
	    & \leq  \|P_{q_v^t,q_k^t}P_{p,q_v^t}P_{q_k^t,p}-\id\|\|X(q_k^t)\| + L d(q_k^t,q_v^t) = O(t^2) + O(t)\|\bar v_k - v\|,
	\end{aligned}
	\end{equation*}
	where the constants in the big-O notation depend on $p$, $v$ and $\cM$.
	An intermediate result of the above estimate is 
	\begin{equation}
		\label{eqn:est-direction}
	    \|\nabla X(p;\bar v_k) - \nabla X(p;v)\|  \leq O(\|\bar v_k - v\|) = o(1).
	\end{equation}
	Thus, the last term in~\eqref{estimate_412} is 
	\begin{equation}\label{estimation:IV}
	    {\rm (D)} = t_k\|\nabla X(p;\bar v_k) - \nabla X(p;v)\| = o(t_k)
	\end{equation}
	Combining \eqref{estimate:I}, \eqref{estimate:II_III} and \eqref{estimation:IV}, we find that 
	    $\|P_{q_k,p}X(q_k) - \nabla X(p;v_k)\| = o(t_k) = o(\|v_k\|)$, 
which contradicts with our assumption. \hfill $\Box$
\end{proof}

\subsection{Proof of Lemma~\ref{lem:mani-cvg}}
\label{app:proof-mani-cvg}
\begin{proof}
	Define $q_{k + 1} := R_{p_k} V_k$. %
	Let $B_r(p), K$ be the quantities in Lemma~\ref{lem:cosine-law}. 
	Without loss of generality, we may assume $p_k \in B_r(p)$  and $\| v\| < r$ for every $k$, and shrink the ball $B_r(p)$ such that \eqref{eqn:retraction-approx} holds in $B_r(p)$.
	Using Lemma~\ref{lem:cosine-law}, we know the following inequality holds for every geodesic triangle in $B_r(p)$ whose edges are $a, b, c$:
	\[ a^2
	\leq b^2 + c^2 - 2bc \cos A + Kb^2c^2 \leq (b + c)^2 + Kb^2c^2,
	 \]
    where $A$ is the angle between $b$ and $c$. 
    Let $a = d(p_k, q_{k+1})$, $b = d(p_k, p)$, $c = d(q_{k+1}, p)$, then we have
    \begin{equation}
        \label{eqn:ratio-limsup_1} \limsup_{k \to \infty} \frac{d(p_k,q_{k+1})^2}{d(p_k,p)^2} 
    \leq K \limsup_{k \to \infty} d(q_{k+1}, p)^2 + \limsup_{k \to \infty} \left ( 1 + \frac{d(q_{k+1},p)}{d(p_k,p)} \right )^2 
    = 1.
    \end{equation}
    
	Define $a = d(p_{k}, p)$, $b = d(p_k, q_{k+1})$, $c = d(q_{k + 1}, p)$, note that $d(q_{k+1},p) = o(d(p_k, p))$, then for any $\varepsilon > 0$ there exists $k_\varepsilon > 0$ such that for any $k \geq k_\varepsilon$, it holds that 
	$d(q_{k + 1}, p)^2 
        \leq \varepsilon d(p_k, p)^2
        \leq 2\varepsilon d(q_{k + 1}, p)^2  + 2\varepsilon d(p_{k}, q_{k+1})^2 + \varepsilon K d(q_{k+1}, p)^2 d(p_k, q_{k+1})^2$.
	From \eqref{eqn:ratio-limsup_1} and the fact that $p_k \to p$ as $k \to \infty$, there exists $K_0 > 0$ such that $K d(p_k, q_{k+1})^2 \leq 2K d(p_k, p)^2 \leq 2$ for $k \geq K_0$.
	Thus, for all $\varepsilon \in (0, 1/8)$ and $k \geq \max\{ k_\varepsilon, K_0 \}$, it holds
    \[ \begin{aligned}
        d(q_{k + 1}, p)^2 
        \leq \frac{2\varepsilon}{1 - 2\varepsilon - \varepsilon K d(p_k, q_{k+1})^2} d(p_k, q_{k+1})^2 \leq 4 \varepsilon d(p_k, q_{k+1})^2, 
    \end{aligned} \]
	i.e., $d(q_{k+1}, p)^2 = o( d(p_k, q_{k+1})^2)$.
	From \eqref{eqn:retraction-approx}, we have 
	\[d(p_k, q_{k+1}) \leq d(p_k, \exp_{p_k} V_k) + d(\exp_{p_k} V_k, q_{k+1}) \leq \| V_k \| + C \| V_k \|^2,\] 
	and hence it holds $d(q_{k+1},p) = o( d(p_k, q_{k+1})) = o( \| V_k \|)$.

	On the other hand, 
    \[ \limsup_{k \to \infty} \frac{d(p_k,p)^2}{d(p_k,q_{k+1})^2} 
    \leq K \limsup_{k \to \infty} d(q_{k+1}, p)^2 + \limsup_{k \to \infty} \left ( 1 + \frac{d(q_{k+1},p)}{d(p_k,q_{k+1})} \right )^2 
    = 1.
    \]
    Combining with \eqref{eqn:ratio-limsup_1}, we have $\lim_{k \to \infty}\limits \frac{d(p_k, p)}{d(p_k, q_{k+1})} = 1$. \hfill $\Box$
\end{proof}

\section{The Deferred Proofs in Section~\ref{sec:semismoothness-from-euc}}

\subsection{Proof of Theorem~\ref{thm:semismoothness-from-euc}}
\label{app:proof-semismoothness-from-euc}
We prove Theorem~\ref{thm:semismoothness-from-euc} in this section.
First, we give a simple result that will be frequently used in our proof:
For $p \in \bar \cM$, $\xi \in T_p \bar \cM$, and a curve $\gamma: [0, 1] \to \bar \cM$ with $\gamma(0) = p$, 
the vector field $Z(t) := \bar P_\gamma^{0 \to t} \xi$ is parallel to $\gamma$, and hence by the definition of parallel transport we have
\begin{equation}
	\label{eqn:parallel-transport-gen}
	\bar \nabla_{\dot \gamma} \bar P_\gamma^{0 \to t} \xi = 0.
\end{equation}
Similarly, for $p \in \cM, \zeta \in T_p \cM$ and $\gamma: [0, 1] \to \cM$, it holds that $\nabla_{\dot \gamma} P^{0 \to t}_\gamma \zeta = 0$.

The following lemma compares the parallel transports of the manifold $\cM$ and its ambient space $\bar \cM$.
\begin{lemma}
	\label{lem:compare-transport}
	There exists $C > 0$ such that for every $p \in \cM$, and every smooth curve $\gamma: [-1, 1] \to \cM$ with $\gamma(0) = p$, and every $v \in T_p \cM$, $t \in [0, 1]$, it holds that
	\begin{equation}
	\label{eqn:compare-transport}
		\| P_\gamma^{0 \to t} v - \bar P_{\gamma}^{0 \to t} v \| \leq C \ell(\gamma|_{[0,t]}) \| v \|.
	\end{equation}
	Moreover, there exists $C^\prime > 0$ such that for every $p \in \cM$, $w \in T_p \cM$ with $\| w\| \leq 2$, and every $t \in [0, 1]$, the following inequality holds for $\gamma(t) := \exp_p(tw)$.
	\begin{equation}
	\label{eqn:compare-transport-sec}
		\| P_\gamma^{0 \to t} v - \bar P_{\gamma}^{0 \to t} v - t \II(\dot\gamma(t), P^{0 \to t}_\gamma v) \| \leq C^\prime t^2.
	\end{equation}
\end{lemma}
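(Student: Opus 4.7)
\medskip

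\noindent\textbf{Proof proposal.} The plan is to derive a first-order linear ODE satisfied by $W(t):=P_\gamma^{0\to t}v-\bar P_\gamma^{0\to t}v$ along $\gamma$, and then read off both estimates by integrating, using compactness of $\cM$ to obtain uniform bounds on $\II$ and its ambient covariant derivatives. The key observation is the following application of the Gauss formula \eqref{eqn:gauss-formula}: since $V(t):=P_\gamma^{0\to t}v$ is by construction $\cM$-parallel along $\gamma$, i.e.\ $\nabla_{\dot\gamma}V=0$, the Gauss formula gives
\begin{equation*}
\bar\nabla_{\dot\gamma}V=\II(\dot\gamma,V),
\end{equation*}
while $\bar V(t):=\bar P_\gamma^{0\to t}v$ satisfies $\bar\nabla_{\dot\gamma}\bar V=0$ by \eqref{eqn:parallel-transport-gen}. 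Subtracting, $W$ satisfies $\bar\nabla_{\dot\gamma}W=\II(\dot\gamma,V)$ with initial condition $W(0)=0$.

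For part (i), I would apply $\bar P_\gamma^{t\to 0}$ to both sides (which is an ambient isometry and commutes with differentiation in the sense that $\tfrac{d}{dt}[\bar P_\gamma^{t\to 0}W(t)]=\bar P_\gamma^{t\to 0}\bar\nabla_{\dot\gamma}W(t)$), and integrate to obtain
\begin{equation*}
\bar P_\gamma^{t\to 0}W(t)=\int_0^t \bar P_\gamma^{s\to 0}\II(\dot\gamma(s),V(s))\,ds.
\end{equation*}
Taking norms, using that $\|V(s)\|=\|v\|$ (the $\cM$-parallel transport is an isometry) and setting $C:=\sup\{\|\II(u,u')\|:p\in\cM,\ u,u'\in T_p\cM,\ \|u\|=\|u'\|=1\}$ (finite by compactness of $\cM$ and smoothness of $\II$), one finds $\|W(t)\|\leq C\|v\|\int_0^t\|\dot\gamma(s)\|\,ds=C\ell(\gamma|_{[0,t]})\|v\|$, which is \eqref{eqn:compare-transport}.

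For part (ii), I would introduce the second-order remainder $U(t):=W(t)-t\,\II(\dot\gamma(t),V(t))$ so that $U(0)=0$, and compute its ambient covariant derivative using the geodesic identity $\bar\nabla_{\dot\gamma}\dot\gamma=\II(\dot\gamma,\dot\gamma)$ (again via Gauss, since $\nabla_{\dot\gamma}\dot\gamma=0$) together with $\bar\nabla_{\dot\gamma}V=\II(\dot\gamma,V)$. The first two terms of the product rule cancel and one is left with
\begin{equation*}
\bar\nabla_{\dot\gamma}U(t)=-\,t\,\bar\nabla_{\dot\gamma}\bigl[\II(\dot\gamma,V)\bigr].
\end{equation*}
Integrating as in part (i) gives $\|U(t)\|\leq\int_0^t s\,\bigl\|\bar\nabla_{\dot\gamma}[\II(\dot\gamma,V)](s)\bigr\|\,ds$, and it remains to bound the integrand uniformly in $p,w,v,t$. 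This requires locally extending $\II$ to an ambient $(0,2)$-tensor with values in the normal bundle and differentiating it together with $\dot\gamma$ and $V$; each piece is controlled by the $C^1$-norm of $\II$, by $\|\dot\gamma\|\leq\|w\|\leq 2$, and by $\|V\|=\|v\|$, giving a bound of the form $C''\|v\|$ uniformly. Hence $\|U(t)\|\leq C' t^2$ with $C'=C''\|v\|/2$, which is the desired estimate (the constant depends linearly on $\|v\|$, as forced by the linearity of the left-hand side in $v$).

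The main obstacle is the uniform bound on $\bar\nabla_{\dot\gamma}[\II(\dot\gamma,V)]$ in the last step: the second fundamental form is intrinsically defined only for tangent-vector arguments, so one has to be careful either to choose a smooth local extension of $\II$ to a neighbourhood of $\cM$ in $\bar\cM$ or to expand the expression in adapted local coordinates before differentiating. Once this is done, compactness of $\cM$ and smoothness of the inclusion $\iota:\cM\hookrightarrow\bar\cM$ (Assumption~\ref{assumption:submanifolds}) make all relevant quantities uniformly bounded, and the estimate is immediate.
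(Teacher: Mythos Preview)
Your proposal is correct and is essentially the same argument as the paper's, just packaged slightly differently. The paper establishes the integral identity
\[
P_\gamma^{0\to t}v-\bar P_\gamma^{0\to t}v=\int_0^t \bar P_\gamma^{s\to t}\,\II(\dot\gamma(s),P_\gamma^{0\to s}v)\,ds
\]
directly (equivalent to your ODE for $W$ after applying $\bar P_\gamma^{t\to 0}$), and for the second estimate subtracts $t\,\II(\dot\gamma(t),V(t))$ and rewrites the remainder as the double integral $\int_0^t\int_s^t \bar P_\gamma^{u\to t}\bar\nabla_{\dot\gamma}\II(\dot\gamma,V)\,du\,ds$, which after Fubini is exactly your $\int_0^t u\,\bar\nabla_{\dot\gamma}[\II(\dot\gamma,V)]\,du$. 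One remark: the ``main obstacle'' you flag is not really an obstacle. You do not need to extend $\II$ to $\bar\cM$ or expand $\bar\nabla_{\dot\gamma}[\II(\dot\gamma,V)]$ via a product rule; the quantity $\II(\dot\gamma(s),V(s))$ is a smooth vector field along $\gamma$ in $T\bar\cM$, its ambient covariant derivative along $\gamma$ is well defined as such, and it depends smoothly on $(p,w,v,t)$, so a uniform bound over the compact parameter set $\{p\in\cM,\ \|w\|\le 2,\ \|v\|\le 2,\ t\in[0,1]\}$ is immediate. This is precisely what the paper does. Your observation that the constant in \eqref{eqn:compare-transport-sec} scales linearly in $\|v\|$ is correct; the paper handles this by taking the supremum over $\|v\|\le 2$ in defining $C'$, since in the intended application $v=X(p)$ is uniformly bounded.
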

\begin{proof}
	Let  $Y: [0, 1] \to T\cM$ be a smooth vector field along $\gamma$.
	For any fixed $t \in [0, 1]$ and $\xi \in T_{\gamma(t)} \bar \cM$, it holds that
	\[  \begin{aligned}
		\inner{ \frac{\dd}{\dd s} \bar P^{s \to t}_\gamma Y(s)}{\xi} 
		&= \frac{\dd}{\dd s} \inner{ \bar P^{s \to t}_\gamma Y(s)}{\xi} 
		= \frac{\dd}{\dd s} \inner{ Y(s)}{\bar P^{t \to s}_\gamma \xi}  \\
		&=  \inner{ \bar \nabla_{\dot\gamma} Y(s)}{\bar P^{t \to s}_\gamma \xi} 
		+  \inner{ Y(s)}{\bar \nabla_{\dot\gamma} \bar P^{t \to s}_\gamma \xi} 
		\overset{\eqref{eqn:parallel-transport-gen}}{=}  \inner{\bar P^{s \to t}_\gamma  \bar \nabla_{\dot\gamma} Y(s)}{\xi},
	\end{aligned}
	\]
	Therefore, $\frac{\dd}{\dd s} \bar P_\gamma^{s \to t}Y(s) = \bar P_{\gamma}^{s \to t} \bar \nabla_{\dot \gamma} Y(s)$.

	Setting $Y(t) = P_\gamma^{0 \to t} v$ for $v \in T_p \cM$, we know
	\[ \bar \nabla_{\dot\gamma} Y \overset{\eqref{eqn:gauss-formula}}{=} \nabla_{\dot\gamma} Y + \II(\dot\gamma, Y) 
	\overset{\eqref{eqn:parallel-transport-gen}}{=} \II(\dot\gamma, Y), \]
	and thus,
	\begin{equation}
		\label{eqn:diff-parallel-transport}
		Y(t) -  \bar P_{\gamma}^{0 \to t}Y(0) 
		=  \int_0^t \frac{\dd}{\dd s} \bar P_{\gamma}^{s \to t}Y(s) \dd s 
		=  \int_0^t \bar P_{\gamma}^{s \to t}\II(\dot\gamma(s), Y(s)) \dd s.
	\end{equation}
	Since $\II$ smoothly depends on the Riemannian metric of $\bar \cM$ and $\cM$ is compact, 
	the constant $C := \sup\{ \|\II(u, v) \| : p \in \cM, u, v \in T_p\cM, \|u\| = \|v\| = 1 \}$ is finite.
	As the parallel transport is an isometry, it holds that $\|Y(t) \| = \|Y(0)\| = \|v\|$ and
	\[ \begin{aligned}
		\| Y(t) -  \bar P_{\gamma}^{0 \to t}Y(0)  \|
		\leq  \int_0^t C\|v \|\| \dot \gamma(s) \|  \dd s
		=  C\|v \| \ell(\gamma|_{[0,t]}).
	\end{aligned} \]

	Moreover, we have
	\[ \begin{aligned}
		t \II(\dot\gamma(t), Y(t)) - [Y(t) - \bar P_{\gamma}^{0 \to t} Y(0)]
		\overset{\eqref{eqn:diff-parallel-transport}}&{=}  \int_0^t \II(\dot\gamma(t), Y(t)) -  \bar P_{\gamma}^{s \to t}\II(\dot\gamma(s), Y(s))  \dd s \\
		& \hspace{-12em}
		=  \int_0^t \int_s^t \frac{\dd}{\dd u} \bar P_{\gamma}^{u \to t}\II(\dot\gamma(u), Y(u)) \dd u \dd s
		=  \int_0^t \int_s^t \bar P_{\gamma}^{u \to t} \bar \nabla_{\dot\gamma} \II(\dot\gamma(u), Y(u)) \dd u \dd s.
	\end{aligned} \]
	When $\gamma(t) := \exp_p (tw)$ for $w \in T_p \cM$ with $\| w \| \leq 2$, the above integrand is continuous with respect to $v, w, t, u, p$,
	and thus, the constant
	$C^\prime := 2\sup\{ \| \bar\nabla_{\dot\gamma} \II(\dot\gamma, P_\gamma^{0 \to t} v) \| : p \in \cM \text{ and } w, v \in T_p\cM, \|w \| \leq 2,  \|v\| \leq 2, t \in [0, 1] \}$ is finite.
	Therefore, \eqref{eqn:compare-transport-sec} holds. \hfill $\Box$
\end{proof}

The following lemma controls the error of parallelly transporting a vector along the shortest geodesic on $\cM$ and then moving back along the shortest geodesic on $\bar \cM$.
\begin{lemma}
	\label{lem:cycle-transport}
	Fix $p \in \cM$, then
	there exists $C > 0$ such that for every $v \in T_p \cM$ with $\| v \| \leq 2$, $t \in [0, 1]$ and $w, \xi \in T_p \bar \cM$, the following inequality holds
	\begin{equation}
		\label{eqn:cycle-parallel-transport}
		|\inner{\bar P_{\gamma(t),p} \bar P_{\gamma}^{0 \to t} w  }{ \xi } - \inner{w}{\xi}|
		\leq C t^2 \| w \| \| \xi \|,
	\end{equation}
	where $\gamma(t) := \exp_p (tv)$. Consequently, 
	\begin{equation}
		\label{eqn:cycle-parallel-transport-map}
		\| \bar P_{p,\gamma(t)} - \bar P^{0 \to t}_\gamma \| \leq Ct^2.
	\end{equation}
\end{lemma}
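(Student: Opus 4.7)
The strategy is to study the one-parameter family of linear isometries
$A(t) := \bar P_{\gamma(t), p} \circ \bar P_\gamma^{0 \to t} : T_p \bar \cM \to T_p \bar \cM$,
which satisfies $A(0) = \id$. The bound \eqref{eqn:cycle-parallel-transport} is exactly
$|\inner{(A(t) - \id) w}{\xi}| \leq C t^2 \|w\| \|\xi\|$, so it suffices to prove the operator bound
$\|A(t) - \id\| \leq C t^2$ for $t \in [0,1]$. The consequence \eqref{eqn:cycle-parallel-transport-map}
will then follow from the factorization
$A(t) - \id = \bar P_{\gamma(t),p} \circ (\bar P_\gamma^{0 \to t} - \bar P_{p, \gamma(t)})$
(obtained by inserting $\bar P_{\gamma(t),p} \circ \bar P_{p,\gamma(t)} = \id$) combined with the fact that $\bar P_{\gamma(t),p}$ is an isometry.

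The key claim is $A'(0) = 0$. To see this, I pair against an arbitrary $\xi \in T_p \bar \cM$ and rewrite
\[
\inner{A(t) w}{\xi} = \inner{\bar P_\gamma^{0 \to t} w}{\bar P_{p, \gamma(t)} \xi}.
\]
I introduce two vector fields along $\gamma$: the field $X(t) := \bar P_\gamma^{0 \to t} w$, which is parallel along $\gamma$ by definition, so $\bar \nabla_{\dot \gamma} X \equiv 0$; and $Y(t) := \hat Y(\gamma(t))$, where $\hat Y(q) := \bar P_{p, q} \xi$ is defined in a $\bar \cM$-normal neighborhood of $p$ by radial parallel transport. The essential observation is that $\bar \nabla_u \hat Y(p) = 0$ for every $u \in T_p \bar \cM$, because along each radial geodesic $s \mapsto \bar \exp_p(s u)$ the field $\hat Y$ is tautologically parallel. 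Hence $\bar \nabla_{\dot \gamma(0)} Y = 0$ at $t = 0$, and metric compatibility of $\bar \nabla$ gives
\[
\frac{d}{dt} \inner{X(t)}{Y(t)} \Big|_{t=0}
= \inner{\bar \nabla_{\dot \gamma(0)} X}{Y(0)} + \inner{X(0)}{\bar \nabla_{\dot \gamma(0)} Y} = 0.
\]
Since $w, \xi$ are arbitrary, $A'(0) = 0$.

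Granted $A(0) = \id$ and $A'(0) = 0$, Taylor's theorem yields $\|A(t) - \id\| \leq C_{p, v} t^2$ for small $t$. The main obstacle is upgrading this to a constant that is uniform in $(p, v)$: here I would note that $A(t)$ depends smoothly on $(p, v, t)$ on the compact domain $\{(p, v, t) : p \in \cM, \|v\| \leq 2, t \in [0, t_0]\}$ for a suitable $t_0 > 0$, which exists because the injectivity radius of $\bar \cM$ is bounded below on the compact set $\cM$ (by Assumption~\ref{assumption:submanifolds}), ensuring that the $\bar \cM$-geodesic from $p$ to $\gamma(t)$ is defined and unique. Compactness then delivers a single constant on $[0, t_0]$, and the trivial isometry bound $\|A(t) - \id\| \leq 2$ combined with inflating $C$ by the factor $2/t_0^2$ extends the estimate to $t \in [t_0, 1]$. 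The subtle point of the proof is exactly this passage from the pointwise identity $A'(0) = 0$ to a uniform second-order estimate; the computation itself is a one-line consequence of radial parallel transport.
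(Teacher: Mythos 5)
Your proposal is correct and takes essentially the same route as the paper's proof: both reduce \eqref{eqn:cycle-parallel-transport} to showing that the $t$-derivative of $\inner{\bar P_\gamma^{0 \to t} w}{\bar P_{p,\gamma(t)}\xi}$ vanishes at $t=0$, which follows because the radial field $q \mapsto \bar P_{pq}\xi$ has vanishing covariant derivative at $p$, and then conclude with a second-order Taylor estimate whose constant is uniform over $\|v\| \leq 2$, before obtaining \eqref{eqn:cycle-parallel-transport-map} from the isometry of $\bar P_{\gamma(t),p}$. The only (cosmetic) difference is in the remainder bound: the paper bounds the second derivative directly over all of $t \in [0,1]$ and $\|v\|\le 2$, whereas you use compactness on a small interval $[0,t_0]$ plus the trivial bound $\|A(t)-\mathrm{Id}\|\le 2$ beyond it, and your extra uniformity in $p$ is not needed since $p$ is fixed in the statement.
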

\begin{proof}
	Let $w, \xi \in T_p \bar \cM$ and $v \in T_p \cM$ with $\| v\| \leq 2$ and $\gamma(t) := \exp_p(tv)$, then
	\begin{align}
		 \frac{\dd}{\dd t}  \inner{  \bar P_{\gamma(t),p} \bar P_{\gamma}^{0 \to t} w }{ \xi }  \nonumber
		&= \frac{\dd}{\dd t}  \inner{  \bar P_{\gamma}^{0 \to t} w }{ \bar P_{p, \gamma(t)} \xi } \\
		&= \inner{ \bar  \nabla_{\dot \gamma}\bar P_{\gamma}^{0 \to t} w }{ \bar P_{p, \gamma(t)} \xi }  \nonumber
		 + \inner{  \bar P_{\gamma}^{0 \to t} w }{ \bar \nabla_{\dot\gamma}\bar P_{p, \gamma(t)} \xi } \\
		 \label{eqn:cycle-first-order}
		\overset{\eqref{eqn:parallel-transport-gen}}&{=} \inner{  \bar P_{\gamma}^{0 \to t} w }{ \bar \nabla_{\dot\gamma}\bar P_{p, \gamma(t)} \xi }, \\
			\frac{\dd^2}{\dd t^2} \inner{  \bar P_{\gamma(t),p} \bar P_{\gamma}^{0 \to t} w }{ \xi }
		&= \inner{ \bar\nabla_{\dot\gamma} \bar P_{\gamma}^{0 \to t} w }{ \bar \nabla_{\dot\gamma}\bar P_{p, \gamma(t)} \xi } \nonumber
		 + \inner{ \bar P_{\gamma}^{0 \to t} w }{ \bar\nabla_{\dot\gamma} \bar \nabla_{\dot\gamma}\bar P_{p, \gamma(t)} \xi } \\
		\overset{\eqref{eqn:parallel-transport-gen}}&{=} \inner{ \bar P_{\gamma}^{0 \to t} w }{ \bar\nabla_{\dot\gamma} \bar \nabla_{\dot\gamma}\bar P_{p, \gamma(t)} \xi }.
		 \label{eqn:cycle-second-order}
	\end{align}
	Let $Y(q) := \bar P_{pq} \xi$ for $q \in \bar \cM$, then $Y(\gamma(t)) = \bar P_{p, \gamma(t)} \xi$.
	From \cite[Proposition 2.2(c)]{carmo1992riemannian}, we know $\bar \nabla_{\dot \gamma} Y = \bar \nabla_{\dot \gamma} \bar P_{p, \gamma(t)} \xi$.
	By the definition of $Y$, $\bar \nabla_v Y(p) = 0$ for every $v \in T_p \bar \cM$.
	Therefore, the right-hand side of \eqref{eqn:cycle-first-order} vanishes at $t = 0$.
	Since $\bar \nabla_{\dot\gamma}\bar \nabla_{\dot\gamma} \bar P_{p, \gamma(t)} \xi$ is continuous with respect to $t, v$ and is linear with respect to $\xi$, then
	\[
		C := \sup\left \{ 
			 \| \bar \nabla_{\dot\gamma}\bar \nabla_{\dot\gamma} \bar P_{p, \gamma(t)} \xi \|  :   
			 t \in [0, 1], \| v \| \leq 2, \| \xi \| \leq 1
		 \right \} < \infty.
		\]
	Applying Taylor's theorem at $t = 0$, we know for every $t \in [0, 1]$ and $w, \xi \in T_p \bar \cM$,
	\[
		|\inner{\bar P_{\gamma(t),p} \bar P_{\gamma}^{0 \to t} w  }{ \xi } - \inner{w}{\xi}|
		\leq C t^2 \| w \| \| \xi \|.
	\]
	Since $C$ is independent of $v$, then the above holds for every $\| v \| \leq 2$.
	Finally, \eqref{eqn:cycle-parallel-transport-map} follows from
	\[
		\| \bar P_{p,\gamma(t)} - \bar P^{0 \to t}_\gamma \| = \| \bar P_{\gamma(t),p} \bar P_\gamma^{0 \to t} - \mathrm{Id} \| \leq Ct^2.
	\]
	This completes the proof. \hfill $\Box$
\end{proof}

The lemma below bounds the error of the inverse exponential maps of $\cM$ and $\bar \cM$, which is proved by extending $\exp_p$ to a retraction on $\bar \cM$ using the Fermi coordinate~\cite[p.~135]{lee2018introduction} and applying \cite[Lemma 3]{zhu2020riemannian} to bound the difference of two inverse retractions.
\begin{lemma}
	\label{lem:diff-submanifold-retraction}
	Fix $p_0 \in \cM$, there exist $C, r > 0$ such that the following inequality holds for every $p, q \in B_{r}(p_0)$.
	\begin{equation}
		\label{eqn:diff-submanifold-retraction}
		\| \exp^{-1}_p q - \overline \exp^{-1}_p q \|  \leq C d_{\bar \cM}(p, q)^2.
	\end{equation}
\end{lemma}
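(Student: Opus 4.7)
The plan is to reduce the comparison of the two inverse exponential maps to the comparison of two retractions on the ambient manifold $\bar\cM$, and then exploit the fact that they agree to first order at $p$. The key construction is a smooth extension of $\exp_p$ on $\cM$ to a retraction $\tilde R_p$ on $\bar\cM$. Using the tubular neighborhood theorem and Fermi coordinates around $\cM$ near $p_0$ (cf.\ \cite[Chapter 5]{lee2018introduction}), we can define, for $p$ in a neighborhood of $p_0$, the map $\tilde R_p: U_p \subset T_p\bar\cM \to \bar\cM$ by $\tilde R_p(v + n) := \overline\exp_{\exp_p v}(\bar P_{p,\exp_p v}\, n)$ for $v \in T_p\cM$ and $n \in (T_p\cM)^\perp$ with $\|v\|+\|n\|$ small. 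By construction $\tilde R_p(0)=p$, $\dd\tilde R_p|_0 = \mathrm{id}_{T_p\bar\cM}$, $\tilde R_p$ agrees with $\exp_p$ on $T_p\cM$, and it is $C^2$ jointly in $(p,v)$ on a neighborhood of $(p_0,0)$ by the smoothness of $\exp$ on both manifolds.

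Next, I would consider the change-of-coordinates map
\begin{equation*}
\Phi_p := \overline\exp_p^{-1}\circ \tilde R_p \ :\ W_p\subset T_p\bar\cM \to T_p\bar\cM,
\end{equation*}
which satisfies $\Phi_p(0)=0$ and $\dd\Phi_p|_0 = \mathrm{id}$. By Taylor's theorem applied to $\Phi_p$ (whose second derivatives are uniformly bounded on $\overline{W_p}$ for $p$ in a small neighborhood of $p_0$, by compactness and the $C^2$-dependence on $p$), there exist $r_0,C_1>0$ such that $\|\Phi_p(v)-v\|\leq C_1\|v\|^2$ for all $p\in B_{r_0}(p_0)$ and $v\in T_p\bar\cM$ with $\|v\|<r_0$. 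Specializing to $v := \exp_p^{-1}q \in T_p\cM$ for $q\in\cM$ close enough to $p$, we have $\tilde R_p v = \exp_p v = q$, so
\begin{equation*}
\overline\exp_p^{-1}q - \exp_p^{-1}q = \Phi_p(v) - v,\qquad \|\overline\exp_p^{-1}q - \exp_p^{-1}q\| \leq C_1 \|\exp_p^{-1}q\|^2 = C_1\, d_\cM(p,q)^2.
\end{equation*}

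Finally, I would upgrade $d_\cM$ to $d_{\bar\cM}$ on the right-hand side. Since $\cM$ is a compact embedded submanifold of $\bar\cM$, intrinsic and extrinsic distances are comparable: there exist $r_1, C_2>0$ such that $d_\cM(p,q)\leq C_2\, d_{\bar\cM}(p,q)$ whenever $p,q\in\cM$ with $d_{\bar\cM}(p,q)<r_1$; this follows from the local graph representation of $\cM$ in Fermi coordinates near $p_0$ together with the compactness argument used around Assumption~\ref{assumption:submanifolds}. Taking $r := \min\{r_0,r_1\}$ and $C := C_1 C_2^2$ yields the claimed inequality uniformly over $p,q\in B_r(p_0)$.

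The main obstacle I anticipate is two-fold: (i) verifying that the extension $\tilde R_p$ is genuinely a $C^2$ retraction of $\bar\cM$ with uniform-in-$p$ regularity near $p_0$, which requires the tubular neighborhood theorem and the smoothness of the orthogonal projection onto $T\cM$; and (ii) establishing the local comparability of $d_\cM$ and $d_{\bar\cM}$, which is standard but relies essentially on compactness of $\cM$ and the injectivity of the normal exponential map on a neighborhood of the zero section of the normal bundle. Once these ingredients are in place, the Taylor-expansion step is routine.
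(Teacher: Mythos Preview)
Your proposal is correct and follows essentially the same strategy as the paper: extend $\exp_p$ on $\cM$ to a $C^2$ retraction on $\bar\cM$ via a Fermi-type construction, then compare it to $\overline\exp_p$ using a second-order Taylor expansion. The paper's extension uses a moving orthonormal frame $E_1,\dots,E_d$ on $B_{4r}(p_0)$ (with $E_{n+1},\dots,E_d$ spanning the normal bundle) rather than parallel transport, and it outsources the final comparison of inverse retractions to \cite[Lemma~3]{zhu2020riemannian}, whereas you carry out that Taylor step explicitly via $\Phi_p=\overline\exp_p^{-1}\circ\tilde R_p$ and then upgrade $d_\cM$ to $d_{\bar\cM}$; these are cosmetic differences, and your direct argument is self-contained.
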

\begin{proof}
	From Lemma~\ref{lem:cosine-law}, there exists $r > 0$ such that $\exp_p$ is a diffeomorphism from $\{ v \in T_p \cM : \| v \| < 4r \}$ to $B_{4r}(p)$ for every $p \in B_{4r}(p_0)$.
	According to Theorem 5.25 in \cite{lee2018introduction}, there exists $r^\prime > 0$ such that $\overline \exp$ is a diffeomorphism from
	 $V := \{ (p, v) \in N\cM : p \in B_{2r}(p_0), \| v \| < r^\prime \}$ to its image under $V$, where $N\cM := \bigcup_{p \in \cM} (T_p \cM)^{\perp}$ is the normal bundle.
	 Let $E_1, \dots, E_{d}$ be vector fields on $B_{4r}(p_0)$ such that 
	 $\{ E_1(p), \dots, E_{n}(p) \}$ and $\{ E_{n+1}(p), \dots, E_{d}(p) \}$ are orthonormal bases of $T_p\cM$ and $(T_p\cM)^\perp$, respectively.
	 For $p \in B_r(p_0)$, $v := \sum_{i=1}^d w_i E_i(p) \in T_p \bar \cM$ with $\| v_\top \| < r$ and $\| v_\perp \| < r^\prime$, we define $q(p, v) := \exp_p (v_\top)$ and 
	 \[ E(p, v) := \overline \exp_{q(p, v)} \left ( \sum_{i=n+1}^d w_i E_i(q(p, v)) \right ).  \]
	 The above discussion shows that $E$ is well-defined, and $E(p, v) = \exp_p v$ if $v \in T_p \cM$ and $\| v\| < r$, and $E(p, v) = \overline \exp_p v$ if $v \in (T_p \cM)^\perp$ and $\|v\| < r^\prime$.
	 Thus, $E$ can be regarded as a retraction on $\bar \cM$ defined near $p_0$, and \eqref{eqn:diff-submanifold-retraction} follows from \cite[Lemma 3]{zhu2020riemannian}. \hfill $\Box$
\end{proof}

\begin{proof}[Proof of Theorem~\ref{thm:semismoothness-from-euc}]

	(i) First, we verify the local Lipschitz property of $X$. 
	Since $\bar X$ is locally Lipschitz at $p \in \cM$, then there exist $L_p > 0$ and a neighborhood $\bar U_p \subset \bar \cM$ at $p$ such that $\bar X$ is $L_p$-Lipschitz in $\bar U_p$.
	By Lemma~\ref{lem:compare-transport}, we know for every $p, q \in \cM \cap \bar U_p$ and every geodesic $\gamma$ joining $p$ and $q$, it holds that
	\[ \begin{aligned}
		\| P_{\gamma}^{0 \to 1} X(p) - X(q) \|
		& \leq 
		\| P_{\gamma}^{0 \to 1} X(p) - \bar P_{\gamma}^{0 \to 1}X(p) \|
		+ \| \bar P_{\gamma}^{0 \to 1}X(p) - X(q) \| \\
		& \leq \left (C \sup_{p \in \cM }\| X(p) \| + L_p \right ) \ell(\gamma),
	\end{aligned} \]
	where $C$ is the constant in Lemma~\ref{lem:compare-transport}.
	Since $\cM$ is compact and $X$ is continuous, $M := \sup_{p \in \cM} \| X(p) \| < \infty$, then $X$ is Lipschitz on $\cM \cap \bar U_p$, i.e., $X$ is locally Lipschitz at $p$.

	(ii)
	Next, we show that $X$ is directionally differentiable at $p$ in the Hadamard sense.
	Fix $v \in T_p \cM$ with $\| v \| = 1$ and 
	let $\gamma(t) := \exp_p(tv^\prime)$ for $t \in (-1, 1)$ and $v^\prime \in T_p \cM$ with $\| v^\prime \| \leq 2$, and decompose $P_\gamma^{t \to 0} X(\exp_p(tv^\prime)) - X(p) $ into
	\begin{equation}
		\label{eqn:directional-decomp}
		\begin{aligned}
		\underbrace{ (P_\gamma^{t \to 0}  - \bar P_\gamma^{t \to 0})X(\gamma(t)) - t S(t) }_{\text{(A)}} 
		+ \underbrace{ (\bar P_\gamma^{t \to 0} - \bar P_{\gamma(t),p}) X(\gamma(t)) }_{\text{(B)}} 
		+ \underbrace{  \bar P_{\gamma(t),p} X(\gamma(t)) - X(p) + tS(t) }_{\text{(C)}},
		\end{aligned}
	\end{equation}
	where $S(t) := \II(-\dot\gamma(0), P_\gamma^{t \to 0} X(\gamma(t)))$.

	From Lemma~\ref{lem:compare-transport} and \ref{lem:cycle-transport}, there exists $C > 0$ (independent of $v^\prime$)  such that
	\[ \begin{aligned} 
		| (\mathrm A) |
		\overset{\eqref{eqn:compare-transport-sec}}&{\leq} Ct^2
		\quad \text{ and } \quad
		|(\mathrm B)| \leq   \|  \bar P_\gamma^{t \to 0} - \bar P_{\gamma(t), p} \| \| X(\gamma(t)) \|
		\overset{\eqref{eqn:cycle-parallel-transport-map}}{\leq}
		CM t^2.
	\end{aligned} \]
	When $t \to 0^{+}$ and $v^\prime \to v$, we know 
	$|(\mathrm A)| + |(\mathrm B)| = O(t^2)$ and  
	$v^\prime_t := t^{-1}\overline \exp^{-1}_p \gamma(t) \to v$, 
	and hence
	the continuity of $S(t)$ and the Hadamard differentiability~\eqref{eqn:Hadamard-directionally-differentiable} imply that 
	\[ \begin{aligned}
		&\peq \lim_{\substack{t \downarrow 0 \\ v^\prime \to v}}
		\frac{1}{t} \left [ P_{\exp_p(tv^\prime),p} X(\exp_p(tv^\prime)) - X(p) \right ]
		= 
		\lim_{\substack{t \downarrow 0 \\ v^\prime \to v}}
		\frac{1}{t}(\mathrm C)  \\
		& %
		=  \lim_{t \downarrow 0} \frac{1}{t} \left [ P_{\overline \exp_p(tv_t^\prime),p} X(\overline \exp_p(tv_t^\prime)) - X(p) \right ]
		+ \lim_{\substack{t \downarrow 0 \\ v^\prime \to v}} S(t)
		\overset{\eqref{eqn:Hadamard-directionally-differentiable}}{=}
		 \bar \nabla X(p; v) - \II(v, X(p)).
	\end{aligned} \]
	Note that $\II(v, X(p)) \in (T_p\cM)^\perp$, we find $\nabla X(p; v) = \bar \nabla X(p; v) - \II(v, X(p)) = (\bar \nabla X(p; v))_\top$ for every $\| v\| = 1$.
	Since $\nabla X(p; tv) = t\nabla X(p; v)$ for all $t > 0$, we know $X$ is directionally differentiable at $p$ in the Hadamard sense.

	(iii.b) Finally, we need to verify the inequality \eqref{eqn:theorem-semismooth-submanifolds}. 
	Suppose that the assumption (iii.b) holds.
	Since the parallel transport is an isometry, then from \eqref{eqn:theorem-semismooth-ambient}, 
	there exist $C > 0, \delta \in (0, 1)$ such that for every $q \in \cM$ with $d_{\bar \cM}(p, q) < \delta$ and $\bar H_q \in \bar \cK(q)$, it holds that
	\begin{equation}
		\label{eqn:proof-semismooth-ambient}
		\| \bar P_{pq} X(p) -  X(q) - \bar H_q \overline \exp_q^{-1} p \| \leq C d_{\bar \cM}(p, q)^{1 + \mu} \leq C d_\cM(p, q)^{1 + \mu}.
	\end{equation}
	Without the loss of generality, we could assume that $B_\delta(p) \subset V_p$, where $V_p$ is the neighborhood of $p$ in the assumption (iii.b).
	For every $q \in \cM$ and $H_q \in \cK(q)$ such that $d_\cM(q, p) < \delta$, 
	the assumption (iii.b) states that there exists $\bar H_q \in \bar \cK(q)$ satisfying 
	$(\bar H_q \exp^{-1}_qp)_\perp = \II(\exp^{-1}_qp, X(q))$ and $(\bar H_q \exp^{-1}_qp)_\top = H_q \exp^{-1}_qp$.
	Let $\alpha = d_\cM(p, q)$, $\xi := \alpha^{-1} \exp^{-1}_p q$ and $\gamma(t) := \exp_p(  t \xi)$, it holds that
	\[ \begin{aligned}
		\| P_{pq} X(p) - X(q) - H_q \exp^{-1}_q p \|
		&\leq 
		\underbrace{ \| P_{pq} X(p) - \bar P_{\gamma}^{0 \to \alpha} X(p) + \II(\exp^{-1}_q p, X(q)) \|  }_{\text{(A)}}
		\\
		&\hspace{-12em} 
		+ \underbrace{\| ( \bar P_{\gamma}^{0 \to \alpha} - \bar P_{pq} )X(p) \| }_{\text{(B)}}
		+ \underbrace{\|  \bar P_{pq}X(p) - X(q) - \bar H_q \overline\exp^{-1}_qp \| }_{\text{(C)}}
		+ \underbrace{\| \bar H_q (\overline\exp^{-1}_qp -  \exp^{-1}_q p) \| }_{\text{(D)}}.
	\end{aligned} \]

	Let $r, \tilde C > 0$ be the constants such that Lemma~\ref{lem:compare-transport}, \ref{lem:cycle-transport} and \ref{lem:diff-submanifold-retraction} hold with $p_0 = p$, 
	and $\tilde L_p > 0$ be the Lipschitz constant of $X$ at $p$, and by the compactness of $\cM$ and the continuity of $\II(v, w)$, we further assume that $\sup\{ \| \II(v, w) \| : p \in \cM, v, w \in T_p\cM, \|v\|=\|w\|=1 \} < \tilde C < \infty$.
	Note that $\dot\gamma(\alpha) = -\alpha^{-1} \exp^{-1}_q p$ and
		$\II(\exp^{-1}_qp, X(q)) = -\alpha \II(\dot\gamma(\alpha), X(q))$,
		then 
	\[ \begin{aligned}
		|(\mathrm A)|
		&\leq 
		\| P_{pq} X(p) - \bar P_{\gamma}^{0 \to \alpha} X(p) - \alpha \II(\dot\gamma(\alpha), P_\gamma^{0 \to \alpha}X(p)) \|
		+ \alpha \| \II(\dot\gamma(\alpha), P_\gamma^{0 \to \alpha}X(p) - X(q)) \| \\
		\overset{\eqref{eqn:compare-transport-sec}}&\leq
		\tilde C \alpha^2
		+ \tilde C \alpha \| \dot\gamma(\alpha) \| \| P_\gamma^{0 \to \alpha}X(p) - X(q) \|
		\leq \tilde C(1 + \tilde L_p) \alpha^2,
	\end{aligned} \]
	where the last inequality follows from the Lipschitzness of $X$ at $p$.
	From \eqref{eqn:cycle-parallel-transport-map} we find that 
	$|(\mathrm B)| \leq \tilde CM\alpha^2$.
	Since $\cK$ is locally bounded, there exists $\tilde M > 0$ such that all elements in $\bigcup_{d(p, q) \leq \delta} \bar \cK(q)$ are bounded by $\tilde M$,
	and thus \eqref{eqn:diff-submanifold-retraction} yields $|(\mathrm D)| \leq \tilde C \tilde M \alpha^2$.
	Combining with \eqref{eqn:proof-semismooth-ambient} and letting $C_0 := \tilde C(M + \tilde M + \tilde L_p + 1)$, it holds that
	\[ \begin{aligned}
		\| P_{pq} X(p) - X(q) - H_q \exp^{-1}_q p \| \leq C d_\cM(p, q)^{1 + \mu} +  C_0 d_\cM(p, q)^2.
	\end{aligned} \]
	Thus, the inequality \eqref{eqn:theorem-semismooth-submanifolds} holds for $q \in \cM$ with $d_\cM(p, q) < \tilde \delta := \min\{ 1, r, \delta \}$ and $\hat C := C + C_0$.
	Moreover, when $\mu \in [0, 1)$, the same inequality holds for $q \in \cM$ with $d_\cM(p, q) < \tilde \delta := \min\{ r, \delta, (C/C_0)^{\frac{1}{1 - \mu}} \}$  and $\hat C := 2C$.

	(iii.a) When $X(p) = 0$, we know
	\[ \begin{aligned}
		\| X(q) + H_q \exp^{-1}_q p \|
		& =
		\| (\bar X(q) + \bar H_q \exp^{-1}_q p)_\top \| 
		\leq \| \bar X(q) + \bar H_q \exp^{-1}_q p\| \\
		&\leq  \| \bar X(q) + \bar H_q \overline \exp^{-1}_q p \|
		+ \| \bar H_q (\overline\exp^{-1}_qp -  \exp^{-1}_q p) \|.
	\end{aligned} \]
	The above two terms can be controlled by \eqref{eqn:theorem-semismooth-ambient} and \eqref{eqn:diff-submanifold-retraction}, respectively, and therefore
	the conclusion follows from a similar discussion as in (iii.b). 

	(iv) 
	Since when $\mu = 0$, we know for every $C > 0$, if \eqref{eqn:theorem-semismooth-ambient} holds, then \eqref{eqn:theorem-semismooth-submanifolds} holds with $\hat C = 2C$, which implies the semismoothness in Definition~\ref{def:semismooth-general}.
	The conclusion for the semismoothness with order $\mu \in (0, 1]$ directly follows from (i), (ii) and (iii).
	\hfill $\Box$
\end{proof}

\subsection{Proof of Proposition~\ref{prop:clarke-inclusion}}
\label{app:proof-clarke-inclusion}

First, we briefly review how to represent quantities on manifolds under a local coordinate.
Let $(U, \varphi)$ be a chart near $p \in \cM$, and $\{ e_i \}$ be the standard basis of $\R^n$, i.e., the $j$-th component of $e_i$ is $\delta_{ij}$, and let $E_i := (\dd\varphi)^{-1}[e_i]$, then $E_i$ is a smooth vector field near $p$ and $\{ E_i(q) \}$  is a basis of $T_q\cM$ for every $q \in U$. 
Let $g_{ij} := \inner{E_i}{E_j}$, $G := (g_{ij})_{i,j\in[n]}$ and $\Gamma_{ij}^k$ be the \emph{Christoffel symbols} such that $\nabla_{E_i}{E_j} = \sum_{k=1}^n \Gamma_{ij}^k E_k$, and $g^{ij}$ be such that $( g^{ij} )_{i,j\in[n]} = G^{-1}$, and we call $G$ the \emph{metric matrix}.
The gradient of a smooth function $f:\cM \to \R$ is $\grad f = \sum_{i, j=1}^n (g^{ij} E_i f)E_j$ (see \cite[p.~27]{lee2018introduction}).
Two smooth vector fields $X, Y$ defined near $p$ can be represented as $X = \sum_{i=1}^n X^i E_i$ and $Y = \sum_{i=1}^n Y^i E_i$, and their inner product is $\inner{X}{Y} = \sum_{i,j=1}^n g_{ij} X^i Y^j$.
The covariant derivative $\nabla_XY$ can be written as (see \cite[p.~92]{lee2018introduction})
\begin{equation}
	 \nabla_{X}Y = \sum_{i=1}^n (X Y^i) E_i + \sum_{i, j, k=1}^n X^iY^j \Gamma_{ij}^k E_k. 
\end{equation}

When $(U, \varphi)$ is the chart constructed in the proof of Lemma~\ref{lem:retraction-approx}, $\varphi^{-1}$ is the \emph{normal coordinate} at $p \in \cM$ (see \cite[p.~132]{lee2018introduction}). Proposition 5.24 in \cite{lee2018introduction} shows that $g_{ij}(p) = \delta_{ij}$, $\Gamma_{ij}^k(p) = 0$, and all partial derivatives of $g_{ij}$ vanish at $p$.
Thus, $\grad f(p) = \sum_{i=1}^n (E_i f)(p) E_i(p)$, $\nabla_XY(p) = \sum_{i=1}^n (X Y^i)(p) E_i(p)$ and $\inner{X(p)}{Y(p)} = \sum_{i=1}^n X^i(p) Y^i(p)$.

The following two lemmas are some chain rules on manifolds, 
which are proved by pulling functions on manifolds back to Euclidean spaces and then applying the Euclidean chain rules.

\begin{lemma}
	\label{lem:clarke-chain-rule}
	Let $\cM, \cN$ be two Riemannian manifolds, and $F: \cM \to \cN$ be a smooth map, $g: \cN \to \R$ be a map that is locally Lipschitz at $F(p) \in \cN$, then the following chain rule holds at $p \in \cM$
	\begin{equation}
		\partial (g \circ F)(p) \subseteq 
		\partial g(F(p)) \dd F(p),
	\end{equation}
	where the inclusion means that for every $\xi \in \partial (g \circ F)(p)$, there exists $\zeta \in \partial g(F(p))$ such that $\inner{\xi}{v} = \inner{\zeta}{\dd F|_p[v]}$ for every $v \in T_p\cM$.
\end{lemma}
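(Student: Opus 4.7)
The plan is to localize the problem through coordinate charts and then invoke the classical Clarke chain rule in Euclidean space. Choose charts $(U,\varphi)$ around $p$ and $(V,\psi)$ around $F(p)$ with $F(U)\subset V$, and define the coordinate representatives $\hat F := \psi\circ F\circ\varphi^{-1}$ and $\hat g := g\circ\psi^{-1}$. Then $\hat F$ is a smooth map between open subsets of $\R^n$ and $\R^m$, $\hat g$ is locally Lipschitz at $\hat F(\varphi(p))$, and $(g\circ F)\circ\varphi^{-1} = \hat g\circ\hat F$.

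Next I would apply the Euclidean chain rule for Clarke subgradients (Theorem 2.3.10 in \cite{clarke1990optimization}), which gives $\partial(\hat g\circ\hat F)(\varphi(p))\subseteq J_{\hat F}(\varphi(p))^\top\,\partial\hat g(\hat F(\varphi(p)))$, where $J_{\hat F}$ is the Euclidean Jacobian. Given $\xi\in\partial(g\circ F)(p)$, the identity \eqref{eqn:subgradient-from-euclidean} applied on $\cM$ produces $\hat\xi := G_{\varphi(p)}\,\dd\varphi|_p[\xi] \in \partial(\hat g\circ\hat F)(\varphi(p))$; the Euclidean chain rule then yields $\hat\zeta\in\partial\hat g(\hat F(\varphi(p)))$ with $\hat\xi = J_{\hat F}(\varphi(p))^\top\hat\zeta$. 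Setting $\zeta := (\dd\psi|_{F(p)})^{-1}\bigl[\tilde G_{\psi(F(p))}^{-1}\hat\zeta\bigr]$, where $\tilde G$ denotes the metric matrix of $\cN$ in the chart $\psi$, the same identity applied on $\cN$ gives $\zeta\in\partial g(F(p))$.

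The remaining task is to verify the pairing $\inner{\xi}{v}_p = \inner{\zeta}{\dd F|_p[v]}_{F(p)}$ for every $v\in T_p\cM$. Writing $[v]:=\dd\varphi|_p[v]$ and noting the differential chain rule $\dd\psi|_{F(p)}[\dd F|_p[v]] = J_{\hat F}(\varphi(p))\,[v]$, both pairings reduce to $\hat\xi^\top[v]$: the metric factors coming from \eqref{eqn:subgradient-from-euclidean} cancel against the metric matrices used to evaluate the Riemannian inner products, so the left-hand side becomes $(G^{-1}\hat\xi)^\top G[v] = \hat\xi^\top[v]$ and the right-hand side becomes $(\tilde G^{-1}\hat\zeta)^\top\tilde G(J_{\hat F}[v]) = \hat\zeta^\top J_{\hat F}[v] = (J_{\hat F}^\top\hat\zeta)^\top[v] = \hat\xi^\top[v]$. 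I do not expect any deep obstacle; the only care needed is in tracking how the metric matrices $G$ on $T_p\cM$ and $\tilde G$ on $T_{F(p)}\cN$ interact with the Jacobian transpose produced by the Euclidean chain rule, which is a routine bookkeeping check once the chart construction is fixed.
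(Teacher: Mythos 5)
Your proposal is correct and follows essentially the same route as the paper's proof: localize via charts, apply the Euclidean Clarke chain rule (Theorem 2.3.10 in \cite{clarke1990optimization}) to $\hat g\circ\hat F$, transfer $\xi$ and $\zeta$ back and forth with \eqref{eqn:subgradient-from-euclidean}, and check the pairing using the symmetry of the metric matrices and $\dd\hat F=\dd\psi\circ\dd F\circ(\dd\varphi)^{-1}$. The only cosmetic difference is that you state the Euclidean chain rule with an explicit Jacobian transpose, while the paper keeps it as a pairing identity; the bookkeeping is identical.
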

\begin{proof}
	Let $(U, \varphi)$ and $(V, \psi)$ be charts near $p \in \cM$ and $F(p) \in \cN$ with $\varphi(0) = p$ and $\psi(0) = F(p)$, respectively. 
	We denote $G_\cN, G_\cM$ as the metric matrices on $\cN, \cM$ defined above, respectively.
	Define $\hat g := g \circ \psi^{-1}$ and $\hat F := \psi \circ F \circ \varphi^{-1}$. Then, Theorem 2.3.10 in \cite{clarke1990optimization} yields that
	\[ \partial (\hat g \circ \hat F)(0) \subseteq
	\partial \hat g(\hat F(0)) \dd \hat F(0). \]
	Fix $\xi \in \partial(g \circ F)(p)$, then from 
	\eqref{eqn:subgradient-from-euclidean} we know
	$\hat \xi := G_\cM\dd \varphi|_p[\xi] \in \partial(\hat g \circ \hat F)(0)$.
	The above display shows that there exists $\hat \zeta \in \partial \hat g(\hat F(0))$ such that for every $\hat v \in \R^n$, it holds that
	$ 
	\hat \xi^\trans \hat v = \hat \zeta^\trans (\dd \hat F(0)[\hat v])
	$.
	Let $v := (\dd\varphi|_p)^{-1}[\hat v]$, then
	$\hat\xi^\trans \hat v
	= (\dd \varphi|_p[\xi])^\trans G_\cM (\dd \varphi|_p[v]) = \inner{\xi}{v}$.
	Since $\zeta := (\dd \psi|_{F(p)})^{-1} G_\cN^{-1} \hat \zeta \in \partial g(F(p))$ and 
	$\dd \hat F = \dd \psi \circ \dd F \circ (\dd \varphi)^{-1}$, we find that
	$\hat \zeta^\trans (\dd \hat F(0)[\hat v])
	= 
	(\dd \psi|_{F(p)}[\zeta])^\trans
	G_\cN ( \dd \psi|_{F(p)} [\dd F|_p[v]])
	= \inner{\zeta}{\dd F|_p[v]}
	$.
	Therefore, $\inner{\xi}{v} = \inner{\zeta}{\dd F|_p[v]}$ for every $v \in T_p \cM$, and hence $\xi \in \partial g(F(p)) \dd F(p)$.
	\hfill $\Box$
\end{proof}

\begin{lemma}
	\label{lem:clarke-chain-rule-equal}
	Let $\cM$ be a Riemannian manifold, $X$ be a vector field on $\cM$ that is locally Lipschitz at~$p \in \cM$, and $Y$ be a smooth vector field on $\cM$. 
	Define $f := \inner{X}{Y}$, then the following chain rule holds.
	\begin{equation}
		\label{eqn:clarke-chain-rule-equal}
		\partial f(p) = \inner{\partial X(p)}{Y(p)} + \inner{X(p)}{\nabla Y(p)},
	\end{equation}
	where $\xi \in \partial f(p)$ is regarded as the operator $\zeta \mapsto \inner{\xi}{\zeta}$ for $\zeta \in T_p\cM$, and the right-hand side is understood as the set of operators $\zeta \mapsto \inner{H\zeta}{Y(p)} + \inner{X(p)}{\nabla_\zeta Y(p)}$ for $\zeta \in T_p\cM$ and $H \in \partial X(p)$.
\end{lemma}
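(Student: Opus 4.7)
The plan is to reduce the equality to the Leibniz rule for the Levi-Civita connection at differentiability points of $X$ and then pass to Clarke-type limits. First, I would work in a normal coordinate chart $(U,\varphi)$ at $p$ so that the coordinate frame $\{E_i\}$ is orthonormal at $p$ and the Christoffel symbols vanish there; via \eqref{eqn:subgradient-from-euclidean} and Definition~\ref{def:clarke-generalized-covariant} this reduces the computation of $\partial f(p)$ and $\partial X(p)$ to standard subgradient/Jacobian computations on the pulled-back objects in $\R^n$.

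At any $q \in U$ at which $X$ is differentiable, $f = \inner{X}{Y}$ is also differentiable (since $Y$ is smooth), and the metric-compatibility of the Levi-Civita connection yields the pointwise identity
\begin{equation*}
\inner{\grad f(q)}{\zeta} = \inner{\nabla_\zeta X(q)}{Y(q)} + \inner{X(q)}{\nabla_\zeta Y(q)}, \quad \forall\, \zeta \in T_q \cM,
\end{equation*}
which can be verified by expanding $E_k f = E_k(\sum_{ij} g_{ij} X^i Y^j)$ together with the connection formula for $\nabla_{E_k} X$ and the vanishing of $\partial_k g_{ij}(p)$ in normal coordinates. Now consider any sequence $\{q_n\} \subset \cD_X$ with $q_n \to p$ along which $\nabla X(q_n)$ converges to some $H \in \partial_B X(p)$ in the sense of Definition~\ref{def:clarke-generalized-covariant}. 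Applying the identity above with $\zeta_n := P_{pq_n} v$ for arbitrary $v \in T_p \cM$, and using the isometry of the parallel transport together with the continuity of $Y$ and the smoothness of $\nabla Y$, one obtains that $P_{q_n p} \grad f(q_n)$ converges in $T_p^*\cM$ to the functional $v \mapsto \inner{Hv}{Y(p)} + \inner{X(p)}{\nabla_v Y(p)}$. This yields the inclusion $\inner{\partial_B X(p)}{Y(p)} + \inner{X(p)}{\nabla Y(p)} \subseteq \partial_B f(p)$, and taking convex hulls (using that $H \mapsto \inner{H\,\cdot\,}{Y(p)}$ is linear) gives the ``$\supseteq$'' direction of \eqref{eqn:clarke-chain-rule-equal}.

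The main obstacle is the reverse inclusion: a B-limit $\xi \in \partial_B f(p)$ a priori arises from a sequence $q_n \to p$ in $\cD_f := \{ q : f \text{ is differentiable at } q\}$, and one must extract a subsequence along which $\nabla X(q_n)$ also converges. Two observations unlock this: first, $\cD_X \subseteq \cD_f$ trivially because $Y$ is smooth and $f = \inner{X}{Y}$; second, by a standard Rademacher-type argument the Clarke subdifferential $\partial f(p)$ is determined by B-limits taken along any dense full-measure subset of $\cD_f$, and in particular along $\cD_X$. Thus one may assume $\{q_n\} \subset \cD_X$; the local boundedness of $\partial X$ (Proposition~3.1 in \cite{de2018newton}) then permits passing to a further subsequence with $\nabla X(q_n) \to H \in \partial_B X(p)$, and the Leibniz identity identifies the limit of $P_{q_n p} \grad f(q_n)$ precisely as $v \mapsto \inner{Hv}{Y(p)} + \inner{X(p)}{\nabla_v Y(p)}$. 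Combining both inclusions at the B-level, and invoking once more the affinity of $H \mapsto \inner{H\,\cdot\,}{Y(p)} + \inner{X(p)}{\nabla Y(p)}$ to commute with convex hulls, yields the equality \eqref{eqn:clarke-chain-rule-equal}.
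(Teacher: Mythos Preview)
Your argument is correct, but the route differs from the paper's. The paper pulls everything back to $\R^n$ via the normal coordinate chart, writes $\hat f(x) = \hat X(x)^\trans G(x)\hat Y(x)$ as a composition $\cA \circ \tilde X$ with $\cA(x,v) := v^\trans G(x)\hat Y(x)$ smooth and $\tilde X(x) := (x,\hat X(x))$ Lipschitz, and then invokes the Euclidean Clarke chain rule \cite[Theorem~2.6.6]{clarke1990optimization} to obtain the equality $\partial \hat f(0) = \nabla \cA(0,\hat X(0))[\mathrm{Id},\partial\hat X(0)]$ in one stroke; the remaining work is a bookkeeping bijection $J:\cL(\R^n)\to\cL(T_p\cM)$ showing $J(\partial\hat X(0)) = \partial X(p)$ and translating the Euclidean identity back. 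By contrast, you argue directly at the level of B-derivatives on the manifold: you use the metric-compatibility Leibniz rule at differentiable points, take limits along $\cD_X$ via parallel transport, and close the reverse inclusion with the null-set insensitivity of the Clarke subdifferential (Clarke, Theorem~2.5.1) together with local boundedness of $\partial X$. Your approach is more self-contained and makes the Riemannian structure explicit, at the cost of reproving by hand what the paper extracts from the off-the-shelf Euclidean chain rule; the paper's approach is shorter once that tool is granted, and sidesteps the need to argue separately about $\cD_X\subseteq\cD_f$ and subsequence extraction.
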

\begin{proof}
	Let $(U_p, \varphi)$ be a chart such that $\varphi^{-1}$ is the normal coordinate.
	Suppose that $X = \sum_{i=1}^n X^i E_i$ and $Y = \sum_{i=1}^n Y^i E_i$, 
	we define $\hat f := f \circ \varphi^{-1}$,
	$\hat X^j := X^j \circ \varphi^{-1}$, $\hat X := (\hat X^1, \dots, \hat X^n)$, and $\hat Y$ likewise.
	Thus, 
	$\hat f(x) = \hat X(x)^\trans G(x) \hat Y(x)$ for $x \in \hat U_p := \varphi(U_p)$,
	and can be regarded as the composition $\cA \circ \tilde X$, where
	$\cA(x, v) := v^\trans G(x) \hat Y(x)$ and $\tilde X(x) := (x, \hat X(x))$ for $x \in U_p, v \in \R^n$.
	Since $\cA$ is smooth and~$G(0) = I_n, \nabla G(0) = 0$, we know
	$\nabla \cA(0, v)[w, \xi] = v^\trans \nabla \hat Y(0)[w] + \xi^\trans \hat Y(0)$ for $w, \xi \in \R^n$.
	Note that~$\partial \tilde X = (\mathrm{Id}, \partial \hat X)$, then the chain rule in Euclidean spaces~\cite[Theorem~2.6.6]{clarke1990optimization} implies that 
	\begin{equation}
		\label{eqn:lem-euc-gradient-inner}
		\partial \hat f(0) = \nabla \cA(0, \hat X(0))[\mathrm{Id}, \partial \hat X(0)] = \hat X(0)^\trans \nabla \hat Y(0) + \hat Y(0)^\trans \partial \hat X(0).   
	\end{equation}

	We define the linear bijection $J: \cL(\R^n) \to \cL(T_p\cM)$ as
	\[ (J\hat H)v := \sum_{i=1}^n [\hat H \hat v]_i E_i(p), \]
	where $\hat H \in \cL(\R^n)$, $v \in T_p \cM$ and $\hat v := \dd\varphi|_p[v]$.
	Below we show that $J(\partial \hat X(0)) = \partial X(p)$. 

	Suppose $\hat H \in \partial_B \hat X(0)$, then there exists $x_k \to 0$ such that $\hat X$ is differentiable at $x_k$ and $\nabla \hat X(x_k) \to \hat H$ as $k\to\infty$.
	Let $H := J\hat H$, $\hat v \in \R^n$, $v := (\dd\varphi|_p)^{-1}[\hat v]$, $p_k := \varphi^{-1}(x_k)$ and $v_k := (\dd \varphi|_{p_k})^{-1}[\hat v]$, then
	$\nabla \hat X^i(x_k)[\hat v] = \dd \hat X^i|_{x_k}[\hat v] = \dd X^i|_{p_k}[v_k] = v_kX^i(p_k)$,
	and therefore
	\[ \nabla_{v_k} X(p_k) = \sum_{i=1}^n \nabla \hat X^i(x_k)[\hat v] E_i + \sum_{i, j, k=1}^n X^iY^j\Gamma_{ij}^k E_k. \]
	Let $k \to \infty$ and note $\Gamma_{ij}^k(p) = 0$, we find $\nabla_{v_k} X(p_k) \to \sum_{i=1}^n [\hat H \hat v]_i E_i(p) = Hv$.
	Since $\hat v$ is arbitrary, we conclude that $\nabla X(p_k) \to  H \in \partial_B X(p)$.
	By reversing this argument, we can show that $J^{-1}H \in \partial \hat X(0)$ for every $H \in \partial X(p)$.
	Then, $J(\partial_B \hat X(0)) = \partial_B X(p)$, and
	by the linearity of $J$, we know $J(\partial \hat X(0)) = \partial X(p)$.

	Thus, $\hat Y(0)^\trans (\hat H\hat v)
	= \sum_{i=1}^n Y^i(p) \innerz{J\hat Hv}{E_i(p)} = \innerz{J\hat Hv}{Y(p)}$ for every $\hat H \in \partial \hat X(0)$, $v \in T_p \cM$ and $\hat v := \dd\varphi|_p[v]$.
	Similarly, it holds that 
	$\hat X(0)^\trans \nabla \hat Y(0)[ \hat v] = \inner{X(p)}{\nabla Y(p)[v]}$.
	Besides, \eqref{eqn:subgradient-from-euclidean} yields $\partial \hat f(0)[\hat v] = \partial f(p)[v]$.
	Therefore, the proof is completed by 
	plugging these equations into~\eqref{eqn:lem-euc-gradient-inner}.
	\hfill $\Box$
\end{proof}

\begin{proof}[Proof of Proposition~\ref{prop:clarke-inclusion}]
	Since Theorem~\ref{thm:semismoothness-from-euc} shows that $X$ is also locally Lipschitz at $p$, then $\partial X(p)$ is well-defined.
	Let $U_p \subset \cM$ be a neighborhood of $p$, $\xi \in T_p\cM$
	and $V$ be a smooth vector field on $\bar \cM$ such that $V_\top(q) = P_{pq}\xi$ for $q \in U_p$,
	and $\iota:  \cM \hookrightarrow \bar \cM$ be the embedding map.
	Define $\bar f_V := \inner{\bar X}{V}$ and $f_V := \bar f_V \circ \iota$,
	then Lemma~\ref{lem:clarke-chain-rule} shows that 
	$\partial f_V(p) \subseteq    \bar \partial \bar f_V(p) \dd \iota(p)   = \bar \partial \bar f_V(p)|_{T_p \cM}$. 
	
	Let $v \in T_p \cM$, then Lemma~\ref{lem:clarke-chain-rule-equal} gives that 
	$\bar\partial \bar f_V(p)[v] =  \inner{\bar \partial \bar X(p)[v]}{V(p)} + \inner{\bar X(p)}{\bar \nabla_v V(p)}$.
	Since $v, \bar X(p) \in T_p \cM$, Proposition 8.4 in \cite{lee2018introduction} shows that
	 \[\inner{\bar X(p)}{\bar \nabla_v V_\perp(p)} 
	=	\inner{\bar X(p)}{(\bar \nabla_v V_\perp(p))_\top} 
	=	\inner{X(p)}{\nabla_v V_\perp(p)} 
	 = -\inner{\II(v, X(p))}{V_\perp(p)}. \]
	Note that $\nabla_v V_\top(p) = \nabla_v (P_{pq}\xi)(p) = 0$, then $\inner{\bar X(p)}{\bar \nabla_v V_\top(p)} = \inner{X(p)}{\nabla_v V_\top(p)} = 0$,
	and thus, 
	\[ \inner{\bar X(p)}{\bar \nabla_vV(p)} 
	= \inner{\bar X(p)}{\bar \nabla_vV_\perp(p) + \bar \nabla_vV_\top(p)} 
	= -\inner{\II(v,X(p))}{V_\perp(p)}
	= -\inner{\II(v,X(p))}{V(p)},  \]
	where the last equation follows from $\inner{\II(v, X(p))}{V_\top(p)} = 0$.

	Combining these arguments, we know
	$\partial f_V(p)[v] \subseteq \innerz{\bar \partial \bar X(p)[v] - \II(v, X(p))}{V(p)}$.
	Note that $\bar X(q) \in T_q \cM$ for $q \in U_p$, then 
	$f_V(q) = \inner{X(q)}{V_\top(q)}$ for $q \in U_p$.
	Since $\nabla_v V_\top(p) = 0$ and $\partial X(p)[v] \subset T_p\cM$, then Lemma~\ref{lem:clarke-chain-rule-equal} yields
	$\partial f_V(p)[v] = \inner{\partial X(p)[v]}{V_\top(p)} = \inner{\partial X(p)[v]}{V(p)}$.
	Therefore, we know
	\[ \inner{\partial X(p)[v]}{V(p)} \subseteq\innerz{\bar \partial \bar X(p)[v] - \II(v, X(p))}{V(p)}. \]
	Note that $V(p)_\top = \xi$ and $V(p)_\perp$ are arbitrary and $\partial  X(p)[v], \bar \partial \bar X(p)[v]$ are compact convex sets, 
	the inclusion relationship \eqref{eqn:clarke-inclusion} follows from \cite[Corollary 13.1.1]{rockafellar1970convex}.
	\hfill $\Box$
\end{proof}

\end{appendices}

\bibliographystyle{plain}
\bibliography{references}
\end{document}